\theoremstyle{plain}
\newtheorem{theorem}{Theorem}[section]
\newtheorem{proposition}[theorem]{Proposition}
\newtheorem{lemma}[theorem]{Lemma}
\newtheorem{conjecture}[theorem]{Conjecture}
\theoremstyle{definition}
\newtheorem{definition}[theorem]{Definition}
\theoremstyle{remark}
\newcommand{\N}{\mathbb{N}}
\newcommand{\Z}{\mathbb{Z}}
\begin{document}


\title{Dynamic Structures of 2-adic Fibonacci Polynomials}


\author{Myunghyun Jung, Donggyun Kim
  and Kyunghwan Song}

\date{}

\maketitle \setlength{\hangindent}{0pt}

\begin{abstract}
	We prove recurrence relations and modulo periodic properties of multiple derivatives of Fibonacci polynomials over the ring of 2-adic integers. We apply the obtained results to present the dynamic structures of Fibonacci polynomials by investigating minimal decompositions which consist of minimal subsystems and attracting basins.
\end{abstract}

\noindent MSC2010: 11B39; 11S82; 37P35

\noindent Keywords: $p$-adic dynamical system; Fibonacci polynomial; derivatives of a Fibonacci polynomial; minimal decomposition

\tableofcontents


\section{Introduction}

Let $\Z_p$ be the ring of $p$-adic integers and $f$ a polynomial over $\Z_p$. The dynamical system $(\Z_p, f)$ has a general theorem of the minimal decomposition due to Fan and Liao in 2011 \cite{FanLiao2011Minimal}.

\begin{theorem}[\cite{FanLiao2011Minimal}
               ]
\label{thm-decomposition}
Let $f \in \mathbb{Z}_p[x]$ be a polynomial of integral coefficients with degree at least  $2$. We have the following decomposition
\[
\mathbb{Z}_p = \mathcal{P} \bigsqcup \mathcal{M} \bigsqcup \mathcal{B}
\]
where
\begin{enumerate}
\item $ \mathcal{P}$ is the finite set consisting of all periodic points of $f$,
\item $\mathcal{M}= \bigsqcup_i \mathcal{M}_i$ is the union of all (at most countably many) clopen invariant sets such that each $\mathcal{M}_i$ is a finite union of balls and each subsystem $f: \mathcal{M}_i \to \mathcal{M}_i$ is minimal, and
\item each point in $\mathcal{B}$ lies in the attracting basin of a periodic orbit or of a minimal subsystem.
\end{enumerate}
\end{theorem}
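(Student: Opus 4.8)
The plan is to reduce the continuous dynamics on $\mathbb{Z}_p$ to an infinite family of finite dynamical systems and to track how their periodic structure refines from one level to the next. For each $n \geq 1$ let $\pi_n \colon \mathbb{Z}_p \to \mathbb{Z}/p^n\mathbb{Z}$ be the reduction map and let $f_n$ be the induced self-map of the finite set $\mathbb{Z}/p^n\mathbb{Z}$; since $f$ has integral coefficients this is well defined and commutes with the projections $\mathbb{Z}/p^{n+1}\mathbb{Z} \to \mathbb{Z}/p^n\mathbb{Z}$. A cycle of $f_n$ of length $k$ corresponds to a periodic orbit of balls $B_1 \cup \cdots \cup B_k$, each $B_i = x_i + p^n\mathbb{Z}_p$, with $f(B_i) \subseteq B_{i+1}$ cyclically. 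The whole argument will be driven by understanding, for such a ball orbit, the first return map $g = f^k \colon B_1 \to B_1$ and how it acts on the $p$ sub-balls of $B_1$ that appear at level $n+1$.

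The key lemma I would prove is a trichotomy for this refinement. Writing $B_1 = x_0 + p^n\mathbb{Z}_p$ and expanding $g$ by Taylor's formula, $g(x) = g(x_0) + g'(x_0)(x-x_0) + \cdots$, the induced action on the $p$ children, indexed by $\mathbb{Z}/p\mathbb{Z}$ via $x = x_0 + p^n y$, is the affine map $\bar y \mapsto c\,\bar y + \delta$ modulo $p$, where $c \equiv g'(x_0)$ and $\delta \equiv (g(x_0)-x_0)/p^n$. This yields three essentially different behaviours according to $v_p(g'(x_0))$ and the cycle structure of the affine map: a \emph{growing} regime ($c \equiv 1$, $\delta \not\equiv 0$), where the children form a single $p$-cycle and the period multiplies by $p$; a \emph{splitting} regime (the affine map is the identity, or more generally bijective with several orbits), where the orbit breaks into several ball orbits of the same or a divisor period; and a \emph{contracting} regime ($v_p(g'(x_0)) \geq 1$), where the affine map is constant and all children are pushed into one. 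The main obstacle lies exactly here: one must show that the linear term genuinely governs the dynamics, i.e.\ control the higher-order Taylor terms by $p$-adic estimates so that the classification is exhaustive. The notoriously delicate point is $p = 2$, where the quadratic term cannot be ignored and the analysis of the second derivative — precisely the kind of derivative computation this paper is built around — is forced.

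Granting the trichotomy, I would assemble the levels into a refinement forest whose vertices at height $n$ are the ball orbits of $f_n$ and whose edges record the passage to level $n+1$. An infinite branch along which the growing case occurs infinitely often yields a nested sequence of ball orbits of unbounded period; its intersection is a compact invariant set on which $f$ is conjugate to an odometer (adding machine), hence minimal, and I would collect these limit sets as the components $\mathcal{M}_i$, each being a finite union of balls at every finite level and therefore clopen and invariant. A branch along which growth eventually ceases produces a chain that is ultimately only splitting, whose intersection is a single genuine periodic point; these points form $\mathcal{P}$. To see that $\mathcal{P}$ is finite I would invoke a uniform bound on cycle lengths of polynomial maps over $\mathbb{Z}_p$ (a Pezda-type bound depending only on $p$) together with the estimate that $f^m(x)=x$ has at most $(\deg f)^m$ solutions. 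Finally, every remaining point eventually enters a ball on which the contracting case applies, and the non-expanding estimate $|f(x)-f(y)|_p \le |x-y|_p$, which is strict where $v_p(f') > 0$, forces its forward orbit to converge into a periodic orbit or into one of the $\mathcal{M}_i$; these points constitute $\mathcal{B}$. Disjointness and exhaustiveness then follow because the forward orbit of each point determines a unique branch of the forest and hence lands in exactly one of the three classes, while compactness of $\mathbb{Z}_p$ guarantees that the limits exist and that there are at most countably many minimal components.
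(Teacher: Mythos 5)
The paper does not prove this theorem: it is imported verbatim from Fan--Liao \cite{FanLiao2011Minimal}, and the only related machinery the paper states (Definition \ref{def:movement} and Proposition \ref{prop:grow tails}) is likewise quoted without proof. Your proposal follows the same general strategy as Fan--Liao's actual argument --- reduce modulo $p^n$, classify the lifts of a cycle via the linearization data $a_l=\prod f'(f^j(x))$ and $b_l=(f^k(x)-x)/p^l$, and assemble the levels --- so the skeleton is right. But there is a genuine gap at the central point of the theorem, namely why each minimal component $\mathcal{M}_i$ is a \emph{finite union of balls}, hence clopen. You build the $\mathcal{M}_i$ as intersections of nested ball orbits along branches where ``the growing case occurs infinitely often.'' A strictly decreasing nested intersection of clopen sets is closed but not open; if growth and splitting were to alternate forever along a branch, the limit would be a minimal Cantor set of measure zero, not a clopen component, and the theorem as stated would be false. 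This is exactly what can happen for general $1$-Lipschitz maps, so the clopenness is the substantive content that must use polynomiality. The missing lemma is \emph{persistence of growth}: once a cycle strongly grows at level $l\geq 2$, its unique lift strongly grows at level $l+1$ (so no further splitting ever occurs and the component is the full clopen set $\mathbb{X}_\sigma$), and a strongly splitting cycle with $\nu_p(b_l)=s<\infty$ resumes growing after exactly $s$ more levels. Proving this requires the recursions expressing $a_{l+1},b_{l+1}$ in terms of $a_l,b_l$ via Taylor expansion of $f^k$, using that $f^{(i)}/i!$ has integral coefficients; your sketch controls higher-order terms only to establish the single-level trichotomy, not the cross-level recursion.

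A second, related weakness is that your trichotomy is too coarse for $p=2$, the case this paper actually needs. Knowing $a_l$ modulo $2$ does not determine the behaviour of the lifts; one needs $a_l$ modulo $4$, which is why Definition \ref{def:movement} distinguishes \emph{strongly} from \emph{weakly} growing/splitting. In particular the weakly splitting case (Proposition \ref{prop:grow tails}(4)) produces a branch that splits at every level forever, converging to a genuine periodic point with infinitely many minimal components accumulating on it; this hybrid behaviour sits outside your three regimes and must be handled to get exhaustiveness of the decomposition. Your treatment of $\mathcal{P}$ (Pezda's uniform bound on cycle lengths plus the root count for $f^m(x)=x$) and of $\mathcal{B}$ via the growing-tails case is fine.
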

	
This decomposition is usually referred to as a \emph{minimal decomposition} and the invariant subsets $\mathcal{M}_i$ are called {\em minimal components }.
	
Although we have a general decomposition theorem, it is still difficult to describe  the explicit minimal decomposition for  a given polynomial,  as there have been few works done on them. Multiplications on $\Z_p$ for $p \geq 3$ were studied by Coelho and Parry in 2001 \cite{CoelhoParry2001}, and affine maps and quotient maps of affine maps for all $p$ were studied by Fan, Li, Yao and Zhou in 2007 \cite{FanLiYaoZhou2007Affine} and by Fan, Fan, Liao and Wang in 2014 \cite{FanFanLiaoWang2014Homographic}, respectively. The square map on $\Z_p$ for all primes $p$ and quadratic maps for $p=2$ were investigated by Fan and Liao in 2016 \cite{FanLiao2016Square} and  2011 \cite{FanLiao2011Minimal}, respectively. These functions are all of low degrees. Recently, the Chebyshev polynomials, whose degrees increase infinitely, on $\Z_2$ were investigated by Fan and Liao in 2016 \cite{FanLiao2016Chebyshev}.

In this paper, we study the dynamic structures of Fibonacci polynomials over the ring of 2-adic integers. The Fibonacci polynomials $F_m(x)$ are defined for nonnegative integers $m$ by the recurrence relation, see (\ref{eq:Fibonacci polynomial_recurrence relation}) bellow. The dynamic structures of the Fibonacci polynomials $F_m(x)$ are described by investigating minimal decompositions which consist of minimal subsystems and attracting basins. The results show that there are full of varieties of minimal decomposition patterns.  The degrees of the Fibonacci polynomials increase infinitely same as the Chebyshev polynomials, but the pattern of minimal decompositions of $F_m$ appears differently, for example a Fibonacci polynomial has  sufficiently small minimal components, see Theorem \ref{thm:m=4+12q,q=2+64d} and \ref{thm:m=8+12q,q=61+64d}. 

The rest of the paper is organized as follows. In section 2, we review basic definitions and properties of $p$-adic polynomials and Fibonacci polynomials $F_m(x)$. In addition, we present recurrence relations of multiple derivatives of $F_m(x)$ and for a fixed $x$, modulo periodic properties of $F_m(x)$'s and their multiple derivatives for nonnegative integers $m$, which are essential tools for investigation of main results. In next sections, we study the minimal decompositions for $F_m(x)$ with $m$ odd in section 3, with $m \equiv 0 \pmod{12}$ in section 4, with $m \equiv 2 \pmod{12}$ in section 5, with $m \equiv 4 \pmod{12}$ in section 6, with $m \equiv 6 \pmod{12}$ in section 7, with $m \equiv 8 \pmod{12}$ in section 8 and with $m \equiv 10 \pmod{12}$ in section 9.

\section{Preliminary}
\subsection{$p$-adic spaces}
We first review the basic definitions and properties of the $p$-adic space. Let $\N$ be the set of positive integers, $\Z$ the set of integers and $p$ a prime number. We consider a family of rings $\{ \Z/p^k \Z\}$ for all $k\in \N$ with canonical projections $\{ \varphi_{i j} : \Z/p^j \Z \to \Z/p^i \Z \}$ for all integers $i$ and $j$ with $i\le j$. The \emph{ring of $p$-adic integers}, denoted $\Z_p$, is the projective limit of the projective system of rings $\{\Z/p^k \Z\}$ with projections $\{\varphi_{i j} \}$.

Then the ring $\Z_p$ is the set of all sequence $(x_k)$ with the property $x_k \in \Z/p^k \Z$ and $\varphi_{i j}(x_j)=x_i$ for all $i \le j$. A point $x_k$ in $\Z/p^k \Z$ can be represented by $x_k=a_0 + a_1 p+a_2 p^2 + a_3 p^3 + \cdots+a_{k-1}p^{k-1}$ with unique $a_i \in\{0,1,\cdots,p-1\}$ and the point $x = (x_k)$ in $\Z_{p}$ by $x: a_0 , a_0 + a_1 p, a_0 + a_1 p+a_2 p^2, a_0 + a_1 p+a_2 p^2 + a_3 p^3, \cdots $ as a sequence form, or simply $x=a_0 + a_1 p+a_2 p^2 + a_3 p^3 + \cdots$.

An element $x$ can be written as $x=p^\nu (b_0 + b_1 p+b_2 p^2 + b_3 p^3 + \cdots)$ with $\nu\in \N\cup \{0\}$ and $b_i \in\{0,1,\cdots,p-1\}$. In this case, we say that $p^\nu$ \emph{divides} $x$ and denote $p^\nu |x$.

The \emph{$p$-adic order} or \emph{$p$-adic valuation} for $\Z_p$ is defined as the function $\nu_p : \Z_p \rightarrow \mathbb{Z} \cup{\infty}$
$$
\nu_p (x) = \begin{cases}\textrm{max}\{\nu \in \N\cup \{0\} : p^\nu | x \}&\textrm{if}~x \neq 0,\\
\infty &\textrm{if}~x =0,\end{cases}
$$

and the \emph{$p$-adic metric} of $\Z_p$ is defined as the function $|\cdot |_p : \Z_p \rightarrow \mathbb{R}$
$$
|x|_p = \begin{cases}p^{-\nu_p (x)} &\textrm{if}~x \neq 0,\\
0 &\textrm{if}~x =0.\end{cases}
$$
With the $p$-adic metric, $\Z_p$ becomes a compact space.

Note that $\Z/p^k\Z$ can be viewed as a subset of $\Z_{p}$,
\[ \Z/p^k\Z=\{x\in\Z_{p}: a_{i}=0 \text{ for every } i\ge k\}, \]
and for $x=a_0 + a_1 p+a_2 p^2 + a_3 p^3 + \cdots$ in $\Z_p$, the element  $x \pmod{ p^k}$ means $a_0 + a_1 p+a_2 p^2 + a_3 p^3 + \cdots+a_{k-1}p^{k-1}$, which is now in $\Z_p / p^k \Z_p$.
It can be checked that $\Z/p^k\Z$ is isomorphic to $\Z_p / p^k \Z_p$.

\begin{proposition}
Let $x$ and $y$ be elements of $\Z_p$. Then, $\nu_p$ has the properties:
\begin{enumerate}
  \item $\nu_p(xy)=\nu_p(x)+\nu_p(y).$
  \item $\nu_p(x+y)\geq \min\{\nu_p(x),\nu_p(y)\}$, with equality when $\nu_p(x)$ and $\nu_p(y)$ are unequal.
\end{enumerate}
\end{proposition}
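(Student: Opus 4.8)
The plan is to reduce both statements to the canonical factorization of a nonzero $p$-adic integer together with a single elementary fact about leading digits. First I would dispose of the degenerate cases involving $0$: if $x=0$ or $y=0$ then in (1) both sides equal $\infty$ under the conventions $\infty+n=\infty$, and in (2) the right-hand side is $\infty$, so the inequality holds trivially. Thus I may assume $x,y\neq 0$ throughout. Then, writing $\nu_p(x)=a$ and $\nu_p(y)=b$, the definition of $\nu_p$ lets me factor $x=p^a u$ and $y=p^b w$, where $u=u_0+u_1p+\cdots$ and $w=w_0+w_1p+\cdots$ have nonzero leading digit, i.e. $u_0,w_0\in\{1,\dots,p-1\}$ and $\nu_p(u)=\nu_p(w)=0$. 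The one fact that does all the work is the equivalence
\[
\nu_p(z)=0 \quad\Longleftrightarrow\quad z\not\equiv 0 \pmod{p}\quad\Longleftrightarrow\quad \text{the leading digit of }z\text{ is nonzero.}
\]

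For part (1) I would simply multiply: $xy=p^{a+b}uw$. Reducing modulo $p$, the leading digit of $uw$ is the residue of $u_0w_0$; since $p$ is prime and $u_0,w_0\in\{1,\dots,p-1\}$, we have $u_0w_0\not\equiv 0\pmod p$. Hence $\nu_p(uw)=0$, and because $\nu_p$ counts the power of $p$ factored out, this gives $\nu_p(xy)=a+b=\nu_p(x)+\nu_p(y)$.

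For part (2) I would assume without loss of generality $a\le b$ and factor out the smaller power, writing $x+y=p^a\bigl(u+p^{\,b-a}w\bigr)$. This already exhibits $p^a$ as a divisor of $x+y$, so $\nu_p(x+y)\ge a=\min\{\nu_p(x),\nu_p(y)\}$, which is the inequality. For the equality statement I take $a<b$: then $p^{\,b-a}w\equiv 0\pmod p$ while $u\not\equiv 0\pmod p$, so the bracketed term has nonzero leading digit and $\nu_p\!\bigl(u+p^{\,b-a}w\bigr)=0$; therefore $\nu_p(x+y)=a=\min\{\nu_p(x),\nu_p(y)\}$, as asserted.

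The only genuine subtlety, and the step I would be most careful to justify, is the legitimacy of the factorization $x=p^a u$ with $\nu_p(u)=0$ and the displayed equivalence for $\nu_p=0$; both rest on $p$ being prime, so that $\Z/p\Z$ has no zero divisors. This is precisely what prevents the leading digits $u_0,w_0$ from cancelling in the product (securing (1)) and what forces the non-cancellation that yields equality in (2) once the valuations differ. Everything else is routine bookkeeping with the sequence expansion, so I would keep those manipulations brief.
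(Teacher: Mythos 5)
Your argument is correct: the reduction to the factorization $x=p^a u$ with $\nu_p(u)=0$, the use of primality of $p$ to keep the leading digits from cancelling in part (1), and the non-cancellation argument for the equality case in part (2) are all sound, and you handle the zero cases properly. The paper states this proposition without proof, treating it as a standard preliminary fact, so there is no authorial argument to compare against; your proof is the standard one and would serve as the omitted justification.
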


\subsection{Dynamics of $p$-adic polynomials}

Let $f$ be a polynomial over $\Z_p$ and let $l\geq1$ be a positive integer. Let $\sigma=\{x_1,\cdots,x_k\}$ with $x_i \in \Z/p^l\Z$ be a cycle of $f$ of length $k$ (also called a $k$-cycle), i.e.,
$$f(x_1)=x_2,\cdots,f(x_i)=x_{i+1},\cdots,f(x_k)=x_1~(\mathrm{mod}~p^l).$$
In this case, we also say that the cycle $\sigma$ is \emph{at level l}. Let
\[ X_\sigma:=\bigcup_{i=1}^k X_i~\mathrm{where}~X_i:=\{x_i+p^lt+p^{l+1}\Z:~t=0,\cdots,p-1\}. \]
Then
\[ f(X_i)\subset X_{i+1}~(1\leq i\leq k-1)~\mathrm{and}~f(X_k)\subset X_1, \]
and so $f$ is invariant on the clopen set $X_\sigma$. The cycles in $X_\sigma$ of $f$ (mod $p^{l+1}$) are called \emph{lifts of $\sigma$} (from level $l$ to level $l+1$). Note that the length of a lift of $\sigma$ is a multiple of $k$.

Let $\mathbb{X}_i:=x_i+p^l\Z_p$ and let $\mathbb{X}_{\sigma}:=\bigcup_{i=1}^{k}\mathbb{X}_i$. For $x\in\mathbb{X}_{\sigma}$, denote
\begin{align}
&a_l(x):=\prod_{j=0}^{k-1}f'\big(f^j(x)\big) \label{a_l}
 \mathrm{~and~}\\
&b_l(x):=\frac{f^k(x)-x}{p^l} \label{b_l}.
\end{align}
Sometimes we abbreviate $a_l(x)$ and $b_l(x)$ to $a_l$ and $b_l$, respectively. The following definition and proposition were treated by Fan et al \cite{FanFares2011, FanLiYaoZhou2007Affine}. Hereafter we fix the prime $p=2$.

\begin{definition}\label{def:movement}

We say $\sigma$ \emph{strongly grows at level l} if $a_l\equiv1$ (mod 4) and $b_l\equiv1$ (mod 2).

We say $\sigma$ \emph{strongly splits at level l} if $a_l\equiv1$ (mod 4) and $b_l\equiv0$ (mod 2).

We say $\sigma$ \emph{weakly grows at level l} if $a_l\equiv3$ (mod 4) and $b_l\equiv1$ (mod 2).

We say $\sigma$ \emph{weakly splits at level l} if $a_l\equiv3$ (mod 4) and $b_l\equiv0$ (mod 2).

We say $\sigma$ \emph{grows tails at level l} if $a_l\equiv0$ (mod 2).
\end{definition}

\begin{proposition}\label{prop:grow tails}
Let $\sigma=\{x_1,\cdots,x_k\}$ be a cycle of $f$ at level $l$.
\begin{enumerate}
\item If $\sigma$ is a strongly growing cycle at level $l\geq2$, then $f$ restricted onto the invariant clopen set $\mathbb{X}_{\sigma}$ is minimal.

\item If $\sigma$ is a strongly splitting cycle at level $l\geq2$ and $\nu_2(b_l(x))=s\geq1$ for all $x\in\mathbb{X}_\sigma$, then the lifts of $\sigma$ will keep splitting until to the level $l+s$ at which all lifts strongly grow.

\item Let $\sigma$ be a cycle of $f$ at level $n\geq2$. If $\sigma$ weakly grows then the lift of $\sigma$ strongly splits.

\item Let $\sigma$ be a weakly splitting cycle of $f$ at level $n\geq2$. Then one lift behaves the same as $\sigma$ while the other weakly grows and then strongly splits.

\item If $\sigma$ is a growing tails cycle at level $l\geq1$, then $f$ has a $k$-periodic orbit in the clopen set $\mathbb{X}_{\sigma}$ lying in its attracting basin.
\end{enumerate}
\end{proposition}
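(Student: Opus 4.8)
The plan is to reduce everything to the analysis of the first return map $g=f^{k}$ on a single ball $\mathbb{X}_1=x_1+2^{l}\Z_2$, since $f(\mathbb{X}_i)\subseteq\mathbb{X}_{i+1}$ forces $g(\mathbb{X}_1)\subseteq\mathbb{X}_1$ and the dynamics of $f$ on $\mathbb{X}_\sigma$ is governed by that of $g$ on $\mathbb{X}_1$. The computational input is a Taylor expansion of $g$ on $\mathbb{X}_1$, using $a_l(x)=\prod_{j=0}^{k-1}f'(f^{j}(x))=(f^{k})'(x)=g'(x)$. Writing $x=x_1+2^{l}u$ with $u\in\Z_2$ and expanding $g(x)$ about $x_1$, the linear term carries $g'(x_1)=a_l(x_1)$ while every higher term is divisible by $2^{2l}$; since $g(x_1)-x_1=2^{l}b_l(x_1)$, this gives the master relation
\[
b_l(x_1+2^{l}u)\equiv b_l(x_1)+\bigl(a_l(x_1)-1\bigr)\,u \pmod{2^{l}}.
\]
I would first record two consequences of the $1$-Lipschitz property of the polynomials $f'$ and $g'$: for $y,y'\in\mathbb{X}_1$ one has $a_l(y')\equiv a_l(y)\pmod{2^{l}}$, so (because $l\ge 2$) the residue $a_l\bmod 4$ is a well-defined invariant of the cycle; and $f^{k}(x_1)\equiv x_1+2^{l}b_l(x_1)\pmod{2^{l+1}}$ shows that $\sigma$ grows (lifts to one $2k$-cycle) when $b_l$ is odd and splits (lifts to two $k$-cycles) when $b_l$ is even.

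Next I would compute how $a$ and $b$ transform under a single lift. For a split, each lift still has length $k$, so its multiplier is again $a_l(\tilde x)\equiv a_l\pmod 4$ and its new constant is $\tilde b_{l+1}(\tilde x)=b_l(\tilde x)/2$; feeding $\tilde x=x_1+2^{l}u$ into the master relation and reducing mod $4$ gives the parity of $\tilde b_{l+1}$ for each of the two halves $u\equiv 0,1\pmod 2$. For a grow, the lift has length $2k$, so $\tilde a_{l+1}\equiv a_l(\tilde x)\,a_l(f^{k}(\tilde x))\equiv a_l^{2}\equiv 1\pmod 4$ whether $a_l\equiv 1$ or $3\pmod 4$, and the master relation yields $\tilde b_{l+1}\equiv\tfrac12(1+a_l)\,b_l\pmod{2^{l-1}}$, whose parity is controlled by $a_l\bmod 4$. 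These formulas immediately give parts (2), (3), (4): in the strongly splitting case with $\nu_2(b_l)=s$ the multiplier stays $\equiv 1\pmod 4$ while $\nu_2$ of the new $b$ drops by exactly one per level, so splitting persists until level $l+s$, where $b$ becomes odd and the lifts strongly grow; a weakly growing cycle lifts to $\tilde a\equiv 1\pmod 4$ with even $\tilde b$, i.e.\ strongly splits; and a weakly splitting cycle ($a_l\equiv 3\pmod 4$, $b_l$ even) sends one half to even and the other to odd $\tilde b$ while keeping $\tilde a\equiv 3\pmod 4$, so one lift repeats the weakly splitting behaviour and the other weakly grows, after which part (3) makes it strongly split.

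For part (1), the grow formulas show that a strongly growing cycle ($a_l\equiv 1$, $b_l$ odd) lifts to a single cycle with $\tilde a\equiv 1\pmod 4$ and $\tilde b$ odd, i.e.\ again strongly growing. By induction the lift at every level $l'\ge l$ is a single cycle of length $2^{l'-l}k$, which exhausts all $2^{l'-l}k$ residues of $\mathbb{X}_\sigma$ modulo $2^{l'}$; thus $f$ acts transitively on $\mathbb{X}_\sigma\bmod 2^{l'}$ for every $l'$, and since $\mathbb{X}_\sigma$ is the inverse limit of these transitive finite systems the restriction $f\colon\mathbb{X}_\sigma\to\mathbb{X}_\sigma$ is minimal. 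For part (5), $a_l\equiv 0\pmod 2$ forces $|g'|_2\le\tfrac12$ on $\mathbb{X}_1$; a term-by-term valuation estimate of the Taylor expansion then gives $|g(x)-g(y)|_2\le\tfrac12|x-y|_2$ for all $x,y\in\mathbb{X}_1$, so $g$ contracts the complete space $\mathbb{X}_1$ into itself and Banach's fixed point theorem produces a unique attracting fixed point of $g=f^{k}$, i.e.\ a $k$-periodic orbit of $f$ in $\mathbb{X}_\sigma$ whose attracting basin contains all of $\mathbb{X}_\sigma$.

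The main obstacle I anticipate is the bookkeeping of $2$-adic valuations in the lifting step: one must check that the higher Taylor terms are genuinely negligible modulo the relevant power of $2$ (this is where $l\ge 2$ enters, since the multiplier mod $4$ and the constant mod $2$ of a lift have to be read off from data known only modulo $2^{l}$), and that the transformation rules hold uniformly in the base point of the cycle. Once the master relation and the two transformation formulas are established rigorously, the five cases follow by elementary parity and valuation arithmetic.
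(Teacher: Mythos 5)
The paper does not actually prove this proposition: it is imported verbatim from Fan--Fares and Fan--Li--Yao--Zhou (the text says these results ``were treated by Fan et al''), so there is no in-paper argument to compare against. Your proposal is correct and reconstructs essentially the standard proof from those references: the linearization $b_l(x_1+2^l u)\equiv b_l(x_1)+(a_l-1)u\ (\mathrm{mod}\ 2^l)$ of the return map $f^k$ on a single ball, the resulting transformation rules $\tilde b_{l+1}\equiv \tfrac12\big(b_l+(a_l-1)u\big)$ for splits and $\tilde a_{l+1}\equiv a_l^2$, $\tilde b_{l+1}\equiv\tfrac12(1+a_l)b_l$ for grows, the ``one cycle at every level'' criterion for minimality in (1), and the contraction/Banach argument for (5). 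In a full write-up the two points you should not leave implicit are the minimality criterion itself (a $1$-Lipschitz map on a compact open subset of $\Z_2$ is minimal if and only if it is transitive on every quotient modulo $2^{l'}$) and the check that $a_l \bmod 2^l$ is independent of the base point and of the ball $\mathbb{X}_i$ chosen; both are routine but are exactly where the hypothesis $l\ge 2$ is consumed.
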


\subsection{Fibonacci polynomials over $\Z_2$}

The \emph{Fibonacci polynomials} are defined for nonnegative integers $m$ by the recurrence relation
\begin{equation}\label{eq:Fibonacci polynomial_recurrence relation}
F_m(x)=x\, F_{m-1}(x)+ F_{m-2}(x),\ \text{with}\ F_0(x)=0\ \text{and}\ F_1(x)=1.
\end{equation}
The first few Fibonacci polynomials are
\begin{flalign}
F_2(x) & = x,\notag \\
F_3(x) & = 1+x^2,\notag \\
F_4(x) & = 2x+x^3,\notag \\
F_5(x) & = 1+3x^2+x^4,\notag\\
F_6(x) & = 3x+4x^3+x^5.\label{Fibonacci_6}
\end{flalign}
	
The Fibonacci polynomial $F_m(x)$ is also given by the explicit summation formula
\begin{equation}\label{eq:Fibonacci polynomial_2}
F_m(x)=\sum_{j=0}^{\lfloor \frac{m-1}{2} \rfloor}
\tbinom{m-1-\lfloor \frac{m-1}{2} \rfloor+j}{\lfloor \frac{m-1}{2} \rfloor-j}x^{m-1 -2\lfloor \frac{m-1}{2} \rfloor+2j},
\end{equation}
where $\lfloor\cdot\rfloor$ is the floor function and $\tbinom{\cdot}{\cdot}$ is the binomial coefficient. There are concrete forms of the Fibonacci polynomials depending on the parity of $m$:
\begin{equation}\label{eq:Fibonacci polynomial_even&odd}
F_m(x)=
\begin{cases}
\sum_{j=0}^{\lfloor \frac{m}{2} \rfloor-1}
\tbinom{\lfloor \frac{m}{2} \rfloor+j}{1+2j}x^{1+2j}, &\text{if $m$ even;}\\
\sum_{j=0}^{\lfloor \frac{m}{2} \rfloor}
\tbinom{\lfloor \frac{m}{2} \rfloor+j}{2j}x^{2j}, &\text{if $m$ odd.}
\end{cases}
\end{equation}

We present the basic properties of Fibonacci polynomials. For general information, see, for example, \cite{Koshy2018book, Posamentier2007Fibonaccibook}.
\begin{proposition}
Let $F_m$ be the Fibonacci polynomial. Then
\begin{align}
&F_{m+n}(x)=F_{m+1}(x)F_n(x)+F_m(x)F_{n-1}(x),\\
&F_{2m}(x)=F_m(x)\big(F_{m+1}(x)+F_{m-1}(x)\big)~\mathrm{and}\label{Fib:eq variant1}\\
&F_{2m+1}(x)=F_{m+1}(x)^2+F_m(x)^2\label{Fib:eq variant2}.
\end{align}
\end{proposition}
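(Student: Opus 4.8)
The plan is to establish the first identity, the addition formula
$$F_{m+n}(x)=F_{m+1}(x)F_n(x)+F_m(x)F_{n-1}(x),$$
directly by induction, and then to read off the other two as specializations. I would fix $m\ge 1$ and induct on $n$. Since the defining recurrence (\ref{eq:Fibonacci polynomial_recurrence relation}) is of second order, the argument must be a two-step induction, so I would set up two base cases rather than one. For $n=1$ the right-hand side is $F_{m+1}(x)F_1(x)+F_m(x)F_0(x)=F_{m+1}(x)$, using $F_1=1$ and $F_0=0$, which matches the left-hand side. For $n=2$ it is $F_{m+1}(x)F_2(x)+F_m(x)F_1(x)=xF_{m+1}(x)+F_m(x)$, which equals $F_{m+2}(x)$ by (\ref{eq:Fibonacci polynomial_recurrence relation}).

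For the inductive step I would assume the formula for $n-1$ and for $n$, with $n\ge 2$, and apply the recurrence to $F_{m+n+1}$ followed by the two hypotheses:
\begin{align*}
F_{m+n+1}(x)&=xF_{m+n}(x)+F_{m+n-1}(x)\\
&=x\big(F_{m+1}(x)F_n(x)+F_m(x)F_{n-1}(x)\big)+F_{m+1}(x)F_{n-1}(x)+F_m(x)F_{n-2}(x)\\
&=F_{m+1}(x)\big(xF_n(x)+F_{n-1}(x)\big)+F_m(x)\big(xF_{n-1}(x)+F_{n-2}(x)\big)\\
&=F_{m+1}(x)F_{n+1}(x)+F_m(x)F_n(x),
\end{align*}
where the final line again invokes (\ref{eq:Fibonacci polynomial_recurrence relation}). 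This closes the induction and proves the addition formula for all $n\ge 1$.

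The two remaining identities are then immediate specializations. Putting $n=m$ in the addition formula gives $F_{2m}(x)=F_{m+1}(x)F_m(x)+F_m(x)F_{m-1}(x)$, and factoring out $F_m(x)$ yields (\ref{Fib:eq variant1}). Putting $n=m+1$ gives $F_{2m+1}(x)=F_{m+1}(x)^2+F_m(x)^2$, which is (\ref{Fib:eq variant2}). I do not anticipate a genuine obstacle in this argument; the only points requiring care are the bookkeeping of the two-step induction, namely retaining both base cases $n=1$ and $n=2$, and confirming that the shifted indices $n-1$ and $n-2$ remain nonnegative throughout (for the first application, $n-2=0$, and $F_0=0$ is exactly what the base case supplies).
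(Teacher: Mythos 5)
Your proof is correct: the two-step induction on $n$ with base cases $n=1,2$ establishes the addition formula, and the other two identities follow by setting $n=m$ and $n=m+1$. The paper itself offers no proof of this proposition, deferring to the standard references on Fibonacci polynomials, and your argument is exactly the classical one found there, so there is nothing to reconcile.
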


\begin{definition}
A sequence $\{f_m\}_m$ is called \emph{periodic of period} $d$ if there is the least positive integer $d$ such that $f_{m+d}=f_m$ for every $m$.
\end{definition}

Let $s$ be an element of $\Z_2$. For convenience, we say that $s$ is an \emph{odd number} if $s$ is in $1+2\Z_2$, and an \emph{even number} if $s$ is in $2\Z_2$.

We present recurrence relations of multiple derivatives of $F_m(x)$ and for a fixed $x$, modulo periodic properties of $F_m(x)$'s and their multiple derivatives for nonnegative integers $m$, which are essential tools for investigation of main results.

\begin{proposition}\label{prop:Fib periodic}
Let $F_m$ be the Fibonacci polynomial.
\begin{enumerate}
\item For any odd number $s$, $F_{3\cdot 2^l}(s)\equiv 0$ and $F_{3\cdot 2^l+1}(s)\equiv 1+2^{l+1}$ (mod $2^{l+2}$) for any positive integer $l$.

\item For any odd number $s$, the sequence $\{F_m(s)~(\mathrm{mod}~2^l)\}_m$ is periodic of period $3\cdot 2^{l-1}$ for any positive integer $l$.

\item For any even number $s$, the sequence $\{F_m(s)~(\mathrm{mod}~2^l)\}_m$ is periodic of period 2 if $l\leq \nu_2(s)$ and periodic of period $2^{l+1-\nu_2(s)}$ if $l>\nu_2(s)$.
\end{enumerate}
\end{proposition}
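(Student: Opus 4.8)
The three parts are naturally proved in the order (1), (2), (3), with (1) feeding (2) and the matrix viewpoint entering for (3).

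\emph{Part (1).} I would argue by induction on $l$ using the doubling identities (\ref{Fib:eq variant1}) and (\ref{Fib:eq variant2}). Writing $a_l:=F_{3\cdot2^l}(s)$, $c_l:=F_{3\cdot2^l+1}(s)$ and putting $m=3\cdot2^l$ so that $3\cdot2^{l+1}=2m$, formula (\ref{Fib:eq variant2}) gives $c_{l+1}=F_{2m+1}(s)=c_l^2+a_l^2$, while (\ref{Fib:eq variant1}) together with $F_{m-1}(s)=F_{m+1}(s)-sF_m(s)=c_l-s\,a_l$ (from (\ref{eq:Fibonacci polynomial_recurrence relation})) yields
\[ a_{l+1}=F_{2m}(s)=a_l\big(c_l+F_{m-1}(s)\big)=2a_lc_l-s\,a_l^2. \]
The base case $l=1$ is a direct evaluation: since $s$ is odd, $s^2\equiv1\pmod 8$, and using $F_6$ from (\ref{Fibonacci_6}) and $F_7(s)=1+6s^2+5s^4+s^6$ one checks $a_1\equiv0$ and $c_1\equiv5=1+2^2\pmod 8$. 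For the step, assuming $a_l\equiv0\pmod{2^{l+2}}$ and $c_l\equiv1+2^{l+1}\pmod{2^{l+2}}$, the term $a_l^2$ is divisible by $2^{2l+4}$ and $2a_lc_l$ by $2^{l+3}$, so $a_{l+1}\equiv0\pmod{2^{l+3}}$; writing $c_l=1+2^{l+1}+2^{l+2}t$ and squaring gives $c_{l+1}=c_l^2+a_l^2\equiv1+2^{l+2}\pmod{2^{l+3}}$. This is routine $2$-adic bookkeeping.

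\emph{Part (2).} I would deduce this from (1) by induction. The elementary fact I use is that the period $\pi_l$ of $\{F_m(s)\bmod 2^l\}_m$ is the least $d\ge1$ with $F_d(s)\equiv0$ and $F_{d+1}(s)\equiv1\pmod{2^l}$ (two consecutive values determine the sequence both ways via (\ref{eq:Fibonacci polynomial_recurrence relation})), and that reduction modulo $2^l$ forces $\pi_l\mid\pi_{l+1}$. Part (1) at level $l$ shows $(F_{3\cdot2^l}(s),F_{3\cdot2^l+1}(s))\equiv(0,1)\pmod{2^{l+1}}$, so $3\cdot2^l$ is a period modulo $2^{l+1}$ and $\pi_{l+1}\mid3\cdot2^l$. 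Starting from the base value $\pi_1=3$ (read off from $0,1,1,0,1,1,\dots\bmod2$) and assuming $\pi_l=3\cdot2^{l-1}$, we get $\pi_{l+1}\in\{3\cdot2^{l-1},3\cdot2^l\}$; the congruence $F_{3\cdot2^{l-1}+1}(s)\equiv1+2^l\not\equiv1\pmod{2^{l+1}}$ from Part (1) (and a direct check $F_3(s)=1+s^2\equiv2\pmod4$ for the first step) shows $3\cdot2^{l-1}$ is not a period modulo $2^{l+1}$, forcing $\pi_{l+1}=3\cdot2^l$.

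\emph{Part (3).} If $l\le v:=\nu_2(s)$ then $s\equiv0\pmod{2^l}$, so $F_m(s)\equiv F_m(0)\pmod{2^l}$, and $F_m(0)=0,1,0,1,\dots$ by (\ref{eq:Fibonacci polynomial_recurrence relation}), giving period $2$. For $l>v$ I would identify the period with the multiplicative order of the companion matrix $M=\begin{pmatrix}s&1\\1&0\end{pmatrix}$ in $\mathrm{GL}_2(\Z/2^l\Z)$: since $M^n=F_n(s)M+F_{n-1}(s)I$ (induction from (\ref{eq:Fibonacci polynomial_recurrence relation})), the relation $M^d\equiv I$ is exactly the period condition, so $\pi_l=\mathrm{ord}(M)$. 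As $s$ is even, Cayley--Hamilton gives $M^2=sM+I=I+2^v(uM)$ with $u=s/2^v$ odd, a principal unit. A squaring lemma, $(I+2^{v+j}D)^2=I+2^{v+j+1}\big(D+2^{v+j-1}D^2\big)$, then shows by induction that $(M^2)^{2^j}-I$ is divisible by exactly $2^{v+j}$, the leading matrix remaining nonzero modulo $2$ at each step (the only borderline case being $v=1,\,j=0$, where $uM+(uM)^2\equiv\left(\begin{smallmatrix}1&1\\1&1\end{smallmatrix}\right)\not\equiv0\pmod2$). Since the principal congruence subgroup modulo $2^v$ is a $2$-group, this gives $\mathrm{ord}(M^2)=2^{l-v}$, and as $M\equiv\left(\begin{smallmatrix}0&1\\1&0\end{smallmatrix}\right)\pmod2$ has order $2$, I conclude $\mathrm{ord}(M)=2^{l+1-v}$.

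The main obstacle is precisely the case $l>v$ of Part (3). A bare valuation count (using $F_{2m}=F_m(F_{m+1}+F_{m-1})$ to get $\nu_2(F_{2^k}(s))=v+k-1$) yields $F_{2^{l+1-v}}(s)\equiv0\pmod{2^l}$, but it only forces $F_{2^{l+1-v}-1}(s)\equiv\pm1$ via the determinant relation and so cannot rule out a spurious shorter period. Recasting the period as $\mathrm{ord}(M)$ is what tracks both consecutive Fibonacci values at once and removes this ambiguity; within that argument the delicate point is simply verifying that the leading matrix stays a unit obstruction through every squaring, especially at the first squaring when $v=1$.
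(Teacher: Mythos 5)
Your Parts (1) and (2), and the case $l\le\nu_2(s)$ of Part (3), follow essentially the same route as the paper: the same induction via the doubling identities (\ref{Fib:eq variant1})--(\ref{Fib:eq variant2}), the same base cases, and the same squeeze of the period modulo $2^{l+1}$ between a multiple of $3\cdot2^{l-1}$ and a divisor of $3\cdot2^{l}$, with Part (1) used to rule out the shorter candidate. The genuine divergence is the case $l>\nu_2(s)$ of Part (3). The paper runs a second induction on $l$ there: the base level $l=\nu_2(s)+1$ is read off from the binomial formula (\ref{eq:Fibonacci polynomial_even&odd}) and the step again uses (\ref{Fib:eq variant1})--(\ref{Fib:eq variant2}). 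You instead identify the period with the order of the companion matrix $M=\left(\begin{smallmatrix}s&1\\1&0\end{smallmatrix}\right)$ in $\mathrm{GL}_2(\Z/2^l\Z)$ via $M^n=F_n(s)M+F_{n-1}(s)I$, and compute that order by repeated squaring of $M^2=I+2^{\nu_2(s)}uM$ inside the $2$-group kernel of reduction modulo $2$. Your argument is correct, and it buys a precision the paper's step leaves implicit: the paper's induction exhibits $2^{l+2-\nu_2(s)}$ as \emph{a} period modulo $2^{l+1}$ but does not separately exclude the shorter candidate $2^{l+1-\nu_2(s)}$ (the analogue of the exclusion it does perform in Parts (1)--(2), which here would require something like $\nu_2\big(F_{2^k}(s)\big)=\nu_2(s)+k-1$), whereas your order computation settles the exact period in one stroke. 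The only delicate point in your route is the first squaring when $\nu_2(s)=1$, which you correctly isolate and verify.
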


\begin{proof}
1. Let $s$ be an odd number. We use induction on positive integers $l$. Let $l=1$. Then,
\begin{align*}
&F_6(s)=3s+4s^3+s^5\equiv 3s+4s+s \equiv 0~(\mathrm{mod}~2^3)~\mathrm{and}\\
&F_7(s)=1+6s^2+5s^4+s^6\equiv 1+6+5+1\equiv 1+2^2~(\mathrm{mod}~2^3).
\end{align*}
Here, we used the fact that $s^2\equiv 1$ (mod $2^3$).

For the induction hypothesis, we assume that $F_{3\cdot 2^l}(s)\equiv 0$ and $F_{3\cdot 2^l+1}(s)\equiv 1+2^{l+1}$ (mod $2^{l+2}$) for an integer $l\geq 1$. By the recurrence relation (\ref{eq:Fibonacci polynomial_recurrence relation}), $F_{3\cdot 2^l-1}(s)\equiv 1+2^{l+1}$ (mod $2^{l+2}$). Then, by expression (\ref{Fib:eq variant1}),
\begin{align*}
F_{3\cdot 2^{l+1}}(s)&=F_{3\cdot 2^l}(s)\big(F_{3\cdot 2^l+1}(s)+F_{3\cdot 2^l-1}(s)\big)\\
&\equiv \big(0~(\mathrm{mod}~2^{l+2})\big)\big((1+2^{l+1}~(\mathrm{mod}~2^{l+2})) +(1+2^{l+1}~(\mathrm{mod}~2^{l+2}))\big)\\
&\equiv \big(0~(\mathrm{mod}~2^{l+2})\big)\big(2~(\mathrm{mod}~2^{l+2})\big)\\
&\equiv 0~(\mathrm{mod}~2^{l+3}).
\end{align*}
From expression (\ref{Fib:eq variant2}),
\begin{align*}
F_{3\cdot 2^{l+1}+1}(s)&=F_{3\cdot 2^{l}+1}(s)^2+F_{3\cdot 2^{l}}(s)^2\\
&\equiv \big(1+2^{l+1}~(\mathrm{mod}~2^{l+2})\big)^2+\big(0~(\mathrm{mod}~2^{l+2})\big)^2\\
&\equiv 1+2^{l+2}~(\mathrm{mod}~2^{l+3}),
\end{align*}
which completes the induction step.

2. Let $s$ be an odd number. We use induction on positive integers $l$. Let $l=1$. Then, the Fibonacci polynomial becomes that $F_0(s)=0,~F_1(s)=1$ and
\begin{align*}
F_m(s)=s\cdot F_{m-1}(s)+F_{m-2}(s)\equiv F_{m-1}(s)+F_{m-2}(s)~(\mathrm{mod}~2)
\end{align*}
for $m\geq 2$. Therefore, the sequence
\[ \{F_m(s)~(\mathrm{mod}~2)\}_m=\{0,1,1,0,1,1,0\dots\} \]
is periodic of period 3.

Let $l=2$. Similarly, the Fibonacci polynomial becomes that $F_0(s)=0,~F_1(s)=1$ and
\begin{align*}
F_m(s)\equiv s\cdot F_{m-1}(s)+F_{m-2}(s)~(\mathrm{mod}~2)
\end{align*}
for $m\geq 2$. Therefore, the sequence
\[ \{F_m(s)~(\mathrm{mod}~2^2)\}_m=\{0,1,s,2,2+s,1,0,1,s,2,2+s,1,0,\dots\} \]
is periodic of period 6.

For the induction hypothesis, we assume that the sequence $\{F_m(s)~(\mathrm{mod}~2^l)\}_m$ is periodic of period $3\cdot 2^{l-1}$ for an integer $l\geq2$. We know that $F_0(s)=0$ and $F_1(s)=1$. Since the period is $3\cdot 2^{l-1}$, we obtain
\[ F_{3\cdot 2^{l-1}}(s)\equiv 0~\mathrm{and}~F_{3\cdot 2^{l-1}+1}(s)\equiv 1~(\mathrm{mod}~2^l). \]
Further, by the recurrence relation (\ref{eq:Fibonacci polynomial_recurrence relation}), $F_{3\cdot 2^{l-1}-1}(s)\equiv 1~(\mathrm{mod}~2^l)$. Then, by expression (\ref{Fib:eq variant1}),
\begin{align*}
F_{3\cdot 2^l}(s)&=F_{3\cdot 2^{l-1}}(s)\big(F_{3\cdot 2^{l-1}+1}(s)+F_{3\cdot 2^{l-1}-1}(s)\big)\\
&\equiv \big(0~(\mathrm{mod}~2^l)\big)\big(1~(\mathrm{mod}~2^l)+1~(\mathrm{mod}~2^l)\big)\\
&\equiv 0~(\mathrm{mod}~2^{l+1})
\end{align*}
From expression (\ref{Fib:eq variant2}),
\begin{align*}
F_{3\cdot 2^{l}+1}(s)&=F_{3\cdot 2^{l-1}+1}(s)^2+F_{3\cdot 2^{l-1}}(s)^2\\
&\equiv \big(1~(\mathrm{mod}~2^l)\big)^2+\big(0~(\mathrm{mod}~2^l)\big)^2\\
&\equiv 1~(\mathrm{mod}~2^{l+1}).
\end{align*}
As a result, we have $F_0(s)\equiv 0,F_1(s)\equiv 1, F_{3\cdot 2^l}(s)\equiv 0$ and $F_{3\cdot 2^l+1}(s)\equiv 1$ (mod $2^{l+1}$). Therefore, by the recurrence relation (\ref{eq:Fibonacci polynomial_recurrence relation}), the sequence $\{F_m(s)~(\mathrm{mod}~2^{l+1})\}_m$ is periodic of period less than or equal to $3\cdot 2^l$. By the induction hypothesis, the period is a multiple of $3\cdot 2^{l-1}$. By property 1 of Proposition \ref{prop:Fib periodic}, the sequence $\{F_m(s)~(\mathrm{mod}~2^{l+1})\}_m$ is periodic of period $3\cdot 2^l$, which completes the induction step.

3. Assume that $s$ is an even integer, hence $\nu_2(s)\geq 1$. Suppose that $l\leq \nu_2(s)$. Then, $s^j\equiv 0~(\mathrm{mod}~2^l)$ for every positive integer $j$, hence $F_m(s)\equiv F_m(0)~(\mathrm{mod}~2^l)$. Since $F_m(0)=0$ for even numbers $m$ and $F_m(0)=1$ for odd numbers $m$, we obtain the sequence $\{F_m(s)~(\mathrm{mod}~2^{l})\}_m=\{0,1,0,1,\dots\}$ which is periodic of period 2.

Now, suppose that $l>\nu_2(s)$. We use induction on $l\geq \nu_2(s)+1$. First assume that $l=\nu_2(s)+1$. If $m=2k$, then, from the expression (\ref{eq:Fibonacci polynomial_even&odd}), $F_m(s)\equiv\tbinom{k}{1}s\equiv k\cdot 2^{\nu_2(s)}~(\mathrm{mod}~2^{\nu_2(s)+1})$.
Further, when $k$ is even, $F_m(s)\equiv 0~(\mathrm{mod}~2^{\nu_2(s)+1})$ and when $k$ is odd, $F_m(s)\equiv 2^{\nu_2(s)}~(\mathrm{mod}~2^{\nu_2(s)+1})$. If $m=2k+1$, then $F_m(s)\equiv\tbinom{k}{0}\equiv 1~(\mathrm{mod}~2^{\nu_2(s)+1})$. Altogether, we obtain the sequence
\[ \{F_m(s)~(\mathrm{mod}~2^{\nu_2(s)+1})\}_m=\{0,1,2^{\nu_2(s)},1,0,1,2^{\nu_2(s)},1,0,\dots\} \]
is periodic of period $2^2$, where the exponent 2 satisfies the expression $2=l+1-\nu_2(s)$. Therefore, the assertion holds for $l=\nu_2(s)+1$.

For the induction hypothesis, we assume that the assertion holds for some $l\geq \nu_2(s)+1$. Then, we have that
\[ F_{2^{l+1-\nu_2(s)}}(s)\equiv 0~~\mathrm{and}~~F_{2^{l+1-\nu_2(s)}+1}(s)\equiv 1~(\mathrm{mod}~2^l), \]
and by the recurrence relation (\ref{eq:Fibonacci polynomial_recurrence relation}),
\[ F_{2^{l+1-\nu_2(s)}-1}(s)\equiv 1~~\mathrm{and}~~F_{2^{l+1-\nu_2(s)}+2}(s)\equiv s~(\mathrm{mod}~2^l). \]
By expression (\ref{Fib:eq variant1}),
\begin{align*}
F_{2^{l+1-\nu_2(s)+1}}(s)&=F_{2^{l+1-\nu_2(s)}}(s)\big(F_{2^{l+1-\nu_2(s)}+1}(s)+F_{2^{l+1-\nu_2(s)}-1}(s)\big)\\
&\equiv \big(0~(\mathrm{mod}~2^l)\big)\big(1~(\mathrm{mod}~2^l)+1~(\mathrm{mod}~2^l)\big)\\
&\equiv 0~(\mathrm{mod}~2^{l+1}).
\end{align*}
From expression (\ref{Fib:eq variant2}),
\begin{align*}
F_{2^{l+1-\nu_2(s)+1}+1}(s)&=F_{2^{l+1-\nu_2(s)}+1}(s)^2+F_{2^{l+1-\nu_2(s)}}(s)^2\\
&=\big(1~(\mathrm{mod}~2^l)\big)^2+\big(0~(\mathrm{mod}~2^l)\big)^2\\
&\equiv 1~(\mathrm{mod}~2^{l+1}).
\end{align*}
Hence, we obtain $F_0(s)\equiv 0,~F_1(s)\equiv 1,~F_{2^{l+1-\nu_2(s)+1}}(s)\equiv 0$ and $F_{2^{l+1-\nu_2(s)+1}+1}(s)\equiv 1~(\mathrm{mod}~2^{l+1})$. Therefore, by the recurrence relation (\ref{eq:Fibonacci polynomial_recurrence relation}), the sequence
    $\{F_m(s)~(\mathrm{mod}~2^{l+1})\}_m$ is periodic of period
    $2^{l+1-\nu_2(s)+1}$, which completes the induction step.
\end{proof}

\begin{lemma}\label{lem:Fib 2valuation}
Let $F_m$ be the Fibonacci polynomial. For any odd number $s$, $\nu_2\big(F_{3\cdot 2^l}(s)\big)=l+2$ for every integer $l\geq 1$.
\end{lemma}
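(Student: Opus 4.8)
The key observation is that Proposition \ref{prop:Fib periodic}(1) already delivers the lower bound for free: since $F_{3\cdot 2^l}(s)\equiv 0 \pmod{2^{l+2}}$, we have $\nu_2\big(F_{3\cdot 2^l}(s)\big)\geq l+2$. So the entire task is to upgrade this congruence bound to an exact equality, i.e. to show that $F_{3\cdot 2^l}(s)$ is \emph{not} divisible by $2^{l+3}$. I would prove the exact value $\nu_2\big(F_{3\cdot 2^l}(s)\big)=l+2$ by induction on $l$, with the doubling formula (\ref{Fib:eq variant1}) and the multiplicativity of the valuation (Proposition on $\nu_p$, part 1) doing the work in the inductive step.

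For the base case $l=1$, I would compute directly with the explicit polynomial in (\ref{Fibonacci_6}): factoring $F_6(x)=3x+4x^3+x^5=x(x^4+4x^2+3)=x(x^2+1)(x^2+3)$, and using that $s^2\equiv 1\pmod 8$ for odd $s$, one gets $\nu_2(s)=0$, $\nu_2(s^2+1)=1$ (since $s^2+1\equiv 2\pmod 8$) and $\nu_2(s^2+3)=2$ (since $s^2+3\equiv 4\pmod 8$). Summing the valuations gives $\nu_2\big(F_6(s)\big)=0+1+2=3=l+2$ when $l=1$, as required.

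For the inductive step, I would apply (\ref{Fib:eq variant1}) with $m=3\cdot 2^{l}$ to write
\[
F_{3\cdot 2^{l+1}}(s)=F_{3\cdot 2^{l}}(s)\big(F_{3\cdot 2^{l}+1}(s)+F_{3\cdot 2^{l}-1}(s)\big).
\]
By the multiplicativity of $\nu_2$, it then suffices to show that the cofactor $F_{3\cdot 2^{l}+1}(s)+F_{3\cdot 2^{l}-1}(s)$ has valuation exactly $1$; combined with the induction hypothesis $\nu_2\big(F_{3\cdot 2^{l}}(s)\big)=l+2$, this yields $\nu_2\big(F_{3\cdot 2^{l+1}}(s)\big)=(l+2)+1=(l+1)+2$. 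To control the cofactor I would use Proposition \ref{prop:Fib periodic}(1) again: $F_{3\cdot 2^{l}+1}(s)\equiv 1+2^{l+1}\pmod{2^{l+2}}$, and since $F_{3\cdot 2^l}(s)\equiv 0$, the recurrence (\ref{eq:Fibonacci polynomial_recurrence relation}) gives $F_{3\cdot 2^{l}-1}(s)\equiv F_{3\cdot 2^{l}+1}(s)\equiv 1+2^{l+1}\pmod{2^{l+2}}$. Reducing modulo $4$ (valid since $l+2\geq 3$) and using $l\geq 1$ so that $2^{l+1}\equiv 0\pmod 4$, both terms are $\equiv 1\pmod 4$, whence their sum is $\equiv 2\pmod 4$ and has valuation exactly $1$.

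The main subtlety — the only place where a genuinely new argument beyond Proposition \ref{prop:Fib periodic} is needed — is precisely pinning down the cofactor modulo $4$ rather than merely modulo $2$; a bound of $\nu_2\geq 1$ would only reprove the lower bound, so the sharpness hinges on verifying that the $2^{l+1}$ correction term vanishes mod $4$ for all $l\geq 1$, giving sum $\equiv 2$ and not $\equiv 0 \pmod 4$. Everything else is routine bookkeeping, and the induction closes cleanly once the base case and this exact-valuation-$1$ claim for the cofactor are established.
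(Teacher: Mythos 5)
Your proposal is correct and follows essentially the same route as the paper: induction on $l$, a direct factorization of $F_6(s)$ for the base case (your $s(s^2+1)(s^2+3)$ is the same factorization the paper obtains by substituting $s=1+2t$), and in the inductive step the doubling formula (\ref{Fib:eq variant1}) combined with the observation that the cofactor $F_{3\cdot 2^{l}+1}(s)+F_{3\cdot 2^{l}-1}(s)$ has $2$-adic valuation exactly $1$. The paper pins down the cofactor modulo $2^{l+1}$ (where both summands are $\equiv 1$) rather than modulo $4$ as you do, but the two reductions are equivalent and the argument closes identically.
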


\begin{proof}
Let $s$ be an odd number. We use induction on $l$. Write $s=1+2t$ for some integer $t$, then we obtain, from  expression (\ref{Fibonacci_6}),
\[ F_{3\cdot 2}(1+2t)=8(1+2t)\big(1+t(t+1)\big)\big(1+2t(t+1)\big). \]
Therefore, we obtain $\nu_2\big(F_{3\cdot 2}(s)\big)=3$, hence the assertion holds true for $l=1$. Now assume that the assertion holds for some $l\geq 1$. By property 1 of Proposition \ref{prop:Fib periodic}, we obtain for an odd number $s$,
\begin{align*}
F_{3\cdot 2^l}(s)\equiv F_{0}(s)\equiv 0~\mathrm{and}~F_{3\cdot 2^l+1}(s)\equiv F_{1}(s)\equiv 1~(\mathrm{mod}~2^{l+1}).
\end{align*}
Further, by the recurrence relation (\ref{eq:Fibonacci polynomial_recurrence relation}), $F_{3\cdot 2^l-1}(s)\equiv 1~(\mathrm{mod}~2^{l+1})$. As a result, we have that $\nu_2\big(F_{3\cdot 2^l+1}(s)+F_{3\cdot 2^l-1}(s)\big)=1$. From expression (\ref{Fib:eq variant1}), we have that
\begin{align*}
\nu_2\big(F_{3\cdot 2^{l+1}}(s)\big)&=\nu_2(F_{3\cdot 2^l}(s))+\nu_2\big(F_{3\cdot 2^l+1}(s)+F_{3\cdot 2^l-1}(s)\big)\\
&=(l+2)+1=l+3.
\end{align*}
Hence, the assertion holds true for $l+1$; therefore, the proof follows by induction.
\end{proof}

\begin{lemma}\label{lem:Fib input i}
Let $F_m$ be the Fibonacci polynomial. The sequence $\{\frac{F_m(i)}{i}\}_m$ is periodic of period 12 where $i=\sqrt{-1}$.
\end{lemma}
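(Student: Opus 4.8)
The plan is to substitute $x=i$ directly into the defining recurrence and reduce the claim to a finite computation. Setting $x=i$ in (\ref{eq:Fibonacci polynomial_recurrence relation}) gives
\[ F_m(i)=i\,F_{m-1}(i)+F_{m-2}(i),\qquad F_0(i)=0,\ F_1(i)=1, \]
so $\{F_m(i)\}_m$ is a sequence in $\Z[i]$ governed by a homogeneous two-term linear recurrence. Dividing through by $i$ shows that $f_m:=F_m(i)/i$ obeys the same recurrence $f_m=i\,f_{m-1}+f_{m-2}$ with $f_0=0$ and $f_1=-i$. Since multiplication by $1/i$ is a bijection of $\Z[i]$, the condition $f_{m+d}=f_m$ for all $m$ is equivalent to $F_{m+d}(i)=F_m(i)$ for all $m$; hence $\{F_m(i)\}_m$ and $\{f_m\}_m$ have exactly the same set of periods, and in particular the same least period. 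It therefore suffices to determine the least period of $\{F_m(i)\}_m$.

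First I would record the general principle that, for this two-term recurrence, a single pair of consecutive equalities $F_d(i)=F_0(i)$ and $F_{d+1}(i)=F_1(i)$ already forces $F_{m+d}(i)=F_m(i)$ for all $m\ge 0$: the recurrence propagates the coincidence upward, and its rearrangement $F_{m-2}(i)=F_m(i)-i\,F_{m-1}(i)$ propagates it downward. Thus a candidate period is detected by one matching pair of consecutive values, and I need not verify infinitely many equalities by hand.

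Next I would simply compute $F_0(i),F_1(i),\dots,F_{13}(i)$ from the recurrence, obtaining the list
\[ 0,\ 1,\ i,\ 0,\ i,\ -1,\ 0,\ -1,\ -i,\ 0,\ -i,\ 1,\ 0,\ 1, \]
and read off that $F_{12}(i)=0=F_0(i)$ and $F_{13}(i)=1=F_1(i)$; by the principle above, $12$ is a period. (The same values also drop out of the product formulas (\ref{Fib:eq variant1}) and (\ref{Fib:eq variant2}) if one prefers to avoid listing all terms, but the direct recursion is shortest.) To confirm that $12$ is the \emph{least} period in the sense of the definition, I would run through the proper divisors $d\in\{1,2,3,4,6\}$ of $12$ and check from the list that the matching pair $(F_d(i),F_{d+1}(i))=(0,1)$ never occurs: indeed $F_1(i)=1\neq 0$, $F_2(i)=F_4(i)=i\neq 0$, while $F_3(i)=F_6(i)=0$ but $F_4(i)=i\neq 1$ and $F_7(i)=-1\neq 1$. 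Hence the least period of $\{F_m(i)\}_m$, and therefore of $\{F_m(i)/i\}_m$, is exactly $12$.

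There is no genuine analytic obstacle here; the only points requiring care are the bookkeeping ones I have isolated: justifying that one coincidence of consecutive values yields full periodicity, ruling out every proper divisor of $12$ so as to obtain minimality rather than mere periodicity, and checking that passing from $\{F_m(i)\}_m$ to $\{F_m(i)/i\}_m$ preserves the least period. As a conceptual sanity check one may note that the characteristic roots of $t^2-it-1=0$ are $e^{i\pi/6}$ and $e^{5i\pi/6}$, both primitive $12$th roots of unity, which explains the value $12$ a priori; this route is unnecessary, however, given the short direct computation.
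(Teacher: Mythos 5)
Your proposal is correct and follows essentially the same route as the paper, which simply declares the lemma ``obvious by induction'' and lists the first twelve values of $F_m(i)/i$; your computed list $0,1,i,0,i,-1,0,-1,-i,0,-i,1,\dots$ agrees with the paper's list after division by $i$. The only additions are the (correct) bookkeeping points you isolate explicitly --- that one coincidence of consecutive values propagates through the two-term recurrence, that dividing by $i$ preserves the least period, and that minimality follows from ruling out the proper divisors of $12$ --- which the paper leaves implicit.
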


\begin{proof}
It is obvious by induction.
\end{proof}
The first 12 entries of the sequence are $\{0,-i,1,0,1,i,0,i,-1,0,-1,-i\}$.

By differentiating the relation (\ref{eq:Fibonacci polynomial_recurrence relation}), the derivative of $F_m(x)$ is given by
\begin{equation}\label{eq:Fibonacci polynomial_recurrence relation_2}
F_m'(x)=F_{m-1}(x)+x F_{m-1}'(x)+ F_{m-2}'(x), \ \text{with}\ F_0'(x)=0 \ \text{and}\  F_1'(x)=0.
\end{equation}

By differentiating the above relation (\ref{eq:Fibonacci polynomial_recurrence relation_2}), the second derivative of $F_m(x)$ is given by
\begin{equation}\label{eq:Fibonacci polynomial_recurrence relation_3}
F_m''(x)=2 F_{m-1}'(x)+x F_{m-1}''(x)+ F_{m-2}''(x), \ \text{with}\ F_0''(x)=0 \ \text{and}\ F_1''(x)=0.
\end{equation}

By differentiating the above relation (\ref{eq:Fibonacci polynomial_recurrence relation_3}), the third derivative of $F_m(x)$ is given by
\begin{equation}\label{eq:Fibonacci polynomial_recurrence relation_4}
F_m'''(x)=3 F_{m-1}''(x)+x F_{m-1}'''(x)+ F_{m-2}'''(x), \ \text{with}\ F_0'''(x)=0 \ \text{and}\ F_1'''(x)=0.
\end{equation}

\begin{lemma}\label{lem:Fib a_n}
Let $F_m$ be the Fibonacci polynomial. For any odd numbers $s,~s_1$ and $s_2$,
\begin{enumerate}
\item $F_{3\cdot 2^{l-1}}'(s)\equiv 0$ and $F_{3\cdot 2^{l-1}+1}'(s)\equiv 2^{l-1}$ (mod $2^l$) for any positive integer $l$,

\item the sequence $\{F_m'(s)~(\mathrm{mod}~2^l)\}_m$ is periodic of period $3\cdot 2^l$ for any positive integer $l$, and

\item the sequence $\{F_m'(s_1)\cdot F_m'(s_2)~(\mathrm{mod}~4)\}_m$ is periodic of period 6.
\end{enumerate}
\end{lemma}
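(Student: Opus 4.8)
The plan is to prove the three assertions in order, the engine throughout being the differentiated doubling identities. Differentiating (\ref{Fib:eq variant1}) and (\ref{Fib:eq variant2}) in $x$ gives
\[
F_{2m}'=F_m'\,(F_{m+1}+F_{m-1})+F_m\,(F_{m+1}'+F_{m-1}')
\quad\text{and}\quad
F_{2m+1}'=2F_{m+1}F_{m+1}'+2F_mF_m',
\]
which pass information from index $3\cdot 2^{l-1}$ to index $3\cdot 2^{l}$ exactly as in the proof of Proposition \ref{prop:Fib periodic}. For part 1 I would induct on $l$, checking $l=1,2$ directly (using $F_3'=2x$, $F_4'=2+3x^2$, $F_6'$, $F_7'$ and $s^2\equiv 1\pmod 8$). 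For the step $l\to l+1$ with $l\ge 2$, put $n=3\cdot2^{l-1}$; Proposition \ref{prop:Fib periodic}(1) gives $F_n(s)\equiv 0$ and $F_{n+1}(s)\equiv 1+2^{l}\pmod{2^{l+1}}$, and (\ref{eq:Fibonacci polynomial_recurrence relation}) gives $F_{n-1}(s)\equiv 1+2^{l}$, whence $F_{n+1}(s)+F_{n-1}(s)\equiv 2\pmod{2^{l+1}}$. Substituting the induction hypothesis $F_n'(s)\equiv 0$, $F_{n+1}'(s)\equiv 2^{l-1}\pmod{2^{l}}$ into the identities, the first gives $F_{3\cdot2^{l}}'(s)\equiv 0\pmod{2^{l+1}}$ (its first summand is a multiple of $2^{l}$ times an even factor, its second a multiple of $2^{l+1}$), and the second gives $F_{3\cdot2^{l}+1}'(s)\equiv 2F_{n+1}F_{n+1}'\equiv 2\cdot 1\cdot 2^{l-1}=2^{l}\pmod{2^{l+1}}$ (the remaining term $2F_nF_n'$ being divisible by $2^{l+1}$), which is the assertion at level $l+1$.

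For part 2 I would prove that the least period $d_l$ of $\{F_m'(s)\bmod 2^{l}\}$ equals $3\cdot 2^{l}$ by induction, the base case $l=1$ being a six-term computation. The pair $\bigl(F_m(s),F_m'(s)\bigr)$ evolves under the coupled recurrences (\ref{eq:Fibonacci polynomial_recurrence relation}) and (\ref{eq:Fibonacci polynomial_recurrence relation_2}), whose one-step map on the state $\bigl(F_m,F_{m+1},F_m',F_{m+1}'\bigr)$ is a fixed bijection mod $2^{l}$. Since Proposition \ref{prop:Fib periodic}(1) gives $F_{3\cdot 2^{l}}(s)\equiv 0\equiv F_0(s)$, $F_{3\cdot 2^{l}+1}(s)\equiv 1\equiv F_1(s)$, while part 1 (at level $l+1$) gives $F_{3\cdot 2^{l}}'(s)\equiv 0\equiv F_0'(s)$ and $F_{3\cdot 2^{l}+1}'(s)\equiv 2^{l}\equiv 0\equiv F_1'(s)\pmod{2^{l}}$, the state at index $3\cdot 2^{l}$ coincides with the state at index $0$, so propagating forward yields $d_l\mid 3\cdot 2^{l}$. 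Reducing mod $2^{l-1}$ shows $d_{l-1}\mid d_l$, and by the induction hypothesis $3\cdot 2^{l-1}\mid d_l$; finally part 1 gives $F_{3\cdot 2^{l-1}+1}'(s)\equiv 2^{l-1}\not\equiv 0=F_1'(s)\pmod{2^{l}}$, which rules out $d_l=3\cdot 2^{l-1}$ and forces $d_l=3\cdot 2^{l}$.

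For part 3 I would first extract two facts from the explicit form (\ref{eq:Fibonacci polynomial_even&odd}): for odd $m$ the polynomial $F_m$ is even, so $F_m'=\sum_j 2j\,c_j x^{2j-1}$ has only even coefficients and $F_m'(s)$ is even; and $F_m'(-x)=(-1)^m F_m'(x)$ for every $m$. Because $F_m'$ has integer coefficients, $F_m'(s)\bmod 4$ depends only on $s\bmod 4\in\{1,3\}$, and from $3\equiv -1$ one gets $F_m'(s_2)\equiv(-1)^m F_m'(s_1)\pmod 4$ when $s_1\equiv 1,\ s_2\equiv 3$; combined with the evenness of $F_m'(s)$ for odd $m$ (which kills the sign mod $4$), this yields $F_m'(s_1)\equiv F_m'(s_2)\pmod 4$ for all odd $s_1,s_2$. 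Hence $F_m'(s_1)F_m'(s_2)\equiv F_m'(s_1)^2\pmod 4$, which is $1$ if $F_m'(s_1)$ is odd and $0$ if it is even. By part 2 with $l=1$ the parity sequence $\{F_m'(s)\bmod 2\}$ has least period $6$, so the product sequence has least period $6$ as well.

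The main obstacle is the inductive step of part 1: the derivative must be pinned down to one $2$-adic digit finer than the Fibonacci values themselves, and both summands of the identity for $F_{2n}'$ are only borderline divisible by $2^{l+1}$, so the valuation bookkeeping — weighing $F_n'\equiv 0\pmod{2^{l}}$ against the even factor $F_{n+1}+F_{n-1}\equiv 2$, and using $F_n\equiv 0\pmod{2^{l+1}}$ — is precisely what produces, and could easily spoil, the exponent $2^{l-1}$ in the statement. Parts 2 and 3 are then comparatively routine consequences of part 1 and the explicit forms.
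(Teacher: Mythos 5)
Your proposal is correct. For parts 1 and 2 it follows essentially the same route as the paper: induction on $l$ via the differentiated identities $F_{2m}'=F_m'(F_{m+1}+F_{m-1})+F_m(F_{m+1}'+F_{m-1}')$ and $F_{2m+1}'=2F_{m+1}F_{m+1}'+2F_mF_m'$, feeding in Proposition \ref{prop:Fib periodic}, and then the standard sandwich argument (period divides $3\cdot 2^{l}$ by matching the state $(F_m,F_{m+1},F_m',F_{m+1}')$ at index $0$ and $3\cdot 2^{l}$; period is a multiple of the period one level down; part 1 excludes the smaller candidate). The only cosmetic difference there is that you invoke Proposition \ref{prop:Fib periodic}(1) modulo $2^{l+1}$, which costs you the extra base case $l=2$, whereas the paper gets by with the coarser congruences from Proposition \ref{prop:Fib periodic}(2).

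Part 3 is where you genuinely diverge: the paper simply lists the period-$12$ sequence $\{F_m'(s)\bmod 4\}_m=\{0,0,1,2,1,2,0,2,3,2,3,0,\dots\}$ (implicitly using $s^2\equiv 1\pmod 8$ to see it is independent of the odd $s$) and multiplies it with itself. You instead prove $F_m'(s_1)\equiv F_m'(s_2)\pmod 4$ structurally, from the parity of $F_m'(s)$ for odd $m$ and the symmetry $F_m'(-x)=(-1)^mF_m'(x)$, and then reduce the product to the indicator of the parity sequence, whose least period $6$ comes from part 2 with $l=1$. Your version explains why the answer does not depend on $s_1,s_2$ and avoids the explicit table; the paper's version is shorter but leans on an unjustified-looking uniformity of the mod-$4$ sequence. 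Both arguments are sound.
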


\begin{proof}
1. Let $s$ be an odd number. We use induction on positive integers $l$. Let $l=1$. Then,
\begin{align*}
F_3'(s)=2s\equiv 0~\mathrm{and}~F_4'(s)=2+3s^2\equiv 1~(\mathrm{mod}~2).
\end{align*}

For the induction hypothesis, we assume that $F_{3\cdot 2^{l-1}}'(s)\equiv 0$ and $F_{3\cdot 2^{l-1}+1}'(s)\equiv 2^{l-1}$ (mod $2^l$) for any integer $l\geq 1$. By the recurrence relations (\ref{eq:Fibonacci polynomial_recurrence relation}), (\ref{eq:Fibonacci polynomial_recurrence relation_2}) and Proposition \ref{prop:Fib periodic}, we have $F_{3\cdot 2^{l-1}}(s)\equiv 0$, $F_{3\cdot 2^{l-1}+1}(s)\equiv 1$, $F_{3\cdot 2^{l-1}-1}(s)\equiv 1$ and $F_{3\cdot 2^{l-1}-1}'(s)\equiv 2^{l-1}$ (mod $2^{l}$). Then, by differentiating expression (\ref{Fib:eq variant1}) at $s$, we obtain
\begin{align*}
F_{3\cdot 2^{l}}'(s)&=F_{3\cdot 2^{l-1}}'(s)\big(F_{3\cdot 2^{l-1}+1}(s)+F_{3\cdot 2^{l-1}-1}(s)\big)\\
&\quad +F_{3\cdot 2^{l-1}}(s)\big(F_{3\cdot 2^{l-1}+1}'(s)+F_{3\cdot 2^{l-1}-1}'(s)\big)\\
&\equiv \big(0 (\mathrm{mod}~2^l)\big)\big(1 (\mathrm{mod}~2^l)+1 (\mathrm{mod}~2^l)\big)\\
&\quad +\big(0 (\mathrm{mod}~2^l)\big)\big(2^{l-1} (\mathrm{mod}~2^l)+2^{l-1} (\mathrm{mod}~2^l)\big)\\
&\equiv \big(0 (\mathrm{mod}~2^l)\big)\big(2 (\mathrm{mod}~2^l)\big)+\big(0 (\mathrm{mod}~2^l)\big)\big(0 (\mathrm{mod}~2^l)\big)\\
&\equiv 0~(\mathrm{mod}~2^{l+1}).
\end{align*}
By differentiating expression (\ref{Fib:eq variant2}) at $s$, we obtain
\begin{align*}
F_{3\cdot 2^{l}+1}'(s)&=2 F_{3\cdot 2^{l-1}+1}(s) F_{3\cdot 2^{l-1}+1}'(s)+2 F_{3\cdot 2^{l-1}}(s) F_{3\cdot 2^{l-1}}'(s)\\
&\equiv 2\big(1 (\mathrm{mod}~2^l)\big)\big(2^{l-1} (\mathrm{mod}~2^l)\big)+2\big(0 (\mathrm{mod}~2^l)\big)\big(0 (\mathrm{mod}~2^l)\big)\\
&\equiv 2^l~(\mathrm{mod}~2^{l+1}),
\end{align*}
which completes the induction step.

2. Let $s$ be an odd number. We use induction on positive integers $l$. Let $l=1$. Then, the derivative of the Fibonacci polynomial at $s$ becomes that $F_0'(s)=0$, $F_1'(s)=0$ and
\begin{align*}
F_m'(s)&=F_{m-1}(s)+s F_{m-1}'(s)+F_{m-2}'(s)\\
&\equiv F_{m-1}(s)+F_{m-1}'(s)+F_{m-2}'(s)~(\mathrm{mod}~2)
\end{align*}
for $m\geq 2$. Since the sequence $\{F_m(s)~(\mathrm{mod}~2)\}_m$ is periodic of period 3 by Proposition \ref{prop:Fib periodic}, the sequence
\[ \{F_m'(s)~(\mathrm{mod}~2)\}_m=\{0,0,1,0,1,0,0,0,1,0,1,0,0\dots\} \]
is periodic of period 6.

For the induction hypothesis, we assume that the sequence $\{F_m'(s)~(\mathrm{mod}~2^l)\}_m$ is periodic of period $3\cdot 2^l$ for an integer $l\geq1$. We know that $F_0'(s)=0$, $F_1'(s)=0$. By property 1 of Lemma \ref{lem:Fib a_n}, we have $F_{3\cdot 2^{l+1}}'(s)\equiv 0$ and $F_{3\cdot 2^{l+1}+1}'(s)\equiv 2^{l+1}$~(mod $2^{l+2}$), which implies that $F_{3\cdot 2^{l+1}}'(s)\equiv 0$ and $F_{3\cdot 2^{l+1}+1}'(s)\equiv 0$~(mod $2^{l+1}$). By Proposition \ref{prop:Fib periodic}, we have $F_1(s)\equiv 1$ and $F_{3\cdot 2^{l+1}+1}(s)\equiv 1$ (mod $2^{l+1}$).

Therefore, we have that $F_0'(s)=0$, $F_1'(s)=0$, $F_1(s)=1$, $F_{3\cdot 2^{l+1}}'(s)\equiv 0$, $F_{3\cdot 2^{l+1}+1}'(s)\equiv 0$ and $F_{3\cdot 2^{l+1}+1}(s)\equiv 1$ (mod $2^{l+1}$). Therefore, by the recurrence relation (\ref{eq:Fibonacci polynomial_recurrence relation_2}), the sequence $\{F_m'(s)~(\mathrm{mod}~2^{l+1})\}_m$ is periodic of period less than or equal to $3\cdot 2^{l+1}$. By induction hypothesis, the period is a multiple of $3\cdot 2^{l}$. By property 1 of Lemma \ref{lem:Fib a_n}, the sequence $\{F_m'(s)~(\mathrm{mod}~2^{l+1})\}_m$ is periodic of period $3\cdot 2^{l+1}$, which completes the induction step.

3. From property 2 of  Lemma \ref{lem:Fib a_n}, the sequence
\[ \{F_m'(s)~(\mathrm{mod}~4)\}_m=\{0,0,1,2,1,2,0,2,3,2,3,0,0,\dots\} \]
is periodic of period 12. So, the sequence
\[ \{F_m'(s_1)\cdot F_m'(s_2)~(\mathrm{mod}~4)\}_m=\{0,0,1,0,1,0,0,0,1,0,1,0,0,\dots\} \]
is periodic of period 6.
\end{proof}

\begin{lemma}\label{lem:Fib second derivative}
Let $F_m$ be the Fibonacci polynomial. For any odd number $s$,
\begin{enumerate}
\item $F_{3\cdot 2^{l}}''(s)\equiv 0$ and $F_{3\cdot 2^{l}+1}''(s)\equiv 2^{l}$ (mod $2^{l+1}$) for any positive integer $l$, and

\item the sequence $\{F_m''(s)~(\mathrm{mod}~2^{l+1})\}_m$ is periodic of period $3\cdot 2^{l+1}$ for any positive integer $l$.
\end{enumerate}
\end{lemma}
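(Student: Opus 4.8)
The plan is to prove both parts by induction on $l$, establishing part 1 first since part 2 will invoke it. Throughout I fix an odd $s$ and write $M=3\cdot 2^{l}$.

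For part 1, the base case $l=1$ is a direct computation: differentiating $F_6$ and $F_7$ twice and reducing mod $4$ (using $s^2\equiv 1\pmod 8$) gives $F_6''(s)\equiv 0$ and $F_7''(s)\equiv 2\pmod 4$, matching $2^l=2$. For the inductive step I would differentiate the doubling identities (\ref{Fib:eq variant1}) and (\ref{Fib:eq variant2}) twice, obtaining
\[ F_{2M}''=F_M''(F_{M+1}+F_{M-1})+2F_M'(F_{M+1}'+F_{M-1}')+F_M(F_{M+1}''+F_{M-1}'') \]
and
\[ F_{2M+1}''=2(F_{M+1}')^2+2F_{M+1}F_{M+1}''+2(F_M')^2+2F_MF_M''. \]
Evaluating at $s$ and tracking $2$-adic valuations term by term is the crux. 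The inputs are: $\nu_2(F_M(s))=l+2$ from Lemma \ref{lem:Fib 2valuation}; $F_{M\pm1}(s)\equiv 1+2^{l+1}\pmod{2^{l+2}}$ (so both are odd) from Proposition \ref{prop:Fib periodic}; $F_M'(s)\equiv 0$ and $F_{M+1}'(s)\equiv 2^l\pmod{2^{l+1}}$ from Lemma \ref{lem:Fib a_n}(1) with index $l+1$; and the induction hypotheses $F_M''(s)\equiv 0$, $F_{M+1}''(s)\equiv 2^l\pmod{2^{l+1}}$. With these, each term of $F_{2M}''(s)$ has valuation $\geq l+2$ (the first since $F_M''$ is divisible by $2^{l+1}$ and $F_{M+1}+F_{M-1}$ is even, the second since $2F_M'$ is divisible by $2^{l+2}$, the third since $F_M$ is divisible by $2^{l+2}$), so $F_{3\cdot 2^{l+1}}''(s)\equiv 0\pmod{2^{l+2}}$. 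In $F_{2M+1}''(s)$, the terms $2(F_{M+1}')^2$, $2(F_M')^2$, $2F_MF_M''$ all have valuation $\geq l+2$, leaving only $2F_{M+1}(s)F_{M+1}''(s)=2\cdot(\text{odd})\cdot 2^l\cdot(\text{odd})\equiv 2^{l+1}\pmod{2^{l+2}}$, which closes the induction.

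For part 2, I would again induct on $l$, now freely using part 1 (proved for all $l$). The base case $l=1$ is the explicit period-$12$ sequence $\{F_m''(s)\pmod 4\}=\{0,0,0,2,2,2,0,2,2,2,0,0,\dots\}$, read off from the recurrence (\ref{eq:Fibonacci polynomial_recurrence relation_3}) together with the period-$6$ sequence $\{F_m'(s)\pmod 2\}$ from Lemma \ref{lem:Fib a_n}(2). The main structural point in the inductive step is that (\ref{eq:Fibonacci polynomial_recurrence relation_3}) is \emph{not} autonomous: it carries the forcing term $2F_{m-1}'(s)$. Since that term depends only on $F_{m-1}'(s)\pmod{2^{l+1}}$, which by Lemma \ref{lem:Fib a_n}(2) is periodic of period $3\cdot 2^{l+1}$ dividing the target period $P=3\cdot 2^{l+2}$, the forcing is $P$-periodic. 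Setting $G_m=F_{m+P}''(s)-F_m''(s)$, the forcing cancels and $G_m\equiv sG_{m-1}+G_{m-2}\pmod{2^{l+2}}$; part 1 at level $l+2$ gives $G_0\equiv G_1\equiv 0$, whence $G_m\equiv 0$ for all $m$ and the period divides $P$. The induction hypothesis forces the period to be a multiple of $3\cdot 2^{l+1}$, and part 1 at level $l+1$ gives $F_{3\cdot 2^{l+1}+1}''(s)\equiv 2^{l+1}\not\equiv 0\equiv F_1''(s)\pmod{2^{l+2}}$, ruling out the smaller candidate and pinning the period at $3\cdot 2^{l+2}$.

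The main obstacle I anticipate is the valuation bookkeeping in the inductive step of part 1: I must verify that in $F_{2M+1}''(s)$ all terms but one vanish modulo $2^{l+2}$ and that the survivor is exactly $2^{l+1}$, which hinges on the sharp facts that $\nu_2(F_{M+1}''(s))=l$ exactly and that $F_{M+1}(s)$ is odd. The non-autonomous recurrence in part 2 is the secondary subtlety, resolved by noting that the factor $2$ in the forcing term lowers the required precision on $F_m'$ by one power of $2$, so that the already-established period of the first derivative suffices.
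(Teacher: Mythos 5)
Your proposal is correct and follows essentially the same route as the paper: part 1 by induction via the twice-differentiated doubling identities (\ref{Fib:eq variant1}) and (\ref{Fib:eq variant2}) with the same valuation bookkeeping, and part 2 by induction using the recurrence (\ref{eq:Fibonacci polynomial_recurrence relation_3}) together with the periodicity of $\{F_m'(s)\}$ and part 1 to pin the period at exactly $3\cdot 2^{l+1}$. Your difference-sequence formalization $G_m=F_{m+P}''(s)-F_m''(s)$ of the non-autonomous periodicity step is a cleaner write-up of what the paper asserts more loosely, but it is the same argument.
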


\begin{proof}
1. Let $s$ be an odd number. We use induction on positive integers $l$. Let $l=1$. Then,
\begin{align*}
F_6''(s)=24s + 20s^3\equiv 0~\mathrm{and}~F_7''(s)=12+60s^2+30s^4\equiv 2s^4\equiv 2~(\mathrm{mod}~4).
\end{align*}

For the induction hypothesis, we assume that $F_{3\cdot 2^{l}}''(s)\equiv 0$ and $F_{3\cdot 2^{l}+1}''(s)\equiv 2^{l}$ (mod $2^{l+1}$) for any integer $l\geq 1$. By the recurrence relation (\ref{eq:Fibonacci polynomial_recurrence relation}) and Proposition \ref{prop:Fib periodic}, we have $F_{3\cdot 2^{l}}(s)\equiv 0$, $F_{3\cdot 2^{l}+1}(s)\equiv 1+2^{l+1}$ and $F_{3\cdot 2^{l}-1}(s)\equiv 1+2^{l+1}$ (mod $2^{l+2}$). By the recurrence relation (\ref{eq:Fibonacci polynomial_recurrence relation_2}) and Lemma \ref{lem:Fib a_n}, we have $F_{3\cdot 2^{l}}'(s)\equiv 0$, $F_{3\cdot 2^{l}+1}'(s)\equiv 2^{l}$ and $F_{3\cdot 2^{l}-1}'(s)\equiv 2^{l}$ (mod $2^{l+1}$). By the recurrence relation (\ref{eq:Fibonacci polynomial_recurrence relation_3}), we have $F_{3\cdot 2^{l}-1}''(s)\equiv 2^{l}$ (mod $2^{l+1}$). Then, by differentiating the expression (\ref{Fib:eq variant1}) twice at $s$, we have
\begin{align*}
F_{3\cdot 2^{l+1}}''(s)&=F_{3\cdot 2^{l}}''(s)\big(F_{3\cdot 2^{l}+1}(s)+F_{3\cdot 2^{l}-1}(s)\big)+F_{3\cdot 2^{l}}'(s)\big(F_{3\cdot 2^{l}+1}'(s)+F_{3\cdot 2^{l}-1}'(s)\big)\\
&\quad +F_{3\cdot 2^{l}}(s)\big(F_{3\cdot 2^{l}+1}''(s)+F_{3\cdot 2^{l}-1}''(s)\big)\\
&\equiv \big(0~(\mathrm{mod}~2^{l+1})\big)\big((1+2^{l+1}~(\mathrm{mod}~2^{l+2})) +(1+2^{l+1}~(\mathrm{mod}~2^{l+2}))\big)\\
&\quad +2\big(0~(\mathrm{mod}~2^{l+1})\big)\big(2^{l}~(\mathrm{mod}~2^{l+1})+2^{l}~(\mathrm{mod}~2^{l+1})\big)\\
&\quad +\big(0~(\mathrm{mod}~2^{l+2})\big)\big(2^{l}~(\mathrm{mod}~2^{l+1})+2^{l}~(\mathrm{mod}~2^{l+1})\big)\\
&\equiv \big(0~(\mathrm{mod}~2^{l+1})\big)\big(2~(\mathrm{mod}~2^{l+2})\big)+\big(0~(\mathrm{mod}~2^{l+2})\big)\big(0~(\mathrm{mod}~2^{l+1})\big)\\
&\quad +\big(0~(\mathrm{mod}~2^{l+2})\big)\big(0~(\mathrm{mod}~2^{l+1})\big)\\
&\equiv 0~(\mathrm{mod}~2^{l+2}).
\end{align*}
By differentiating the expression (\ref{Fib:eq variant2}) twice at $s$, we have
\begin{align*}
F_{3\cdot 2^{l+1}+1}''(s)&=2 F_{3\cdot 2^{l}+1}'(s)^2+2 F_{3\cdot 2^{l}+1}(s) F_{3\cdot 2^{l}+1}''(s)\\
&\quad +2 F_{3\cdot 2^{l}}'(s)^2+2 F_{3\cdot 2^{l}}(s) F_{3\cdot 2^{l}}''(s)\\
&\equiv 2\big(2^l (\mathrm{mod}~2^{l+1})\big)^2+2\big(1+2^{l+1} (\mathrm{mod}~2^{l+2})\big) \big(2^{l} (\mathrm{mod}~2^{l+1})\big)\\
&\quad +2\big(0 (\mathrm{mod}~2^{l+1})\big)^2+2\big(0 (\mathrm{mod}~2^{l+2})\big) \big(0 (\mathrm{mod}~2^{l+1})\big)\\
&\equiv 2^{l+1}~(\mathrm{mod}~2^{l+2}),
\end{align*}
which completes the induction step.

2. Let $s$ be an odd number. We use induction on positive integers $l$. Let $l=1$. Then, the second derivative of the Fibonacci polynomial at $s$ becomes that $F_0''(s)=0$, $F_1''(s)=0$ and
\begin{align*}
F_m''(s)\equiv 2 F_{m-1}'(s)+s\cdot F_{m-1}''(s)+F_{m-2}''(s)~(\mathrm{mod}~4)
\end{align*}
for $m\geq 2$. Since the sequence $\{F_m'(s)~(\mathrm{mod}~4)\}_m$ is periodic of period 12 by Lemma \ref{lem:Fib a_n}, the sequence
\[ \{F_m''(s)~(\mathrm{mod}~4)\}_m=\{0,0,0,2,2,2,0,2,2,2,0,0,0\dots\} \]
is periodic of period 12.

For the induction hypothesis, we assume that the sequence $\{F_m''(s)~(\mathrm{mod}~2^{l+1})\}_m$ is periodic of period $3\cdot 2^{l+1}$ for an integer $l\geq1$. We know that $F_0''(s)=0$, $F_1''(s)=0$. By property 1 of Lemma \ref{lem:Fib second derivative}, we have $F_{3\cdot 2^{l+2}}''(s)\equiv 0$ and $F_{3\cdot 2^{l+2}+1}''(s)\equiv 2^{l+2}$~(mod $2^{l+3}$), which implies that $F_{3\cdot 2^{l+2}}''(s)\equiv 0$ and $F_{3\cdot 2^{l+2}+1}''(s)\equiv 0$~(mod $2^{l+2}$). By Lemma \ref{lem:Fib a_n}, we have $F_1'(s)\equiv 1$ and $F_{3\cdot 2^{l+2}+1}'(s)\equiv 1$ (mod $2^{l+2}$).

So, we have that $F_0''(s)=0$, $F_1''(s)=0$, $F_1'(s)=1$, $F_{3\cdot 2^{l+2}}''(s)\equiv 0$, $F_{3\cdot 2^{l+2}+1}''(s)\equiv 0$ and $F_{3\cdot 2^{l+2}+1}'(s)\equiv 1$ (mod $2^{l+2}$). Therefore, by the recurrence relation (\ref{eq:Fibonacci polynomial_recurrence relation_3}), the sequence $\{F_m''(s)~(\mathrm{mod}~2^{l+2})\}_m$ is periodic of period less than or equal to $3\cdot 2^{l+2}$. By induction hypothesis, the period is a multiple of $3\cdot 2^{l+1}$. By property 1 of Lemma \ref{lem:Fib second derivative}, the sequence $\{F_m''(s)~(\mathrm{mod}~2^{l+2})\}_m$ is periodic of period $3\cdot 2^{l+2}$, which completes the induction step.
\end{proof}

\begin{lemma}\label{lem:Fib third derivative}
Let $F_m$ be the Fibonacci polynomial. For any odd number $s$,
\begin{enumerate}
\item $F_{3\cdot 2^{l}}'''(s)\equiv 0$, $F_{3\cdot 2^{l}+1}'''(s)\equiv 0$ and $F_{3\cdot 2^{l}+2}'''(s)\equiv 2^{l}$ (mod $2^{l+1}$) for any positive integer $l$, and

\item the sequence $\{F_m'''(s)~(\mathrm{mod}~2^{l+1})\}_m$ is periodic of period $3\cdot 2^{l+1}$ for any positive integer $l$.
\end{enumerate}
\end{lemma}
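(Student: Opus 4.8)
The plan is to reproduce, one derivative higher, the two-part template of Lemmas \ref{lem:Fib a_n} and \ref{lem:Fib second derivative}. For part 1 I will induct on $l$, evaluating the three targets $F_{3\cdot 2^{l+1}}'''(s)$, $F_{3\cdot 2^{l+1}+1}'''(s)$ and $F_{3\cdot 2^{l+1}+2}'''(s)$ by differentiating the doubling identities (\ref{Fib:eq variant1}) and (\ref{Fib:eq variant2}) three times and by invoking the third-derivative recurrence (\ref{eq:Fibonacci polynomial_recurrence relation_4}). For part 2 I will again induct on $l$, combining (\ref{eq:Fibonacci polynomial_recurrence relation_4}) with the periodicity of $\{F_m''(s)\}$ from Lemma \ref{lem:Fib second derivative} and with part 1 to pin down the exact period.

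For the base case $l=1$ of part 1, I differentiate $F_6,F_7,F_8$ three times and reduce modulo $4$, which gives $F_6'''(s)\equiv 0$, $F_7'''(s)\equiv 0$ and $F_8'''(s)\equiv 2s^4\equiv 2$. For the inductive step set $M=3\cdot 2^l$. I first collect, at the appropriate $2$-power, the relevant evaluations at $s$: from Proposition \ref{prop:Fib periodic} and Lemma \ref{lem:Fib 2valuation} one has $\nu_2(F_M)\ge l+2$ and $F_{M\pm1}\equiv 1+2^{l+1}$, from Lemma \ref{lem:Fib a_n} and Lemma \ref{lem:Fib second derivative} one has $F_M'\equiv F_M''\equiv 0$ and $F_{M\pm1}'\equiv F_{M\pm1}''\equiv 2^l\pmod{2^{l+1}}$ (the congruences at $M-1$ coming from (\ref{eq:Fibonacci polynomial_recurrence relation_2})--(\ref{eq:Fibonacci polynomial_recurrence relation_3}) exactly as in the proof of Lemma \ref{lem:Fib second derivative}), and from the inductive hypothesis $F_M'''\equiv F_{M+1}'''\equiv 0\pmod{2^{l+1}}$. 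Writing $G:=F_{M+1}+F_{M-1}$, so that $\nu_2(G)=1$ and $G'\equiv G''\equiv 0\pmod{2^{l+1}}$, I differentiate (\ref{Fib:eq variant1}) three times to get $F_{2M}'''=F_M'''G+3F_M''G'+3F_M'G''+F_MG'''$ and check term by term that every summand is divisible by $2^{l+2}$, so $F_{3\cdot 2^{l+1}}'''(s)\equiv 0$. Likewise, differentiating (\ref{Fib:eq variant2}) three times gives $F_{2M+1}'''=6F_{M+1}'F_{M+1}''+2F_{M+1}F_{M+1}'''+6F_M'F_M''+2F_MF_M'''$, and the same valuation count yields $F_{3\cdot 2^{l+1}+1}'''(s)\equiv 0$. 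Feeding these two vanishings together with $F_{3\cdot 2^{l+1}+1}''(s)\equiv 2^{l+1}$ (property 1 of Lemma \ref{lem:Fib second derivative} at level $l+1$) into (\ref{eq:Fibonacci polynomial_recurrence relation_4}) gives $F_{3\cdot 2^{l+1}+2}'''(s)\equiv 3\cdot 2^{l+1}\equiv 2^{l+1}\pmod{2^{l+2}}$, completing the step.

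For part 2, the base case $l=1$ comes from reducing (\ref{eq:Fibonacci polynomial_recurrence relation_4}) modulo $4$: since $\{F_m''(s)\pmod{4}\}$ has period $12$ by Lemma \ref{lem:Fib second derivative} and $F_0'''(s)=F_1'''(s)=0$, the sequence $\{F_m'''(s)\pmod{4}\}=\{0,0,0,0,2,0,0,0,2,0,0,0,\dots\}$ is periodic of period $12=3\cdot 2^{2}$. For the inductive step I mimic the second-derivative argument. By part 1 at level $l+2$ the initial state matches, $F_{3\cdot 2^{l+2}}'''(s)\equiv 0\equiv F_0'''(s)$ and $F_{3\cdot 2^{l+2}+1}'''(s)\equiv 0\equiv F_1'''(s)\pmod{2^{l+2}}$, while the driving term $3F_{m-1}''$ in (\ref{eq:Fibonacci polynomial_recurrence relation_4}) is periodic of period $3\cdot 2^{l+2}$ by property 2 of Lemma \ref{lem:Fib second derivative}; hence $\{F_m'''(s)\pmod{2^{l+2}}\}$ has period dividing $3\cdot 2^{l+2}$. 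The induction hypothesis forces this period to be a multiple of $3\cdot 2^{l+1}$, and it cannot equal $3\cdot 2^{l+1}$ because part 1 gives $F_{3\cdot 2^{l+1}+2}'''(s)\equiv 2^{l+1}\not\equiv 0\equiv F_2'''(s)\pmod{2^{l+2}}$; therefore the period is exactly $3\cdot 2^{l+2}$.

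The main obstacle is the valuation bookkeeping in the inductive step of part 1. The triple Leibniz expansions of (\ref{Fib:eq variant1}) and (\ref{Fib:eq variant2}) generate several cross terms, and the whole step rests on verifying that all of them except the one intended contribution are divisible by $2^{l+2}$. This in turn requires the sharp valuations $\nu_2(F_{M+1}')=\nu_2(F_{M+1}'')=l$ but $\nu_2(F_{M+1}''')\ge l+1$, together with $\nu_2(F_M)\ge l+2$, so the correct congruences must be harvested from three separate earlier results before expanding. A second small but essential point is that the surviving contribution to $F_{3\cdot 2^{l+1}+2}'''(s)$ is $3\cdot 2^{l+1}$, which one must reduce via $3\cdot 2^{l+1}\equiv 2^{l+1}\pmod{2^{l+2}}$ to reach the stated form; the remaining reductions are routine.
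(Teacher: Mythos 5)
Your proposal is correct and follows essentially the same route as the paper: the same base-case computations with $F_6''',F_7''',F_8'''$, the same triple differentiation of (\ref{Fib:eq variant1}) and (\ref{Fib:eq variant2}) with the valuation data harvested from Proposition \ref{prop:Fib periodic} and Lemmas \ref{lem:Fib a_n}--\ref{lem:Fib second derivative}, the same use of (\ref{eq:Fibonacci polynomial_recurrence relation_4}) to extract $F_{3\cdot 2^{l+1}+2}'''(s)\equiv 3\cdot 2^{l+1}\equiv 2^{l+1}$, and the same period-pinning argument for part 2. (Minor note: your listing of the mod-$4$ base sequence places the $2$'s at $m=4,8$, which is in fact the correct indexing; the paper's displayed sequence is off by one, but this does not affect either argument.)
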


\begin{proof}
1. Let $s$ be an odd number. We use induction on positive integers $l$. Let $l=1$. Then,
\begin{align*}
&F_6'''(s)=24+60s^2\equiv 0,\\
&F_7'''(s)=120s+120s^3\equiv 0~\mathrm{and}\\
&F_8'''(s)=60+360s^2+210s^4\equiv 2s^4\equiv 2~(\mathrm{mod}~4).
\end{align*}

For the induction hypothesis, we assume that $F_{3\cdot 2^{l}}'''(s)\equiv 0$, $F_{3\cdot 2^{l}+1}'''(s)\equiv 0$ and $F_{3\cdot 2^{l}+2}'''(s)\equiv 2^{l}$ (mod $2^{l+1}$) for any integer $l\geq 1$. By the recurrence relation (\ref{eq:Fibonacci polynomial_recurrence relation}) and Proposition \ref{prop:Fib periodic}, we have $F_{3\cdot 2^{l}}(s)\equiv 0$, $F_{3\cdot 2^{l}+1}(s)\equiv 1+2^{l+1}$ and $F_{3\cdot 2^{l}-1}(s)\equiv 1+2^{l+1}$ (mod $2^{l+2}$). By the recurrence relation (\ref{eq:Fibonacci polynomial_recurrence relation_2}) and Lemma \ref{lem:Fib a_n}, we have $F_{3\cdot 2^{l}}'(s)\equiv 0$, $F_{3\cdot 2^{l}+1}'(s)\equiv 2^{l}$ and $F_{3\cdot 2^{l}-1}'(s)\equiv 2^{l}$ (mod $2^{l+1}$). By the recurrence relation (\ref{eq:Fibonacci polynomial_recurrence relation_3}) and Lemma \ref{lem:Fib second derivative}, we have $F_{3\cdot 2^{l}}''(s)\equiv 0$, $F_{3\cdot 2^{l}+1}''(s)\equiv 2^{l}$, $F_{3\cdot 2^{l}-1}''(s)\equiv 2^{l}$ (mod $2^{l+1}$) and $F_{3\cdot 2^{l+1}+1}''(s)\equiv 2^{l+1}$ (mod $2^{l+2}$). By the recurrence relation (\ref{eq:Fibonacci polynomial_recurrence relation_4}), we have $F_{3\cdot 2^{l}-1}'''(s)\equiv 0$ (mod $2^{l+1}$). Then, by differentiating the expression (\ref{Fib:eq variant1}) three times at $s$, we have
\begin{align*}
F_{3\cdot 2^{l+1}}'''(s)&=F_{3\cdot 2^{l}}'''(s)\big(F_{3\cdot 2^{l}+1}(s)+F_{3\cdot 2^{l}-1}(s)\big)+3F_{3\cdot 2^{l}}''(s)\big(F_{3\cdot 2^{l}+1}'(s)+F_{3\cdot 2^{l}-1}'(s)\big)\\
&\quad +3F_{3\cdot 2^{l}}'(s)\big(F_{3\cdot 2^{l}+1}''(s)+F_{3\cdot 2^{l}-1}''(s)\big)+F_{3\cdot 2^{l}}(s)\big(F_{3\cdot 2^{l}+1}'''(s)+F_{3\cdot 2^{l}-1}'''(s)\big)\\
&\equiv \big(0~(\mathrm{mod}~2^{l+1})\big)\big((1+2^{l+1}~(\mathrm{mod}~2^{l+2})) +(1+2^{l+1}~(\mathrm{mod}~2^{l+2}))\big)\\
&\quad +3\big(0~(\mathrm{mod}~2^{l+1})\big)\big(2^{l}~(\mathrm{mod}~2^{l+1})+2^{l}~(\mathrm{mod}~2^{l+1})\big)\\
&\quad +3\big(0~(\mathrm{mod}~2^{l+1})\big)\big(2^{l}~(\mathrm{mod}~2^{l+1})+2^{l}~(\mathrm{mod}~2^{l+1})\big)\\
&\quad +\big(0~(\mathrm{mod}~2^{l+2})\big)\big(0~(\mathrm{mod}~2^{l+1})+0~(\mathrm{mod}~2^{l+1})\big)\\
&\equiv \big(0~(\mathrm{mod}~2^{l+1})\big)\big(2~(\mathrm{mod}~2^{l+2})\big)+3\big(0~(\mathrm{mod}~2^{l+1})\big)\big(0~(\mathrm{mod}~2^{l+1})\big)\\
&\quad +3\big(0~(\mathrm{mod}~2^{l+1})\big)\big(0~(\mathrm{mod}~2^{l+1})\big)+\big(0~(\mathrm{mod}~2^{l+2})\big)\big(0~(\mathrm{mod}~2^{l+1})\big)\\
&\equiv 0~(\mathrm{mod}~2^{l+2}).
\end{align*}
By differentiating the expression (\ref{Fib:eq variant2}) three times at $s$, we have
\begin{align*}
&F_{3\cdot 2^{l+1}+1}'''(s)\\
     &\qquad =6 F_{3\cdot 2^{l}+1}''(s) F_{3\cdot 2^{l}+1}'(s)+2 F_{3\cdot 2^{l}+1}(s) F_{3\cdot 2^{l}+1}'''(s)\\
&\qquad\quad +6 F_{3\cdot 2^{l}}''(s) F_{3\cdot 2^{l}}'(s)+2 F_{3\cdot 2^{l}}(s) F_{3\cdot 2^{l}}'''(s)\\
&\qquad\equiv 6\big(2^l (\mathrm{mod}~2^{l+1})\big) \big(2^{l} (\mathrm{mod}~2^{l+1})\big)+2\big(1+2^{l+1} (\mathrm{mod}~2^{l+2})\big) \big(0 (\mathrm{mod}~2^{l+1})\big)\\
&\qquad\quad +6\big(0 (\mathrm{mod}~2^{l+1})\big) \big(0 (\mathrm{mod}~2^{l+1})\big)+2\big(0 (\mathrm{mod}~2^{l+2})\big) \big(0 (\mathrm{mod}~2^{l+1})\big)\\
&\qquad\equiv 0~(\mathrm{mod}~2^{l+2}).
\end{align*}
By the recurrence relation (\ref{eq:Fibonacci polynomial_recurrence relation_4}), we have
\begin{align*}
F_{3\cdot 2^{l+1}+2}'''(s)&=3 F_{3\cdot 2^{l+1}+1}''(s)+s F_{3\cdot 2^{l+1}+1}'''(s)+F_{3\cdot 2^{l+1}}'''(s)\\
&\equiv 3\big(2^{l+1} (\mathrm{mod}~2^{l+2})\big)+s\big(0 (\mathrm{mod}~2^{l+2})\big)+\big(0 (\mathrm{mod}~2^{l+2})\big)\\
&\equiv 2^{l+1}~(\mathrm{mod}~2^{l+2}),
\end{align*}
which completes the induction step.

2. Let $s$ be an odd number. We use induction on positive integers $l$. Let $l=1$. Then, the third derivative of the Fibonacci polynomial at $s$ becomes that $F_0'''(s)=0$, $F_1'''(s)=0$ and
\begin{align*}
F_m'''(s)\equiv 3 F_{m-1}''(s)+s\cdot F_{m-1}'''(s)+F_{m-2}'''(s)~(\mathrm{mod}~4)
\end{align*}
for $m\geq 2$. Since the sequence $\{F_m''(s)~(\mathrm{mod}~4)\}_m$ is periodic of period 12 by Lemma \ref{lem:Fib second derivative}, the sequence
\[ \{F_m'''(s)~(\mathrm{mod}~4)\}_m=\{0,0,0,2,0,0,0,2,0,0,0,0,0\dots\} \]
is periodic of period 12.

For the induction hypothesis, we assume that the sequence $\{F_m'''(s)~(\mathrm{mod}~2^{l+1})\}_m$ is periodic of period $3\cdot 2^{l+1}$ for an integer $l\geq1$. We know that $F_0'''(s)=0$, $F_1'''(s)=0$. By property 1 of Lemma \ref{lem:Fib third derivative}, we have $F_{3\cdot 2^{l+2}}'''(s)\equiv 0$ and $F_{3\cdot 2^{l+2}+1}'''(s)\equiv 0$~(mod $2^{l+3}$), which implies that $F_{3\cdot 2^{l+2}}''(s)\equiv 0$ and $F_{3\cdot 2^{l+2}+1}''(s)\equiv 0$~(mod $2^{l+2}$). By Lemma \ref{lem:Fib second derivative}, we have $F_1''(s)\equiv 0$ and $F_{3\cdot 2^{l+2}+1}''(s)\equiv 0$ (mod $2^{l+2}$).

So, we have that $F_0'''(s)=0$, $F_1'''(s)=0$, $F_1''(s)=0$, $F_{3\cdot 2^{l+2}}'''(s)\equiv 0$, $F_{3\cdot 2^{l+2}+1}'''(s)\equiv 0$ and $F_{3\cdot 2^{l+2}+1}''(s)\equiv 0$ (mod $2^{l+2}$). Therefore, by the recurrence relation (\ref{eq:Fibonacci polynomial_recurrence relation_4}), the sequence $\{F_m'''(s)~(\mathrm{mod}~2^{l+2})\}_m$ is periodic of period less than or equal to $3\cdot 2^{l+2}$. By induction hypothesis, the period is a multiple of $3\cdot 2^{l+1}$. By property 1 of Lemma \ref{lem:Fib third derivative}, the sequence $\{F_m'''(s)~(\mathrm{mod}~2^{l+2})\}_m$ is periodic of period $3\cdot 2^{l+2}$, which completes the induction step.
\end{proof}

We define three functions from $\N\cup \{0\}$ to itself. For $q=c_0+c_1\cdot 2+c_2\cdot 2^2+\dots+c_n\cdot 2^n$ in $\N\cup \{0\}$, define
\begin{align}
&t(q)=\min\{i\geq 0|c_i=c_{i+1}\},\label{special value t(q)}\\
&u_0(q)=\min\{i\geq 1|c_i=0\},~\mathrm{and}\label{special value u_0(q)}\\
&u_1(q)=\min\{i\geq 1|c_i=1\}.\label{special value u_1(q)}
\end{align}

These functions have simple properties.

\begin{lemma}\label{the relation between q and t}
\begin{enumerate}
\item Assume that $q$ is odd and let $t=t(q)$. Then, $t$ is odd if and only if $c_t=c_{t+1}=0$.

\item Assume that $q$ is even and let $t=t(q)$. Then, $t$ is odd if and only if $c_t=c_{t+1}=1$.
\end{enumerate}
\end{lemma}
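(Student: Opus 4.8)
The plan is to exploit the minimality built into the definition of $t(q)$. By (\ref{special value t(q)}), $t=t(q)$ is the \emph{smallest} index at which two consecutive binary digits agree; consequently, for every $i$ with $0\le i<t$ we must have $c_i\neq c_{i+1}$. Since each digit $c_i$ lies in $\{0,1\}$, the inequality $c_i\neq c_{i+1}$ is the same as $c_{i+1}=1-c_i$, so the initial segment $c_0,c_1,\dots,c_t$ is forced to alternate strictly between $0$ and $1$. The first step is therefore to record this alternation explicitly: for $0\le i\le t$ one has $c_i=c_0$ when $i$ is even and $c_i=1-c_0$ when $i$ is odd.

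The second step is to pin down $c_0$ from the parity of $q$. Writing $q=c_0+c_1\cdot 2+\cdots+c_n\cdot 2^n$, the constant digit $c_0$ is exactly the residue of $q$ modulo $2$, so $c_0=1$ when $q$ is odd and $c_0=0$ when $q$ is even. Combining this with the alternation formula from the first step evaluates $c_t$ in terms of the parity of $t$.

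The third step is to read off the two claims. For part (1), $q$ odd gives $c_0=1$, hence $c_t=1$ if $t$ is even and $c_t=0$ if $t$ is odd; since $c_t=c_{t+1}$ by the definition of $t$, this yields $t$ odd $\iff c_t=c_{t+1}=0$. For part (2), $q$ even gives $c_0=0$, hence $c_t=0$ if $t$ is even and $c_t=1$ if $t$ is odd, so again using $c_t=c_{t+1}$ we get $t$ odd $\iff c_t=c_{t+1}=1$.

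I do not expect a genuine obstacle here: the statement is a direct combinatorial consequence of the alternating pattern, and the only point requiring a word of care is that $t(q)$ is well-defined, i.e.\ that an index with $c_i=c_{i+1}$ exists at all. This is immediate because only finitely many digits are nonzero, so the trailing zeros guarantee $c_i=c_{i+1}=0$ for all large $i$; thus the alternation cannot persist indefinitely and $t(q)$ is finite.
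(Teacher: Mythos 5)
Your proof is correct and follows essentially the same route as the paper's: both arguments rest on the observation that minimality of $t$ forces the digits $c_0,\dots,c_t$ to alternate, together with the fact that $c_0$ equals the parity of $q$. Your write-up is merely a slightly more explicit version (including the harmless extra remark that $t(q)$ is well defined), so there is nothing to add.
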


\begin{proof}
1. Assume that $q$ is odd, then we write $q=1+c_1\cdot 2+c_2\cdot 2^2+\dots+c_n\cdot 2^n$. If $t$ is odd, then $c_t=c_{t-2}=\dots=c_1=0$, which implies that $c_t=c_{t+1}=0$. If $c_t=c_{t+1}=0$, then $0=c_t=c_{t-2}=\dots=c_1$, which implies that $t$ is odd.

2. Assume that $q$ is even, then we write $q=0+c_1\cdot 2+c_2\cdot 2^2+\dots+c_n\cdot 2^n$. If $t$ is odd, then $c_t=c_{t-2}=\dots=c_1=1$, which implies that $c_t=c_{t+1}=1$. If $c_t=c_{t+1}=1$, then $1=c_t=c_{t-2}=\dots=c_1$, which implies that $t$ is odd.
\end{proof}

These functions link the number of cycles and the starting level of strongly growth mysteriously, see Proposition \ref{prop:m=2+12q,q=1}, Theorem \ref{thm:m=2+12q,q=1}, Proposition \ref{prop:m=2+12q,q=2}, Theorem \ref{thm:m=2+12q,q=2}, Proposition \ref{prop:m=6+12q,q=1}, Theorem \ref{thm:m=6+12q,q=1}, Proposition \ref{prop:m=6+12q,q=2}, Theorem \ref{thm:m=6+12q,q=2}, Proposition \ref{prop:m=10+12q,q=2}, Theorem \ref{thm:m=10+12q,q=2}, Proposition \ref{prop:m=10+12q,q=3} and Theorem \ref{thm:m=10+12q,q=3}.

\section{Minimal decompositions for $F_m(x)$ with $m$ odd}

For odd positive numbers $m$, the minimal decompositions of $\Z_2$ for $F_m(x)$ have the following patterns.
\begin{theorem}
Let $m$ be an odd positive number.
\begin{enumerate}
\item If $m\equiv1~\mathrm{or}~2~(\mathrm{mod}~3)$, then $F_m(x)$ has a fixed point in $\Z_2$, say $a$. So, the minimal decomposition of $\Z_2$ for $F_m(x)$ is
\[ \Z_2=\{a\}\bigsqcup\big(\Z_2-\{a\}\big), \]
where $\Z_2-\{a\}$ is the attracting basin of $\{a\}$.

\item If $m\equiv0~(\mathrm{mod}~3)$, then $F_m(x)$ has a cycle of length 2 in $\Z_2$, say $\{a,b\}$. So, the minimal decomposition of $\Z_2$ for $F_m(x)$ is
\[ \Z_2=\{a,b\}\bigsqcup\big(\Z_2-\{a,b\}\big), \]
where $\{a,b\}$ is a cycle of period 2 and $\Z_2-\{a,b\}$ is the attracting basin of $\{a,b\}$.
\end{enumerate}
\end{theorem}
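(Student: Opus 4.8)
Set $f=F_m$. The plan is to reduce the whole statement to one application of Proposition~\ref{prop:grow tails}(5) at the base level $l=1$, using a parity feature special to odd $m$. The key observation is that for $m$ odd the polynomial $F_m$ is \emph{even}: by the odd case of (\ref{eq:Fibonacci polynomial_even&odd}) we may write $F_m(x)=G(x^2)$, whence $F_m'(x)=2x\,G'(x^2)\in 2\Z_2$ for \emph{every} $x\in\Z_2$. Consequently the multiplier $a_1=\prod_{j=0}^{k-1}f'\big(f^{j}(x)\big)$ of \emph{any} cycle of $f$ at level $1$ is even; that is, every level-$1$ cycle of $f$ grows tails.

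First I would determine the dynamics modulo $2$. Since $m$ is odd, $F_m(0)=1$, so the residue $0$ maps to $1$; and by property~2 of Proposition~\ref{prop:Fib periodic} the sequence $\{F_m(1)\bmod 2\}_m$ follows the period-$3$ pattern $0,1,1,\dots$, which gives $F_m(1)\equiv0\pmod 2$ exactly when $3\mid m$ and $F_m(1)\equiv1\pmod 2$ otherwise. Thus modulo $2$ the map reads $0\mapsto1\mapsto1$ when $m\equiv1,2\pmod 3$, whose only cycle is the fixed residue $1$, and $0\mapsto1\mapsto0$ when $m\equiv0\pmod 3$, whose only cycle is the $2$-cycle $\{0,1\}$.

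Next I would feed each cycle into Proposition~\ref{prop:grow tails}(5). As the cycle grows tails at level $1$, the proposition furnishes a $k$-periodic orbit inside the invariant clopen set $\mathbb{X}_\sigma$ contained in its attracting basin; since on $\mathbb{X}_\sigma$ all points of each constituent ball share one parity, the even multiplier makes $f^{k}$ a strict $2$-adic contraction there, so this orbit is the unique periodic orbit in $\mathbb{X}_\sigma$ and all of $\mathbb{X}_\sigma$ drains into it. When $m\equiv1,2\pmod 3$ the cycle is $\{1\}$, so $\mathbb{X}_\sigma=1+2\Z_2$ and we obtain an attracting fixed point $a$; the complementary ball $2\Z_2$ satisfies $F_m(2\Z_2)\subset 1+2\Z_2$ (because $F_m$ carries even numbers to odd ones) while $1+2\Z_2$ is forward invariant, so $2\Z_2$ harbors no periodic point and lies in the basin of $a$ as well, yielding $\Z_2=\{a\}\sqcup(\Z_2-\{a\})$. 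When $m\equiv0\pmod 3$ the $2$-cycle already covers both residues, so $\mathbb{X}_\sigma=\Z_2$ and the resulting attracting $2$-cycle $\{a,b\}$ has all of $\Z_2$ as its basin, giving $\Z_2=\{a,b\}\sqcup(\Z_2-\{a,b\})$.

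All individual computations are routine; the only place demanding care is the verification that the \emph{whole} space is accounted for. In the fixed-point case this amounts to separately checking that the off-cycle ball $2\Z_2$ flows into the basin and contains no stray periodic point, which is exactly the forward-invariance argument above. The conceptual crux, and the reason odd $m$ is so much easier than the even cases treated later, is the universal evenness of $F_m'$: it forces every base-level cycle to grow tails, so no cycle can ever strongly grow and hence no minimal component $\mathcal{M}_i$ can arise, leaving only a single attracting orbit together with its basin.
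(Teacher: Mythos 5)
Your proposal is correct and follows essentially the same route as the paper: identify the level-$1$ cycles mod $2$ ($\{1\}$ when $3\nmid m$, $\{0,1\}$ when $3\mid m$), observe that $a_1\equiv 0\pmod 2$ so they grow tails, apply Proposition~\ref{prop:grow tails}(5), and check separately that $F_m(2\Z_2)\subseteq 1+2\Z_2$ in the fixed-point case. Your framing of the key step via the global evenness of $F_m'$ (since $F_m(x)=G(x^2)$ for odd $m$) is just a slightly more conceptual packaging of the paper's pointwise computation of $a_1$, not a different argument.
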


\begin{proof}
1. Suppose that $m$ is odd with $m\equiv\pm1~(\mathrm{mod}~3)$. Then $F_m(1)\equiv 1$ and $a_1(1)=F'_m(1)\equiv0$ (mod 2) by (\ref{a_l}). So, $\{1\}$ is a cycle of length 1 at level 1 which grows tails by Definition \ref{def:movement}. By property 5 of Proposition \ref{prop:grow tails}, $F_m(x)$ has a fixed point $a$ in the clopen set $1+2\Z_2$ with $1+2\Z_2$ lying its attracting basin. For $2\Z_2$, it is obvious that $F_m(2\Z_2)\subseteq 1+2\Z_2$ because $F_m(x)$ has the constant term 1. Therefore, the minimal decomposition of $\Z_2$ for $F_m(x)$ is $\Z_2=\{a\}\bigsqcup\big(\Z_2-\{a\}\big),$
where $\{a\}$ is a fixed point and $\Z_2-\{a\}$ is the attracting basin of $\{a\}$.

2. Suppose that $m$ is odd with $m\equiv0~(\mathrm{mod}~3)$. Then $F_m(0)=1$ and $F_m(1)\equiv 0~(\mathrm{mod}~2)$. In addition, $F'_m(0)=0$ implies that $a_1(0)=F'_m(0)\cdot F'_m(1)=0$ by (\ref{a_l}). So, $\{0,1\}$ is a cycle of length 2 at level 1 which grows tails. By property 5 of Proposition \ref{prop:grow tails}, $F_m(x)$ has a cycle of length 2, say $\{a,b\}$, in the clopen set $\Z_2$. Therefore, the minimal decomposition is $\Z_2=\{a,b\}\bigsqcup\big(\Z_2-\{a,b\}\big),$ where $\{a,b\}$ is a cycle of period two and $\Z_2-\{a,b\}$ is the attracting basin of $\{a,b\}$.
\end{proof}

\section{Minimal decompositions for $F_m(x)$ with $m\equiv 0$ (mod 12)}
From now on, we consider the cases when $m$ is an even positive number. We divide the cases into $m=0+12q,~m=2+12q,~m=4+12q,~m=6+12q,~m=8+12q~\mathrm{and}~m=10+12q$ with nonnegative integers $q$. We express the minimal decomposition of $\Z_2$ for $F_m(x)$ for each case.

We consider the case $m=0+12q$ with positive integers $q$. For formula (\ref{eq:Fibonacci polynomial_even&odd}), we substitute $m=0+12q$, then we obtain the Fibonacci polynomial
\[ F_m(x)=\sum_{j=0}^{6q-1}\tbinom{6q+j}{1+2j}x^{1+2j}. \]

\begin{theorem}
Let $m=0+12q$ with nonnegative integers $q$. Then the minimal decomposition of $\Z_2$ for $F_m(x)$ is
\[ \Z_2=\{0\}\bigsqcup(\Z_2-\{0\}). \]
Here, $\{0\}$ is the set of a fixed point and $\Z_2-\{0\}$ is the attracting basin of $\{a\}$.
\end{theorem}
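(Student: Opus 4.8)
The plan is to show that $0$ is a fixed point of $F_m$ whose attracting basin is all of $\Z_2-\{0\}$, by analysing the dynamics at level $1$ and invoking the ``grows tails'' mechanism of part 5 of Proposition \ref{prop:grow tails}. First I would record that since $m=12q$ is even, formula (\ref{eq:Fibonacci polynomial_even&odd}) shows that every monomial of $F_m$ carries a positive power of $x$; in particular $F_m(0)=0$, so $\{0\}$ is a fixed point, and $F_m(2\Z_2)\subseteq 2\Z_2$, so the ball $2\Z_2$ is invariant. I would then split $\Z_2=2\Z_2\sqcup(1+2\Z_2)$ and treat the two balls separately.

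Second, I would examine the cycle $\{0\}$ at level $1$. Reducing modulo $2$ we have $F_m(0)\equiv 0$; moreover, since $12q\equiv 0\pmod 3$, property 2 of Proposition \ref{prop:Fib periodic} gives $F_m(s)\equiv 0\pmod 2$ for every odd $s$, so in particular $F_m(1)\equiv 0$. Thus $\{0\}$ is a $1$-cycle at level $1$ toward which $1$ also falls. To classify its movement I compute $a_1(0)=F_m'(0)$ from (\ref{a_l}): differentiating the even-case formula in (\ref{eq:Fibonacci polynomial_even&odd}) and evaluating at $0$ leaves only the $j=0$ term, giving $F_m'(0)=m/2=6q\equiv 0\pmod 2$. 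Hence $\{0\}$ grows tails at level $1$ in the sense of Definition \ref{def:movement}. By part 5 of Proposition \ref{prop:grow tails}, $F_m$ has a fixed point in the clopen ball $2\Z_2$ lying in its attracting basin; since $0$ is itself a fixed point of $F_m$ in $2\Z_2$ and every point of $2\Z_2$ converges to the attractor produced by the proposition, that attractor must be $0$. Consequently $2\Z_2-\{0\}$ lies in the attracting basin of $\{0\}$.

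Finally I would dispose of the odd numbers. Because $F_m(s)$ is even for odd $s$ (established above), we have $F_m(1+2\Z_2)\subseteq 2\Z_2$, so every point of $1+2\Z_2$ enters $2\Z_2$ after one iteration and is therefore also attracted to $0$. Combining the two balls yields $\Z_2=\{0\}\sqcup(\Z_2-\{0\})$ with $\Z_2-\{0\}$ the attracting basin of $\{0\}$, as claimed.

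The argument is essentially direct, so there is no deep obstacle; the two points requiring care are the evenness of $a_1(0)=F_m'(0)=6q$ and the identification of the fixed point produced by part 5 of Proposition \ref{prop:grow tails} with the known fixed point $0$. It is worth emphasising that the hypothesis $m\equiv 0\pmod{12}$ is used in exactly two guises: the factor $3\mid m$ forces $F_m(s)$ even for odd $s$ (so that $\{0\}$ is a $1$-cycle at level $1$), while the factor $4\mid m$ forces $F_m'(0)=m/2$ even (so that the cycle grows tails); the least common multiple $12$ is precisely what makes both hold. One should also note the degenerate case $q=0$, where $F_0\equiv 0$ and every point maps to $0$ in a single step, separately.
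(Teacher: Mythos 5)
Your proposal is correct and follows essentially the same route as the paper: both show that $\{0\}$ is a cycle of length $1$ at level $1$ that grows tails because $a_1(0)=F_m'(0)\equiv 0\pmod 2$, invoke part 5 of Proposition \ref{prop:grow tails} to get the attracting fixed point in $2\Z_2$, and then check $F_m(1+2\Z_2)\subseteq 2\Z_2$ to absorb the odd ball. Your additions --- explicitly identifying the fixed point produced by the proposition with $0$ (rather than an unspecified $a$), computing $F_m'(0)=6q$, and using the mod-$2$ periodicity of Proposition \ref{prop:Fib periodic} instead of the direct binomial expansion for the odd residues --- are minor refinements of the same argument, not a different approach.
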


\begin{proof}
$F_m(0)=0$ and $a_1(0)=F'_m(0)\equiv0~(\mathrm{mod}~2)$. So, $\{0\}$ is a cycle of length 1 at level 1 which grows tails. By property 5 of Proposition \ref{prop:grow tails}, $F_m(x)$ has a fixed point $a$ in the clopen set $2\Z_2$ with $2\Z_2$ lying its attracting basin. For $t\in\Z_2$,
\begin{align*}
F_m(1+2t)=&\tbinom{6q}{1}(1+2t)+\tbinom{1+6q}{3}(1+2t)^3+\dots\\
=&\tbinom{6q}{1}+\tbinom{1+6q}{3}+\dots+1+2s~\mathrm{for~some}~s\in\Z_2\\
=&F_m(1)+2s,
\end{align*}
where $F_m(1)\equiv 0~(\mathrm{mod}~2)$, so $F_m(1+2t)\subseteq 2\Z_2$ and $F_m(1+2\Z_2)\subseteq 2\Z_2$.
\end{proof}

\section{Minimal decompositions for $F_m(x)$ with $m\equiv 2$ (mod 12)}
For $m=2$, the Fibonacci polynomial is $F_2(x)=x$, so all the elements in $\Z_2$ are fixed. Therefore, the minimal decomposition of $\Z_2$ for $F_2(x)$ can be written as $\Z_2$ trivially.

We consider the case $m=2+12q$ with positive integers $q$. For formula (\ref{eq:Fibonacci polynomial_even&odd}), we substitute $m=2+12q$, then we obtain the Fibonacci polynomial
\[ F_m(x)=\sum_{j=0}^{6q}\tbinom{1+6q+j}{1+2j}x^{1+2j}. \]
We divide nonnegative integers $q$ into cases $q=1+2d$ and $q=2+2d$ for nonnegative integers $d$.

\subsection{Case: $m=2+12q$ with $q=1+2d$}

We consider the case $m=2+12q$ with $q=1+2d$ for nonnegative integers $d$. The Fibonacci polynomial becomes
\[ F_m(x)=\sum_{j=0}^{6+12d}\tbinom{7+12d+j}{1+2j}x^{1+2j}. \]

\begin{proposition}\label{prop:m=2+12q,q=1}
Let $m=2+12q$ with $q=1+2d$ for some nonnegative integer $d$. Let $t=t(q)$, as defined in (\ref{special value t(q)}). The Fibonacci polynomial $F_m(x)$ has two types of cycles:
\begin{enumerate}
\item the cycles $\{1+2k\}$ of length 1 strongly grow at level 3 where $k=0,1,2,3$, and

\item the cycles $\{(1+4k)2^n,(2^{t+2}-1-4k)2^n\}$ of length 2 strongly grow at level $n+t+3$ with $k=0,1,\dots,2^{t+1}-1$ and $n\geq1$.
\end{enumerate}
\end{proposition}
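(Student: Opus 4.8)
The plan is to verify, for each of the two families, the two defining conditions of strong growth from Definition \ref{def:movement} at the indicated level $l$, namely $a_l\equiv 1\ (\mathrm{mod}\ 4)$ and $b_l\equiv 1\ (\mathrm{mod}\ 2)$ with $a_l,b_l$ as in (\ref{a_l}), (\ref{b_l}); once this holds at a level $\ge 2$, minimality of the corresponding clopen piece is immediate from part 1 of Proposition \ref{prop:grow tails}. Two inputs drive everything. First, because $m=2+12q\equiv 2\ (\mathrm{mod}\ 12)$, the periodicity statements of Proposition \ref{prop:Fib periodic} and Lemma \ref{lem:Fib a_n} let me replace $F_m$ and $F_m'$ (modulo small powers of $2$, at odd arguments) by the corresponding values of $F_2(x)=x$; finer residues I will obtain from the addition formula $F_{m+n}=F_{m+1}F_n+F_mF_{n-1}$ together with part 1 of Proposition \ref{prop:Fib periodic}. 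Second, the lowest-degree coefficient of $F_m$ is $\binom{1+6q}{1}=1+6q$, so on even arguments $F_m$ behaves to leading order like multiplication by $1+6q$; the decisive arithmetic fact I must isolate is $\nu_2\big((1+6q)^2-1\big)=t+3$, equivalently $\nu_2(1+3q)=t+1$. This is exactly where $t=t(q)$ enters, and I expect to prove it from the digit description behind Lemma \ref{the relation between q and t}: for $i\le t$ the binary digits of the odd number $q$ strictly alternate from $c_0=1$, and a short geometric-series computation of $q$ modulo $2^{t+2}$ then pins $\nu_2(1+3q)$ to be exactly $t+1$.

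For the length-$1$ cycles $\{1+2k\}$ with $k=0,1,2,3$, I first note that $F_m(s)\equiv F_2(s)=s\ (\mathrm{mod}\ 8)$ for odd $s$ by Proposition \ref{prop:Fib periodic}(2) (the period modulo $8$ is $12$), so $1,3,5,7$ are genuine fixed points at level $3$. The multiplier is $a_3=F_m'(s)$, and Lemma \ref{lem:Fib a_n}(2) gives $F_m'(s)\equiv F_2'(s)=1\ (\mathrm{mod}\ 4)$, so $a_3\equiv 1\ (\mathrm{mod}\ 4)$. For $b_3$ I compute $F_m(s)$ modulo $16$: writing $F_{2+12q}(s)=s\,F_{12q+1}(s)+F_{12q}(s)$ and using that $q$ is odd (so $12q\equiv 12$ and $12q+1\equiv 13\ (\mathrm{mod}\ 24)$) together with $F_{12}(s)\equiv 0$ and $F_{13}(s)\equiv 1+2^3\ (\mathrm{mod}\ 16)$ from Proposition \ref{prop:Fib periodic}(1) with $l=2$, I obtain $F_m(s)\equiv 9s\ (\mathrm{mod}\ 16)$. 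Hence $F_m(s)-s\equiv 8s\equiv 8\ (\mathrm{mod}\ 16)$, so $\nu_2\big(F_m(s)-s\big)=3$ and $b_3$ is odd. Both conditions hold, so these four cycles strongly grow at level $3$.

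For the length-$2$ cycles I work in a fixed ball of valuation $n$: write $s=2^n u$ with $u$ odd and $n\ge 1$, so that $F_m(s)=(1+6q)s+\varepsilon(s)$ with $\varepsilon(s)=\sum_{j\ge 1}\binom{1+6q+j}{1+2j}s^{1+2j}$. Granting the tail estimate below, $\nu_2(\varepsilon(s))\ge (t+2)+3n>n+t+3$ for $n\ge 1$, so modulo $2^{n+t+3}$ the map is exactly multiplication of the unit part by $1+6q$ on the odd residues $u$ modulo $2^{t+3}$. Since $\nu_2\big((1+6q)^2-1\big)=t+3$ we have $(1+6q)^2u\equiv u$ but $(1+6q)u\not\equiv u\ (\mathrm{mod}\ 2^{t+3})$ for every odd $u$, so each odd residue lies on an honest $2$-cycle; choosing the representative $u_1=1+4k\equiv 1\ (\mathrm{mod}\ 4)$ and using $\nu_2(1+3q)=t+1$ shows its partner is $u_2\equiv(1+6q)u_1\equiv 2^{t+2}-1-4k\ (\mathrm{mod}\ 2^{t+3})$, reproducing the stated pairs as $k$ runs over $0,\dots,2^{t+1}-1$. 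For strong growth, the multiplier $a_{n+t+3}=F_m'(y_1)F_m'(y_2)$ is easy: on even arguments every term of $F_m'$ beyond the constant $1+6q$ carries a factor $x^2\equiv 0\ (\mathrm{mod}\ 4)$, so $F_m'(y_i)\equiv 1+6q\equiv 3\ (\mathrm{mod}\ 4)$ and the product is $\equiv 1\ (\mathrm{mod}\ 4)$. For $b_{n+t+3}$ I expand $F_m^2(y_1)-y_1=\big((1+6q)^2-1\big)y_1+(1+6q)\varepsilon(y_1)+\varepsilon\big(F_m(y_1)\big)$; the first term has valuation exactly $(t+3)+n$, while both $\varepsilon$-terms have valuation at least $(t+2)+3n>(t+3)+n$ for $n\ge 1$, so $\nu_2\big(F_m^2(y_1)-y_1\big)=n+t+3$ and $b_{n+t+3}$ is odd.

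The main obstacle is the tail estimate used twice above: for every $j\ge 1$ and every $n\ge 1$ one needs $\nu_2\binom{1+6q+j}{1+2j}+2nj\ge t+3$, and in fact the slightly sharper $\nu_2\binom{1+6q+j}{1+2j}\ge t+4-2j$ (at $n=1$), so that the cubic term $\binom{2+6q}{3}s^3$ governs the leading valuation. Its coefficient factors as $\binom{2+6q}{3}=2q(1+6q)(1+3q)$, of valuation exactly $t+2$ precisely because $\nu_2(1+3q)=t+1$. For small $n$ and large $t$ the contribution $2nj$ alone is insufficient, so this reduces to the combinatorial bound $\nu_2\binom{1+6q+j}{1+2j}\ge t+4-2j$ in the small-$j$ range, which I would prove by Kummer's carry-counting applied to $1+2j$ and $6q-j$ in base $2$, again exploiting that the low digits of $q$ alternate up to index $t$. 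Once this estimate is in hand, both families follow uniformly, and Proposition \ref{prop:grow tails}(1) upgrades each strongly growing cycle at level $\ge 2$ to a minimal subsystem on its clopen neighborhood.
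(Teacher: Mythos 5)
Your proposal is correct and, at bottom, runs on the same two engines as the paper's proof: periodicity of $F_m$ and $F_m'$ at odd arguments (reducing to $F_2(x)=x$) for the four fixed points, and, on $2\Z_2$, the fact that $F_m$ is multiplication by its linear coefficient $1+6q$ up to a tail controlled by the $2$-adic valuations of $\binom{1+6q+j}{1+2j}$. What you do differently is worth recording. The paper splits Part 2 into the cases $t$ odd and $t$ even, uses Lemma \ref{the relation between q and t} to write out the binary expansion of $q$ explicitly in each case ($q=\frac{2\cdot2^t-1}{3}+2^{t+2}r$ resp.\ $q=\frac{10\cdot2^t-1}{3}+2^{t+2}r$) and drags these expressions through every estimate; you collapse both cases into the single invariant $\nu_2(1+3q)=t+1$, equivalently $1+6q\equiv 2^{t+2}-1\pmod{2^{t+3}}$, which is exactly the content of those two expansions, and this removes the duplication entirely. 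Your compositional identity $F_m^2(y)-y=\big((1+6q)^2-1\big)y+(1+6q)\varepsilon(y)+\varepsilon\big(F_m(y)\big)$ likewise replaces the paper's term-by-term re-expansion of $F_m$ at the image point, and your derivation of $F_m(s)\equiv 9s\pmod{16}$ from the addition formula replaces the paper's direct evaluation of $F_{14}$. Your key estimate $\nu_2\binom{1+6q+j}{1+2j}\ge t+4-2j$ is correct: it holds with equality at $j=1,2$ (e.g.\ $\binom{2+6q}{3}=2q(1+3q)(1+6q)$ has valuation exactly $t+2$), and for $j\ge3$ it already follows from the paper's cruder bound $\ge t+1-j$, obtained by locating the factor $2+6q=2^{t+2}\cdot(\mathrm{odd})$ among the $2j+1$ consecutive integers in the numerator and using $\nu_2\big((1+2j)!\big)\le 2j$; that factor-counting route is easier to execute than Kummer's carry count, but either closes the gap. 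The only omission is cosmetic: you verify $b_{n+t+3}$ at one point of each $2$-cycle, while the paper also checks the partner via oddness of $F_m^2$; since $a_{n+t+3}$ is odd, $b$ mod $2$ is constant on the cycle, so nothing is lost.
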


\begin{proof}
1. We compute that $F_{2}(1+2k)\equiv 1+2k$ (mod $2^3$). Since the sequence $\{F_m(s)~(\mathrm{mod}~2^3)\}_m$ is periodic of period $12$ for any odd number $s$ by Proposition \ref{prop:Fib periodic}, for $m=14+48d$,
\[ F_{m}(1+2k)\equiv F_{2}(1+2k)\equiv 1+2k~(\mathrm{mod}~2^3). \]
Therefore, $\{1+2k\}$ is a cycle of length 1 at level 3.

Now we compute the quantity $a_3$ for the above cycles, as defined in (\ref{a_l}). We have that $F'_{2}(1+2k)\equiv 1~(\mathrm{mod}~4)$. Since the sequence $\{F'_m(1+2k)~(\mathrm{mod}~4)\}_m$ is periodic of period 12 by Lemma \ref{lem:Fib a_n}, for $m=14+48d$, we obtain the quentity $a_3$,
\[ a_3(1+2k)=F'_m(1+2k)\equiv F'_{2}(1+2k)\equiv 1~(\mathrm{mod}~4). \]

Now we compute the quantity $b_3$ for the above cycles, as defined in (\ref{b_l}). We have that $F_{14}(1+2k)-(1+2k)\equiv 8~(\mathrm{mod}~2^4)$. Since the sequence $\{F_m(1+2k)-(1+2k)~(\mathrm{mod}~2^4)\}_m$ is periodic of period $3\cdot 2^3$ by Proposition \ref{prop:Fib periodic}, for $m=14+48d$, we obtain
\[ F_m(1+2k)-(1+2k)\equiv 8~(\mathrm{mod}~2^4). \]
So, $F_m(1+2k)-(1+2k)=8+2^4Q$ for some $Q\in\Z$. Finally,
\[ b_3(1+2k)=\frac{F_m(1+2k)-(1+2k)}{2^3}=1+2Q\equiv 1~(\mathrm{mod}~2). \]
Therefore, the cycle $\{1+2k\}$ strongly grows at level 3, which completes the proof.
	
2. For the proof, we divide the cycles $\{(1+4k)2^n,(2^{t+2}-1-4k)2^n\}$ into two types of cycles $\{(1+2k)2^n,(2^{t+2}-1-2k)2^n\}$ and $\{(2^{t+2}+1+2k)2^n,(2^{t+3}-1-2k)2^n\}$ where $k=0,1,\dots,2^{t}-1$ and $n\geq1$.

First, assume that $t$ is odd. By Lemma \ref{the relation between q and t}, $c_t=c_{t+1}=0$. Then,
\begin{align*}
q&=1+2^2+2^4+\dots+2^{t-1}+0\cdot 2^t+0\cdot 2^{t+1}+2^{t+2}r\\
&=\frac{2\cdot 2^t-1}{3}+2^{t+2} r
\end{align*}
for some $r\in\Z$. The Fibonacci polynomial becomes\\
$F_m(x)=\sum_{j=0}^{4\cdot 2^t+24\cdot 2^t r-2}\tbinom{4\cdot 2^t+24\cdot 2^t r-1+j}{1+2j}x^{1+2j}$. Consider the form
\[ F_m\big((1+2k)2^n\big)=\sum_{j=0}^{4\cdot 2^t+24\cdot 2^t r-2}\tbinom{4\cdot 2^t+24\cdot 2^t r-1+j}{1+2j}\big((1+2k)2^n\big)^{1+2j}. \]
For $j=1$, the summand has the valuation
\begin{align*}
&\nu_2\Big(\tbinom{4\cdot 2^t+24\cdot 2^t r}{3}\big((1+2k)2^n\big)^{3}\Big)\\
&=\nu_2\big(2^{3n+t+2} \cdot\frac{1}{3}(1+2k)^3 (1+6r)(-1+2^{t+1}+3\cdot 2^{t+2} r)\\
&\quad \cdot(-1+2^{t+2}+3\cdot 2^{t+3} r)\big)\\
&\geq n+t+3.
\end{align*}
For $j\geq 2$,
\begin{multline*}
\tbinom{4\cdot 2^t+24\cdot 2^t r-1+j}{1+2j}\big((1+2k)2^n\big)^{1+2j}\\
=\frac{\big(2^{t+2}(1+6r)-1+j\big)\dots\big(2^{t+2}(1+6r)-1-j\big)}{(1+2j)!}\big((1+2k)2^n\big)^{1+2j}.
\end{multline*}
In the above expression, the numerator has the factor $2^{t+2}(1+6r)$ and the number of even factors in the numerator, excluding the term $2^{t+2}(1+6r)$, is $j-1$. It is known that for $x=\sum_{i=0}^{s}x_i 2^i$ with $x_i=0$ or 1, $\nu_2(x!)=x-\sum_{i=0}^{s}x_i$. Hence, $\nu_2\big((1+2j)!\big)\leq 2j$. Therefore,
\begin{align}
\nu_2\Big(\tbinom{4\cdot 2^t+24\cdot 2^t r-1+j}{1+2j}&\big((1+2k)2^n\big)^{1+2j}\Big)\label{m=14+12q,q=4d coefficient}\\
&\geq(t+2)+(j-1)+(1+2j)n-2j\notag\\
&=1+t+n+j(2n-1)\notag\\
&\geq n+t+3\notag.
\end{align}
So,
\begin{align*}
F_m\big((1+2k)2^n\big)&\equiv\tbinom{4\cdot 2^t+24\cdot 2^t r-1}{1}(1+2k)2^n\\
&\equiv(2^{t+2}-1-2k)2^n~(\mathrm{mod}~2^{n+t+3}),
\end{align*}
\begin{align*}
F_m\big((2^{t+2}-1-2k)2^n\big)&\equiv\tbinom{4\cdot 2^t+24\cdot 2^t r-1}{1}(2^{t+2}-1-2k)2^n\\
&\equiv(1+2k)2^n~(\mathrm{mod}~2^{n+t+3}),
\end{align*}
\begin{align*}
F_m\big((2^{t+2}+1+2k)2^n\big)&\equiv\tbinom{4\cdot 2^t+24\cdot 2^t r-1}{1}(2^{t+2}+1+2k)2^n\\
&\equiv(2^{t+3}-1-2k)2^n~(\mathrm{mod}~2^{n+t+3})~\mathrm{and}
\end{align*}
\begin{align*}
F_m\big((2^{t+3}-1-2k)2^n\big)&\equiv\tbinom{4\cdot 2^t+24\cdot 2^t r-1}{1}(2^{t+3}-1-2k)2^n\\
&\equiv(2^{t+2}+1+2k)2^n~(\mathrm{mod}~2^{n+t+3}).
\end{align*}
Therefore, $\{(1+2k)2^n,(2^{t+2}-1-2k)2^n\}$ and $\{(2^{t+2}+1+2k)2^n,(2^{t+3}-1-2k)2^n\}$ are cycles of length 2 at level $n+t+3$.

Now we compute the quantity $a_{n+t+3}$ for the above cycles, as defined in (\ref{a_l}).
\begin{align*}
a_{n+t+3}\big((1+2k)2^n\big)&=F_m'\big((1+2k)2^n\big)\cdot F_m'\big((2^{t+2}-1-2k)2^n\big)\\
&\equiv \tbinom{4\cdot 2^t+24\cdot 2^t r-1}{1}\cdot \tbinom{4\cdot 2^t+24\cdot 2^t r-1}{1}\\
&\equiv 1~(\mathrm{mod}~4)~\mathrm{and}
\end{align*}
\begin{align*}
a_{n+t+3}\big((2^{t+2}+1+&2k)2^n\big)\\
&=F_m'\big((2^{t+2}+1+2k)2^n\big)\cdot F_m'\big((2^{t+3}-1-2k)2^n\big)\\
&\equiv \tbinom{4\cdot 2^t+24\cdot 2^t r-1}{1}\cdot \tbinom{4\cdot 2^t+24\cdot 2^t r-1}{1}\\
&\equiv 1~(\mathrm{mod}~4).
\end{align*}

Now we compute the quantity $b_{n+t+3}$ for the above cycles, as defined in (\ref{b_l}).
\begin{align*} b_{n+t+3}\big((1+2k)2^n\big)=\frac{F_m^2\big((1+2k)2^n\big)-(1+2k)2^n}{2^{n+t+3}}.
\end{align*}
We show that $\nu_2\Big(F_m^2\big((1+2k)2^n\big)-(1+2k)2^n\Big)=n+t+3$.
\[ F_m\big((1+2k)2^n\big)=\sum_{j=0}^{4\cdot 2^t+24\cdot 2^t r-2}\tbinom{4\cdot 2^t+24\cdot 2^t r-1+j}{1+2j}\big((1+2k)2^n\big)^{1+2j}. \]
For $j=1$, the summand has the valuation
\begin{align*}
\nu_2\Big(\tbinom{4\cdot 2^t+24\cdot 2^t r}{3}&\big((1+2k)2^n\big)^{3}\Big)\\
&=\nu_2\big(2^{3n+t+2} \cdot\frac{1}{3}(1+2k)^3 (1+6r)(-1+2^{t+1}+3\cdot 2^{t+2} r)\\
&\quad \cdot(-1+2^{t+2}+3\cdot 2^{t+3} r)\big)\\
&\geq n+t+4.
\end{align*}
For $j=2$, the summand has the valuation
\begin{align*}
\nu_2&\Big(\tbinom{4\cdot 2^t+24\cdot 2^t r+1}{5}\big((1+2k)2^n\big)^{5}\Big)\\
&=\nu_2\big(2^{5n+t} \cdot\frac{1}{15}(1+2k)^5 (1+6r)(-1+2^{t+1} +3\cdot 2^{t+2} r)\\
&\quad \cdot(-3+2^{t+2}+3\cdot 2^{t+3} r)(-1+2^{t+2}+3\cdot 2^{t+3} r)(1+2^{t+2}+3\cdot 2^{t+3} r)\big)\\
&\geq n+t+4.
\end{align*}
For $j\geq 3$, by expression (\ref{m=14+12q,q=4d coefficient}), $\nu_2\Big(\tbinom{4\cdot 2^t+24\cdot 2^t r-1+j}{1+2j}\big((1+2k)2^n\big)^{1+2j}\Big)
\geq 1+t+n+j(2n-1) \geq n+t+4.$
So, we obtain
\begin{align*}
F_m\big((1+2k)2^n\big)&\equiv \tbinom{4\cdot 2^t+24\cdot 2^t r-1}{1}(1+2k)2^n\\
&\equiv (1+2k)(-1+2^{t+2}+3\cdot 2^{t+3} r)2^n~(\mathrm{mod}~2^{n+t+4}).
\end{align*}
Now we compute the following.
\begin{align*}
F_m^2&\big((1+2k)2^n\big)-(1+2k)2^n\\
&\equiv F_m\big((1+2k)(-1+2^{t+2}+3\cdot 2^{t+3} r)2^n\big)-(1+2k)2^n\\
&\equiv \sum_{j=0}^{4\cdot 2^t+24\cdot 2^t r-2}\tbinom{4\cdot 2^t+24\cdot 2^t r-1+j}{1+2j}\big((1+2k)(-1+2^{t+2}+3\cdot 2^{t+3} r)2^n\big)^{1+2j}\\
&\quad-(1+2k)2^n~(\mathrm{mod}~2^{n+t+4}).
\end{align*}
For $j=0$, with the term $(1+2k)2^n$, the summand has the valuation
\begin{align*}
\nu_2&\big(\tbinom{4\cdot 2^t+24\cdot 2^t r-1}{1}(1+2k)(-1+2^{t+2}+3\cdot 2^{t+3} r)2^n-(1+2k)2^n\big)\\
&=\nu_2\big(2^{n+t+3} (1+2k)(1+6r)(-1+2^{t+1}+3\cdot 2^{t+2} r)\big)\\
&=n+t+3.
\end{align*}
For $j=1$, the summand has the valuation
\begin{align*}
\nu_2\Big(&\tbinom{4\cdot 2^t+24\cdot 2^t r}{3}\big((1+2k)(-1+2^{t+2}+3\cdot 2^{t+3} r) 2^n\big)^3\Big)\\
&=\nu_2\big(2^{3n+t+2} \cdot\frac{1}{3}(1+2k)^3 (1+6r)(-1+2^{t+1}+3\cdot 2^{t+2} r)\\
&\quad \cdot(-1+2^{t+2}+3\cdot 2^{t+3} r)^4\big)\\
&\geq n+t+4.
\end{align*}
For $j=2$, the summand has the valuation
\begin{align*}
\nu_2\Big(&\tbinom{4\cdot 2^t+24\cdot 2^t r+1}{5}\big((1+2k)(-1+2^{t+2}+3\cdot 2^{t+3} r)2^n\big)^{5}\Big)\\
&=\nu_2\big(2^{5n+t} \frac{1}{15}(1+2k)^5 (1+6r)(-1+2^{t+1}+3\cdot 2^{t+2} r)\\
&\quad \cdot(-3+2^{t+2}+3\cdot 2^{t+3} r)(-1+2^{t+2}+3\cdot 2^{t+3} r)^6\\
&\quad \cdot(1+2^{t+2}+3\cdot 2^{t+3} r)\big)\\
&\geq n+t+4.
\end{align*}
For $j\geq 3$, by expression (\ref{m=14+12q,q=4d coefficient}),
\begin{align*}
\nu_2\Big(\tbinom{4\cdot 2^t+24\cdot 2^t r-1+j}{1+2j}&\big((1+2k)(-1+2^{t+2}+3\cdot 2^{t+3} r)2^n\big)^{1+2j}\Big)\\
&\geq 1+t+n+j(2n-1)\\
&\geq n+t+4.
\end{align*}	
Combining these, we obtain
\[ \nu_2\Big(F_m^2\big((1+2k)2^n\big)-(1+2k)2^n\Big)=n+t+3. \]
Hence, $b_{n+t+3}\big((1+2k)2^n\big)\equiv 1~(\mathrm{mod}~2)$.

Because $F_m^2(x)$ is an odd function and $(2^{t+3}-1-2k)2^n\equiv -(1+2k)2^n$ $(\mathrm{mod}~2^{n+t+3})$,
\begin{align*}
b_{n+t+3}\big((2^{t+3}-1-2k)2^n\big)&=b_{n+t+3}\big(-(1+2k)2^n\big)\\
&=\frac{F_m^2\big(-(1+2k)2^n\big)-\big(-(1+2k)2^n\big)}{2^{n+t+3}}\\
&=-\frac{F_m^2\big((1+2k)2^n\big)-\big((1+2k)2^n\big)}{2^{n+t+3}}\\
&=-b_{n+t+3}\big((1+2k)2^n\big)\\
&\equiv 1~(\mathrm{mod}~2).
\end{align*}

Second, assume that $t$ is even. By Lemma \ref{the relation between q and t}, $c_t=c_{t+1}=1$. Then,
\begin{align*}
q&=1+2^2+2^4+2^6+\dots+2^t+2^{t+1}+2^{t+2}r\\
&=\frac{10\cdot 2^t-1}{3}+2^{t+2} r
\end{align*}
for some $r\in\Z$. The Fibonacci polynomial becomes\\
$F_m(x)=\sum_{j=0}^{20\cdot 2^t+24\cdot 2^t r-2}\tbinom{20\cdot 2^t+24\cdot 2^t r-1+j}{1+2j}x^{1+2j}$. Consider the form
\[ F_m\big((1+2k)2^n\big)=\sum_{j=0}^{20\cdot 2^t+24\cdot 2^t r-2}\tbinom{20\cdot 2^t+24\cdot 2^t r-1+j}{1+2j}\big((1+2k)2^n\big)^{1+2j}. \]
For $j=1$, the summand has the valuation
\begin{align*}
\nu_2\Big(\tbinom{20\cdot 2^t+24\cdot 2^t r}{3}&\big((1+2k)2^n\big)^{3}\Big)\\
&=\nu_2\big(2^{3n+t+2} \cdot\frac{1}{3}(1+2k)^3 (5+6r)\\
&\quad \cdot(-1+5\cdot 2^{t+1}+3\cdot 2^{t+2} r)(-1+5\cdot 2^{t+2}+3\cdot 2^{t+3} r)\big)\\
&\geq n+t+3.
\end{align*}
For $j\geq 2$,
\begin{multline*}
\tbinom{20\cdot 2^t+24\cdot 2^t r-1+j}{1+2j}\big((1+2k)2^n\big)^{1+2j}\\
=\frac{\big(2^{t+2}(5+6r)-1+j\big)\dots\big(2^{t+2}(5+6r)-1-j\big)}{(1+2j)!}\big((1+2k)2^n\big)^{1+2j}.
\end{multline*}
In the above expression, the numerator has the factor $2^{t+2}(5+6r)$ and the number of even factors in the numerator, excluding the term $2^{t+2}(5+6r)$, is $j-1$. It is known that for $x=\sum_{i=0}^{s}x_i 2^i$ with $x_i=0$ or 1, $\nu_2(x!)=x-\sum_{i=0}^{s}x_i$. Hence, $\nu_2\big((1+2j)!\big)\leq 2j$. Therefore,
\begin{align}
\nu_2\Big(\tbinom{20\cdot 2^t+24\cdot 2^t r-1+j}{1+2j}&\big((1+2k)2^n\big)^{1+2j}\Big)\label{m=14+12q,q=4d coefficient_2}\\
&\geq(t+2)+(j-1)+(1+2j)n-2j\notag\\
&=1+t+n+j(2n-1)\notag\\
&\geq n+t+3\notag.
\end{align}
So,
\begin{align*}
F_m\big((1+2k)2^n\big)\equiv&\tbinom{20\cdot 2^t+24\cdot 2^t r-1}{1}(1+2k)2^n\\
\equiv&(2^{t+2}-1-2k)2^n~(\mathrm{mod}~2^{n+t+3}),
\end{align*}
\begin{align*}
F_m\big((2^{t+2}-1-2k)2^n\big)\equiv&\tbinom{20\cdot 2^t+24\cdot 2^t r-1}{1}(2^{t+2}-1-2k)2^n\\
\equiv&(1+2k)2^n~(\mathrm{mod}~2^{n+t+3}),
\end{align*}
\begin{align*}
F_m\big((2^{t+2}+1+2k)2^n\big)\equiv&\tbinom{20\cdot 2^t+24\cdot 2^t r-1}{1}(2^{t+2}+1+2k)2^n\\
\equiv&(2^{t+3}-1-2k)2^n~(\mathrm{mod}~2^{n+t+3})~\mathrm{and}
\end{align*}
\begin{align*}
F_m\big((2^{t+3}-1-2k)2^n\big)\equiv&\tbinom{20\cdot 2^t+24\cdot 2^t r-1}{1}(2^{t+3}-1-2k)2^n\\
\equiv&(2^{t+2}+1+2k)2^n~(\mathrm{mod}~2^{n+t+3}).
\end{align*}
Therefore, $\{(1+2k)2^n,(2^{t+2}-1-2k)2^n\}$ and $\{(2^{t+2}+1+2k)2^n,(2^{t+3}-1-2k)2^n\}$ are cycles of length 2 at level $n+t+3$.

Now we compute the quantity $a_{n+t+3}$ for the above cycles, as defined in (\ref{a_l}).
\begin{align*}
a_{n+t+3}\big((1+2k)2^n\big)&=F_m'\big((1+2k)2^n\big)\cdot F_m'\big((2^{t+2}-1-2k)2^n\big)\\
&\equiv \tbinom{20\cdot 2^t+24\cdot 2^t r-1}{1}\cdot\tbinom{20\cdot 2^t+24\cdot 2^t r-1}{1}\\
&\equiv 1~(\mathrm{mod}~4)~\mathrm{and}
\end{align*}
\begin{align*}
a_{n+t+3}\big((2^{t+2}+1+2k)2^n\big)
&=F_m'\big((2^{t+2}+1+2k)2^n\big)\cdot F_m'\big((2^{t+3}-1-2k)2^n\big)\\
&\equiv \tbinom{20\cdot 2^t+24\cdot 2^t r-1}{1}\cdot\tbinom{20\cdot 2^t+24\cdot 2^t r-1}{1}\\
&\equiv 1~(\mathrm{mod}~4).
\end{align*}

Now we compute the quantity $b_{n+t+3}$ for the above cycles, which is defined in (\ref{b_l}).
\begin{align*} b_{n+t+3}\big((1+2k)2^n\big)=\frac{F_m^2\big((1+2k)2^n\big)-(1+2k)2^n}{2^{n+t+3}}.
\end{align*}
We show that $\nu_2\Big(F_m^2\big((1+2k)2^n\big)-(1+2k)2^n\Big)=n+t+3$.
\[ F_m\big((1+2k)2^n\big)=\sum_{j=0}^{20\cdot 2^t+24\cdot 2^t r-2}\tbinom{20\cdot 2^t+24\cdot 2^t r-1+j}{1+2j}\big((1+2k)2^n\big)^{1+2j}. \]
For $j=1$, the summand has the valuation
\begin{align*}
\nu_2\Big(\tbinom{20\cdot 2^t+24\cdot 2^t r}{3}&\big((1+2k)2^n\big)^{3}\Big)\\
&=\nu_2\big(2^{3n+t+2} \cdot\frac{1}{3}(1+2k)^3 (5+6r)\\
&\quad \cdot(-1+5\cdot 2^{t+1}+3\cdot 2^{t+2} r)(-1+5\cdot 2^{t+2}+3\cdot 2^{t+3} r)\big)\\
&\geq n+t+4.
\end{align*}
For $j=2$, the summand has the valuation
\begin{align*}
\nu_2&\Big(\tbinom{20\cdot 2^t+24\cdot 2^t r+1}{5}\big((1+2k)2^n\big)^{5}\Big)\\
&=\nu_2\big(2^{5n+t} \cdot\frac{1}{15}(1+2k)^5 (5+6r)(-1+5\cdot 2^{t+1}+3\cdot 2^{t+2} r)\\
&\quad \cdot(-3+5\cdot 2^{t+2}+3\cdot 2^{t+3} r)(-1+5\cdot 2^{t+2}+3\cdot 2^{t+3} r)\\
&\quad \cdot(1+5\cdot 2^{t+2}+3\cdot 2^{t+3} r)\big)\\
&\geq n+t+4.
\end{align*}
For $j\geq 3$, by expression (\ref{m=14+12q,q=4d coefficient_2}), $\nu_2\Big(\tbinom{20\cdot 2^t+24\cdot 2^t r-1+j}{1+2j}\big((1+2k)2^n\big)^{1+2j}\Big)
\geq 1+t+n+j(2n-1) \geq n+t+4.$
Therefore, we obtain
\begin{align*}
F_m\big((1+2k)2^n\big)&\equiv \tbinom{20\cdot 2^t+24\cdot 2^t r-1}{1}(1+2k)2^n\\
&\equiv (1+2k)(-1+5\cdot 2^{t+2}+3\cdot 2^{t+3} r)2^n~(\mathrm{mod}~2^{n+t+4}).
\end{align*}
Now we compute the following.
\begin{align*}
F_m^2&\big((1+2k)2^n\big)-(1+2k)2^n\\
&\equiv F_m\big((1+2k)(-1+5\cdot 2^{t+2}+3\cdot 2^{t+3} r)2^n\big)-(1+2k)2^n\\
&\equiv \sum_{j=0}^{20\cdot 2^t+24\cdot 2^t r-2}\tbinom{20\cdot 2^t+24\cdot 2^t r-1+j}{1+2j}\big((1+2k)(-1+5\cdot 2^{t+2}+3\cdot 2^{t+3} r)2^n\big)^{1+2j}\\
&\quad-(1+2k)2^n~(\mathrm{mod}~2^{n+t+4}).
\end{align*}
For $j=0$, with the term $(1+2k)2^n$, the summand has the valuation
\begin{align*}
\nu_2&\big(\tbinom{20\cdot 2^t+24\cdot 2^t r-1}{1}(1+2k)(-1+5\cdot 2^{t+2}+3\cdot 2^{t+3} r)2^n-(1+2k)2^n\big)\\
&=\nu_2\big(2^{n+t+3} (1+2k)(5+6r)(-1+5\cdot 2^{t+1}+3\cdot 2^{t+2} r)\big)\\
&=n+t+3.
\end{align*}
For $j=1$, the summand has the valuation
\begin{align*}
\nu_2\Big(&\tbinom{20\cdot 2^t+24\cdot 2^t r}{3}\big((1+2k)(-1+5\cdot 2^{t+2}+3\cdot 2^{t+3} r)2^n\big)^3\Big)\\
&=\nu_2\big(2^{3n+t+2} \cdot\frac{1}{3}(1+2k)^3 (5+6r)(-1+5\cdot 2^{t+1}+3\cdot 2^{t+2} r)\\
&\quad \cdot(-1+5\cdot 2^{t+2}+3\cdot 2^{t+3} r)^4\big)\\
&\geq n+t+4.
\end{align*}
For $j=2$, the summand has the valuation
\begin{align*}
\nu_2\Big(&\tbinom{20\cdot 2^t+24\cdot 2^t r+1}{5}\big((1+2k)(-1+5\cdot 2^{t+2}+3\cdot 2^{t+3} r)2^n\big)^{5}\Big)\\
&=\nu_2\big(2^{5n+t} \cdot\frac{1}{15}(1+2k)^5 (5+6r)(-1+5\cdot 2^{t+1}+3\cdot 2^{t+2} r)\\
&\quad \cdot(-3+5\cdot 2^{t+2}+3\cdot 2^{t+3} r)(-1+5\cdot 2^{t+2}+3\cdot 2^{t+3} r)^6\\
&\quad \cdot(1+5\cdot 2^{t+2}+3\cdot 2^{t+3} r)\big)\\
&\geq n+t+4.
\end{align*}
For $j\geq 3$, by expression (\ref{m=14+12q,q=4d coefficient_2}),
\begin{align*}
\nu_2\Big(\tbinom{20\cdot 2^t+24\cdot 2^t r-1+j}{1+2j}&\big((1+2k)(-1+5\cdot 2^{t+2}+3\cdot 2^{t+3} r)2^n\big)^{1+2j}\Big)\\
&\geq 1+t+n+j(2n-1)\\
&\geq n+t+4.
\end{align*}	
Combining these, we obtain
\[ \nu_2\Big(F_m^2\big((1+2k)2^n\big)-(1+2k)2^n\Big)=n+t+3. \]
Hence, $b_{n+t+3}\big((1+2k)2^n\big)\equiv 1~(\mathrm{mod}~2)$.

Because $F_m^2(x)$ is an odd function and $(2^{t+3}-1-2k)2^n\equiv -(1+2k)2^n$ $(\mathrm{mod}~2^{n+t+3})$,
\begin{align*}
b_{n+t+3}\big((2^{t+3}-1-2k)2^n\big)&=b_{n+t+3}\big(-(1+2k)2^n\big)\\
&=\frac{F_m^2\big(-(1+2k)2^n\big)-\big(-(1+2k)2^n\big)}{2^{n+t+3}}\\
&=-\frac{F_m^2\big((1+2k)2^n\big)-\big((1+2k)2^n\big)}{2^{n+t+3}}\\
&=-b_{n+t+3}\big((1+2k)2^n\big)\\
&\equiv 1~(\mathrm{mod}~2).
\end{align*}
Therefore, the cycles $\{(1+2k)2^n,(2^{t+2}-1-2k)2^n\}$ and $\{(2^{t+2}+1+2k)2^n,(2^{t+3}-1-2k)2^n\}$ strongly grow at level $n+t+3$, which completes the proof.
\end{proof}

By Proposition \ref{prop:m=2+12q,q=1}, we conclude that the following is true.
\begin{theorem}\label{thm:m=2+12q,q=1}
The minimal decomposition of $\Z_2$ for $F_m(x)$ with $m=2+12q$ and $q=1+2d$ for nonnegative integers $d$ is
\[ \Z_2=\{0\}\bigsqcup\Bigl(\big(\bigcup_{k=0}^3 A_k\big)\cup\big(\bigcup_{n\geq 1}\bigcup_{k=0}^{2^{t+1}-1} M_{n,k}\big)\Bigr), \]
where 
\begin{align*}
	& A_k=1+2k+2^3\Z_2 \ \text{and} \\
	& M_{n,k}=\{(1+4k)2^n+2^{n+t+3}\Z_2\}\cup \{ (2^{t+2}-1-4k)2^n+2^{n+t+3}\Z_2\}
\end{align*} 
with $t=t(q)$. Here, $\{0\}$ is the set of a fixed point and $A_k$'s and $M_{n,k}$'s are the minimal components.
\end{theorem}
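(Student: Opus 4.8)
The plan is to assemble the decomposition from the cycle classification already obtained in Proposition \ref{prop:m=2+12q,q=1} together with the minimality criterion of Proposition \ref{prop:grow tails}; the remaining work is essentially combinatorial, namely verifying that the associated invariant clopen sets partition $\Z_2\setminus\{0\}$.

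First I would record that $\{0\}$ is a fixed point: for even $m$ the polynomial $F_m$ consists only of odd-degree monomials, so $F_m(0)=0$. Next, Proposition \ref{prop:m=2+12q,q=1} furnishes two families of strongly growing cycles, namely the length-$1$ cycles $\{1+2k\}$ at level $3$ for $k=0,1,2,3$ and the length-$2$ cycles $\{(1+4k)2^n,(2^{t+2}-1-4k)2^n\}$ at level $n+t+3$ for $n\geq1$ and $0\leq k\leq 2^{t+1}-1$. Each of these occurs at a level $\geq 2$, so part (1) of Proposition \ref{prop:grow tails} applies and shows that $F_m$ restricted to the invariant clopen set $\mathbb{X}_\sigma$ is minimal. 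Reading off $\mathbb{X}_\sigma$ for the two families yields exactly the sets $A_k$ and $M_{n,k}$ of the statement, so each $A_k$ and each $M_{n,k}$ is a minimal component.

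The main step is to show that $\{0\}$ together with these minimal components exhausts $\Z_2$ disjointly, which I would organize by stratifying $\Z_2$ according to $2$-adic valuation. The four sets $A_k=1+2k+2^3\Z_2$ have distinct centres modulo $2^3$ and together cover all odd elements, i.e. the valuation-$0$ shell $1+2\Z_2$. For a fixed $n\geq1$, every element of $M_{n,k}$ has valuation exactly $n$, since its centres have valuation $n$ while the ball radius corresponds to $2^{n+t+3}\Z_2$; it then suffices to check that the $2^{t+1}$ sets $M_{n,k}$ partition the valuation-$n$ shell $\{x:\nu_2(x)=n\}$. Dividing through by $2^n$, this reduces to the claim that, modulo $2^{t+3}$, the residues $1+4k$ for $0\leq k\leq 2^{t+1}-1$ run exactly through the odd residues $\equiv1\pmod4$ while $2^{t+2}-1-4k$ run through those $\equiv3\pmod4$; since each progression is injective modulo $2^{t+3}$ and contributes $2^{t+1}$ residues lying in a single class modulo $4$, together they exhaust all $2^{t+2}$ odd residues. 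This is the heart of the argument and the place where the precise index ranges matter.

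The anticipated obstacle is bookkeeping: one must confirm simultaneously that the two progressions occupy disjoint classes modulo $4$ (so the two balls inside a single $M_{n,k}$ never meet), that distinct $k$ give disjoint balls, and that distinct $n$ give disjoint valuation shells. Granting this, one obtains $\Z_2=\{0\}\sqcup(1+2\Z_2)\sqcup\bigsqcup_{n\geq1}\{x:\nu_2(x)=n\}$, matching the asserted decomposition exactly. Finally, since each $M_{n,k}$ and $A_k$ is invariant and a minimal subsystem carried by an infinite clopen set contains no periodic point, $\{0\}$ is the unique periodic point; hence $\mathcal{P}=\{0\}$ and the attracting-basin part $\mathcal{B}$ is empty, which completes the minimal decomposition.
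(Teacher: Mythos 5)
Your proposal is correct and follows the same route as the paper, which derives this theorem directly from Proposition \ref{prop:m=2+12q,q=1} combined with part (1) of Proposition \ref{prop:grow tails}; the paper simply leaves the partition bookkeeping (that the $A_k$ cover the odd shell and the $M_{n,k}$ cover each valuation-$n$ shell exactly once) implicit, and you have verified it correctly.
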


\subsection{Case: $m=2+12q$ with $q=2+2d$}

We consider the case $m=2+12q$ with $q=2+2d$ for nonnegative integers $d$. Let $u=u_1(q)$, as defined in (\ref{special value u_1(q)}). Then, $q=2^{u}+2^{u+1} r$ for some $r\in\Z$. Then, $m=12\cdot 2^{u}+2+24\cdot 2^{u} r$ and the Fibonacci polynomial becomes
\[ F_m(x)=\sum_{j=0}^{6\cdot 2^{u}+12\cdot 2^{u} r}\tbinom{6\cdot 2^{u}+12\cdot 2^{u} r+1+j}{1+2j}x^{1+2j}. \]

\begin{proposition}\label{prop:m=2+12q,q=2}
Let $m=2+12q$ with $q=2+2d$ for some nonnegative integer $d$. Let $u=u_1(q)$, as defined in (\ref{special value u_1(q)}). The Fibonacci polynomial $F_m(x)$ has two types of cycles:
\begin{enumerate}
\item the cycles $\{1+2k\}$ of length 1 strongly grow at level $u+3$ where $k=0,1,\dots,2^{u+2}-1$, and

\item the cycles $\{(1+2k)2^n\}$ of length 1 strongly grow at level $n+u+1$ with $k=0,1,\dots,2^{u}-1$ and $n\geq1$.
\end{enumerate}
\end{proposition}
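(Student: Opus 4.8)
The plan is to verify, for each of the two claimed families, the three defining conditions of strong growth in Definition \ref{def:movement}: that the point is a cycle at the stated level, that the multiplier $a$ from (\ref{a_l}) satisfies $a \equiv 1 \pmod 4$, and that the quantity $b$ from (\ref{b_l}) satisfies $b \equiv 1 \pmod 2$. Both families consist of length-$1$ cycles, so these reduce to $a_l = F_m'(x)$ and $b_l = (F_m(x)-x)/2^l$. Throughout I use $q = 2^u + 2^{u+1}r$, so that $m = 2 + 3\cdot 2^{u+2}(1+2r)$; in particular $m \equiv 2 \pmod{12}$, $m \equiv 2 \pmod{3 \cdot 2^{u+2}}$, and $m \equiv 2 + 3\cdot 2^{u+2} \pmod{3\cdot 2^{u+3}}$. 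Note also that family (1) exhausts the odd residues modulo $2^{u+3}$ while family (2) sweeps out the points of $2$-adic valuation exactly $n$.

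For family (1), the cycle property is immediate: since $\{F_m(s) \pmod{2^{u+3}}\}_m$ has period $3 \cdot 2^{u+2}$ for odd $s$ by Proposition \ref{prop:Fib periodic}(2), and $m \equiv 2 \pmod{3 \cdot 2^{u+2}}$, I get $F_m(1+2k) \equiv F_2(1+2k) = 1 + 2k \pmod{2^{u+3}}$. For the multiplier, Lemma \ref{lem:Fib a_n}(2) gives period $12$ for $\{F_m'(s) \pmod 4\}_m$, so $a_{u+3} = F_m'(1+2k) \equiv F_2'(1+2k) = 1 \pmod 4$. The quantity $b_{u+3}$ is the delicate point: I must show $\nu_2\big(F_m(1+2k)-(1+2k)\big) = u+3$ exactly, i.e. compute the difference modulo $2^{u+4}$. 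I would reduce $m$ modulo the level-$(u+4)$ period $3\cdot 2^{u+3}$ to $m \equiv 2 + 3\cdot 2^{u+2}$, then apply the addition formula $F_{2+N} = F_3 F_N + F_2 F_{N-1}$ with $N = 3\cdot 2^{u+2}$. Proposition \ref{prop:Fib periodic}(1) (with the recurrence) gives $F_N(s) \equiv 0$ and $F_{N-1}(s) \equiv 1 + 2^{u+3} \pmod{2^{u+4}}$, whence $F_m(1+2k)-(1+2k) \equiv s\cdot 2^{u+3} \equiv 2^{u+3} \pmod{2^{u+4}}$, so $b_{u+3}$ is odd.

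For family (2), I work directly from the summation formula, writing $M = 6\cdot 2^u + 12\cdot 2^u r = 3\cdot 2^{u+1}(1+2r)$ so that $\nu_2(M) = u+1$ and $M+1 \equiv 1 \pmod 4$. Evaluating $F_m$ at $(1+2k)2^n$, the $j=0$ summand is $(M+1)(1+2k)2^n$, and $(M+1)(1+2k)2^n - (1+2k)2^n = 3\cdot 2^{n+u+1}(1+2r)(1+2k)$ has valuation exactly $n+u+1$. For the tail, the numerator of $\binom{M+1+j}{1+2j}$ is a product of $2j+1$ consecutive integers centered at the odd number $M+1$; it contains $M$ (valuation $u+1$) together with at least $j-1$ further even factors, so its valuation is at least $u+j$, while $\nu_2\big((1+2j)!\big) \le 2j$. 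This yields $\nu_2$ of the $j$-th summand at least $u + n + j(2n-1)$, which is $\ge n+u+1$ for every $j \ge 1$ (giving the cycle at level $n+u+1$) and $\ge n+u+2$ once $j \ge 2$. The $j=1$ term must be handled by an exact count, since the generic bound is too weak when $n=1$: here $\binom{M+2}{3} = (M+2)(M+1)M/6$ has valuation $u+1$, so its summand has valuation $u+1+3n \ge n+u+2$ for $n \ge 1$. Consequently only $j=0$ survives modulo $2^{n+u+1}$, confirming the fixed point; differentiating, only $j=0$ survives modulo $4$, giving $a_{n+u+1} \equiv M+1 \equiv 1 \pmod 4$; and the difference has valuation exactly $n+u+1$, giving $b_{n+u+1}$ odd.

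The main obstacle is the bookkeeping of $2$-adic valuations in family (2): showing that the $j=0$ term alone controls the residue, both for the cycle condition (modulo $2^{n+u+1}$) and for the oddness of $b$ (modulo $2^{n+u+2}$), requires the sharp, case-by-case estimate for $j=1$ set against the generic bound for $j \ge 2$, all uniform in the auxiliary parameter $u$. A secondary subtlety is the level shift in family (1): the $b$-computation lives one level above the cycle level, so the reduction of $m$ and the application of Proposition \ref{prop:Fib periodic}(1) must be carried out at level $u+4$ rather than $u+3$.
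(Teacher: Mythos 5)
Your proposal is correct and follows essentially the same route as the paper: part (1) via the mod-$2^{l}$ periodicity of $\{F_m(s)\}_m$ and $\{F_m'(s)\}_m$ together with Proposition \ref{prop:Fib periodic}(1) at level $u+4$ to get the exact valuation of $F_m(1+2k)-(1+2k)$, and part (2) via the same term-by-term $2$-adic valuation estimates on the summation formula, with the sharp $j=1$ count and the generic bound $n+u+j(2n-1)$ for $j\ge 2$. The only cosmetic difference is that you compute $F_{2+3\cdot 2^{u+2}}(s)$ through the addition formula rather than the plain recurrence; the substance is identical.
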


\begin{proof}
1. We compute that $F_{2}(1+2k)=1+2k$. Since the sequence $\{F_m(1+2k)~(\mathrm{mod}~2^{u+3})\}_m$ is periodic of period $12\cdot 2^{u}$ by Proposition \ref{prop:Fib periodic}, for $m=12\cdot 2^{u}+2+24\cdot 2^{u} r$,
\begin{align*}
F_m(1+2k)&=F_{12\cdot 2^{u}+24\cdot 2^{u} r+2}(1+2k)\\
&\equiv F_{2}(1+2k)\equiv 1+2k~(\mathrm{mod}~2^{u+3}).
\end{align*}
Therefore, $\{1+2k\}$ is a cycle of length 1 at level $u+3$.

Now we compute the quantity $a_{u+3}$ for the above cycles, as defined in (\ref{a_l}). We have that $F_{2}'(1+2k)\equiv 1~(\mathrm{mod}~4)$. Since the sequence $\{F_m'(1+2k)~(\mathrm{mod}~4)\}_m$ is periodic of period 12 by Lemma \ref{lem:Fib a_n}, for $m=12\cdot 2^{u}+2+24\cdot 2^{u} r$, we obtain the quantity $a_{u+3}$,
\[ a_{u+3}(1+2k)=F_m'(1+2k)\equiv F_{2}'(1+2k)\equiv 1~(\mathrm{mod}~4). \]

Now we compute the quantity $b_{u+3}$ for the above cycles, as defined in (\ref{b_l}). By Proposition \ref{prop:Fib periodic} and the recurrence relation (\ref{eq:Fibonacci polynomial_recurrence relation}), we have $F_{12\cdot 2^{u}+2}(1+2k)\equiv 1+2k+2^{u+3}~(\mathrm{mod}~2^{u+4})$. Since $\{F_m(1+2k)~(\mathrm{mod}~2^{u+4})\}_m$ is periodic of period $24\cdot 2^{u}$, for $m=12\cdot 2^{u}+2+24\cdot 2^{u} r$, we obtain
\[ F_m(1+2k)\equiv F_{12\cdot 2^{u}+2}(1+2k)\equiv 1+2k+2^{u+3}~(\mathrm{mod}~2^{u+4}). \]
So, $F_m(1+2k)=1+2k+2^{u+3}+2^{u+4}Q$ for some $Q\in\Z$. Finally,
\[ b_{u+3}=\frac{F_m(1+2k)-(1+2k)}{2^{u+3}}=1+2Q\equiv 1~(\mathrm{mod}~2). \]
Therefore, the cycle $\{1+2k\}$ strongly grows at level $u+3$, which completes the proof.

2. Consider the form
\begin{align*}
F_m\big((1+2k)2^n\big)=\sum_{j=0}^{6\cdot 2^{u}+12\cdot 2^{u} r}\tbinom{6\cdot 2^{u}+12\cdot 2^{u} r+1+j}{1+2j}\big((1+2k)2^n\big)^{1+2j}.
\end{align*}
For $j\geq 1$,
\begin{multline*}
\tbinom{6\cdot 2^{u}+12\cdot 2^{u} r+1+j}{1+2j}\big((1+2k)2^n\big)^{1+2j}\\
=\frac{(2^{u+1}(3+6r)+1+j)\dots(2^{u+1}(3+6r)+1-j)}{(1+2j)!}\big((1+2k)2^n\big)^{1+2j}.
\end{multline*}
In the above expression, the numerator has the factor $2^{u+1}(3+6r)$ and the number of even factors in the numerator, excluding the term $2^{u+1}(3+6r)$, is $j-1$. It is known that for $x=\sum_{i=0}^{s}x_i 2^i$ with $x_i=0$ or 1, $\nu_2(x!)=x-\sum_{i=0}^{s}x_i$. Hence, $\nu_2\big((1+2j)!\big)\leq 2j$. Therefore,
\begin{align}
\nu_2\Big(\tbinom{6\cdot 2^{u}+12\cdot 2^{u} r+1+j}{1+2j}&\big((1+2k)2^n\big)^{1+2j}\Big)\label{m=2+12q,q=2+2d coefficient}\\
&\geq(u+1)+(j-1)+(1+2j)n-2j\notag\\
&=n+u+j(2n-1)\notag\\
&\geq n+u+1.\notag
\end{align}
So,
\begin{align*}
F_m\big((1+2k)2^n\big)\equiv&\tbinom{6\cdot 2^{u}+12\cdot 2^{u} r+1}{1}(1+2k)2^n\\
\equiv&(1+2k)2^n~(\mathrm{mod}~2^{n+u+1}).
\end{align*}
Therefore, $\{(1+2k)2^n\}$ is a cycle of length 1 at level $n+u+1$.

Now we compute the quantity $a_{n+u+1}$ for the above cycles, as defined in (\ref{a_l}).
\begin{align*}
a_{n+u+1}\big((1+2k) 2^n\big)&=F'_m\big((1+2k) 2^n\big)\equiv \tbinom{6\cdot 2^{u}+12\cdot 2^{u} r+1}{1}\equiv 1~(\mathrm{mod}~4).
\end{align*}
Now we compute the quantity $b_{n+u+1}$ for the above cycles, as defined in (\ref{b_l}).
\[ b_{n+u+1}\big((1+2k) 2^n\big)=\frac{F_m\big((1+2k)2^n\big)-(1+2k)2^n}{2^{n+u+1}}. \]
We show that $\nu_2\Big(F_m\big((1+2k)2^n\big)-(1+2k)2^n\Big)=n+u+1$.
\begin{multline*}
F_m\big((1+2k)2^n\big)-(1+2k)2^n\\
=\sum_{j=0}^{6\cdot 2^{u}+12\cdot 2^{u} r}\tbinom{6\cdot 2^{u}+12\cdot 2^{u} r+1+j}{1+2j}\big((1+2k)2^n\big)^{1+2j}-(1+2k)2^n.
\end{multline*}
For $j=0$, with the term $(1+2k)2^n$, the summand has the valuation
\begin{align*}
\nu_2\big(\tbinom{6\cdot 2^{u}+12\cdot 2^{u} r+1}{1}&(1+2k)2^n-(1+2k)2^n\big)\\
&=\nu_2\big(2^{n+u+1} 3(1+2k)(1+2r)\big)\\
&=n+u+1.
\end{align*}
For $j=1$, the summand has the valuation
\begin{align*}
\nu_2\Big(\tbinom{6\cdot 2^{u}+12\cdot 2^{u} r+2}{3}&\big((1+2k)2^n\big)^3\Big)\\
&=\nu_2\big(2^{3n+u+1} (1+2k)^3 (1+2r)(1+3\cdot 2^{u}+6\cdot 2^{u} r)\\
&\quad \cdot(1+6\cdot 2^{u}+12\cdot 2^{u} r)\big)\\
&\geq n+u+2.
\end{align*}
For $j\geq2$, by expression (\ref{m=2+12q,q=2+2d coefficient}),
\begin{align*}
\nu_2\Big(\tbinom{6\cdot 2^{u}+12\cdot 2^{u} r+1+j}{1+2j}\big((1+2k)2^n\big)^{1+2j}\Big)&\geq n+u+j(2n-1)\\
&\geq n+u+2.
\end{align*}
Combining these, we obtain
\[ \nu_2\Big(F_m\big((1+2k)2^n\big)-(1+2k)2^n\Big)=n+u+1. \]
Hence, $b_{n+u+1}\big((1+2k)2^n\big)\equiv 1~(\mathrm{mod}~2)$. Therefore, the cycle $\{(1+2k)2^n\}$ strongly grows at level $n+u+1$, which completes the proof.
\end{proof}

By Proposition \ref{prop:m=2+12q,q=2}, we conclude that the following is true.
\begin{theorem}\label{thm:m=2+12q,q=2}
The minimal decomposition of $\Z_2$ for $F_m(x)$ with $m=2+12q$ and $q=2+2d$ for nonnegative integers $d$ is
\[ \Z_2=\{0\}\bigsqcup\Bigl(\big(\bigcup_{k=0}^{2^{u+2}-1} A_k\big)\cup\big(\bigcup_{n\geq 1}\bigcup_{k=0}^{2^{u}-1} M_{n,k}\big)\Bigr), \]
where 
\[ A_k=1+2k+2^{u+3}\Z_2 \ \text{and} \ M_{n,k}=(1+2k)2^n+2^{n+u+1}\Z_2 \]
with $u=u_1(q)$. Here, $\{0\}$ is the set of a fixed point and $A_k$'s and $M_{n,k}$'s are the minimal components.
\end{theorem}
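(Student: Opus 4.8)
The plan is to read the minimal decomposition directly off the cycle data supplied by Proposition~\ref{prop:m=2+12q,q=2}, using the dichotomy of Proposition~\ref{prop:grow tails} to promote each strongly growing cycle to a genuine minimal component, and then to verify that the resulting clopen balls together with the fixed point $\{0\}$ tile $\Z_2$. First I would note that since $m=2+12q$ is even, $F_m$ has no constant term, so $F_m(0)=0$ and $\{0\}$ is a fixed point, placing it in the periodic part $\mathcal{P}$ of Theorem~\ref{thm-decomposition}.

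Next I would invoke Proposition~\ref{prop:m=2+12q,q=2}, which produces two families of strongly growing cycles. Both grow at a level $\geq 2$: the type~1 cycles $\{1+2k\}$ at level $u+3$ and the type~2 cycles $\{(1+2k)2^n\}$ at level $n+u+1$, with $n\geq 1$ and $u=u_1(q)\geq 1$. Hence part~1 of Proposition~\ref{prop:grow tails} applies to each, so the restriction of $F_m$ to the associated invariant clopen set $\mathbb{X}_\sigma$ is minimal. For a length-$1$ cycle at level $l$ the set $\mathbb{X}_\sigma$ is the single ball centred at the cycle point of radius $2^{-l}$; thus the type~1 cycle yields $\mathbb{X}_\sigma=(1+2k)+2^{u+3}\Z_2=A_k$ and the type~2 cycle yields $\mathbb{X}_\sigma=(1+2k)2^n+2^{n+u+1}\Z_2=M_{n,k}$. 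Each $A_k$ and each $M_{n,k}$ is therefore a minimal component.

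I would then check that these components, together with $\{0\}$, form a disjoint cover of $\Z_2$. As $k$ runs over $0,\dots,2^{u+2}-1$, the residues $1+2k \pmod{2^{u+3}}$ run over all odd classes, so the $A_k$ are pairwise disjoint and $\bigcup_k A_k = 1+2\Z_2$ is exactly the set of odd $2$-adic integers. For fixed $n\geq 1$, writing $M_{n,k}=2^n\big((1+2k)+2^{u+1}\Z_2\big)$ shows that as $k$ runs over $0,\dots,2^u-1$ the residues $1+2k\pmod{2^{u+1}}$ exhaust the odd classes, so the $M_{n,k}$ are disjoint and $\bigcup_{k}M_{n,k}=2^n(1+2\Z_2)=\{x:\nu_2(x)=n\}$. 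Summing over $n\geq 1$ gives $2\Z_2\setminus\{0\}$, and adjoining the odds and $\{0\}$ recovers all of $\Z_2$ with no overlap. Consequently the attracting-basin part $\mathcal{B}$ of Theorem~\ref{thm-decomposition} is empty and the stated decomposition holds.

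The substantive analytic work is already carried out inside Proposition~\ref{prop:m=2+12q,q=2}, so the only genuine care needed here is the valuation bookkeeping of the last step: confirming that the index ranges $0\le k\le 2^{u+2}-1$ and $0\le k\le 2^u-1$ are precisely right, so that the $A_k$ tile the odd integers and, for each $n$, the $M_{n,k}$ tile the $2$-adic integers of valuation exactly $n$, neither double-counting a residue class nor omitting one. This tiling verification is the main (and essentially the only) obstacle.
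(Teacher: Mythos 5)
Your proposal is correct and follows essentially the same route as the paper, which derives the theorem directly from Proposition~\ref{prop:m=2+12q,q=2} by promoting each strongly growing cycle (all at level $\geq 2$) to a minimal component via part~1 of Proposition~\ref{prop:grow tails}. Your explicit verification that the balls $A_k$ tile $1+2\Z_2$ and the $M_{n,k}$ tile the set of elements of valuation exactly $n$, leaving only $\{0\}$, supplies the bookkeeping the paper leaves implicit.
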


\section{Minimal decompositions for $F_m(x)$ with $m\equiv 4$ (mod 12)}

We consider the case $m=4+12q$ with nonnegative integers $q$. The Fibonacci polynomial becomes
\[ F_m(x)=\sum_{j=0}^{1+6q}\tbinom{2+6q+j}{1+2j}x^{1+2j}. \]

\begin{proposition}\label{m=4+12q even numbers}
Let $m=4+12q$ for some nonnegative integer $q$. The Fibonacci polynomial $F_m(x)$ has a fixed point 0 in the clopen set $2\Z_2$ with $2\Z_2$ lying in its attracting basin.
\end{proposition}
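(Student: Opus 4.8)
The plan is to show that $\{0\}$ is a single-point cycle at level $1$ that grows tails, and then to invoke property 5 of Proposition \ref{prop:grow tails}; this runs exactly parallel to the $m\equiv 0\pmod{12}$ case. First I note that since $m=4+12q$ is even, $F_m(x)$ contains only odd powers of $x$ by (\ref{eq:Fibonacci polynomial_even&odd}); in particular it has no constant term, so $F_m(0)=0$ and $0$ is a fixed point. Moreover $F_m$ is then an odd function, so $F_m(2\Z_2)\subseteq 2\Z_2$ and the clopen set $\mathbb{X}_{\{0\}}=0+2\Z_2=2\Z_2$ is invariant. Thus $\{0\}$ is a cycle of length $1$ at level $1$.

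The key computation is the quantity $a_1(0)=F_m'(0)$ from (\ref{a_l}). Reading off the coefficient of the linear term $x^{1}$ (the $j=0$ summand) in the even-$m$ formula (\ref{eq:Fibonacci polynomial_even&odd}) gives $F_m'(0)=\tbinom{\lfloor m/2\rfloor}{1}=\lfloor m/2\rfloor=m/2=2+6q$. Since $2+6q$ is even, we have $a_1(0)=F_m'(0)\equiv 0\pmod 2$, so by Definition \ref{def:movement} the cycle $\{0\}$ grows tails at level $1$.

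It then remains to apply property 5 of Proposition \ref{prop:grow tails}: a growing-tails cycle at level $l\geq 1$ forces $F_m$ to have a $1$-periodic orbit inside the invariant clopen set $\mathbb{X}_{\{0\}}=2\Z_2$ lying in its attracting basin. Because $0$ is itself a fixed point contained in $2\Z_2$ and every point of $2\Z_2$ is attracted to this periodic orbit, the orbit must be $\{0\}$; hence $0$ is a fixed point with $2\Z_2$ in its attracting basin, as claimed.

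I do not anticipate a genuine obstacle: the only arithmetic input is the parity of the linear coefficient $m/2=2+6q$, which is routine. The one point deserving a word of care is the identification of the attracting periodic orbit as exactly $\{0\}$ rather than merely \emph{some} fixed point, which follows because $0$ is already fixed and lies in the basin, so it cannot be attracted to a distinct orbit.
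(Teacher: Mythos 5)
Your proposal is correct and follows essentially the same route as the paper: identify $\{0\}$ as a fixed point, show $a_1(0)=F_m'(0)\equiv 0\pmod 2$ so the cycle grows tails at level $1$, and conclude via property 5 of Proposition \ref{prop:grow tails}. The only difference is that you spell out the computation $F_m'(0)=\tbinom{2+6q}{1}=2+6q$ and the identification of the attracting orbit with $\{0\}$ explicitly, which the paper leaves implicit.
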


\begin{proof}
Since $F_m(0)=0$, $\{0\}$ is a cycle of length 1, i.e., a fixed point at any level. We check that $a_1(0)=F_m'(0)\equiv 0~(\mathrm{mod}~2)$ by (\ref{a_l}), so $\{0\}$ grows tails at level 1 by Definition \ref{def:movement}. By property 5 of Proposition \ref{prop:grow tails}, $F_m(x)$ has a fixed point 0 in the clopen set $2\Z_2$ with $2\Z_2$ lying its attracting basin.
\end{proof}

Now, we consider elements in the set $1+2\Z_2$, which is the complement of $2\Z_2$ in $\Z_2$. For this purpose, we divide nonnegative integers $q$ into the cases that $q$ is $1+2d,\ 4d,\ 6+8d,\ 10+16d,\ 18+32d,\ 34+64d $, and $2+64d $ for nonnegative integers $d$.

\subsection{Case: $m=4+12q$ with $q=1+2d$}

We consider the case $m=4+12q$ with $q=1+2d$ for nonnegative integers $d$. The Fibonacci polynomial becomes
\[ F_m(x)=\sum_{j=0}^{7+12d}\tbinom{8+12d+j}{1+2j}x^{1+2j}. \]

\begin{proposition}\label{prop:m=4+12q,q=1}
Let $m=4+12q$ with $q=1+2d$ for some nonnegative integer $d$. The Fibonacci polynomial $F_m(x)$ has cycles $\{1+4k,11+4k\}$ of length 2 which strongly grow at level 4 where $k=0,1,2$ and 3.
\end{proposition}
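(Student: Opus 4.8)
The plan is to follow the same three-step template used for Propositions \ref{prop:m=2+12q,q=1} and \ref{prop:m=2+12q,q=2}: first verify that each pair $\{1+4k,11+4k\}$ is a $2$-cycle at level $4$, then compute the multiplier $a_4$ modulo $4$, and finally compute $b_4$ modulo $2$, so that strong growth follows from Definition \ref{def:movement} (and minimality from Proposition \ref{prop:grow tails}). Throughout I write $x_1=1+4k$ and $x_2=11+4k$, both odd, and I use that here $m=4+12q=16+24d$.

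For the cycle, since $x_1,x_2$ are odd, Proposition \ref{prop:Fib periodic} gives that $\{F_m(x_i)~(\mathrm{mod}~2^4)\}_m$ has period $3\cdot 2^3=24$; as $m=16+24d\equiv 16~(\mathrm{mod}~24)$ I may replace $F_m$ by $F_{16}$ modulo $16$. A short recurrence computation then shows $F_{16}(x_1)\equiv x_2$ and $F_{16}(x_2)\equiv x_1~(\mathrm{mod}~16)$ for each $k=0,1,2,3$ (the resulting four pairs exhaust the eight odd classes modulo $16$), so $\{x_1,x_2\}$ is a $2$-cycle at level $4$. For $a_4=F_m'(x_1)F_m'(x_2)$ I invoke Lemma \ref{lem:Fib a_n}: the product sequence is periodic of period $6$ and $m\equiv 4~(\mathrm{mod}~6)$, so $a_4\equiv F_4'(x_1)F_4'(x_2)~(\mathrm{mod}~4)$; since $F_4'(x)=2+3x^2\equiv 1~(\mathrm{mod}~4)$ for every odd $x$, this product is $\equiv 1~(\mathrm{mod}~4)$.

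The heart of the argument is $b_4=\big(F_m^2(x_1)-x_1\big)/2^4$, which requires data modulo $2^5=32$. The clean reduction is the following: writing $F_m(x_1)\equiv x_2+16\varepsilon_1$ and $F_m(x_2)\equiv x_1+16\varepsilon_2~(\mathrm{mod}~32)$ with $\varepsilon_i\in\{0,1\}$, a one-term Taylor expansion gives $F_m(x_2+16\varepsilon_1)\equiv F_m(x_2)+16\varepsilon_1 F_m'(x_2)~(\mathrm{mod}~32)$, and since $F_m'(x_2)$ is odd this yields $F_m^2(x_1)-x_1\equiv 16(\varepsilon_1+\varepsilon_2)~(\mathrm{mod}~32)$. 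Hence $b_4\equiv \varepsilon_1+\varepsilon_2~(\mathrm{mod}~2)$, and it suffices to show $\varepsilon_1\neq\varepsilon_2$, i.e. that exactly one of $F_m(x_1),F_m(x_2)$ lands on its target plus $16$ modulo $32$. To obtain these two residues I again use Proposition \ref{prop:Fib periodic}, now with period $3\cdot 2^4=48$; here $m=16+24d$ is $\equiv 16$ or $\equiv 40~(\mathrm{mod}~48)$ according as $d$ is even or odd, so I must split into these two subcases and read off $F_{16}(x_i)$ or $F_{40}(x_i)$ modulo $32$.

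This split is exactly where the main obstacle lies: the individual defects $\varepsilon_1,\varepsilon_2$ genuinely depend on the parity of $d$ (for example at $k=0$ one finds $F_{16}(1)\equiv 27$ and $F_{16}(11)\equiv 1$, giving $(\varepsilon_1,\varepsilon_2)=(1,0)$, whereas $F_{40}(1)\equiv 11$ and $F_{40}(11)\equiv 17$, giving $(\varepsilon_1,\varepsilon_2)=(0,1)$), yet in both subcases $\varepsilon_1+\varepsilon_2$ is odd, so $b_4\equiv 1~(\mathrm{mod}~2)$ uniformly. I expect the only real work to be the bookkeeping of these finite mod-$32$ recurrences across the four values of $k$ and the two residues of $m$; once $\nu_2\big(F_m^2(x_1)-x_1\big)=4$ is confirmed in each subcase, Definition \ref{def:movement} gives that each cycle $\{1+4k,11+4k\}$ strongly grows at level $4$.
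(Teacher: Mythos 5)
Your proposal is correct, and its first two steps (the $2$-cycle at level $4$ via the period-$24$ property of $\{F_m(s)\bmod 2^4\}_m$, and $a_4\equiv 1\pmod 4$ via the period-$6$ property of $\{F_m'(s_1)F_m'(s_2)\bmod 4\}_m$) coincide with the paper's. Where you diverge is the computation of $b_4$. The paper also writes $F_m(1+4k)=11+4k+2^4A$, $F_m(11+4k)=1+4k+2^4B$ and reduces everything to showing $A+B\equiv 1\pmod 2$ (your $\varepsilon_1+\varepsilon_2$), but it gets there \emph{uniformly in $d$} via the polynomial identity $(11+4k)^{1+2j}\equiv(1+4k)^{1+2j}+2+4(1+2j)(2j)+8(-1)^j\pmod{2^5}$, which converts $F_m(11+4k)-F_m(1+4k)$ into $2F_m(1)+4F_m''(1)+8F_m(i)/i$ and then reads off the needed residues from Proposition \ref{prop:Fib periodic}, Lemma \ref{lem:Fib second derivative} and Lemma \ref{lem:Fib input i} --- no case split on the parity of $d$ is ever made. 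You instead work at the finer modulus $48$ for the index, split into $m\equiv 16$ and $m\equiv 40\pmod{48}$, and compute the residues of $F_m(x_i)\bmod 32$ directly; your sample values at $k=0$ ($F_{16}(1)\equiv 27$, $F_{16}(11)\equiv 1$, $F_{40}(1)\equiv 11$, $F_{40}(11)\equiv 17$ mod $32$) check out, and your Taylor-expansion reduction $b_4\equiv\varepsilon_1+\varepsilon_2\pmod 2$ is sound since $F_m'(x_2)$ is odd. What your route buys is elementary self-containedness (no identity in $j$, no second derivative or $F_m(i)/i$ bookkeeping); what it costs is the doubling of cases by the parity of $d$ and a residual block of $16$ finite mod-$32$ recurrences ($k=0,1,2,3$, two points each, two residues of $m$) that you have only sampled --- these must actually be carried out to close the argument, whereas the paper's identity-based computation is the template it reuses verbatim at much higher levels (e.g.\ level $10$ in Proposition \ref{prop:m=4+12q, q=2+64d sg at lev10}), where a direct residue tabulation would be far heavier.
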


\begin{proof}
We compute that $F_{16}(1+4k)\equiv 11+4k$ and $F_{16}(11+4k)\equiv 1+4k~(\mathrm{mod}~2^4)$. Since the sequence $\{F_m(s)~(\mathrm{mod}~2^4)\}_m$ is periodic of period 24 for any odd number $s$ by Proposition \ref{prop:Fib periodic}, for $m=16+24d$,
\begin{align}
&F_{m}(1+4k)\equiv F_{16}(1+4k)\equiv 11+4k\ (\mathrm{mod}~2^4)~~\mathrm{and}\label{thm:m=4+12q,q=1_1}\\
&F_{m}(11+4k)\equiv F_{16}(11+4k)\equiv 1+4k~(\mathrm{mod}~2^4).\label{thm:m=4+12q,q=1_2}
\end{align}
Therefore, $\{1+4k,11+4k\}$ is a cycle of length 2 at level 4.

Now we compute the quantity $a_4$ for the above cycles, as defined in (\ref{a_l}). We have that $F'_{4}(1+4k)\cdot F'_{4}(11+4k)\equiv 1~(\mathrm{mod}~4)$. Since the sequence $\{F'_m(1+4k)\cdot F'_m(11+4k)~(\mathrm{mod}~4)\}_m$ is periodic of period 6 by Lemma \ref{lem:Fib a_n}, for $m=16+24d$, we obtain the quantity $a_4$,
\begin{align*}
a_4(1+4k)&=F'_m(1+4k)\cdot F'_m(11+4k)\equiv F'_{4}(1+4k)\cdot F'_{4}(11+4k)\\
&\equiv 1~(\mathrm{mod}~4).
\end{align*}
	
Now we compute the quantity $b_4$ for the above cycles, as defined in (\ref{b_l}). From the expressions (\ref{thm:m=4+12q,q=1_1}) and (\ref{thm:m=4+12q,q=1_2}), we write
\begin{align}\label{m=4+12q and q=1+2d A,B coeff}
F_m(1+4k)= 11+4k+2^4A~\mathrm{and}~F_m(11+4k)= 1+4k+2^4B
\end{align}
for some $A,B\in\Z$. To obtain the value $b_4$, the following expression is useful. For integers $j\geq 0$, the following holds
\begin{equation}\label{b_l with m=4+12q and q=1+2d}
(11+4k)^{1+2j}\equiv (1+4k)^{1+2j}+2+4(1+2j)(2j)+8(-1)^j~(\mathrm{mod}~2^5).
\end{equation}
We check that the three sequences $\{(1+4k)^{1+2j}\}_j$, $\{(-1)^j\}_j$ and $\{4(1+2j)(2j)\}_j$ modulo $2^5$ are periodic of period 4, 2 and 4, respectively, and a direct computation shows that the expression holds for $j=0,1,2$ and 3. Therefore, the expression holds for every $j\geq 0$. Using the expression (\ref{b_l with m=4+12q and q=1+2d}) and the equalities $F_m(1)=\sum_{j=0}^{7+12d}\tbinom{8+12d+j}{1+2j}$, $\frac{F_m(i)}{i}=\sum_{j=0}^{7+12d}\tbinom{8+12d+j}{1+2j}(-1)^j$ and $F_m''(1)=\sum_{j=0}^{7+12d}\tbinom{8+12d+j}{1+2j}(1+2j)(2j)$, we obtain
\begin{align*}
F_m(11+4k)&=\sum_{j=0}^{7+12d}\tbinom{8+12d+j}{1+2j}(11+4k)^{1+2j}\\
&\equiv\sum_{j=0}^{7+12d}\tbinom{8+12d+j}{1+2j}\big((1+4k)^{1+2j}+2+4(1+2j)(2j)+8(-1)^j\big)\\
&\equiv F_m(1+4k)+2F_m(1)+4 F_m''(1)+8 \frac{F_m(i)}{i}~(\mathrm{mod}~2^5).
\end{align*}

We know that the sequence $\{F_m(1)~(\mathrm{mod}~2^4)\}_m$ is periodic of period 24 by Proposition \ref{prop:Fib periodic}. Since $m\equiv 16$~(mod 24), $F_{16}(1)\equiv 11~(\mathrm{mod}~2^4)$ implies $F_m(1)\equiv 11~(\mathrm{mod}~2^4)$. Write $F_m(1)=11+16h$ for some $h\in\Z_2$.

We know that the sequence $\{\frac{F_m(i)}{i}\}_m$ is periodic of period 12 by Lemma \ref{lem:Fib input i}. Since $m\equiv 4$~(mod 12), $\frac{F_4(i)}{i}=1$ implies $\frac{F_m(i)}{i}=1$.

We know that the sequence $\{F_m''(1)~(\mathrm{mod}~8)\}_m$ is periodic of period 24 by Lemma \ref{lem:Fib second derivative}. Since $m\equiv 16$~(mod 24), $F_{16}''(1)\equiv 2~(\mathrm{mod}~8)$ implies $F_m(1)\equiv2~(\mathrm{mod}~8)$. Write $F_m''(1)=2+8g$ for some $g\in\Z_2$.

Therefore, we obtain
\begin{align*}
F_m(11+4k)&\equiv (11+4k+2^4A)+2(11+16h)+4(2+8g)+8\\
&\equiv 17+4k+2^4A~(\mathrm{mod}~2^5).
\end{align*}
From expression (\ref{m=4+12q and q=1+2d A,B coeff}), we have that
\[ 1+4k+2^4B\equiv 17+4k+2^4A~(\mathrm{mod}~2^5), \]
so $A+B\equiv 1$ (mod 2).

We compute that
\begin{align*}
(11+4k+2^4A)^{1+2j}&\equiv (11+4k)^{1+2j}+\tbinom{1+2j}{1}(11+4k)^{2j}2^4A\\
&\equiv (11+4k)^{1+2j}+2^4A~(\mathrm{mod}~2^5).
\end{align*}
Therefore,
\begin{align*}
F_m^2(1+4k)&=F_m(11+4k+2^4A)\\
&=\sum_{j=0}^{7+12d}\tbinom{8+12d+j}{1+2j}(11+4k+2^4A)^{1+2j}\\
&\equiv\sum_{j=0}^{7+12d}\tbinom{8+12d+j}{1+2j}\big((11+4k)^{1+2j}+2^4A\big)\\
&\equiv F_m(11+4k)+2^4A\cdot F_m(1)\\
&\equiv 1+4k+2^4B+2^4A(11+16h)\\
&\equiv1+4k+2^4(A+B)\\
&\equiv1+4k+2^4~(\mathrm{mod}~2^5).
\end{align*}
So, $F_m^2(1+4k)=1+4k+2^4+2^5Q$ for some $Q\in\Z$. Finally,
\begin{equation*}
b_4(1+4k)=\frac{F_m^2(1+4k)-(1+4k)}{2^4}=1+2Q\equiv 1~(\mathrm{mod}~2).
\end{equation*}
Therefore, the cycle $\{1+4k,11+4k\}$ strongly grows at level 4, which completes the proof.
\end{proof}

By Proposition \ref{m=4+12q even numbers} and \ref{prop:m=4+12q,q=1}, we conclude that the following is true.
\begin{theorem}
The minimal decomposition of $\Z_2$ for $F_m(x)$ with $m=4+12q$ and $q=1+2d$ for nonnegative integers $d$ is
\[ \Z_2=\{0\}\bigsqcup\Big(\bigcup_{k=0}^3 M_k\Big)\bigsqcup(2\Z_2-\{0\}), \]
where 
\[ M_k=(1+4k+2^4\Z_2)\cup(11+4k+2^4\Z_2). \]
Here, $\{0\}$ is the set of a fixed point and $M_k$'s are the minimal components. The set $2\Z_2-\{0\}$ is the attracting basin of the fixed point $0$.
\end{theorem}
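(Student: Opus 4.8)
The plan is to assemble the theorem from the two preceding propositions by splitting $\Z_2$ into its even part $2\Z_2$ and its odd part $1+2\Z_2$, each of which is invariant under $F_m$. Since $m=4+12q$ is even, $F_m(0)=0$ and $F_m$ has vanishing constant term, so $F_m(2\Z_2)\subseteq 2\Z_2$; and since $m\equiv 1\pmod 3$, property 2 of Proposition \ref{prop:Fib periodic} gives $F_m(s)\equiv F_1(s)\equiv 1\pmod 2$ for every odd number $s$, so $F_m(1+2\Z_2)\subseteq 1+2\Z_2$. These two invariances let me treat the two parts independently and then recombine.

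On the even part I would simply invoke Proposition \ref{m=4+12q even numbers}: $0$ is a fixed point of $F_m$ and $2\Z_2$ lies in its attracting basin. This contributes exactly the summands $\{0\}$ and $2\Z_2-\{0\}$ to the decomposition.

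On the odd part the main step is to show that the four sets $M_k$ partition $1+2\Z_2$ and that each is a minimal component. For the partition I would reduce modulo $2^4$: the residues $1+4k$ and $11+4k$ for $k=0,1,2,3$ run through $\{1,5,9,13\}$ and $\{11,15,3,7\}$ respectively, so together they exhaust all eight odd residue classes modulo $16$ exactly once, whence $\bigsqcup_{k=0}^3 M_k=1+2\Z_2$. For minimality I would apply Proposition \ref{prop:m=4+12q,q=1}, which establishes that $\sigma_k=\{1+4k,\,11+4k\}$ is a length-$2$ cycle at level $4$ that strongly grows. The associated invariant clopen set is
\[
\mathbb{X}_{\sigma_k}=(1+4k+2^4\Z_2)\cup(11+4k+2^4\Z_2)=M_k,
\]
and since the level $4\geq 2$, property 1 of Proposition \ref{prop:grow tails} guarantees that $F_m\colon M_k\to M_k$ is minimal.

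Finally I would collect the three pieces $\{0\}$, $\bigcup_{k=0}^3 M_k$, and $2\Z_2-\{0\}$ into the asserted disjoint union; disjointness is automatic from the even/odd split combined with the modulo-$16$ partition above. I do not expect a genuine obstacle, since all the delicate valuation estimates were already carried out in Proposition \ref{prop:m=4+12q,q=1}. The only point needing care is the bookkeeping that the eight odd residue classes modulo $16$ are covered without overlap, which is precisely what identifies $1+2\Z_2$ with the union of the minimal components $M_k$.
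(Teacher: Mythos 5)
Your proposal is correct and follows essentially the same route as the paper, which derives this theorem directly by citing Proposition \ref{m=4+12q even numbers} for the even part and Proposition \ref{prop:m=4+12q,q=1} (together with property 1 of Proposition \ref{prop:grow tails}) for the minimal components on the odd part. Your explicit check that the residues $1+4k$ and $11+4k$ exhaust the eight odd classes modulo $16$ is exactly the bookkeeping the paper leaves implicit.
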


\subsection{Case: $m=4+12q$ with $q=4d$}

We consider the case $m=4+12q$ with $q=4d$ for nonnegative integers $d$. The Fibonacci polynomial becomes
\[ F_m(x)=\sum_{j=0}^{1+24d}\tbinom{2+24d+j}{1+2j}x^{1+2j}. \]

\begin{proposition}\label{prop:m=4+12q,q=0}
Let $m=4+12q$ with $q=4d$ for some nonnegative integer $d$. The Fibonacci polynomial $F_m(x)$ has cycles $\{1+4k,3+4k\}$ of length 2 which strongly grow at level 5 where $k=0,1,\dots,2^3-1$.
\end{proposition}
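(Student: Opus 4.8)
The plan is to follow the three-step template already used in Propositions \ref{prop:m=2+12q,q=1}, \ref{prop:m=2+12q,q=2} and especially \ref{prop:m=4+12q,q=1}: for each $k\in\{0,1,\dots,7\}$ I will (i) verify that $\{1+4k,\,3+4k\}$ is a genuine $2$-cycle at level $5$, (ii) show $a_5\equiv1\pmod 4$, and (iii) show $b_5\equiv1\pmod2$, after which Definition \ref{def:movement} yields strong growth at level $5$. The organizing simplification is that $m=4+48d\equiv4\pmod{48}$, so every periodicity statement in Proposition \ref{prop:Fib periodic} and Lemmas \ref{lem:Fib a_n}, \ref{lem:Fib second derivative} lets me replace the unwieldy $m$ by the concrete value $m=4$, where $F_4(x)=2x+x^3$ and $F_4'(x)=2+3x^2$.

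Steps (i) and (ii) are routine. Since $\{F_m(s)\pmod{2^5}\}_m$ has period $3\cdot2^4=48$ for odd $s$ by Proposition \ref{prop:Fib periodic}(2), I get $F_m(1+4k)\equiv F_4(1+4k)$ and $F_m(3+4k)\equiv F_4(3+4k)\pmod{32}$; expanding $2x+x^3$ and using $16k(k+1)\equiv0\pmod{32}$ gives $F_4(1+4k)\equiv3+4k$ and $F_4(3+4k)\equiv1+4k\pmod{32}$, confirming the cycle. For $a_5$, the product $F_m'(1+4k)\,F_m'(3+4k)\pmod4$ has period $6$ by Lemma \ref{lem:Fib a_n}(3) and $m\equiv4\pmod6$, so it equals $F_4'(1+4k)\,F_4'(3+4k)\equiv1\cdot1\equiv1\pmod4$.

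The real work is step (iii). Writing $F_m(1+4k)=(3+4k)+2^5A$ and $F_m(3+4k)=(1+4k)+2^5B$, a one-step expansion of the second iterate, using that $F_m'(3+4k)$ is a unit, gives
\[
F_m^2(1+4k)-(1+4k)\equiv 2^5(A+B)\pmod{2^6},
\]
so $b_5\equiv A+B\pmod2$ and it suffices to prove $A+B$ is odd. For this I would establish the key congruence
\[
(3+4k)^{1+2j}-(1+4k)^{1+2j}\equiv 2+24j\equiv -10+12(1+2j)\pmod{2^6},
\]
valid for all $j\ge0$ and independent of $k$ (verified by a finite computation, the powers being periodic in $j$ modulo $2^6$, exactly as for the analogous identity (\ref{b_l with m=4+12q and q=1+2d})). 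Summing against the coefficients $\tbinom{2+24d+j}{1+2j}$ collapses the right-hand side into standard quantities:
\[
F_m(3+4k)-F_m(1+4k)\equiv 12\,F_m'(1)-10\,F_m(1)\pmod{2^6}.
\]
Reducing by periodicity ($F_m(1)\equiv F_4(1)=3\pmod{32}$ and $F_m'(1)\equiv F_4'(1)=5\pmod{16}$) yields the difference $\equiv30\pmod{64}$; comparing with $(1+4k+2^5B)-(3+4k+2^5A)=-2+2^5(B-A)$ forces $B-A$ odd, hence $A+B$ odd and $b_5\equiv1\pmod2$.

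The main obstacle is isolating the correct linear-in-$j$ reduction of the power difference modulo $2^6$: one must check that the quadratic and alternating contributions (the $F_m''(1)$ and $F_m(i)/i$ terms that appeared at level $4$ in Proposition \ref{prop:m=4+12q,q=1}) genuinely vanish here, leaving only $F_m(1)$ and $F_m'(1)$, and then track exactly how much $2$-adic precision of each is required so that the reduction $m\mapsto4$ is legitimate (mod $32$ for $F_m(1)$, mod $16$ for $F_m'(1)$). Once $F_m(3+4k)-F_m(1+4k)\equiv30\pmod{64}$ is secured, the parity of $A+B$, and therefore the strong growth at level $5$, follows at once.
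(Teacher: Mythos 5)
Your argument is correct and follows exactly the template the paper intends here (the paper omits this proof, deferring to the ``similar'' Proposition \ref{prop:m=4+12q,q=1}): periodicity reductions to $m=4$ for the cycle and for $a_5$, followed by the $A+B$ parity computation via a $j$-periodic power congruence summed against the binomial coefficients. The key identity $(3+4k)^{1+2j}-(1+4k)^{1+2j}\equiv 2+24j\pmod{2^6}$ does hold for $j=0,\dots,7$ and hence for all $j$ by periodicity, and your precision bookkeeping ($F_m(1)\equiv 3\pmod{2^5}$, $F_m'(1)\equiv 5\pmod{2^4}$) is exactly what is needed to get $F_m(3+4k)-F_m(1+4k)\equiv 30\pmod{2^6}$, $A+B$ odd, and $b_5\equiv 1\pmod 2$.
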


We omit the proof of Proposition \ref{prop:m=4+12q,q=0} because it is similar to that of Proposition \ref{prop:m=4+12q,q=1}. By Proposition \ref{m=4+12q even numbers} and \ref{prop:m=4+12q,q=0}, we conclude that the following is true.
\begin{theorem}
The minimal decomposition of $\Z_2$ for Fibonacci polynomial $F_m(x)$ with $m=4+12q$ and $q=4d$ for nonnegative integers $d$ is
\[ \Z_2=\{0\}\bigsqcup\Bigl(\bigcup_{k=0}^{2^3-1} M_k\Bigr)\bigsqcup(2\Z_2-\{0\}), \]
where 
\[ M_k=(1+4k+2^5\Z_2)\cup(3+4k+2^5\Z_2). \] 
Here, $\{0\}$ is the set of a fixed point and $M_k$'s are the minimal components. The set $2\Z_2-\{0\}$ is the attracting basin of the fixed point $0$.
\end{theorem}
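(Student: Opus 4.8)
The plan is to read the decomposition directly off the two preceding propositions, so that the theorem reduces to a covering (exhaustion) check rather than any new $2$-adic computation. First I would split $\Z_2=2\Z_2\sqcup(1+2\Z_2)$ and record that both halves are $F_m$-invariant: since $m=4+12q$ is even, $F_m$ is a sum of odd-degree monomials, whence $F_m(2\Z_2)\subseteq 2\Z_2$; and since $m\equiv 1\pmod 3$, Proposition~\ref{prop:Fib periodic} gives $F_m(s)$ odd for every odd $s$, so $F_m(1+2\Z_2)\subseteq 1+2\Z_2$. This lets me treat the two halves independently.

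For the even half I would invoke Proposition~\ref{m=4+12q even numbers} verbatim: $\{0\}$ is a fixed point and $2\Z_2$ lies in its attracting basin, so $\{0\}$ supplies the periodic part $\mathcal{P}$ and $2\Z_2-\{0\}$ joins the attracting basin $\mathcal{B}$.

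For the odd half I would use Proposition~\ref{prop:m=4+12q,q=0}, which yields, for each $k=0,1,\dots,2^3-1$, a length-$2$ cycle $\{1+4k,\,3+4k\}$ at level $5$ that strongly grows. Taking the associated clopen invariant set $\mathbb{X}_\sigma=M_k=(1+4k+2^5\Z_2)\cup(3+4k+2^5\Z_2)$, part (1) of Proposition~\ref{prop:grow tails} applies (the level $5\ge 2$), so $F_m|_{M_k}$ is minimal; each $M_k$ is therefore a minimal component, belonging to $\mathcal{M}$ and harboring no periodic point.

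The one substantive step is to show these eight components exhaust $1+2\Z_2$, and I expect this residue count to be the main (indeed the only) obstacle. Working modulo $2^5=32$, as $k$ runs over $0,\dots,7$ the values $1+4k$ and $3+4k$ sweep out exactly the sixteen odd residues $1,3,\dots,31$ once each; hence the balls are pairwise disjoint and $\bigsqcup_{k=0}^{2^3-1}M_k=1+2\Z_2$. Assembling both halves gives $\mathcal{P}=\{0\}$, $\mathcal{M}=\bigcup_k M_k$, and $\mathcal{B}=2\Z_2-\{0\}$, the asserted minimal decomposition. All the delicate strong-growth estimates ($a_5\equiv 1\pmod 4$ and $b_5\equiv 1\pmod 2$) have already been discharged inside Proposition~\ref{prop:m=4+12q,q=0}, so nothing beyond the exhaustion check remains.
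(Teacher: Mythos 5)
Your proposal is correct and follows the same route as the paper, which likewise derives the theorem directly from Proposition \ref{m=4+12q even numbers} (fixed point $0$ with basin $2\Z_2$) and Proposition \ref{prop:m=4+12q,q=0} (strongly growing $2$-cycles at level $5$) via part 1 of Proposition \ref{prop:grow tails}. Your explicit verification that the sixteen balls $1+4k+2^5\Z_2$ and $3+4k+2^5\Z_2$ exhaust $1+2\Z_2$ is the only detail the paper leaves implicit, and it is checked correctly.
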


\subsection{Case: $m=4+12q$ with $q=6+8d$}

We consider the case $m=4+12q$ with $q=6+8d$ for nonnegative integers $d$. The Fibonacci polynomial becomes
\[ F_m(x)=\sum_{j=0}^{37+48d}\tbinom{38+48d+j}{1+2j}x^{1+2j}. \]

\begin{proposition}\label{prop:m=4+12q,q=6}
Let $m=4+12q$ with $q=6+8d$ for some nonnegative integer $d$. The Fibonacci polynomial $F_m(x)$ has cycles $\{1+4k,19+20k+16k^2\}$ of length 2 which strongly grows at level 6 where $k=0,\dots,2^4-1$.
\end{proposition}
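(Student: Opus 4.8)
The plan is to follow the template of the proof of Proposition \ref{prop:m=4+12q,q=1}, now carried out at level $6$ with the $b$-computation performed modulo $2^7$. Writing $q=6+8d$ gives $m=4+12q=76+96d$, so $m\equiv 76\pmod{96}$ and $m\equiv 4\pmod 6$. Set $x=1+4k$ and $y=19+20k+16k^2$; note $y\equiv 3\pmod 4$ while $x\equiv 1\pmod 4$, so the two orbits are genuinely disjoint and, as $k$ runs over $0,\dots,2^4-1$, the pairs $\{x,y\}$ exhaust all odd residues modulo $2^6$. First I would show that $\{x,y\}$ is a $2$-cycle at level $6$: since the sequence $\{F_m(s)~(\mathrm{mod}~2^6)\}_m$ has period $3\cdot 2^5=96$ by Proposition \ref{prop:Fib periodic}, and $m\equiv 76\pmod{96}$, it suffices to verify $F_{76}(x)\equiv y$ and $F_{76}(y)\equiv x\pmod{2^6}$, which is a finite check over the $16$ values of $k$ (equivalently, a polynomial identity in $k$ modulo $2^6$, conveniently built up from the doubling identities (\ref{Fib:eq variant1}) and (\ref{Fib:eq variant2})).

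Next I would compute $a_6$. By property 3 of Lemma \ref{lem:Fib a_n} the sequence $\{F_m'(s_1)F_m'(s_2)~(\mathrm{mod}~4)\}_m$ has period $6$, and $m\equiv 4\pmod 6$, so $a_6(x)=F_m'(x)F_m'(y)\equiv F_4'(x)F_4'(y)\pmod 4$. Since $F_4'(s)=2+3s^2\equiv 1\pmod 4$ for every odd $s$ (using $s^2\equiv 1\pmod 8$), both factors are $\equiv 1\pmod 4$, whence $a_6\equiv 1\pmod 4$. In particular $F_m'(y)$ is odd, which is needed below.

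The heart of the argument is the computation of $b_6$. Write $F_m(x)=y+2^6A$ and $F_m(y)=x+2^6B$ with $A,B\in\Z$, which is legitimate by the level-$6$ cycle relation. Using the exact Taylor expansion $F_m(y)=\sum_{i\ge 0}\frac{F_m^{(i)}(x)}{i!}\,\delta^i$, where $\delta=y-x=18+16k(k+1)=2\bigl(9+8k(k+1)\bigr)$ has $\nu_2(\delta)=1$, I would establish the congruence $F_m(x)+F_m(y)\equiv x+y+2^6\pmod{2^7}$, equivalently $A+B\equiv 1\pmod 2$. Because each Taylor coefficient $F_m^{(i)}(x)/i!$ is a $2$-adic integer, the $i$-th term has valuation at least $i$, so only $1\le i\le 6$ contribute modulo $2^7$; the first-order term combines with the leading discrepancy as $\delta\bigl(1+F_m'(x)\bigr)$, whose valuation is at least $2$ since $F_m'(x)$ is odd, and the remaining terms are controlled using the valuation and periodicity data for $F_m'$, $F_m''$ and $F_m'''$ furnished by Lemmas \ref{lem:Fib a_n}, \ref{lem:Fib second derivative} and \ref{lem:Fib third derivative}. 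Granting $A+B\equiv 1\pmod 2$, the proof finishes as in Proposition \ref{prop:m=4+12q,q=1}: expanding once more,
\[ F_m^2(x)=F_m\bigl(y+2^6A\bigr)\equiv F_m(y)+2^6A\,F_m'(y)\equiv x+2^6\bigl(B+A\,F_m'(y)\bigr)\pmod{2^7}, \]
and since $F_m'(y)$ is odd this equals $x+2^6(A+B)\equiv x+2^6\pmod{2^7}$; hence $b_6(x)=\bigl(F_m^2(x)-x\bigr)/2^6\equiv 1\pmod 2$. Together with $a_6\equiv 1\pmod 4$, Definition \ref{def:movement} shows that $\{x,y\}$ strongly grows at level $6$.

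I expect the main obstacle to be precisely the step $A+B\equiv 1\pmod 2$, i.e. the congruence $F_m(x)+F_m(y)\equiv x+y+2^6\pmod{2^7}$. Unlike the linear case of Proposition \ref{prop:m=4+12q,q=1}, where the shift $y-x=10$ was constant and the differences $(11+4k)^{1+2j}-(1+4k)^{1+2j}$ were independent of $k$ modulo $2^5$ (reducing everything to the fixed special values $F_m(1)$, $F_m''(1)$ and $F_m(i)/i$), here $\delta=18+16k(k+1)$ carries genuine $k$-dependence through $16k(k+1)$. Consequently the correction terms no longer collapse to evaluations at a single base point: they involve the derivatives $F_m^{(i)}(1+4k)$ at the moving point $x$, and extracting the single relevant $2^6$-bit requires tracking $\nu_2\bigl(F_m^{(i)}(1+4k)\bigr)$ against $\nu_2(\delta^i)=i$ and showing that all contributions below $2^6$ cancel while the $2^6$-coefficient remains odd. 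Moreover, since $\{F_m^{(i)}(s)~(\mathrm{mod}~2^6)\}_m$ has period $3\cdot 2^6=192$ rather than $96$, the verification naturally splits into the two subcases $m\equiv 76$ and $m\equiv 172\pmod{192}$ (i.e. $d$ even or odd). This bookkeeping is the delicate part; once completed, the rest of the argument is routine.
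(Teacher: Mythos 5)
Your outer structure---verifying the $2$-cycle at level $6$ via the period-$96$ congruence for $\{F_m(s)\ (\mathrm{mod}\ 2^6)\}_m$, getting $a_6\equiv 1\pmod 4$ from the period-$6$ product lemma, and reducing $b_6\equiv 1$ to $A+B\equiv 1\pmod 2$ followed by $F_m^2(x)\equiv F_m(y)+2^6A\,F_m'(y)\pmod{2^7}$---is exactly the paper's template; the paper itself only says the argument is ``similar to Proposition \ref{prop:m=4+12q,q=1}'' after first splitting the sixteen cycles into the two families $\{1+8k,19+40k\}$ and $\{63+56k,45+24k\}$ with $k=0,\dots,7$, so that within each family both entries are linear in $k$. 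Those parts of your plan are fine, as is your observation that the sixteen pairs exhaust the odd residues modulo $2^6$.

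The gap is in your proposed mechanism for $A+B\equiv 1\pmod 2$. You expand $F_m(y)=\sum_{i}F_m^{(i)}(x)\delta^i/i!$ with $\nu_2(\delta)=1$ and assert that the terms $2\le i\le 6$ are ``controlled using the valuation and periodicity data for $F_m'$, $F_m''$ and $F_m'''$.'' But the terms $i=4,5,6$ genuinely contribute modulo $2^7$: $\nu_2(\delta^i)=i$ exactly, and the divided derivatives $F_m^{(i)}(x)/i!$ need not be divisible by $2^{7-i}$ (already for a single monomial the $i=4$ piece of $y^{1+2j}-x^{1+2j}$ is $\tbinom{1+2j}{4}x^{2j-3}\delta^4$, and $\tbinom{1+2j}{4}$ is odd for infinitely many $j$, e.g.\ $\tbinom54=5$, so that piece has valuation exactly $4$). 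The paper supplies no lemmas on the fourth, fifth or sixth divided derivatives, so your bookkeeping cannot be closed with the cited tools. The device the paper actually uses (see the fully worked case $q=2+64d$, identity (\ref{b_l with m=4+12q and q=2+64d})) is to write $y^{1+2j}-x^{1+2j}$ modulo $2^7$ as an explicit combination of $1$, $(1+2j)$, $(1+2j)(2j)$, $(1+2j)(2j)(2j-1)$ and $(-1)^j$ with $k$-dependent coefficients, verified by periodicity in $j$ plus a finite check; summed against the binomial coefficients, the $(-1)^j$ term becomes $F_m(i)/i$ (Lemma \ref{lem:Fib input i}), and it is precisely this term that absorbs the contributions which, in your organization, would require unboundedly many derivatives. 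You should either adopt that decomposition (after the linear reparametrization the shift is $18+32k$, i.e.\ constant up to a multiple of $32$, which is easily handled by a single extra $F_m'(1)$ correction) or prove the missing higher-derivative analogues of Lemmas \ref{lem:Fib a_n}--\ref{lem:Fib third derivative}; as written the central step does not go through.
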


\begin{proof}
For the proof, we divide the cycles $\{1+4k,19+20k+16k^2\}$ into two types of cycles $\{1+8k,19+40k\}$ and $\{63+56k,45+24k\}$ where $k=0,\dots,2^3-1$.

We omit the further proof because it is similar to that of Proposition \ref{prop:m=4+12q,q=1}.
\end{proof}

By Proposition \ref{m=4+12q even numbers} and \ref{prop:m=4+12q,q=6}, we conclude that the following is true.
\begin{theorem}
The minimal decomposition of $\Z_2$ for $F_m(x)$ with $m=4+12q$ and $q=6+8d$ for nonnegative integers $d$ is
\[ \Z_2=\{0\}\bigsqcup\Big(\bigcup_{k=0}^{2^4-1} M_{k}\Big)\bigsqcup(2\Z_2-\{0\}), \]
where 
\[ M_{k}=(1+4k+2^6\Z_2)\cup(19+20k+16k^2+2^6\Z_2). \]
Here, $\{0\}$ is the set of a fixed point and $M_k$'s are the minimal components. The set $2\Z_2-\{0\}$ is the attracting basin of the fixed point $0$.
\end{theorem}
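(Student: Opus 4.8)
The plan is to assemble the decomposition directly from the two structural facts just established, treating the even and odd parts of $\Z_2$ separately. Since $F_m$ is an odd polynomial it maps $2\Z_2$ into $2\Z_2$, and since $m\equiv 1\pmod 3$ we have $F_m(s)\equiv 1\pmod 2$ for every odd $s$ by Proposition~\ref{prop:Fib periodic}, so $F_m$ also maps $1+2\Z_2$ into itself; thus $2\Z_2$ and $1+2\Z_2$ are both invariant clopen sets and it suffices to describe $F_m$ on each. On $2\Z_2$, Proposition~\ref{m=4+12q even numbers} already gives that $0$ is a fixed point whose attracting basin contains all of $2\Z_2-\{0\}$, which contributes the summands $\{0\}$ and $2\Z_2-\{0\}$ with no further argument.

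It remains to treat $1+2\Z_2$. By Proposition~\ref{prop:m=4+12q,q=6}, for each $k=0,\dots,2^4-1$ the $2$-cycle $\sigma_k=\{1+4k,\,19+20k+16k^2\}$ strongly grows at level $6$. Because $6\geq 2$, I would invoke property~1 of Proposition~\ref{prop:grow tails} to conclude that $F_m$ restricted to the invariant clopen set $\mathbb{X}_{\sigma_k}=M_k$ is minimal, so each $M_k$ is a minimal component. The only thing left is to verify that the sets $M_k$ partition $1+2\Z_2$, for then $\Z_2=2\Z_2\sqcup(1+2\Z_2)=\{0\}\sqcup(2\Z_2-\{0\})\sqcup\bigsqcup_k M_k$ is exactly the asserted decomposition.

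I expect the covering claim $\bigsqcup_{k}M_k=1+2\Z_2$ to be the only step with real content. Each $M_k$ is a union of two balls of radius $2^{-6}$, so the $16$ sets $M_k$ comprise $32$ balls, matching the $32$ odd residues modulo $2^6$; hence it is enough to check pairwise disjointness, i.e.\ that the $32$ first- and second-components are distinct modulo $2^6$. The first components $1+4k$ plainly run over all residues $\equiv 1\pmod 4$ as $k$ ranges over $0,\dots,15$. For the second components, $19+20k+16k^2\equiv 3\pmod 4$ shows they all lie among the residues $\equiv 3\pmod 4$, and the factorization $19+20k+16k^2=3+4\,(4+5k+4k^2)$ reduces the claim to showing that $k\mapsto 4+5k+4k^2$ is a bijection of $\Z/2^4\Z$. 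This is the crux of the proof: using $4k^2\equiv 4\,(k\bmod 2)\pmod{2^4}$, a direct evaluation for $k=0,\dots,15$ exhibits the $16$ outputs as a permutation of $\Z/2^4\Z$, so the second components exhaust the residues $\equiv 3\pmod 4$ modulo $2^6$. Together the first and second components then cover every odd residue modulo $2^6$ exactly once, establishing the partition and completing the decomposition.
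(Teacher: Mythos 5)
Your proposal is correct and follows essentially the same route as the paper, which simply cites Proposition~\ref{m=4+12q even numbers} for the even part and Proposition~\ref{prop:m=4+12q,q=6} together with property~1 of Proposition~\ref{prop:grow tails} for the minimal components on $1+2\Z_2$. The only detail you add beyond the paper's one-line deduction is the explicit verification that the $32$ balls partition $1+2\Z_2$ (via the bijectivity of $k\mapsto 4+5k+4k^2$ on $\Z/2^4\Z$), which is a correct and routine check the paper leaves implicit.
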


\subsection{Case: $m=4+12q$ with $q=10+16d$}

We consider the case $m=4+12q$ with $q=10+16d$ for nonnegative integers $d$. The Fibonacci polynomial becomes
\[ F_m(x)=\sum_{j=0}^{61+96d}\tbinom{62+96d+j}{1+2j}x^{1+2j}. \]

\begin{proposition}\label{prop:m=4+12q,q=10}
Let $m=4+12q$ with $q=10+16d$ for some nonnegative integer $d$. The Fibonacci polynomial $F_m(x)$ has cycles $\{1+4k,51+116k+48k^2\}$ of length 2 which strongly grow at level 7 where $k=0,\dots,2^5-1$.
\end{proposition}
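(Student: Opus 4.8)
The plan is to show that each pair $\{1+4k,\,51+116k+48k^2\}$ is a strongly growing $2$-cycle at level $7$ by verifying the two conditions $a_7\equiv1\pmod4$ and $b_7\equiv1\pmod2$ of Definition~\ref{def:movement} for $l=7$, following the scheme of Proposition~\ref{prop:m=4+12q,q=1}. Writing $m=4+12q=124+192d$, I first observe that $m\equiv124\pmod{3\cdot2^{6}}$, so by the periodicity of $\{F_m(s)\pmod{2^{7}}\}_m$ for odd $s$ (Proposition~\ref{prop:Fib periodic}) we have $F_m(s)\equiv F_{124}(s)\pmod{2^{7}}$. Thus the cycle relations $F_m(1+4k)\equiv 51+116k+48k^2$ and $F_m(51+116k+48k^2)\equiv 1+4k\pmod{2^{7}}$ reduce to a finite verification on the single polynomial $F_{124}$ (both $F_{124}(1+4k)\bmod 2^{7}$ and the claimed quadratic in $k$ are periodic in $k$ of period dividing $32$), which I would confirm by direct computation or by the moment expansion used below for $b_7$.

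For $a_7=F_m'(1+4k)\cdot F_m'(51+116k+48k^2)$ I use Lemma~\ref{lem:Fib a_n}(3): the product $\{F_m'(s_1)F_m'(s_2)\pmod4\}_m$ has period $6$, and since $m\equiv4\pmod6$ it equals $F_4'(1+4k)\cdot F_4'(51+116k+48k^2)\pmod4$. Because $F_4'(x)=2+3x^2$ and $x^2\equiv1\pmod8$ for odd $x$, each factor is $\equiv1\pmod4$, giving $a_7\equiv1\pmod4$.

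The substantive step is the oddness of $b_7$, i.e. the claim $\nu_2\big(F_m^2(1+4k)-(1+4k)\big)=7$. Setting $a=1+4k$, the partner point is exactly $3a^2+23a+25$, so I would first establish a congruence of the form
\[
(3a^2+23a+25)^{1+2j}\equiv a^{1+2j}+R_j\pmod{2^{8}},
\]
valid for all $j\ge0$, where $R_j$ is a fixed $\Z_2$-combination of the $j$-periodic quantities $1$, $(1+2j)(2j)$, $(-1)^j$ and $(1+2j)(2j)(2j-1)$; since these sequences are periodic in $j$, it suffices to check the identity on one period. Summing against the coefficients $\tbinom{62+96d+j}{1+2j}$ then rewrites $F_m(51+116k+48k^2)$, and after a second substitution $F_m^2(1+4k)$, in terms of the moment quantities $F_m(1)$, $F_m''(1)$, $F_m'''(1)$ and $\frac{F_m(i)}{i}$, each pinned down by its own short period together with the value of $m$ (Proposition~\ref{prop:Fib periodic} and Lemmas~\ref{lem:Fib input i}, \ref{lem:Fib second derivative}, \ref{lem:Fib third derivative}). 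Collecting the contributions should yield $F_m^2(1+4k)\equiv(1+4k)+2^{7}\pmod{2^{8}}$, whence $b_7\equiv1\pmod2$.

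I expect this $b_7$ computation to be the main obstacle, and it is genuinely heavier than in the linear cases because the partner $51+116k+48k^2$ depends quadratically on $k$; the term $48k^2$ contributes differently according to $k\bmod4$. The cleanest remedy is to split the $32$ cycles into subfamilies $k=4k'+c$ with $c=0,1,2,3$, on each of which the partner becomes an affine function of $k'$ modulo $2^{8}$ (its $k'$-coefficient having $2$-adic valuation $\ge6$), exactly as Proposition~\ref{prop:m=4+12q,q=6} splits its quadratic cycles into linear pieces. On each subfamily the Proposition~\ref{prop:m=4+12q,q=1} argument then applies essentially verbatim, and combining the four subfamilies establishes strong growth at level $7$ for all $k=0,\dots,2^5-1$.
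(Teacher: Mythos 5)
Your proposal follows essentially the same route as the paper: the paper's (omitted) proof likewise splits the quadratic family $\{1+4k,\,51+116k+48k^2\}$ into affine subfamilies (it uses the two families $\{1+8k,51+40k\}$ and $\{127+120k,77+88k\}$ with $k=0,\dots,2^4-1$ rather than your four classes $k=4k'+c$) and then applies the template of Proposition~\ref{prop:m=4+12q,q=1} exactly as you outline --- periodicity modulo $3\cdot 2^6$ for the cycle relation, Lemma~\ref{lem:Fib a_n} for $a_7$, and the $j$-periodic correction-term/moment expansion for $b_7$. The only slip is your parenthetical claim that the $k'$-coefficient of the partner on each subfamily has $2$-adic valuation at least $6$: it equals $464+128c$, of valuation $4$, but this does not affect the argument since the resulting correction terms remain expressible through the same periodic-in-$j$ quantities.
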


\begin{proof}
For the proof, we divide the cycles $\{1+4k,51+116k+48k^2\}$ into two types of cycles $\{1+8k,51+40k\}$ and $\{127+120k,77+88k\}$ where $k=0,\dots,2^4-1$.

We omit the further proof because it is similar to that of Proposition \ref{prop:m=4+12q,q=1}.
\end{proof}

By Proposition \ref{m=4+12q even numbers} and \ref{prop:m=4+12q,q=10}, we conclude that the following is true.
\begin{theorem}
The minimal decomposition of $\Z_2$ for $F_m(x)$ with $m=4+12q$ and $q=10+16d$ for nonnegative integers $d$ is
\[ \Z_2=\{0\}\bigsqcup\Big(\bigcup_{k=0}^{2^5-1} M_{k}\Big)\bigsqcup(2\Z_2-\{0\}), \]
where 
\[ M_{k}=(1+4k+2^7\Z_2)\cup\big(51+116k+48k^2+2^7\Z_2\big). \] 
Here, $\{0\}$ is the set of a fixed point and $M_k$'s are the minimal components. The set $2\Z_2-\{0\}$ is the attracting basin of the fixed point $0$.
\end{theorem}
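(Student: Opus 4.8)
The plan is to read off the decomposition from the two preceding propositions, treating the even part $2\Z_2$ and the odd part $1+2\Z_2$ separately, and then to verify that the listed components $M_k$ exhaust $1+2\Z_2$. For the even part, note that $m=4+12q$ is even, so $F_m$ is an odd polynomial with $F_m(0)=0$ and $2\Z_2$ is invariant. Proposition \ref{m=4+12q even numbers} gives that $0$ is a fixed point whose attracting basin contains all of $2\Z_2$; this produces the piece $\{0\}\bigsqcup(2\Z_2-\{0\})$ with $2\Z_2-\{0\}$ the attracting basin of $0$.

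For the odd part, Proposition \ref{prop:m=4+12q,q=10} provides, for each $k\in\{0,\dots,2^5-1\}$, a $2$-cycle $\{1+4k,\,51+116k+48k^2\}$ that strongly grows at level $7$. Since $7\ge2$, property 1 of Proposition \ref{prop:grow tails} applies and shows that $F_m$ restricted to the associated invariant clopen set $\mathbb{X}_\sigma=M_k$ is minimal; thus each $M_k$ is a minimal component. One also checks, via Proposition \ref{prop:Fib periodic} and $m\equiv1\pmod3$, that $F_m$ carries $1+2\Z_2$ into itself, consistent with these components being confined to the odd part.

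The crux is to show that the $M_k$ partition $1+2\Z_2$. A counting check comes first: there are $2^5$ cycles, and each $M_k$ is a union of two balls of radius $2^{-7}$, so together the $M_k$ comprise $2^6=64$ balls, exactly the number of odd residue classes modulo $2^7$. It therefore suffices to prove the $64$ centers are pairwise distinct mod $2^7$. The centers $1+4k$ sweep out all residues $\equiv1\pmod4$ bijectively, while $51+116k+48k^2\equiv3\pmod4$, so the two families cannot collide. The main obstacle, and the one genuine computation, is the injectivity of $k\mapsto 51+116k+48k^2\pmod{2^7}$ on $\{0,\dots,2^5-1\}$: factoring a difference of two such values as $(k_1-k_2)\bigl(116+48(k_1+k_2)\bigr)$ and observing that $\nu_2\bigl(116+48(k_1+k_2)\bigr)=2$ forces $\nu_2(k_1-k_2)\ge5$, hence $k_1=k_2$ in the given range. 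Consequently the second family hits all $32$ residues $\equiv3\pmod4$, all $64$ balls are distinct, and $\bigcup_k M_k=1+2\Z_2$. Combining the even and odd parts yields the asserted minimal decomposition.
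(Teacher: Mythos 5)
Your proposal is correct and follows essentially the same route as the paper, which simply derives the theorem by combining Proposition \ref{m=4+12q even numbers} (the fixed point $0$ with $2\Z_2$ as its attracting basin) with Proposition \ref{prop:m=4+12q,q=10} (the $2$-cycles strongly growing at level $7$) and the minimality criterion of Proposition \ref{prop:grow tails}. Your explicit verification that the $64$ balls are pairwise disjoint and exhaust $1+2\Z_2$ --- via the factorization $(k_1-k_2)\bigl(116+48(k_1+k_2)\bigr)$ and the valuation count --- is a detail the paper leaves implicit, and it is carried out correctly.
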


\subsection{Case: $m=4+12q$ with $q=18+32d$}

We consider the case $m=4+12q$ with $q=18+32d$ for nonnegative integers $d$. The Fibonacci polynomial becomes
\[ F_m(x)=\sum_{j=0}^{109+192d}\tbinom{110+192d+j}{1+2j}x^{1+2j}. \]

\begin{proposition}\label{prop:m=4+12q,q=18}
Let $m=4+12q$ with $q=18+32d$ for some nonnegative integer $d$. The Fibonacci polynomial $F_m(x)$ has cycles $\{1+4k,243+116k+112k^2+64k^3\}$ of length 2 which strongly grow at level 8 where $k=0,\dots,2^6-1$.
\end{proposition}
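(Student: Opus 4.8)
The plan is to follow the template of Proposition~\ref{prop:m=4+12q,q=1}, establishing in turn that $\{1+4k,\,243+116k+112k^2+64k^3\}$ is a $2$-cycle at level $8$, that $a_8\equiv1\pmod4$, and that $b_8\equiv1\pmod2$. Writing $m=4+12q=220+384d$, the sequence $\{F_m(s)\pmod{2^8}\}_m$ has period $3\cdot2^7=384$ for odd $s$ by Proposition~\ref{prop:Fib periodic}, so $F_m(s)\equiv F_{220}(s)\pmod{2^8}$. I would first confirm the cycle by checking $F_{220}(1+4k)\equiv 243+116k+112k^2+64k^3$ and $F_{220}\big(243+116k+112k^2+64k^3\big)\equiv 1+4k\pmod{2^8}$. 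Since $F_m(x)\pmod{2^8}$ depends only on $x\pmod{2^8}$, hence only on $k\pmod{2^6}$, these are congruences of bounded-degree polynomials in $k$, which I would verify, exactly as in the cases $q=6+8d$ and $q=10+16d$, by splitting the family into subtypes whose second coordinate is \emph{linear} in $k$ and checking each linear relation.

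For $a_8$ the argument is immediate. By property~3 of Lemma~\ref{lem:Fib a_n}, the sequence $\{F_m'(s_1)F_m'(s_2)\pmod4\}_m$ has period $6$, and $m=220+384d\equiv4\pmod6$; since $F_4'(x)=2+3x^2\equiv1\pmod4$ on odd inputs, one gets $a_8\equiv F_4'(s_1)F_4'(s_2)\equiv1\pmod4$ for the two odd cycle points $s_1,s_2$.

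The heart of the proof is $b_8$, and this is where I expect the main difficulty. Working modulo $2^9$, and setting $x_1=1+4k$, $x_2=243+116k+112k^2+64k^3$, $F_m(x_1)=x_2+2^8A$ and $F_m(x_2)=x_1+2^8B$, the goal is the parity relation $A+B\equiv1\pmod2$. As in Proposition~\ref{prop:m=4+12q,q=1}, this rests on a power-expansion identity rewriting $x_2^{\,1+2j}$ modulo $2^9$ as $x_1^{\,1+2j}$ plus correction terms built from $1$, $(1+2j)(2j)$, $(1+2j)(2j)(2j-1)$ and $(-1)^j$; because the second coordinate is now \emph{cubic} in $k$, a third-order term genuinely appears, so Lemma~\ref{lem:Fib third derivative} enters alongside Lemmas~\ref{lem:Fib a_n} and~\ref{lem:Fib second derivative}. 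Summing these corrections against the coefficients $\tbinom{110+192d+j}{1+2j}$ converts them into the seed quantities $F_m(1)$, $F_m''(1)$, $F_m'''(1)$ and $\tfrac{F_m(i)}{i}$, each of which I would evaluate through its modulo-periodic behaviour (Proposition~\ref{prop:Fib periodic} and Lemmas~\ref{lem:Fib second derivative}, \ref{lem:Fib third derivative}, \ref{lem:Fib input i}), yielding $A+B\equiv1\pmod2$. Since $F_m'(x_2)\equiv F_4'(x_2)\equiv1\pmod2$, expanding then gives
\[
F_m^2(x_1)=F_m\big(x_2+2^8A\big)\equiv F_m(x_2)+2^8A\,F_m'(x_2)\equiv x_1+2^8(A+B)\equiv x_1+2^8\pmod{2^9},
\]
so $\nu_2\big(F_m^2(x_1)-x_1\big)=8$ and $b_8\equiv1\pmod2$, completing the verification of strong growth.

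Two points are delicate and carry the real weight. First, controlling the power-expansion identity to the full accuracy $2^9$ when the shift $x_2-x_1$ is a true cubic in $k$: the cross terms are heavier than in the linear cases, and the subtype splitting is precisely what forces the $k$-dependent part of each correction to drop out at the required precision. Second, evaluating the seed constants uniformly in $d$: the period of $\{F_m(\cdot)\pmod{2^9}\}_m$ is $3\cdot2^8=768$, so $m\equiv 220$ or $604\pmod{768}$ according to the parity of $d$, and one must check that both residues yield the same parity of $A+B$. Once these are in hand the conclusion follows by the same formal steps as above.
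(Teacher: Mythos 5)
Your proposal follows exactly the route the paper takes: it splits the $2^6$ cycles into subfamilies whose second coordinate is linear in $k$ (the paper uses the four types $\{1+16k,243-48k\}$, $\{5+16k,23+80k\}$, $\{251-16k,233-80k\}$, $\{255-16k,13+48k\}$ with $k=0,\dots,2^4-1$) and then runs the template of Proposition~\ref{prop:m=4+12q,q=1}: cycle verification via the period $3\cdot 2^7$ modulo $2^8$, $a_8\equiv 1\pmod 4$ via property~3 of Lemma~\ref{lem:Fib a_n}, and $b_8\equiv 1\pmod 2$ via the power-expansion identity whose corrections are resummed into the seed quantities $F_m(1)$, $F_m''(1)$, $F_m'''(1)$, $\tfrac{F_m(i)}{i}$ evaluated by their modulo-periodicity. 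This matches the paper's (largely omitted) argument, and your attention to the two residues $m\equiv 220$ or $604\pmod{768}$ for the $b_8$ computation is exactly the point that must be checked.
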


\begin{proof}
For the proof, we divide the cycles $\{1+4k,243+116k+112k^2+64k^3\}$ into four types of cycles $\{1+16k,243-48k\}$, $\{5+16k,23+80k\}$, $\{251-16k,233-80k\}$ and $\{255-16k,13+48k\}$ where $k=0,\dots,2^4-1$.

We omit the further proof because it is similar to that of Proposition \ref{prop:m=4+12q,q=1}.
\end{proof}

By Proposition \ref{m=4+12q even numbers} and \ref{prop:m=4+12q,q=18}, we conclude that the following is true.
\begin{theorem}
The minimal decomposition of $\Z_2$ for $F_m(x)$ with $m=4+12q$ and $q=18+32d$ for nonnegative integers $d$ is
\[ \Z_2=\{0\}\bigsqcup\Big(\bigcup_{k=0}^{2^6-1} M_{k}\Big)\bigsqcup(2\Z_2-\{0\}), \]
where 
\[ M_{k}=(1+4k+2^8\Z_2)\cup(243+116k+112k^2+64k^3+2^8\Z_2). \]
Here, $\{0\}$ is the set of a fixed point and $M_k$'s are the minimal components. The set $2\Z_2-\{0\}$ is the attracting basin of the fixed point $0$.
\end{theorem}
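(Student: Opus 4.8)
The plan is to assemble the statement from Propositions \ref{m=4+12q even numbers} and \ref{prop:m=4+12q,q=18} together with Proposition \ref{prop:grow tails}, and then to check that the exhibited sets really do partition $\Z_2$. First I would treat the even numbers: by Proposition \ref{m=4+12q even numbers}, $0$ is a fixed point of $F_m$ contained in $2\Z_2$, and all of $2\Z_2$ lies in its attracting basin. Hence $\{0\}$ furnishes the periodic part and $2\Z_2-\{0\}$ lies in the attracting part of Theorem \ref{thm-decomposition}.

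Next I would extract the minimal components from the odd numbers. By Proposition \ref{prop:m=4+12q,q=18}, for each $k=0,\dots,2^6-1$ the pair $\{1+4k,\,243+116k+112k^2+64k^3\}$ is a length-$2$ cycle that strongly grows at level $8$. Since $8\geq 2$, property 1 of Proposition \ref{prop:grow tails} applies to the associated invariant clopen set $\mathbb{X}_\sigma$, which by its definition is exactly $M_k=(1+4k+2^8\Z_2)\cup(243+116k+112k^2+64k^3+2^8\Z_2)$, and it shows that $F_m\colon M_k\to M_k$ is minimal. Thus each $M_k$ is a minimal component consisting of two balls of radius $2^{-8}$.

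The decisive step — and the one I expect to cause the most trouble — is to verify that the $2^6$ sets $M_k$ tile $1+2\Z_2$ exactly, since only then do the three parts assemble into $\Z_2$ without overlap or omission. For the partition it is enough to understand the underlying family of $128$ balls, and this is most transparent through the four affine subfamilies produced inside the proof of Proposition \ref{prop:m=4+12q,q=18}, namely $\{1+16k,\,243-48k\}$, $\{5+16k,\,23+80k\}$, $\{251-16k,\,233-80k\}$ and $\{255-16k,\,13+48k\}$ with $k=0,\dots,2^4-1$. In each list the coefficient of $k$ (one of $16,48,80$) has $2$-adic valuation exactly $4$, so reduction modulo $2^8$ turns $k\mapsto a+ck$ into a bijection of $\{0,\dots,2^4-1\}$ onto the sixteen residues congruent to $a$ modulo $16$ in $\Z/2^8\Z$; thus each list sweeps out one full class modulo $16$. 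Reading off those classes, the eight lists realize the residues $1,5,9,13$ (the elements $\equiv 1\pmod 4$) and $3,7,11,15$ (those $\equiv 3\pmod 4$) modulo $16$, i.e. every odd residue modulo $16$, and within each class all sixteen lifts modulo $2^8$. Hence the $128$ balls cover each odd residue modulo $2^8$ exactly once, so $\bigcup_{k} M_k = 1+2\Z_2$ is a disjoint union.

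Finally I would combine the pieces. Since $\{0\}\sqcup(2\Z_2-\{0\})=2\Z_2$ accounts for the even numbers and $\bigcup_k M_k=1+2\Z_2$ for the odd numbers, the three sets are pairwise disjoint and their union is $\Z_2$. Identifying $\{0\}$ with the periodic part $\mathcal{P}$, the $M_k$ with the minimal components $\mathcal{M}_i$, and $2\Z_2-\{0\}$ with the attracting part $\mathcal{B}$ of Theorem \ref{thm-decomposition} then certifies that the displayed decomposition is the minimal decomposition, completing the proof.
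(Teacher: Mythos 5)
Your proposal is correct and follows the same route as the paper, which obtains this theorem directly by combining Proposition \ref{m=4+12q even numbers} (the fixed point $0$ and its basin $2\Z_2$) with Proposition \ref{prop:m=4+12q,q=18} and property 1 of Proposition \ref{prop:grow tails}. The only addition is your explicit check, via the four affine subfamilies and the valuation of the coefficients of $k$, that the $2^6$ sets $M_k$ tile $1+2\Z_2$ exactly; the paper leaves this covering argument implicit, and your verification of it is accurate.
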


\subsection{Case: $m=4+12q$ with $q=34+64d$}

We consider the case $m=4+12q$ with $q=34+64d$ for nonnegative integers $d$. The Fibonacci polynomial becomes
\[ F_m(x)=\sum_{j=0}^{205+384d}\tbinom{206+384d+j}{1+2j}x^{1+2j}. \]

\begin{proposition}\label{prop:m=4+12q,q=34}
Let $m=4+12q$ with $q=34+64d$ for some nonnegative integer $d$. The Fibonacci polynomial $F_m(x)$ has cycles $\{1+4k,115+116k+112k^2+64k^3\}$ of length 2 which strongly grow at level 9 where $k=0,\dots,2^7-1$.
\end{proposition}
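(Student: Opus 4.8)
The plan is to follow the template of Proposition~\ref{prop:m=4+12q,q=1} essentially verbatim, the one genuinely new feature being that the second coordinate $115+116k+112k^2+64k^3$ of each cycle is a cubic in $k$. Substituting $q=34+64d$ into $m=4+12q$ gives $m=412+768d$, hence $m\equiv 412\pmod{3\cdot 2^8}$ and $m\equiv 4\pmod 6$; these two congruences are what drive all the periodicity lemmas. Since $112k^2+64k^3$ is divisible by $16$, the cubic second coordinate becomes linear in $k$ once $k$ is frozen in a residue class of small modulus. I would therefore first split the family $\{1+4k,\,115+116k+112k^2+64k^3\}$, $k=0,\dots,2^7-1$, into eight linear subfamilies of the shape $\{a+16k,\,b\pm ck\}$ reparametrized by $k=0,\dots,2^4-1$, simply doubling the four subfamilies produced in Proposition~\ref{prop:m=4+12q,q=18} (the cubic coefficients $112$ and $64$ are identical there, so the splitting is mechanically the same). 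As in that case the subfamilies occur in pairs interchanged by $x\mapsto -x$ modulo $2^9$, and one checks at the end that the $8\cdot 2^4=2^7$ subfamily cycles exhaust the stated family.

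Fix one linear subfamily $\{s_1,s_2\}$. The first step is to confirm it is a $2$-cycle at level $9$. Since $\{F_m(s)~(\mathrm{mod}~2^9)\}_m$ has period $3\cdot 2^8=768$ for odd $s$ by Proposition~\ref{prop:Fib periodic} and $m\equiv 412\pmod{768}$, we have $F_m(s)\equiv F_{412}(s)\pmod{2^9}$; thus it suffices to verify the two congruences $F_{412}(s_1)\equiv s_2$ and $F_{412}(s_2)\equiv s_1\pmod{2^9}$ by a direct finite evaluation, exactly as the cycle relation was checked on $F_{16}$ in Proposition~\ref{prop:m=4+12q,q=1}. The purpose of the linear reparametrization is precisely to keep this $k$-dependence tractable: with $s=a+16k$ only the first few terms of the binomial expansion of $(a+16k)^{1+2j}$ survive modulo $2^9$. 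Next, by Lemma~\ref{lem:Fib a_n}(3) the sequence $\{F_m'(s_1)F_m'(s_2)~(\mathrm{mod}~4)\}_m$ has period $6$; since $m\equiv 4\pmod 6$ and $F_4'(s)=2+3s^2\equiv 1\pmod 4$ for odd $s$, the quantity from (\ref{a_l}) is $a_9(s_1)=F_m'(s_1)F_m'(s_2)\equiv 1\pmod 4$.

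The only delicate point, and the step I expect to be the main obstacle, is the computation of $b_9$ from (\ref{b_l}): one must show $\nu_2\big(F_m^2(s_1)-s_1\big)=9$ exactly, so that $b_9\equiv 1\pmod 2$. I would repeat the mechanism of Proposition~\ref{prop:m=4+12q,q=1}. Write $F_m(s_1)=s_2+2^9A$ and $F_m(s_2)=s_1+2^9B$ with $A,B\in\Z_2$. The crucial input is a congruence expressing $s_2^{1+2j}$ in terms of $s_1^{1+2j}$ plus correction terms that are fixed integer combinations of the weights $1$, $(-1)^j$, $(1+2j)(2j)$ and $(1+2j)(2j)(2j-1)$, valid for all $j$; as in (\ref{b_l with m=4+12q and q=1+2d}) this is established by checking finitely many residues of $j$, since each such weight is eventually periodic in $j$ modulo $2^{10}$. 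Summing this congruence against the binomial coefficients of $F_m$ converts the weights into the auxiliary quantities $F_m(1)$, $F_m(i)/i$, $F_m''(1)$ and $F_m'''(1)$ (the last being exactly why the third-derivative machinery of Lemma~\ref{lem:Fib third derivative} was developed, together with $F_m'(1)$ if the expansion demands it), each of which is pinned down to the needed precision by $m\equiv 412\pmod{768}$ through Proposition~\ref{prop:Fib periodic}, Lemma~\ref{lem:Fib input i}, Lemma~\ref{lem:Fib second derivative} and Lemma~\ref{lem:Fib third derivative}; this yields the parity of $A+B$. Finally one expands $F_m^2(s_1)=F_m(s_2+2^9A)$ modulo $2^{10}$ and finds it $\equiv s_1+2^9(A+B)\equiv s_1+2^9\pmod{2^{10}}$, giving $\nu_2\big(F_m^2(s_1)-s_1\big)=9$ and $b_9$ odd. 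The difficulty is purely in the bookkeeping: one must verify that the leading contribution has $2$-valuation exactly $9$ while every other monomial and every correction term contributes valuation at least $10$, so that nothing cancels. Oddness of $b_9$ at $s_2$ then follows from $s_2\equiv -s_1\pmod{2^9}$ together with $F_m^2$ being an odd function, exactly as at the end of Proposition~\ref{prop:m=4+12q,q=1}. Combining $a_9\equiv 1\pmod 4$ with $b_9\equiv 1\pmod 2$ gives strong growth at level $9$ by Definition~\ref{def:movement}, which completes the proof.
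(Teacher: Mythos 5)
Your proposal follows essentially the same route as the paper, which itself only records a subdivision of the family into linear subfamilies and then defers to the template of Proposition \ref{prop:m=4+12q,q=1} (cycle check via the period-$768$ congruence $m\equiv 412$, $a_9$ via Lemma \ref{lem:Fib a_n}, $b_9$ via the $s_2^{1+2j}$-congruence and the auxiliary quantities $F_m(1)$, $F_m'(1)$, $F_m''(1)$, $F_m'''(1)$, $F_m(i)/i$). The only difference is cosmetic: the paper splits into four linear types with $k=0,\dots,2^5-1$ rather than your eight types with $k=0,\dots,2^4-1$, and both subdivisions cover the stated family.
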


\begin{proof}
For the proof, we divide the cycles $\{1+4k,115+116k+112k^2+64k^3\}$ into four types of cycles $\{1+16k,115+208k\}$, $\{5+16k,407-176k\}$, $\{511-16k,397-208k\}$ and $\{507-16k,105+176k\}$ where $k=0,\dots,2^5-1$.

We omit the further proof because it is similar to that of Proposition \ref{prop:m=4+12q,q=1}.
\end{proof}

By Proposition \ref{m=4+12q even numbers} and \ref{prop:m=4+12q,q=34}, we conclude that the following is true.
\begin{theorem}
The minimal decomposition of $\Z_2$ for $F_m(x)$ with $m=4+12q$ and $q=34+64d$ for nonnegative integers $d$ is
\[ \Z_2=\{0\}\bigsqcup\Big(\bigcup_{k=0}^{2^7-1} M_{k}\Big)\bigsqcup(2\Z_2-\{0\}), \]
where 
\[ M_{k}=(1+4k+2^9\Z_2)\cup\big(115+116k+112k^2+64k^3+2^9\Z_2\big). \] 
Here, $\{0\}$ is the set of a fixed point and $M_k$'s are the minimal components. The set $2\Z_2-\{0\}$ is the attracting basin of the fixed point $0$.
\end{theorem}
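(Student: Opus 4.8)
The plan is to obtain the decomposition by assembling the two preceding propositions over the partition $\Z_2=2\Z_2\sqcup(1+2\Z_2)$ into invariant clopen pieces. Both pieces are indeed invariant: since $F_m$ has zero constant term we have $F_m(2\Z_2)\subseteq 2\Z_2$, while $m\equiv 4\equiv 1\pmod 3$ makes $F_m(s)$ odd for every odd $s$ by Proposition~\ref{prop:Fib periodic}, so $F_m(1+2\Z_2)\subseteq 1+2\Z_2$. I would analyze the two pieces separately and then fit the three resulting types into the single disjoint union prescribed by Theorem~\ref{thm-decomposition}.

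On $2\Z_2$ there is nothing new to prove: Proposition~\ref{m=4+12q even numbers} already supplies that $0$ is a fixed point and that $2\Z_2$ lies in its attracting basin, so $\{0\}$ contributes to $\mathcal{P}$ and $2\Z_2-\{0\}$ contributes to $\mathcal{B}$.

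On $1+2\Z_2$ I would push the cycles of Proposition~\ref{prop:m=4+12q,q=34} through part~1 of Proposition~\ref{prop:grow tails}. Each cycle $\sigma_k=\{1+4k,\ 115+116k+112k^2+64k^3\}$ strongly grows at level $9\geq 2$, so $F_m$ restricted to the invariant clopen set $\mathbb{X}_{\sigma_k}=M_k$ is minimal; hence every $M_k$ is a minimal component lying in $\mathcal{M}$. The remaining task is to verify that the $M_k$ exhaust $1+2\Z_2$, equivalently that the $2^8$ balls of radius $2^{-9}$ constituting them are pairwise disjoint and thus partition the odd $2$-adic integers.

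This covering check is the only real obstacle, and I expect to resolve it through the four affine families listed in the proof of Proposition~\ref{prop:m=4+12q,q=34}, which re-index the same $2^7$ cycles by a parameter $k=0,\dots,2^5-1$. The eight coordinate families are $1+16k,\ 5+16k,\ 507-16k,\ 511-16k$ together with $115+208k,\ 407-176k,\ 105+176k,\ 397-208k$. Read modulo $16$ they are constant and realize the eight residues $1,5,11,15$ and $3,7,9,13$ respectively, so between them they meet every odd class modulo $16$. Each family is affine of the form $a+ck$ with $\nu_2(c)=4$ and $c/16$ odd, whence $k\mapsto a+ck\pmod{2^9}$ is a bijection from $\{0,\dots,2^5-1\}$ onto all $2^5$ residues modulo $2^9$ that are congruent to $a$ modulo $16$. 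Consequently the $2^8$ representatives are distinct odd residues modulo $2^9$; since there are exactly $2^8$ such residues, the balls are disjoint and cover $1+2\Z_2$ exactly. Gathering the fixed point $\{0\}$, the minimal components $M_k$, and the basin $2\Z_2-\{0\}$ then yields the stated decomposition, the identification of the three parts with $\mathcal{P}$, $\mathcal{M}$ and $\mathcal{B}$ being forced by the uniqueness built into Theorem~\ref{thm-decomposition}.
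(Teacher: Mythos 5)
Your proposal is correct and follows essentially the same route as the paper, which obtains this theorem directly by combining Proposition~\ref{m=4+12q even numbers} with Proposition~\ref{prop:m=4+12q,q=34} and part~1 of Proposition~\ref{prop:grow tails}. The only addition is your explicit verification, via the four affine families, that the $2^8$ balls are pairwise disjoint and exhaust $1+2\Z_2$ --- a check the paper leaves implicit, and which you carry out correctly.
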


\subsection{Case: $m=4+12q$ with $q=2+64d$}

We consider the case $m=4+12q$ with $q=2+64d$ for nonnegative integers $d$. The Fibonacci polynomial becomes
\[ F_m(x)=\sum_{j=0}^{13+384d}\tbinom{14+384d+j}{1+2j}x^{1+2j}. \]

\begin{proposition}\label{prop:m=4+12q, q=2+64d sg at lev10}
Let $m=4+12q$ with $q=2+64d$ for some nonnegative integer $d$.
\begin{enumerate}
\item When $d$ is even, the Fibonacci polynomial $F_m(x)$ has strongly growing cycles $\{1+16k,371+464k+768k^2\}$ and $\{-1-16k,653+560k+256k^2\}$ and strongly splitting cycles $\{5+16k,663+80k+768k^2\}$ and $\{-5-16k,361+944k+256k^2\}$ with $k=0,1,\dots,2^6-1$ at level 10.

\item When $d$ is odd, the Fibonacci polynomial $F_m(x)$ has strongly growing cycles $\{5+16k,151+80k+768k^2\}$ and $\{-5-16k,873+944k+256k^2\}$ and strongly splitting cycles $\{1+16k,883+464k+768k^2\}$ and $\{-1-16k,141+560k+256k^2\}$ with $k=0,1,\dots,2^6-1$ at level 10.
\end{enumerate}

\end{proposition}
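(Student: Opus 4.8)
The plan is to follow the template established in the proof of Proposition \ref{prop:m=4+12q,q=1}: for each listed family of points I verify that it forms a $2$-cycle at level $10$, then compute the quantities $a_{10}$ and $b_{10}$ of (\ref{a_l}) and (\ref{b_l}) and read off the behaviour from Definition \ref{def:movement}. Since strongly growing and strongly splitting cycles both require $a_{10}\equiv 1\pmod 4$ and are distinguished only by whether $b_{10}\equiv 1$ or $0\pmod 2$, the entire content of the statement is that every family has $a_{10}\equiv 1\pmod 4$, while the parity of $b_{10}$ comes out as claimed. A first labour-saving observation is that $F_m$ with $m$ even is an odd function, hence so is the second iterate $F_m^2$; as in the closing computation of Proposition \ref{prop:m=4+12q,q=1} this relates the two families whose first coordinates are negated ($\{-1-16k,\dots\}$ and $\{-5-16k,\dots\}$) to the two positive ones and preserves the parity of $b_{10}$, so it suffices to treat two families in each case.

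First I would reduce the exponent $m$. Here $m=4+12q=28+768d$. By property $2$ of Proposition \ref{prop:Fib periodic} the sequence $\{F_m(s)\pmod{2^{10}}\}_m$ has period $3\cdot 2^{9}=1536$ for odd $s$, and $768d\equiv 0\pmod{1536}$ when $d$ is even while $768d\equiv 768\pmod{1536}$ when $d$ is odd. Thus modulo $2^{10}$ the map $F_m$ depends only on the parity of $d$, which is precisely the origin of the two cases: it suffices to treat the representatives $m\equiv 28$ and $m\equiv 796\pmod{1536}$ and then transport the conclusions to all $d$ of the given parity by the periodicity lemmas. For each family I then expand
\[
F_m(x)=\sum_{j}\tbinom{14+384d+j}{1+2j}x^{1+2j}
\]
at the two cycle points and show, exactly as in the valuation estimates used in Proposition \ref{prop:m=4+12q,q=1}, that every summand with $j\geq 1$ has $2$-adic valuation at least $10$, so $F_m$ collapses to its linear term modulo $2^{10}$. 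The indexing of each family by $k=0,\dots,2^{6}-1$ is chosen exactly so that this linear approximation is sharp and the two points are interchanged modulo $2^{10}$, confirming the $2$-cycle; the four first-coordinate classes $1,5,11,15\pmod{16}$ together with their images exhaust the eight odd residues modulo $16$, so these cycles account for all of $1+2\Z_2$ at this level.

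Next I would compute $a_{10}=F_m'(x_1)\,F_m'(x_2)\pmod 4$. By property $3$ of Lemma \ref{lem:Fib a_n} the sequence $\{F_m'(s_1)F_m'(s_2)\pmod 4\}_m$ has period $6$, so this reduces to a finite check and yields $a_{10}\equiv 1\pmod 4$ for all families; consequently none of the cycles grows tails and each is either strongly growing or strongly splitting, leaving only the parity of $b_{10}$ to decide.

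The decisive and hardest step is computing $b_{10}=\big(F_m^2(x_1)-x_1\big)/2^{10}\pmod 2$, equivalently deciding whether $\nu_2\big(F_m^2(x_1)-x_1\big)$ equals $10$ (strong growth) or exceeds $10$ (strong splitting). I would adapt the shift identity of Proposition \ref{prop:m=4+12q,q=1}: expand the powers $x_2^{1+2j}$ in terms of $x_1^{1+2j}$ together with explicit correction terms, recombine the resulting sums into the controlled quantities $F_m(1)$, $F_m''(1)$ and $F_m(i)/i$, whose residues are pinned down by Proposition \ref{prop:Fib periodic}, Lemma \ref{lem:Fib second derivative} and Lemma \ref{lem:Fib input i}, and then iterate once more to pass from $F_m(x_1)$ to $F_m^2(x_1)$. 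The main obstacle is the valuation bookkeeping, now one power of $2$ higher than before: deciding the parity of $b_{10}$ requires controlling $F_m^2(x_1)-x_1$ modulo $2^{11}$, and although the residue of $m$ modulo $3\cdot 2^{10}$ that governs this quantity a priori depends on $d$ modulo $4$, the parity of $b_{10}$ must be shown to collapse to a dependence on $d$ modulo $2$ alone, while $a_{10}$ stays pinned at $1\pmod 4$. It is exactly the leading correction term, whose sign flips when $m$ changes by $768$ modulo $1536$, that separates the two strongly growing families from the two strongly splitting families and reproduces the swap between the $d$ even and $d$ odd cases.
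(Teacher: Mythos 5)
Your overall architecture -- reduce $m$ modulo $3\cdot 2^{9}$ to the two representatives $28$ and $796$, verify the $2$-cycles at level $10$, get $a_{10}\equiv 1\pmod 4$ from the period-$6$ property in Lemma \ref{lem:Fib a_n}, decide the parity of $b_{10}$ via a shift identity recombined into $F_m(1)$, $F_m'(1)$, $F_m''(1)$, $F_m'''(1)$ and $F_m(i)/i$, and use the oddness of $F_m^2$ to transfer the conclusion to the negated families -- is exactly the paper's proof. But there is one concretely wrong step: you claim that at the listed cycle points every summand with $j\geq 1$ in $F_m(x)=\sum_j\tbinom{14+384d+j}{1+2j}x^{1+2j}$ has $2$-adic valuation at least $10$, so that $F_m$ ``collapses to its linear term modulo $2^{10}$.'' That is false for \emph{odd} arguments: at $x=1$ the $j=1$ summand is $\tbinom{15+384d}{3}$, which is odd, and indeed $F_{28}(1)=317811\equiv 371\pmod{2^{10}}$ while the linear term $\tbinom{14}{1}\cdot 1=14$ does not. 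The valuation estimates you are importing from the other propositions only work for arguments of the form $(1+2k)2^n$, where the factor $2^{n(1+2j)}$ supplies the growth in valuation with $j$; here all eight cycle points are odd units, so no such estimate is available and the series does not linearize. The paper establishes the cycle congruences instead by directly computing $F_{28}$ (resp.\ $F_{796}$) at $1+16k$, $5+16k$, etc.\ modulo $2^{10}$ as polynomials in $k$, and then transporting to all $m=28+768d$ of the given parity of $d$ via the period-$1536$ statement in Proposition \ref{prop:Fib periodic} -- a step you correctly set up but then do not actually use for the cycle verification.

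A second, smaller point: you correctly identify that the parity of $b_{10}$ requires control of $F_m^2(x_1)-x_1$ modulo $2^{11}$, where the relevant periods of $F_m(1)$, $F_m'(1)$, $F_m''(1)$, $F_m'''(1)$ exceed $1536$, but you leave it as something that ``must be shown to collapse'' to a dependence on $d$ modulo $2$. The paper resolves this explicitly by writing $F_m(1)=371+1024h$, $F_m'(1)=285+512g$, $F_m''(1)=174+256e$, $F_m'''(1)=62+64r$ with undetermined $h,g,e,r\in\Z_2$ and checking that these unknowns cancel from the final congruence for $F_m^2(x_1)-x_1$ modulo $2^{11}$; the distinction between the strongly growing pairs ($A+B\equiv 1$) and the strongly splitting pairs ($A+B\equiv 0$) then falls out of the explicit constants in the shift identity, not from a sign flip of a single leading correction term as you suggest. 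These are repairable, but as written the cycle-verification step would fail.
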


\begin{proof}
1. Assume that $d\equiv 0$ (mod 2). Then, $m\equiv 28~(\mathrm{mod}~3\cdot 2^9)$. We compute that $F_{28}(1+16k)\equiv 371+464k+768k^2$, $F_{28}(371+464k+768k^2)\equiv 1+16k$, $F_{28}(-1-16k)\equiv 653+560k+256k^2$, $F_{28}(653+560k+256k^2)\equiv -1-16k$, $F_{28}(5+16k)\equiv 663+80k+768k^2$, $F_{28}(663+80k+768k^2)\equiv 5+16k$, $F_{28}(-5-16k)\equiv 361+944k+256k^2$, $F_{28}(361+944k+256k^2)\equiv -5-16k~(\mathrm{mod}~2^{10})$. Since the sequence $\{F_m(s)~(\mathrm{mod}~2^{10})\}_m$ is periodic of period 1536 for any odd number $s$ by Proposition \ref{prop:Fib periodic}, for $m=28+768d$,
\begin{align}
&F_{m}(1+16k)\equiv F_{28}(1+16k)\equiv 371+464k+768k^2~ (\mathrm{mod}~2^{10}),\label{thm:m=4+12q,q=2_1}\\
&F_{m}(371+464k+768k^2)\equiv F_{28}(371+464k+768k^2)\equiv 1+16k~ (\mathrm{mod}~2^{10}),\label{thm:m=4+12q,q=2_2}\\
&F_{m}(-1-16k)\equiv F_{28}(-1-16k)\equiv 653+560k+256k^2~ (\mathrm{mod}~2^{10})~\mathrm{and}\notag\\
&F_{m}(653+560k+256k^2)\equiv F_{28}(653+560k+256k^2)\equiv -1-16k~(\mathrm{mod}~2^{10}).\notag\\
&F_{m}(5+16k)\equiv F_{28}(5+16k)\equiv 663+80k+768k^2~ (\mathrm{mod}~2^{10}),\label{thm:m=4+12q,q=2_3}\\
&F_{m}(663+80k+768k^2)\equiv F_{28}(663+80k+768k^2)\equiv 5+16k~ (\mathrm{mod}~2^{10}),\label{thm:m=4+12q,q=2_4}\\
&F_{m}(-5-16k)\equiv F_{28}(-5-16k)\equiv 361+944k+256k^2~ (\mathrm{mod}~2^{10})~\mathrm{and}\notag\\
&F_{m}(361+944k+256k^2)\equiv F_{28}(361+944k+256k^2)\equiv -5-16k~(\mathrm{mod}~2^{10}).\notag
\end{align}
Therefore, $\{1+16k,371+464k+768k^2\}$, $\{-1-16k,653+560k+256k^2\}$, $\{5+16k,663+80k+768k^2\}$ and $\{-5-16k,361+944k+256k^2\}$ are cycles of length 2 at level 10.

Now we compute the quantity $a_{10}$ for the above cycles, as defined in (\ref{a_l}). We have that $F'_{4}(1+16k)\cdot F'_{4}(371+464k+768k^2)\equiv 1$, $F'_{4}(-1-16k)\cdot F'_{4}(653+560k+256k^2)\equiv 1$, $F'_{4}(5+16k)\cdot F'_{4}(663+80k+768k^2)\equiv 1$, $F'_{4}(-5-16k)\cdot F'_{4}(361+944k+256k^2)\equiv 1~(\mathrm{mod}~4)$. Since the sequences $\{F'_m(1+16k)\cdot F'_m(371+464k+768k^2)~(\mathrm{mod}~4)\}_m$, $\{F'_m(-1-16k)\cdot F'_m(653+560k+256k^2)~(\mathrm{mod}~4)\}_m$, $\{F'_m(5+16k)\cdot F'_m(663+80k+768k^2)~(\mathrm{mod}~4)\}_m$ and $\{F'_m(-5-16k)\cdot F'_m(361+944k+256k^2)~(\mathrm{mod}~4)\}_m$ are periodic of period 6 by Lemma \ref{lem:Fib a_n}, for $m=28+768d$, we obtain the quantity $a_{10}$,
\begin{align*}
a_{10}(1+16k)&=F'_m(1+16k)\cdot F'_m(371+464k+768k^2)\\
&\equiv F'_{4}(1+16k)\cdot F'_{4}(371+464k+768k^2)\equiv 1~(\mathrm{mod}~4),\\
a_{10}(-1-16k)&=F'_m(-1-16k)\cdot F'_m(653+560k+256k^2)\\
&\equiv F'_{4}(-1-16k)\cdot F'_{4}(653+560k+256k^2)\equiv 1~(\mathrm{mod}~4),\\
a_{10}(5+16k)&=F'_m(5+16k)\cdot F'_m(663+80k+768k^2)\\
&\equiv F'_{4}(5+16k)\cdot F'_{4}(663+80k+768k^2)\equiv 1~(\mathrm{mod}~4)~\mathrm{and}\\
a_{10}(-5-16k)&=F'_m(-5-16k)\cdot F'_m(361+944k+256k^2)\\
&\equiv F'_{4}(-5-16k)\cdot F'_{4}(361+944k+256k^2)\equiv 1~(\mathrm{mod}~4).\\
\end{align*}

Now we compute the quantity $b_{10}$ for the above cycles, as defined in (\ref{b_l}). From the expressions (\ref{thm:m=4+12q,q=2_1}) and (\ref{thm:m=4+12q,q=2_2}), we write
\begin{align}\label{m=4+12q and q=2+64d A,B coeff}
&F_m(1+16k)= 371+464k+768k^2+2^{10}A~\mathrm{and}\\
&F_m(371+464k+768k^2)= 1+16k+2^{10}B\notag
\end{align}
for some $A,B\in\Z$. To obtain the value $b_{10}$, the following expression is useful. For integers $j\geq 0$, the following holds
\begin{align}\label{b_l with m=4+12q and q=2+64d}
(371+464k+768k^2&)^{1+2j}\\
&\equiv (1+16k)^{1+2j}+(94+768k^2)+(20+192k)(1+2j)\notag\\
&\quad +(152+64k)(1+2j)(2j)+544(1+2j)(2j)(2j-1)\notag\\
&\quad +(256+256k)(-1)^j~(\mathrm{mod}~2^{11}).\notag
\end{align}
We check that the four sequences $\{(1+16k)^{1+2j}\}_j$, $\{(20+192k)(1+2j)\}_j$, $\{(152+64k)(1+2j)(2j)\}_j$, $\{544(1+2j)(2j)(2j-1)\}_j$ and $\{(-1)^j\}_j$ modulo $2^{11}$ are periodic of period 64, 256, 128, 32 and 2, respectively, and a direct computation shows that the expression holds for $j=0,\dots,2^8-1$. Therefore, the expression holds for every $j\geq 0$. Using the expression (\ref{b_l with m=4+12q and q=2+64d}) and the equalities $F_m(1)=\sum_{j=0}^{13+384d}\tbinom{14+384d+j}{1+2j}$, $F_m'(1)=\sum_{j=0}^{13+384d}\tbinom{14+384d+j}{1+2j}(1+2j)$, $F_m''(1)=\sum_{j=0}^{13+384d}\tbinom{14+384d+j}{1+2j}(1+2j)(2j)$,
$F_m'''(1)=\sum_{j=0}^{13+384d}\tbinom{14+384d+j}{1+2j}(1+2j)(2j)(2j-1)$ and $\frac{F_m(i)}{i}=\sum_{j=0}^{13+384d}\tbinom{14+384d+j}{1+2j}(-1)^j$, we obtain
\begin{align*}
F_m(371&+464k+768k^2)\\
&=\sum_{j=0}^{13+384d}\tbinom{14+384d+j}{1+2j}(371+464k+768k^2)^{1+2j}\\
&\equiv\sum_{j=0}^{13+384d}\tbinom{14+384d+j}{1+2j} \big((1+16k)^{1+2j}+(94+768k^2)\\
&\quad +(20+192k)(1+2j) +(152+64k)(1+2j)(2j)\\
&\quad +544(1+2j)(2j)(2j-1)+(256+256k)(-1)^j\big)\\
&\equiv F_m(1+16k)+(94+768k^2)F_m(1)+(20+192k)F_m'(1)\\
&\quad +(152+64k)F_m''(1)+544F_m'''(1)+(256+256k)\frac{F_m(i)}{i}~(\mathrm{mod}~2^{11}).
\end{align*}

We know that the sequence $\{F_m(1)~(\mathrm{mod}~2^{10})\}_m$ is periodic of period 1536 by Proposition \ref{prop:Fib periodic}. Since $m\equiv 28~$ (mod 1536), $F_{28}(1)\equiv 371~(\mathrm{mod}~2^{10})$ implies $F_m(1)\equiv 371~(\mathrm{mod}~2^{10})$. Write $F_m(1)=371+1024h$ for some $h\in\Z_2$.

We know that the sequence $\{F_m'(1)~(\mathrm{mod}~2^9)\}_m$ is periodic of period 1536 by Lemma \ref{lem:Fib a_n}. Since $m\equiv 28$~(mod 1536), $F_{28}'(1)\equiv 285~(\mathrm{mod}~2^9)$ implies $F_m'(1)\equiv 285~(\mathrm{mod}~2^9)$. Write $F_m'(1)=285+512g$ for some $g\in\Z_2$.

We know that the sequence $\{F_m''(1)~(\mathrm{mod}~2^8)\}_m$ is periodic of period 768 by Lemma \ref{lem:Fib second derivative}. Since $m\equiv 28$~(mod 768), $F_{28}''(1)\equiv 174~(\mathrm{mod}~2^8)$ implies $F_m''(1)\equiv 174~(\mathrm{mod}~2^8)$. Write $F_m''(1)=174+256e$ for some $e\in\Z_2$.

We know that the sequence $\{F_m'''(1)~(\mathrm{mod}~2^6)\}_m$ is periodic of period 192 by Lemma \ref{lem:Fib third derivative}. Since $m\equiv 28$~(mod 192), $F_{28}'''(1)\equiv 62~(\mathrm{mod}~2^6)$ implies $F_m'''(1)\equiv 62~(\mathrm{mod}~2^6)$. Write $F_m'''(1)=62+64r$ for some $r\in\Z_2$.

We know that the sequence $\{\frac{F_m(i)}{i}\}_m$ is periodic of period 12 by Lemma \ref{lem:Fib input i}. Since $m\equiv 4$~(mod 12), $\frac{F_4(i)}{i}=1$ implies $\frac{F_m(i)}{i}=1$.

Therefore, we obtain
\begin{align*}
F_m(371+&464k+768k^2)\\
&\equiv (371+464k+768k^2+2^{10}A)+(94+768k^2)(371+1024h)\\
&\quad +(20+192k)(285+512g)+(152+64k)(174+256e)\\
&\quad +544(62+64r)+(256+256k)\\
&\equiv 1025+16k+2^{10}A~(\mathrm{mod}~2^{11}).
\end{align*}
From expression (\ref{m=4+12q and q=2+64d A,B coeff}), we have that
\[ 1+16k+2^{10}B\equiv 1025+16k+2^{10}A~(\mathrm{mod}~2^{11}), \]
so $A+B\equiv 1$ (mod 2).

We compute that
\begin{align*}
(371+&464k+768k^2+2^{10}A)^{1+2j}\\
&\equiv (371+464k+768k^2)^{1+2j}+\tbinom{1+2j}{1}(371+464k+768k^2)^{2j}2^{10}A\\
&\equiv (371+464k+768k^2)^{1+2j}+2^{10}A~(\mathrm{mod}~2^{11}).
\end{align*}
Therefore,
\begin{align*}
F_m^2(1+16k)&=F_m(371+464k+768k^2+2^{10}A)\\
&=\sum_{j=0}^{13+384d}\tbinom{14+384d+j}{1+2j}(371+464k+768k^2+2^{10}A)^{1+2j}\\
&\equiv\sum_{j=0}^{13+384d}\tbinom{14+384d+j}{1+2j}\big((371+464k+768k^2)^{1+2j}+2^{10}A\big)\\
&\equiv F_m(371+464k+768k^2)+2^{10}A\cdot F_m(1)\\
&\equiv 1+16k+2^{10}B+2^{10}A(371+1024h)\\
&\equiv 1+16k+2^{10}(A+B)\\
&\equiv 1+16k+2^{10}~(\mathrm{mod}~2^{11}).
\end{align*}
So, $F_m^2(1+16k)=1+16k+2^{10}+2^{11}Q$ for some $Q\in\Z$. Finally,
\begin{equation*}
b_{10}(1+16k)=\frac{F_m^2(1+16k)-(1+16k)}{2^{10}}=1+2Q\equiv 1~(\mathrm{mod}~2).
\end{equation*}

Because $F_m^2(x)$ is an odd function and the cycle  $\{-1-16k,653+560k+256k^2\}$ is the same as $\{-(1+16k),-(371+464k+768k^2)\}$ modulo $2^{10}$,
\begin{align*}
b_{10}(-1-16k)&=b_{10}\big(-(1+16k)\big)\\
&=\frac{F_m^2\big(-(1+16k)\big)-\big(-(1+16k)\big)}{2^{10}}\\
&=-\frac{F_m^2(1+16k)-(1+16k)}{2^{10}}\\
&=-b_{10}(1+16k)\\
&\equiv 1~(\mathrm{mod}~2).
\end{align*}

From the expressions (\ref{thm:m=4+12q,q=2_3}) and (\ref{thm:m=4+12q,q=2_4}), we write
\begin{align}\label{m=4+12q and q=2+64d A,B coeff_2}
&F_m(5+16k)= 663+80k+768k^2+2^{10}A~\mathrm{and}\\
&F_m(663+80k+768k^2)= 5+16k+2^{10}B\notag
\end{align}
for some $A,B\in\Z$. To obtain the value $b_{10}$, the following expression is useful. For integers $j\geq 0$, the following holds
\begin{align}\label{b_l with m=4+12q and q=2+64d_2}
(663+80k+768k^2&)^{1+2j}\\
&\equiv (5+16k)^{1+2j}+(94+768k^2)+(52+320k)(1+2j)\notag\\
&\quad +(56+448k)(1+2j)(2j)+608(1+2j)(2j)(2j-1)\notag\\
&\quad +(512+1792k)(-1)^j~(\mathrm{mod}~2^{11}).\notag
\end{align}
We check that the four sequences $\{(5+16k)^{1+2j}\}_j$, $\{(52+320k)(1+2j)\}_j$, $\{(56+448k)(1+2j)(2j)\}_j$, $\{608(1+2j)(2j)(2j-1)\}_j$ and $\{(-1)^j\}_j$ modulo $2^{11}$ are periodic of period 256, 256, 128, 32 and 2, respectively, and a direct computation shows that the expression holds for $j=0,\dots,2^8-1$. Therefore, the expression holds for every $j\geq 0$. Using the expression (\ref{b_l with m=4+12q and q=2+64d_2}) and the equalities $F_m(1)=\sum_{j=0}^{13+384d}\tbinom{14+384d+j}{1+2j}$, $F_m'(1)=\sum_{j=0}^{13+384d}\tbinom{14+384d+j}{1+2j}(1+2j)$, $F_m''(1)=\sum_{j=0}^{13+384d}\tbinom{14+384d+j}{1+2j}(1+2j)(2j)$,
$F_m'''(1)=\sum_{j=0}^{13+384d}\tbinom{14+384d+j}{1+2j}(1+2j)(2j)(2j-1)$ and $\frac{F_m(i)}{i}=\sum_{j=0}^{13+384d}\tbinom{14+384d+j}{1+2j}(-1)^j$, we obtain
\begin{align*}
F_m(663&+80k+768k^2)\\
&=\sum_{j=0}^{13+384d}\tbinom{14+384d+j}{1+2j}(663+80k+768k^2)^{1+2j}\\
&\equiv\sum_{j=0}^{13+384d}\tbinom{14+384d+j}{1+2j}\big((5+16k)^{1+2j} +(94+768k^2)\\
&\quad +(52+320k)(1+2j)+(56+448k)(1+2j)(2j)\\
&\quad +608(1+2j)(2j)(2j-1)+(512+1792k)(-1)^j\big)\\
&\equiv F_m(5+16k)+(94+768k^2)F_m(1)+(52+320k)F_m'(1)\\
&\quad +(56+448k)F_m''(1)+608F_m'''(1)+(512+1792k)\frac{F_m(i)}{i}\\
&\equiv (663+80k+768k^2+2^{10}A)+(94+768k^2)(371+1024h)\\
&\quad +(52+320k)(285+512g) +(56+448k)(174+256e)\\
&\quad +608(62+64r)+(512+1792k)\\
&\equiv 5+16k+2^{10}A~(\mathrm{mod}~2^{11}).
\end{align*}
From expression (\ref{m=4+12q and q=2+64d A,B coeff_2}), we have that
\[ 5+16k+2^{10}B\equiv 5+16k+2^{10}A~(\mathrm{mod}~2^{11}), \]
so $A+B\equiv 0$ (mod 2).

We compute that
\begin{align*}
(663+&80k+768k^2+2^{10}A)^{1+2j}\\
&\equiv (663+80k+768k^2)^{1+2j}+\tbinom{1+2j}{1}(663+80k+768k^2)^{2j}2^{10}A\\
&\equiv (663+80k+768k^2)^{1+2j}+2^{10}A~(\mathrm{mod}~2^{11}).
\end{align*}
Therefore,
\begin{align*}
F_m^2(5+16k)&=F_m(663+80k+768k^2+2^{10}A)\\
&=\sum_{j=0}^{13+384d}\tbinom{14+384d+j}{1+2j}(663+80k+768k^2+2^{10}A)^{1+2j}\\
&\equiv\sum_{j=0}^{13+384d}\tbinom{14+384d+j}{1+2j}\big((663+80k+768k^2)^{1+2j}+2^{10}A\big)\\
&\equiv F_m(663+80k+768k^2)+2^{10}A\cdot F_m(1)\\
&\equiv 5+16k+2^{10}B+2^{10}A(371+1024h)\\
&\equiv 5+16k+2^{10}(A+B)\\
&\equiv 5+16k~(\mathrm{mod}~2^{11}).
\end{align*}
So, $F_m^2(5+16k)=5+16k+2^{11}Q$ for some $Q\in\Z$. Finally,
\begin{equation*}
b_{10}(5+16k)=\frac{F_m^2(5+16k)-(5+16k)}{2^{10}}=2Q\equiv 0~(\mathrm{mod}~2).
\end{equation*}

Because $F_m^2(x)$ is an odd function and the cycle  $\{-5-16k,361+944k+256k^2\}$ is the same as $\{-(5+16k),-(663+80k+768k^2)\}$ modulo $2^{10}$,
\begin{align*}
b_{10}(-5-16k)&=b_{10}\big(-(5+16k)\big)\\
&=\frac{F_m^2\big(-(5+16k)\big)-\big(-(5+16k)\big)}{2^{10}}\\
&=-\frac{F_m^2(5+16k)-(5+16k)}{2^{10}}\\
&=-b_{10}(5+16k)\\
&\equiv 0~(\mathrm{mod}~2).
\end{align*}

Therefore, when $d$ is even, the cycles $\{1+16k,371+464k+768k^2\}$ and $\{-1-16k,653+560k+256k^2\}$ strongly grow at level 10 and the cycles $\{5+16k,663+80k+768k^2\}$ and $\{-5-16k,361+944k+256k^2\}$ strongly split at level 10, which completes the proof.

2. The proof of this statement is similar to that of above statement, so we omit it.
\end{proof}

\begin{lemma}\label{lem:m=4+12q, q=2+64d, g_l}
Let $m=4+12q$ with $q=2+64d$ for some nonnegative integer $d$. If $\{g_l, F_m(g_l)\}$ is a 2-cycle at level $l\geq 3$, then $\{g_l, F_m(g_l)\}\equiv \{1,3\}$ or $\{5,7\}$ (mod 8).
\end{lemma}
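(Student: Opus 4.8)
The plan is to push the whole question down to level $3$, i.e. modulo $2^3=8$, and read off the admissible $2$-cycles from the induced map on $\Z/8\Z$, after first showing that even residues play no role. Since $m=4+12q=28+768d$ we have $m\equiv 4\pmod{12}$, so the periodicity statements of Proposition \ref{prop:Fib periodic} force $F_m(s)\equiv F_4(s)\pmod 8$ for every residue $s$; thus the map induced by $F_m$ on $\Z/8\Z$ is exactly the one induced by $F_4(x)=x^3+2x$. First I would tabulate this map by direct evaluation: $0\mapsto 0$, $1\mapsto 3$, $2\mapsto 4$, $3\mapsto 1$, $4\mapsto 0$, $5\mapsto 7$, $6\mapsto 4$, $7\mapsto 5$. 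Reading off the orbit structure, the unique fixed point is $0$ and the only two $2$-cycles are $\{1,3\}$ and $\{5,7\}$.

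Because $F_m$ has integer coefficients it commutes with reduction mod $8$, so a $2$-cycle $\{g_l,F_m(g_l)\}$ at level $l\ge 3$ reduces to a closed orbit of the map above of length dividing $2$: either a genuine $2$-cycle mod $8$ or the fixed point. In the first case the table leaves only $\{1,3\}$ or $\{5,7\}$, which is the desired conclusion. The whole content of the lemma is therefore to exclude the second case, namely that both $g_l$ and $F_m(g_l)$ are $\equiv 0\pmod 8$, i.e. even.

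To rule out an even cycle element I would argue by $2$-adic valuation. The lowest-order term of $F_m$ is the linear term $\binom{m/2}{1}x=(m/2)\,x$, and $m/2=14+384d$ satisfies $\nu_2(m/2)=1$; every higher term $\binom{m/2+j}{1+2j}x^{1+2j}$ with $j\ge 1$ has an integer coefficient, hence valuation at least $(1+2j)\nu_2(x)\ge 3\nu_2(x)$. Consequently, for any even $x$ with $\nu:=\nu_2(x)\ge 1$ the linear term strictly dominates, because $1+\nu<3\nu$, giving $\nu_2\big(F_m(x)\big)=\nu+1$. Iterating once more gives $\nu_2\big(F_m^2(x)\big)=\nu+2$, so by the ultrametric inequality $\nu_2\big(F_m^2(x)-x\big)=\nu$. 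An even element of a $2$-cycle cannot satisfy $g_l\equiv 0\pmod{2^l}$ (that residue is the fixed point $0$, not a $2$-cycle), so $\nu=\nu_2(g_l)<l$ and hence $F_m^2(g_l)\not\equiv g_l\pmod{2^l}$, contradicting the cycle condition. Thus no even residue, in particular nothing $\equiv 0\pmod 8$, can lie on a $2$-cycle.

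Putting the pieces together, the level-$l$ $2$-cycle must reduce to one of the two $2$-cycles of $\Z/8\Z$, so $\{g_l,F_m(g_l)\}\equiv\{1,3\}$ or $\{5,7\}\pmod 8$. The one step that requires genuine care, and the main obstacle, is the valuation estimate for even inputs: I must confirm that the linear term of $F_m$ dominates all higher terms at every even argument, which rests entirely on the arithmetic fact $\nu_2(m/2)=1$ coming from $m=28+768d$. Everything else is a finite table computation backed by the periodicity in Proposition \ref{prop:Fib periodic}.
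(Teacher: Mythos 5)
Your proposal is correct, and its core is the same as the paper's: reduce modulo $2^3$ and use the periodicity of $\{F_n(s)\bmod 8\}_n$ together with $m\equiv 4$ to identify the induced map on $\Z/8\Z$ with that of $F_4$, so that the only $2$-cycles among odd residues are $\{1,3\}$ and $\{5,7\}$. The differences are worth recording. First, the paper's proof goes further than the lemma requires: it also computes $a_3\equiv 1\pmod 4$ and $b_3\equiv 0\pmod 2$ to show that these two level-$3$ cycles strongly split, data that is used later but is not needed to conclude $\{g_l,F_m(g_l)\}\equiv\{1,3\}$ or $\{5,7\}\pmod 8$. Second, and more substantively, the paper disposes of the even residues with the single sentence ``all the $2$-cycles at level $l\ge 4$ are lifts of $\{1+4k,3+4k\}$,'' which tacitly relies on Proposition \ref{m=4+12q even numbers} (all of $2\Z_2$ lies in the attracting basin of the fixed point $0$, so no cycle at any level can consist of even residues). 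You instead close this gap explicitly and self-containedly: since the linear coefficient $m/2=14+384d$ has $\nu_2=1$ and every higher term of $F_m(x)$ has valuation at least $3\nu_2(x)$, one gets $\nu_2(F_m(x))=\nu_2(x)+1$ for even $x\ne 0$, hence $\nu_2(F_m^2(x)-x)=\nu_2(x)<l$ for any nonzero even residue at level $l$, so no even element can lie on a $2$-cycle. This valuation argument is a clean, local substitute for the paper's appeal to the global attracting-basin statement, and it makes the enumeration of periodic residues mod $8$ genuinely complete; the paper's route, in exchange, records the splitting behaviour of the level-$3$ cycles that the surrounding development needs anyway.
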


\begin{proof}
Let $k=0$ or 1. We compute that $F_{4}(1+4k)\equiv 3+4k$ and $F_{4}(3+4k)\equiv 1+4k$ (mod 8). Since the sequence $\{F_m(1+4k)~(\mathrm{mod}~8)\}_m$ is periodic of period 12 by Proposition \ref{prop:Fib periodic}, for $m=28+768d$,
\begin{align*}
F_m(1+4k)&\equiv F_{4}(1+4k)\equiv 3+4k\\
F_m(3+4k)&\equiv F_{4}(3+4k)\equiv 1+4k~(\mathrm{mod}~8).
\end{align*}

Now we compute the quantity $a_{3}$ for the above cycles, as defined in (\ref{a_l}). We have that $F'_{4}(1+4k)\cdot F'_{4}(3+4k)\equiv 1~(\mathrm{mod}~4)$. Since the sequence $\{F'_m(1+4k)\cdot F'_m(3+4k)~(\mathrm{mod}~4)\}_m$ is periodic of period 6 by Lemma \ref{lem:Fib a_n}, for $m=28+768d$, we obtain the quantity $a_{3}$,
\begin{align*}
a_{3}(1+4k)&=F'_m(1+4k)\cdot F'_m(3+4k)\\
&\equiv F'_{4}(1+4k)\cdot F'_{4}(3+4k)\equiv 1~(\mathrm{mod}~4).
\end{align*}

Now we compute the quantity $b_{3}$ for the above cycles, as defined in (\ref{b_l}). We compute that $F_{4}(1+4k)\equiv 3+4k$ and $F_{4}(3+4k)\equiv 1+4k$ (mod $2^4$). Since the sequence $\{F_m(1+4k)~(\mathrm{mod}~16)\}_m$ is periodic of period 24 by Proposition \ref{prop:Fib periodic}, for $m=28+768d$,
\begin{align*}
F_m(1+4k)&\equiv F_{4}(1+4k)\equiv 3+4k\\
F_m(3+4k)&\equiv F_{4}(3+4k)\equiv 1+4k~(\mathrm{mod}~2^4).
\end{align*}
So, we obtain
\begin{align*}
F_m^2(1+4k)&\equiv F_{m}(3+4k)\equiv 1+4k~(\mathrm{mod}~2^4).
\end{align*}
So, $F_m^2(1+4k)=1+4k+2^{4}Q$ for some $Q\in\Z$. Finally,
\begin{equation*}
b_{3}(1+4k)=\frac{F_m^2(1+4k)-(1+4k)}{2^{3}}=2Q\equiv 0~(\mathrm{mod}~2).
\end{equation*}
Therefore, the cycle $\{1+4k,3+4k\}$ strongly splits at level 3. All the 2-cycles at level $l\geq 4$ are lifts of $\{1+4k,3+4k\}$, which completes the proof.
\end{proof}

\begin{lemma}\label{lem:Fib a_n (mod 2^7) for m=28+768d}
Let $m=4+12q$ with $q=2+64d$ for some nonnegative integer $d$. Then,
\[ F_m'(1+8a)\cdot F_m'\big(F_m(1+8a)\big)\equiv 1~(\mathrm{mod}~2^7) \]
for some $a\in\Z_2$.
\end{lemma}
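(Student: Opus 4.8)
The plan is to strip away the large index $m=28+768d$ by periodicity, reduce the whole expression to a computation with the single polynomial $F_{28}$, and then settle the resulting congruence by a finite check. First I would record that $m=4+12q=28+768d$. By property 2 of Proposition~\ref{prop:Fib periodic}, for any odd $s$ the sequence $\{F_m(s)~(\mathrm{mod}~2^7)\}_m$ is periodic of period $3\cdot 2^{6}=192$; since $768=4\cdot 192$ we have $m\equiv 28~(\mathrm{mod}~192)$, so
\[ F_m(1+8a)\equiv F_{28}(1+8a)~(\mathrm{mod}~2^7). \]
Likewise, by property 2 of Lemma~\ref{lem:Fib a_n}, $\{F_m'(s)~(\mathrm{mod}~2^7)\}_m$ is periodic of period $3\cdot 2^{7}=384$; since $768=2\cdot 384$ we have $m\equiv 28~(\mathrm{mod}~384)$, so $F_m'(s)\equiv F_{28}'(s)~(\mathrm{mod}~2^7)$ for every odd $s$. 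The key point behind the choice $q=2+64d$ is that both reductions land on the \emph{same} representative index $28$.

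Next I would handle the nested argument. Because $F_{28}'$ has integer coefficients, $x\equiv y~(\mathrm{mod}~2^7)$ forces $F_{28}'(x)\equiv F_{28}'(y)~(\mathrm{mod}~2^7)$ (as $x-y$ divides $F_{28}'(x)-F_{28}'(y)$). Applying this with $x=F_m(1+8a)$ and $y=F_{28}(1+8a)$ and combining with the two reductions above yields
\[ F_m'(1+8a)\cdot F_m'\big(F_m(1+8a)\big)\equiv F_{28}'(1+8a)\cdot F_{28}'\big(F_{28}(1+8a)\big)~(\mathrm{mod}~2^7). \]
By Lemma~\ref{lem:m=4+12q, q=2+64d, g_l} the pair $\{1+8a,\,F_{28}(1+8a)\}$ is congruent to $\{1,3\}~(\mathrm{mod}~8)$, so the second factor is $F_{28}'$ evaluated at a point $\equiv 3~(\mathrm{mod}~8)$; in particular $8$ divides $F_{28}(1+8a)-3$, which controls the truncation in the next step.

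It then remains to prove $F_{28}'(1+8a)\cdot F_{28}'\big(F_{28}(1+8a)\big)\equiv 1~(\mathrm{mod}~2^7)$ for every $a\in\Z_2$ (which implies the asserted statement). Since integer polynomials preserve congruences modulo $2^7$ and $8a~(\mathrm{mod}~2^7)$ depends only on $a~(\mathrm{mod}~2^{4})$, the entire product depends only on $a~(\mathrm{mod}~2^4)$, so this is a finite verification over $16$ residues. Concretely, as $(8a)^3=2^9a^3\equiv 0~(\mathrm{mod}~2^7)$, the Taylor expansion at $1$ truncates after the quadratic term,
\[ F_{28}'(1+8a)\equiv F_{28}'(1)+8a\,F_{28}''(1)+32a^2F_{28}'''(1)~(\mathrm{mod}~2^7), \]
and in the same way
\[ F_{28}(1+8a)\equiv F_{28}(1)+8a\,F_{28}'(1)+32a^2F_{28}''(1)~(\mathrm{mod}~2^7); \]
writing $F_{28}(1+8a)=3+\big(F_{28}(1+8a)-3\big)$ with $8\mid F_{28}(1+8a)-3$, the same truncation expands $F_{28}'\big(F_{28}(1+8a)\big)$ about $3$ in terms of $F_{28}'(3),F_{28}''(3),F_{28}'''(3)$. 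I would compute the seven values $F_{28}(1),F_{28}'(1),F_{28}''(1),F_{28}'''(1)$ and $F_{28}'(3),F_{28}''(3),F_{28}'''(3)$ to the needed $2$-adic precision from the recurrences (\ref{eq:Fibonacci polynomial_recurrence relation})–(\ref{eq:Fibonacci polynomial_recurrence relation_4}), substitute, and check that the product collapses to $1~(\mathrm{mod}~2^7)$.

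The main obstacle is precisely this last modular computation: one must carry enough precision in the seven derivative values and in the quadratic-in-$a$ cross terms of the product to be certain the answer is exactly $1$ modulo $2^7$, rather than merely $1$ modulo some smaller power of $2$. Everything preceding it is bookkeeping with the periodicities established in Proposition~\ref{prop:Fib periodic} and Lemma~\ref{lem:Fib a_n}.
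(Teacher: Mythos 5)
Your proposal follows essentially the same route as the paper's proof: reduce $F_m$ and $F_m'$ to $F_{28}$ and $F_{28}'$ modulo $2^7$ via the periodicities of Proposition \ref{prop:Fib periodic} and Lemma \ref{lem:Fib a_n}, replace the nested argument by $F_{28}(1+8a)\bmod 2^7$ (which the paper computes explicitly as $115+104a+64a^2$), and finish with a finite modular verification. The only difference is presentational --- you organize the final check as a Taylor expansion over the $16$ residues of $a$ modulo $2^4$, whereas the paper simply asserts the resulting congruence $F_{28}'(1+8a)\cdot F_{28}'(115+104a+64a^2)\equiv 1 \pmod{2^7}$ --- so the argument is correct.
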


\begin{proof}
We compute that $F_{28}(1+8a)\equiv 115+104a+64a^2~(\mathrm{mod}~2^7)$. Since the sequence $\{F_m(1+8a)~(\mathrm{mod}~2^7)\}_m$ is periodic of period 192 by Proposition \ref{prop:Fib periodic}, for $m=28+768d$,
\[ F_m(1+8a)\equiv F_{28}(1+8a)\equiv 115+104a+64a^2~(\mathrm{mod}~2^7). \]

We have that $F_{28}'(1+8a)\cdot F_{28}'(115+104a+64a^2)\equiv 1~(\mathrm{mod}~2^7)$. Since the sequence $\{F_m'(1+8a)~(\mathrm{mod}~2^7)\}_m$ and $\{F_m'(115+104a+64a^2)~(\mathrm{mod}~2^7)\}_m$ are periodic of period 192 by Lemma \ref{lem:Fib a_n}, then the sequence $\{F_m'(1+8a)\cdot F_m'(115+104a+64a^2)~(\mathrm{mod}~2^7)\}_m$ is periodic of period which is a divisor of 192. So, for $m=28+768d$, we obtain
\begin{align*}
F_m'(1+8a)\cdot F_m'\big(F_m(1+8a)\big)&\equiv F_m'(1+8a)\cdot F_m'(115+104a+64a^2)\\
&\equiv F_{28}'(1+8a)\cdot F_{28}'(115+104a+64a^2)\\
&\equiv 1~(\mathrm{mod}~2^7),
\end{align*}
which completes the proof.
\end{proof}

\begin{proposition}\label{prop:m=4+12(2+64d)}
Let $m=4+12q$ with $q=2+64d$ for some nonnegative integer $d$. Let $l\geq 4$ and $g_l$ be a positive integer. If $\{g_l,F_m(g_l)\}$ forms a cycle of length 2 which strongly grows at level $l+6$, then $\{g_l+2^l k, F_m(g_l+2^l k)\}$ and $\{-(g_l+2^l k), -F_m(g_l+2^l k)\}$ form cycles of length 2 which strongly grow at level $l+6$ for $k=0,1,\dots,2^6-1$.
\end{proposition}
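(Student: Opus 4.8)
The plan is to verify the two defining inequalities of Definition \ref{def:movement} for the cycle $\{g_l+2^lk,F_m(g_l+2^lk)\}$, and then to get the negated cycle $\{-(g_l+2^lk),-F_m(g_l+2^lk)\}$ for free from the fact that $F_m$ is an \emph{odd} function (since $m$ is even, only odd powers of $x$ occur in $F_m$, so $F_m'$ is even and $F_m^2$ is odd). Thus it suffices to prove
\[
a_{l+6}(g_l+2^lk)\equiv1~(\mathrm{mod}~4)\qquad\text{and}\qquad \nu_2\big(F_m^2(g_l+2^lk)-(g_l+2^lk)\big)=l+6 .
\]
The second equality simultaneously shows that the set is a genuine $2$-cycle at level $l+6$ (it has length $2$ because, by Lemma \ref{lem:m=4+12q, q=2+64d, g_l}, the two entries are distinct modulo $8$, a distinction untouched by adding $2^lk$ with $l\geq3$) and that $b_{l+6}$ is odd. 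For the negated cycle the oddness of $F_m$ leaves $a_{l+6}$ unchanged and only flips the sign of $b_{l+6}$, exactly as in the computations $b_{n+t+3}\big(-(1+2k)2^n\big)=-b_{n+t+3}\big((1+2k)2^n\big)$ of the earlier propositions.

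The condition on $a_{l+6}$ is routine. Writing $a_{l+6}(x)=F_m'(x)\,F_m'\big(F_m(x)\big)$ and using that $F_m$ and $F_m'$ have integral coefficients, we get $F_m'(g_l+2^lk)\equiv F_m'(g_l)$ and $F_m(g_l+2^lk)\equiv F_m(g_l)$, hence $F_m'\big(F_m(g_l+2^lk)\big)\equiv F_m'\big(F_m(g_l)\big)~(\mathrm{mod}~2^l)$. Since $l\geq4$, reducing modulo $4$ gives $a_{l+6}(g_l+2^lk)\equiv a_{l+6}(g_l)\equiv1~(\mathrm{mod}~4)$; moreover Lemma \ref{lem:Fib a_n (mod 2^7) for m=28+768d} upgrades this to $a_{l+6}(g_l+2^lk)\equiv1~(\mathrm{mod}~2^7)$, which is needed below.

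For $b_{l+6}$, put $\Phi(x)=F_m^2(x)-x$, so $\nu_2\big(\Phi(g_l)\big)=l+6$ by hypothesis. I would compare $\Phi(g_l+2^lk)$ with $\Phi(g_l)$ through the finite expansion
\[
\Phi(g_l+2^lk)=\Phi(g_l)+\sum_{j\geq1}\frac{\Phi^{(j)}(g_l)}{j!}\,(2^lk)^j ,
\]
whose coefficients $\Phi^{(j)}(g_l)/j!$ lie in $\Z_2$, and show every $j\geq1$ term has valuation $>l+6$, so that $\nu_2\big(\Phi(g_l+2^lk)\big)=l+6$ for all $k$. The linear term $\Phi'(g_l)(2^lk)=\big(a_{l+6}(g_l)-1\big)2^lk$ has valuation at least $7+l$ by the mod $2^7$ bound, and for $j\geq3$ the crude estimate $\nu_2\big(\Phi^{(j)}(g_l)(2^lk)^j/j!\big)\geq jl\geq3l\geq l+7$ (valid for $l\geq4$) disposes of the rest.

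The main obstacle is the quadratic term $\tfrac12\Phi''(g_l)(2^lk)^2$, of valuation $\nu_2\big(\Phi''(g_l)\big)-1+2l$, which must exceed $l+6$; equivalently $\nu_2\big(\Phi''(g_l)\big)\geq 8-l$. Differentiating $a_{l+6}$ gives $\Phi''(g_l)=F_m''\big(F_m(g_l)\big)F_m'(g_l)^2+F_m'\big(F_m(g_l)\big)F_m''(g_l)$, so everything reduces to the behaviour of $F_m''$ at the two points of the cycle. Since $m\equiv4~(\mathrm{mod}~12)$, Lemma \ref{lem:Fib second derivative} yields $\nu_2\big(F_m''(s)\big)=1$ for every odd $s$, whence $\nu_2\big(\Phi''(g_l)\big)\geq2$ immediately, and this already settles all $l\geq6$. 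The genuinely delicate point is to gain one further power of $2$ when $l=5$, i.e.\ to show $F_m''(g_l)+F_m''\big(F_m(g_l)\big)\equiv0~(\mathrm{mod}~8)$ along a strongly growing cycle; here a mod $8$ refinement of Lemma \ref{lem:Fib second derivative}, combined with the residue classes $\{1,3\}$, $\{5,7\}$ of the cycle modulo $8$ from Lemma \ref{lem:m=4+12q, q=2+64d, g_l}, is what must be supplied. The remaining case $l=4$ is precisely the explicit level-$10$ computation of Proposition \ref{prop:m=4+12q, q=2+64d sg at lev10}, where the quadratic dependence on $k$ is carried along directly, so it serves as the base of the argument rather than being deduced from the expansion above.
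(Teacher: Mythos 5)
Your framework is the same as the paper's: expand around $g_l$ to second order in $2^lk$, kill the linear term with Lemma \ref{lem:Fib a_n (mod 2^7) for m=28+768d}, kill the terms of order $j\geq 3$ by the crude bound $jl\geq l+7$, and transfer everything to the negated cycle by oddness of $F_m^2$. But you have correctly isolated, and then not closed, the one step that carries all the content: the quadratic term. You need $\nu_2\big((F_m^2)''(g_l)\big)\geq 8-l$, which is $4$ when $l=4$ and $3$ when $l=5$; you only establish $\geq 2$ from $\nu_2(F_m''(s))=1$, you explicitly state that the $l=5$ case ``must be supplied,'' and you defer $l=4$ to Proposition \ref{prop:m=4+12q, q=2+64d sg at lev10} (which proves a list of specific cycles rather than the conditional statement for an arbitrary admissible $g_4$, so the reduction is itself not immediate). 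That is a genuine gap, not a routine verification: a mod $8$ refinement of Lemma \ref{lem:Fib second derivative} alone would only give $\nu_2\geq 3$ at best and still would not reach the $l=4$ case.

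The paper closes this uniformly for all $l\geq 4$ by computing the second-order coefficient exactly rather than estimating it. Writing $\binom{1+2j}{2}g_l^{2j-1}=(1+2j)j\,g_l^{2j-1}$ and using the identity $(1+2j)j\equiv \tfrac12+\tfrac{1+2j}{2}-(-1)^j\pmod 8$ together with $g_l^{2j-1}\equiv 1\pmod 8$, it converts the sum $\sum_j\binom{14+384d+j}{1+2j}\binom{1+2j}{2}g_l^{2j-1}$ into the combination $\tfrac12 F_m(1)+\tfrac12 F_m'(1)-\tfrac{F_m(i)}{i}$, and after composing twice the total coefficient of $2^{2l}k^2$ in $F_m^2(g_l+2^lk)$ becomes $\tfrac32\big(F_m(1)+F_m'(1)\big)-3\tfrac{F_m(i)}{i}+7F_m'(1)$. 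The periodicity results pin down $F_m(1)\equiv 3$ and $F_m'(1)\equiv 13\pmod{16}$ and $\tfrac{F_m(i)}{i}=1$, so this coefficient equals $112$ plus multiples of $8h$ and $8s$; hence the quadratic contribution has valuation at least $2l+3\geq l+7$, which is exactly the bound your argument is missing. (Two smaller points: Lemma \ref{lem:Fib a_n (mod 2^7) for m=28+768d} is stated only for arguments $\equiv 1\pmod 8$, so the branch $g_l\equiv 5\pmod 8$ from Lemma \ref{lem:m=4+12q, q=2+64d, g_l} needs its own version of that congruence, as the paper acknowledges by splitting into two cases; and the paper never separately re-verifies $a_{l+6}\equiv 1\pmod 4$ for the shifted cycles because the mod $2^7$ congruence is already built into its computation of $F_m^2(g_l+2^lk)$, whereas your mod $4$ argument for $a_{l+6}$ is fine but would not by itself suffice for the $b$ computation.)
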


\begin{proof}
Assume that $\{g_l,F_m(g_l)\}$ forms a cycle of length 2 which strongly grows at level $l+6$. By Lemma \ref{lem:m=4+12q, q=2+64d, g_l}, $\{g_l,F_m(g_l)\}\equiv \{1,3\}$ or $\{5,7\}$ (mod 8).

First, assume that $g_l\equiv 1~(\mathrm{mod}~8)$. We can write
\[ F_m^2(g_l)=g_l+2^{l+6}+2^{l+7}c \]
for some $c\in\Z_2$.

For integers $j\geq 0$, the following expression holds that
\begin{align}
(1+2j)j&\equiv 1+j-(-1)^j\label{exp (1+2j)j (mod 8)}\\
&\equiv \frac{1}{2}+\frac{1+2j}{2}-(-1)^j~(\mathrm{mod}~8).\notag
\end{align}
Using the expression (\ref{exp (1+2j)j (mod 8)}), we compute that
\begin{align*}
(g_l+2^l k)^{1+2j}&\equiv g_l^{1+2j}+\tbinom{1+2j}{1}g_l^{2j}2^l k+\tbinom{1+2j}{2}g_l^{2j-1}2^{2l}k^2\\
&\equiv g_l^{1+2j}+(1+2j)g_l^{2j}2^l k+(1+2j)j 2^{2l}k^2\\
&\equiv g_l^{1+2j}+(1+2j)g_l^{2j}2^l k+2^{2l-1}k^2+(1+2j)2^{2l-1}k^2\\
&\quad -2^{2l}k^2(-1)^j~(\mathrm{mod}~2^{l+7}).
\end{align*}
Using the equalities $F_m(1)=\sum_{j=0}^{13+384d}\tbinom{14+384d+j}{1+2j}$, $F_m'(1)=\sum_{j=0}^{13+384d}\tbinom{14+384d+j}{1+2j}(1+2j)$ and $\frac{F_m(i)}{i}=\sum_{j=0}^{13+384d}\tbinom{14+384d+j}{1+2j}(-1)^j$,
\begin{align*}
F_m(g_l+2^l k)&=\sum_{j=0}^{13+384d}\tbinom{14+384d+j}{1+2j}(g_l+2^l k)^{1+2j}\\
&\equiv \sum_{j=0}^{13+384d}\tbinom{14+384d+j}{1+2j}\big(g_l^{1+2j}+\tbinom{1+2j}{1}g_l^{2j}2^l k+\tbinom{1+2j}{2}g_l^{2j-1}2^{2l}k^2\big)\\
&\equiv \sum_{j=0}^{13+384d}\tbinom{14+384d+j}{1+2j}\big(g_l^{1+2j}+(1+2j)g_l^{2j}2^l k+2^{2l-1}k^2\\
&\quad +(1+2j)2^{2l-1}k^2-2^{2l}k^2(-1)^j\big)\\
&\equiv F_m(g_l)+F_m'(g_l)2^l k+F_m(1)2^{2l-1}k^2+F_m'(1)2^{2l-1}k^2\\
&\quad -\frac{F_m(i)}{i}2^{2l}k^2~(\mathrm{mod}~2^{l+7}).
\end{align*}

We know that the sequence $\{F_m(1)~(\mathrm{mod}~2^{4})\}_m$ is periodic of period 24 by Proposition \ref{prop:Fib periodic}. Since $m\equiv 4~$ (mod 24), $F_{4}(1)\equiv 3~(\mathrm{mod}~2^{4})$ implies $F_m(1)\equiv 3~(\mathrm{mod}~2^{4})$. Write $F_m(1)=3+16h$ for some $h\in\Z_2$. Since $g_l\equiv 1$ (mod 8), we have $F_m(g_l)\equiv F_m(1)\equiv 3$ (mod 8).

We know that the sequence $\{F_m'(1)~(\mathrm{mod}~2^4)\}_m$ is periodic of period 48 by Lemma \ref{lem:Fib a_n}. Since $m\equiv 28$~(mod 48), $F_{28}'(1)\equiv 13~(\mathrm{mod}~2^4)$ implies $F_m'(1)\equiv 13~(\mathrm{mod}~2^4)$. Write $F_m'(1)=13+16s$ for some $s\in\Z_2$. Since $g_l\equiv 1$ (mod 8), we have $F_m'(g_l)\equiv F_m'(1)\equiv 5$ (mod 8).

We know that the sequence $\{\frac{F_m(i)}{i}\}_m$ is periodic of period 12 by Lemma \ref{lem:Fib input i}. Since $m\equiv 4$~(mod 12), $\frac{F_4(i)}{i}=1$ implies $\frac{F_m(i)}{i}=1$.

Therefore, we obtain
\begin{align*}
F_m(g_l+2^l k)
&\equiv F_m(g_l)+F_m'(g_l)2^l k+(3+16h)2^{2l-1}k^2+(13+16s)2^{2l-1}k^2\\
&\quad -2^{2l}k^2\\
&\equiv F_m(g_l)+F_m'(g_l)2^l k+7\cdot 2^{2l}k^2~(\mathrm{mod}~2^{l+7}).
\end{align*}
Using the expression (\ref{exp (1+2j)j (mod 8)}), we compute that
\begin{align*}
\big(F_m(g_l)&+F_m'(g_l)2^l k+7\cdot 2^{2l}k^2\big)^{1+2j}\\
&\equiv \big(F_m(g_l)+F_m'(g_l)2^l k\big)^{1+2j}+\tbinom{1+2j}{1}\big(F_m(g_l)+F_m'(g_l)2^l k\big)^{2j}\cdot 7\cdot 2^{2l}k^2\\
&\equiv F_m(g_l)^{1+2j}+\tbinom{1+2j}{1}F_m(g_l)^{2j}F_m'(g_l)2^l k\\
&\quad +\tbinom{1+2j}{2}F_m(g_l)^{2j-1}F_m'(g_l)^2 2^{2l} k^2+(1+2j)\cdot 7\cdot 2^{2l}k^2\\
&\equiv F_m(g_l)^{1+2j}+(1+2j)F_m(g_l)^{2j}F_m'(g_l)2^l k\\
&\quad +(1+2j)j\cdot F_m(g_l)^{2j-1}F_m'(g_l)^2 2^{2l} k^2+(1+2j)\cdot 7\cdot 2^{2l}k^2\\
&\equiv F_m(g_l)^{1+2j}+(1+2j)F_m(g_l)^{2j}F_m'(g_l)2^l k+(1+2j)j\cdot 3\cdot 2^{2l} k^2\\
&\quad +(1+2j)\cdot 7\cdot 2^{2l}k^2\\
&\equiv F_m(g_l)^{1+2j}+(1+2j)F_m(g_l)^{2j}F_m'(g_l)2^l k+3\cdot 2^{2l-1} k^2\\
&\quad +(1+2j)3\cdot 2^{2l-1} k^2-(-1)^j 3\cdot 2^{2l} k^2+(1+2j)\cdot 7\cdot 2^{2l}k^2~(\mathrm{mod}~2^{l+7}).
\end{align*}
By Lemma \ref{lem:Fib a_n (mod 2^7) for m=28+768d}, we obtain
\begin{align*}
F_m^2(g_l+2^l k)&\equiv F_m\big(F_m(g_l)+F_m'(g_l)2^l k+7\cdot 2^{2l}k^2\big)\\
&\equiv \sum_{j=0}^{13+384d}\tbinom{14+384d+j}{1+2j}\big(F_m(g_l)+F_m'(g_l)2^l k+7\cdot 2^{2l}k^2\big)^{1+2j}\\
&\equiv \sum_{j=0}^{13+384d}\tbinom{14+384d+j}{1+2j}\big(F_m(g_l)^{1+2j}+(1+2j)F_m(g_l)^{2j}F_m'(g_l)2^l k\\
&\quad+3\cdot 2^{2l-1} k^2+(1+2j)3\cdot 2^{2l-1} k^2-(-1)^j 3\cdot 2^{2l} k^2\\
&\quad +(1+2j)\cdot 7\cdot 2^{2l}k^2\big)^{1+2j}\\
&\equiv F_m^2(g_l)+F_m'\big(F_m(g_l)\big)F_m'(g_l)2^l k+F_m(1)\cdot 3\cdot 2^{2l-1} k^2\\
&\quad +F_m'(1)\cdot 3\cdot 2^{2l-1} k^2-\frac{F_m(i)}{i} 3\cdot 2^{2l} k^2+F_m'(1)\cdot 7\cdot 2^{2l}k^2\\
&\equiv (g_l+2^{l+6})+2^l k+(3+16h)3\cdot 2^{2l-1} k^2\\
&\quad +(13+16s)3\cdot 2^{2l-1} k^2-3\cdot 2^{2l} k^2+(13+16s)7\cdot 2^{2l}k^2\\
&\equiv g_l+2^l k+2^{l+6}~(\mathrm{mod}~2^{l+7}).
\end{align*}
So, $F_m^2(g_l+2^l k)=g_l+2^l k+2^{l+6}+2^{l+7}Q$ for some $Q\in\Z$. Finally,
\begin{equation*}
b_{l+6}(g_l+2^l k)=\frac{F_m^2(g_l+2^l k)-(g_l+2^l k)}{2^{l+6}}=1+2Q\equiv 1~(\mathrm{mod}~2).
\end{equation*}

Because $F_m^2(x)$ is an odd function,
\begin{align*}
b_{l+6}\big(-(g_l+2^l k)\big)&=\frac{F_m^2\big(-(g_l+2^l k)\big)-\big(-(g_l+2^l k)\big)}{2^{l+6}}\\
&=-\frac{F_m^2(g_l+2^l k)-(g_l+2^l k)}{2^{l+6}}\\
&=-b_{l+6}(g_l+2^l k)\\
&\equiv 1~(\mathrm{mod}~2).
\end{align*}

Therefore, the cycles $\{g_l+2^l k,F_m(g_l+2^l k)\}$ and $\{-(g_l+2^l k),-F_m(g_l+2^l k)\}$ strongly grow at level $l+6$.

Second, assume that $g_l\equiv 5$ (mod 8). 
In this case, the proof is similar to that of the first case, so we omit it.
\end{proof}

In order to explain all strongly growing cycles in the case $m=4+12q$ with $q=2+64d$, we need to construct a sequence.

\begin{definition}\label{def:m=4+12(2+64d)g_l}
	Let $m=4+12q$ with $q=2+64d$ for some nonnegative integer $d$. We define a sequence $\{g_l\}_{l\ge 10}$ recurrently. Let $g_{10}=1$ and $g_{10}'=1+2^3$ if $d$ is even, or $g_{10}=5$ and $g_{10}'=5+2^3$ if $d$ is odd. For $l> 10$, if $\{g_{l-1}', F_m(g_{l-1}')\}$ is a cycle of length 2 and strongly grows at level $l$, then set $g_l=g_{l-1}'$ and $g_l'=g_{l-1}'+2^{l-7}$, or otherwise set $g_l=g_{l-1}'+2^{l-7}$ and $g_l'=g_{l-1}'$.
\end{definition}

A computation, for example a Mathematica experiment, shows that $\{g_l, F_m(g_l)\}$ is a cycle of length 2 and strongly grows at level $l$ for many $l$'s. But the numbers in the sequence $\{g_l\}$ appear randomly, so the proof of strong growth cannot be done, so we leave as a conjecture.

\begin{conjecture}\label{conj}
	Let $m=4+12q$ with $q=2+64d$ for some nonnegative integer $d$. For each $g_l$ for $l\ge 10$ defined in Definition \ref{def:m=4+12(2+64d)g_l}, the Fibonacci polynomial $F_m(x)$ has a cycle $\{g_l, F_m(g_l)\}$ of length 2 which strongly grows at level $l$.
\end{conjecture}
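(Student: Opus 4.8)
The plan is to argue by induction on $l\ge 10$ that exactly one of the two candidates prescribed in Definition \ref{def:m=4+12(2+64d)g_l} gives a strongly growing $2$-cycle at level $l$, so that the branch selected by the recursion is forced to be the growing one. The base case $l=10$ is exactly Proposition \ref{prop:m=4+12q, q=2+64d sg at lev10}, which already exhibits strongly growing and strongly splitting cycles and produces $g_{10}$ together with the companion candidate $g_{10}'$. For the inductive step, suppose the splitting candidate $g_{l-1}'$ has been carried up to level $l$. By Lemma \ref{lem:m=4+12q, q=2+64d, g_l} the pair $\{g_{l-1}',F_m(g_{l-1}')\}$ is a $2$-cycle congruent to $\{1,3\}$ or $\{5,7\}$ modulo $8$, and by Lemma \ref{lem:Fib a_n (mod 2^7) for m=28+768d} its multiplier satisfies $a_l\equiv 1\ (\mathrm{mod}\ 2^7)$, in particular $a_l\equiv 1\ (\mathrm{mod}\ 4)$ along the whole chain. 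Thus, by Definition \ref{def:movement}, the choice between strong growth and strong splitting is governed \emph{solely} by the parity of $b_l$, and the problem reduces to comparing $b_l$ at the two candidates $g_{l-1}'$ and $g_{l-1}'+2^{l-7}$.

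The key computation is the expansion of $F_m^2$ around $x=g_{l-1}'$ with increment $h=2^{l-7}$. Since $(F_m^2)'(x)=F_m'\!\big(F_m(x)\big)F_m'(x)=a_l(x)$, a Taylor expansion gives
\[
\bigl(F_m^2(x+h)-(x+h)\bigr)-\bigl(F_m^2(x)-x\bigr)=(a_l(x)-1)\,h+\sum_{j\ge 2}\frac{(F_m^2)^{(j)}(x)}{j!}\,h^{j}.
\]
Each term with $j\ge 2$ carries extra powers of $2$ from $h^{j}=2^{j(l-7)}$ and from the $2$-divisibility of the higher derivatives of $F_m$ recorded in Lemmas \ref{lem:Fib second derivative} and \ref{lem:Fib third derivative}, so for large $l$ every such term has valuation exceeding $l$, the finitely many smallest levels being checked by hand. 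Writing $a_l(x)-1=2^7w$ with $w\in\Z_2$, the leading term equals $2^{l}w$, whence
\[
b_l(g_{l-1}'+2^{l-7})-b_l(g_{l-1}')\equiv w\ (\mathrm{mod}\ 2).
\]
Hence the two candidates have \emph{opposite} $b_l$-parities — equivalently exactly one strongly grows while the other strongly splits — precisely when $w$ is odd, i.e.\ when $\nu_2\bigl(a_l(g_{l-1}')-1\bigr)=7$. Granting this, the recursion records the growing branch as $g_l$ and carries the splitting branch forward as $g_l'$ (which, having $a_l\equiv1\ (\mathrm{mod}\ 4)$ and even $b_l$, is again a strongly splitting $2$-cycle by Definition \ref{def:movement}, so the induction continues); Proposition \ref{prop:m=4+12(2+64d)} then promotes $g_l$ to a full family of $2^{7}$ strongly growing cycles (its perturbations together with their negatives), which become the minimal components attached to level $l$.

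The hard part is the exact-valuation statement $\nu_2\bigl(a_l(g_{l-1}')-1\bigr)=7$. Lemma \ref{lem:Fib a_n (mod 2^7) for m=28+768d} only secures $a_l\equiv 1\ (\mathrm{mod}\ 2^7)$; to decide the parity of $w$ one must evaluate $a_l=F_m'(g_{l-1}')\,F_m'\bigl(F_m(g_{l-1}')\bigr)$ modulo $2^8$, and this residue depends on $g_{l-1}'$ modulo a power of $2$ that grows with $l$. Because $g_{l-1}'$ is produced by appending one unpredictable binary digit at each level of the recursion in Definition \ref{def:m=4+12(2+64d)g_l}, there is no closed form for it, and the periodicity machinery of Proposition \ref{prop:Fib periodic} and Lemmas \ref{lem:Fib a_n}--\ref{lem:Fib third derivative} — which pins down $F_m$ and its derivatives at a \emph{fixed} odd residue — cannot be applied uniformly along the moving sequence $\{g_l\}$. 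This is exactly the obstruction the experiments reveal: although $\nu_2(a_l-1)=7$ holds in every computed instance, the bits of $g_l$ appear random, so a uniform proof of this exact valuation, and hence of the conjecture, is beyond the present methods.
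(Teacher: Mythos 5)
The statement you were asked to prove is not proved in the paper at all: the authors explicitly label it Conjecture \ref{conj}, state that Mathematica experiments verify it for many $l$, and write that ``the numbers in the sequence $\{g_l\}$ appear randomly, so the proof of strong growth cannot be done.'' So there is no proof of record to compare yours against, and your proposal, by your own admission in its final paragraph, is not a proof either. That admission is the correct assessment, but it means the submission has a genuine, unclosed gap rather than being an alternative argument.

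Concretely, the gap is the step you flag yourself: the induction requires that at each level exactly one of the two candidates $g_{l-1}'$ and $g_{l-1}'+2^{l-7}$ strongly grows, which in your reduction comes down to the exact valuation $\nu_2\bigl(a_l(g_{l-1}')-1\bigr)=7$ (so that $w$ is odd and the two candidates have opposite $b_l$-parities). Lemma \ref{lem:Fib a_n (mod 2^7) for m=28+768d} gives only $a_l\equiv 1\ (\mathrm{mod}~2^7)$, i.e.\ $\nu_2(a_l-1)\ge 7$, and it is stated only for arguments $\equiv 1\ (\mathrm{mod}~8)$, so the $d$ odd branch with $g_{10}=5$ needs a separate (unstated) analogue. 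More importantly, deciding the residue of $a_l$ modulo $2^8$ requires knowing $g_{l-1}'$ modulo a power of $2$ that grows with $l$, and the periodicity lemmas (Proposition \ref{prop:Fib periodic}, Lemmas \ref{lem:Fib a_n}--\ref{lem:Fib third derivative}) only pin down $F_m$ and its derivatives at a fixed residue; they cannot track the moving sequence $\{g_l\}$ whose binary digits are determined one unpredictable bit per level. Your Taylor-expansion reduction of the problem to this single valuation statement is a reasonable and correct-in-outline way to organize the difficulty (modulo checking the $j\ge 2$ error terms at the finitely many small levels, and modulo verifying that the splitting branch remains a $2$-cycle with $a\equiv 1\ (\mathrm{mod}~4)$ when carried up a level), and it matches the obstruction the authors identify; but since that statement is exactly what is unproven, the conjecture remains open and your write-up should be presented as a reduction or a partial result, not as a proof.
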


If we assume that the conjecture is true, then by Proposition \ref{prop:m=4+12(2+64d)}, the minimal decomposition of $F_m(x)$ in this case is the following.

\begin{theorem}\label{thm:m=4+12q,q=2+64d}
Suppose that Conjecture \ref{conj} holds. Then the minimal decomposition of $\Z_2$ for $F_m(x)$ with $m=4+12q$ and $q=2+64d$ for nonnegative integers $d$ is
\[ \Z_2=\{0\}
   \bigsqcup\Bigl(\bigcup_{l\geq 10}\bigcup_{k=0}^{2^6-1}
   ( M_{l,k,1}\cup M_{l,k,-1})\Bigr)
   \bigsqcup(2\Z_2-\{0\}), \]
where 
\begin{align*}
	& M_{l,k,1}=(g_l+2^{l-6} k +2^l \Z_2) \cup (F_m(g_l+2^{l-6} k) +2^l\Z_2)\ \text{and} \\
	& M_{l,k,-1}=(-(g_l+2^{l-6} k) +2^l \Z_2) \cup (-F_m(g_l+2^{l-6} k) +2^l \Z_2).
\end{align*} Here, $\{0\}$ is the set of a fixed point and $M_{l,k,1}$'s and $M_{l,k,-1}$'s are the minimal components. The set $2\Z_2-\{0\}$ is the attracting basin of the fixed point $0$.
\end{theorem}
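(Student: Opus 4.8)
The plan is to partition $\Z_2$ into the two $F_m$-invariant halves $2\Z_2$ and $1+2\Z_2$ and treat each separately. Because every monomial of $F_m(x)=\sum_{j}\tbinom{14+384d+j}{1+2j}x^{1+2j}$ carries an odd power of $x$, the map is odd and fixes $0$, so $F_m(2\Z_2)\subseteq 2\Z_2$ and $F_m(1+2\Z_2)\subseteq 1+2\Z_2$; this both legitimizes handling the halves in isolation and, through oddness, lets me pair each positive cycle with its negative. On $2\Z_2$ I would invoke Proposition \ref{m=4+12q even numbers} verbatim: $0$ is a fixed point whose attracting basin is $2\Z_2-\{0\}$, which already produces the outer summands $\{0\}$ and $2\Z_2-\{0\}$. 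Everything substantive then happens on $1+2\Z_2$, where I must show that $\{M_{l,k,\pm1}\}$ lists the minimal components and that they fill the odd part.

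First I would certify that each $M_{l,k,\pm1}$ is a minimal component. Granting Conjecture \ref{conj}, the cycle $\{g_l,F_m(g_l)\}$ strongly grows at level $l$ for every $l\ge 10$; feeding this into Proposition \ref{prop:m=4+12(2+64d)}, whose parameter plays the role of $l-6\,(\ge 4)$ so that its ``level $l+6$'' is our level $l$ and its displacement $2^{l}k$ is our $2^{l-6}k$, promotes the strong growth to all $2^{6}$ lifts $\{g_l+2^{l-6}k,F_m(g_l+2^{l-6}k)\}$ and, by oddness, to their negatives, for $k=0,\dots,2^{6}-1$. The clopen sets $\mathbb{X}_\sigma$ of these strongly growing $2$-cycles are exactly $M_{l,k,1}$ and $M_{l,k,-1}$, so Proposition \ref{prop:grow tails}(1) makes $F_m$ minimal on each. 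The base level $l=10$ is supplied by Proposition \ref{prop:m=4+12q, q=2+64d sg at lev10}: its strongly growing cycles are the $g_{10}$ family together with the negatives, while its strongly splitting cycles initialize the survivor $g_{10}'$ of Definition \ref{def:m=4+12(2+64d)g_l}.

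The remaining work is the combinatorics of disjointness and exhaustiveness. Disjointness I would read off the recursion of Definition \ref{def:m=4+12(2+64d)g_l}: Lemma \ref{lem:m=4+12q, q=2+64d, g_l} shows every $2$-cycle at level $\ge 3$ reduces to $\{1,3\}$ or $\{5,7\}\pmod 8$, and at each level the lone strongly splitting ball $g_{l-1}'+2^{l-7}\Z_2$ bifurcates into the grower $g_l$, which peels off as the level-$l$ components, and the survivor $g_l'$; since the survivor ball is disjoint from every component already extracted and successive levels sit at strictly finer depth, the $M_{l,k,\pm1}$ are pairwise disjoint. For exhaustiveness the natural first move is a measure count: the level-$l$ family has total Haar measure $2^{7}\cdot 2\cdot 2^{-l}=2^{8-l}$, and $\sum_{l\ge 10}2^{8-l}=\tfrac12$ equals the measure of $1+2\Z_2$, so the components already fill the odd part up to a set of measure zero.

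The hard part will be closing that measure-zero gap. The survivor balls $g_l'+2^{l-6}\Z_2$ form a nested sequence of nonempty compacts, so their intersection $K$ is a nonempty (measure-zero) compact set meeting no clopen $M_{l,k,\pm1}$, and the growth/splitting dichotomy of Proposition \ref{prop:grow tails} does not by itself evacuate it; this is precisely where the ``random'' behavior of $\{g_l\}$ flagged before Conjecture \ref{conj} resists a uniform treatment. I would try to show that the first-return map to the survivor ball strongly splits at \emph{every} level, forcing $\nu_2(b_l)\to\infty$ and hence collapsing $K$ onto a finite union of indifferent periodic $2$-cycles and their negatives; one must then decide whether those points are genuinely absent or whether they must be recorded in the $\mathcal{P}$ part of Theorem \ref{thm-decomposition}. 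Settling the exact fate of $K$ is the one genuinely delicate step, and granting that it contributes nothing to the odd part, the stated three-fold partition $\{0\}\sqcup\bigcup_{l,k}(M_{l,k,1}\cup M_{l,k,-1})\sqcup(2\Z_2-\{0\})$ follows immediately.
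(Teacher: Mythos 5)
Your strategy is sound and, notably, far more detailed than the paper's own treatment: the paper ``proves'' this theorem in one sentence, asserting that Conjecture \ref{conj} together with Proposition \ref{prop:m=4+12(2+64d)} gives the decomposition, with no argument for disjointness or exhaustiveness. Your positive steps are all correct: invariance of the two halves (though note $F_m(1+2\Z_2)\subseteq 1+2\Z_2$ needs $F_m(1)$ odd, which follows from $m\equiv 1\pmod 3$, not from oddness of the polynomial alone); Proposition \ref{m=4+12q even numbers} for the even half; the index shift $l\mapsto l-6$ in Proposition \ref{prop:m=4+12(2+64d)} combined with Proposition \ref{prop:m=4+12q, q=2+64d sg at lev10} and Proposition \ref{prop:grow tails}(1) to make each $M_{l,k,\pm1}$ minimal; and the measure count $\sum_{l\ge 10}2^{8-l}=\tfrac12$ with the nesting of survivor balls for disjointness and exhaustiveness up to the residual set $K$.

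The gap you flag at the end is genuine, and it closes in the direction you feared rather than the one you hoped. The survivor balls $g_l'+2^{l-6}\Z_2$ are nested with radii tending to $0$, so their intersection is a single point $x^{*}$ with $x^{*}\equiv g_l'\pmod{2^{l-6}}$ for all $l$. For each $l$ this point lies in one of the level-$l$ two-cycles $\{g_l'+2^{l-6}k,\,F_m(g_l'+2^{l-6}k)\}$, hence $F_m^{2}(x^{*})\equiv x^{*}\pmod{2^{l}}$ for every $l$, and therefore $F_m^{2}(x^{*})=x^{*}$ exactly. Thus $K=\{x^{*},F_m(x^{*}),-x^{*},-F_m(x^{*})\}$ consists of four genuine periodic points (two indifferent $2$-cycles), lies in $1+2\Z_2$, and meets no $M_{l,k,\pm1}$; it cannot ``contribute nothing'' and must be recorded in the $\mathcal{P}$ part of Theorem \ref{thm-decomposition}. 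Your alternative scenario --- that the first-return map strongly splits with finite uniform $\nu_2(b_l)$ --- is excluded by Proposition \ref{prop:grow tails}(2) together with Definition \ref{def:m=4+12(2+64d)g_l}: a finite uniform valuation would force \emph{all} lifts to grow at some finite level, contradicting the perpetual existence of a non-growing survivor $g_l'$; the only consistent reading is $b_l(x^{*})=0$ for all $l$. So the displayed partition is incomplete as stated, no proof of it can succeed without adding these four periodic points, and the paper's one-line justification never confronts this; your write-up correctly isolates the decisive point even though it leaves it unresolved.
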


\section{Minimal decompositions for $F_m(x)$ with $m\equiv 6$ (mod 12)}

We consider the case $m=6+12q$ with nonnegative integers $q$. The Fibonacci polynomial becomes
\[ F_m(x)=\sum_{j=0}^{2+6q}\tbinom{3+6q+j}{1+2j}x^{1+2j}. \]

\begin{proposition}\label{prop:m=6+12q odd number}
Let $m=6+12q$ for some nonnegative integer $q$. Then, the Fibonacci polynomial has the property
\[ F_m(1+2\Z_2)\subseteq 2^3+2^4\Z_2. \]
Therefore, the clopen set $1+2\Z_2$ is included in the attracting basin of $F_m$.
\end{proposition}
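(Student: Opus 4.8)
The plan is to prove the equivalent statement that $\nu_2\big(F_m(s)\big)=3$ for every odd $s$, since the target set is exactly
\[
2^3+2^4\Z_2=\{y\in\Z_2 : y\equiv 8 \ (\mathrm{mod}\ 16)\}=\{y\in\Z_2 : \nu_2(y)=3\}.
\]
Granting this, the containment $F_m(1+2\Z_2)\subseteq 2\Z_2$ follows, and because $2\Z_2$ is invariant (the constant term of $F_m$ vanishes for even $m$) while $1+2\Z_2$ contains no periodic point and no minimal component, every point of $1+2\Z_2$ is carried into $2\Z_2$ and hence lies in the attracting basin $\mathcal B$.

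First I would reduce the exponent $m$ modulo the relevant period. By part 2 of Proposition \ref{prop:Fib periodic}, for each fixed odd $s$ the sequence $\{F_m(s)\ (\mathrm{mod}\ 2^4)\}_m$ is periodic of period $3\cdot 2^{3}=24$. Writing $m=6+12q$ and reducing modulo $24$ gives $m\equiv 6\ (\mathrm{mod}\ 24)$ when $q$ is even and $m\equiv 18\ (\mathrm{mod}\ 24)$ when $q$ is odd. It therefore suffices to prove $F_6(s)\equiv 8$ and $F_{18}(s)\equiv 8\ (\mathrm{mod}\ 16)$ for all odd $s$. The case $m\equiv 6$ is immediate from the computation inside the proof of Lemma \ref{lem:Fib 2valuation}: writing $s=1+2t$ one has $F_6(1+2t)=8(1+2t)\big(1+t(t+1)\big)\big(1+2t(t+1)\big)$, which is $8$ times a product of three odd factors, so $F_6(s)\equiv 8\ (\mathrm{mod}\ 16)$.

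For $m\equiv 18$ I would invoke a triplication identity. Combining the addition formula $F_{3n}=F_{2n+1}F_n+F_{2n}F_{n-1}$ with (\ref{Fib:eq variant1}) and (\ref{Fib:eq variant2}) yields
\[
F_{3n}=F_n\big(F_{n+1}^2+F_{n-1}F_{n+1}+F_{n-1}^2+F_n^2\big),
\]
and taking $n=6$ gives $F_{18}=F_6\cdot G$ with $G=F_7^2+F_5F_7+F_5^2+F_6^2\in\Z[x]$. Evaluating at an odd $s$ and using that $\{F_m(s)\ (\mathrm{mod}\ 2)\}_m$ has period $3$ (Proposition \ref{prop:Fib periodic}), so that $F_m(s)$ is even precisely when $3\mid m$, we see $F_6(s)$ is even while $F_5(s)$ and $F_7(s)$ are odd; hence $G(s)\equiv 1+1+1+0\equiv 1\ (\mathrm{mod}\ 2)$ is odd. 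Consequently $\nu_2\big(F_{18}(s)\big)=\nu_2\big(F_6(s)\big)+\nu_2\big(G(s)\big)=3+0=3$, i.e. $F_{18}(s)\equiv 8\ (\mathrm{mod}\ 16)$, which completes both cases.

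The main obstacle is precisely the case $m\equiv 18\ (\mathrm{mod}\ 24)$: unlike $F_6$, the polynomial $F_{18}$ has no ready-made factorization exhibiting its $2$-adic valuation, so one cannot simply read off $\nu_2=3$. The triplication identity is what unlocks this, turning the valuation computation into the elementary parity count $F_5(s),F_7(s)$ odd and $F_6(s)$ even, all of which are consequences of the period-$3$ behaviour of $\{F_m(s)\ (\mathrm{mod}\ 2)\}_m$. The remaining steps are routine verifications.
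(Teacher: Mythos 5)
Your argument is correct, but it takes a genuinely different route from the paper's. The paper fixes $m$ and reduces in the $x$-direction: it expands $(1+2k)^{1+2j}$ modulo $2^4$ as a combination of $1$ and $(-1)^j$, which collapses $F_m(1+2k)$ to $(1+4k+2k^2)F_m(1)-(2k+2k^2)\frac{F_m(i)}{i} \pmod{2^4}$, and then evaluates the two quantities $F_m(1)$ and $F_m(i)/i$ via the periodicities of Proposition \ref{prop:Fib periodic} and Lemma \ref{lem:Fib input i}. You instead reduce in the $m$-direction, using the period-$24$ behaviour of $\{F_m(s)\ (\mathrm{mod}\ 2^4)\}_m$ to cut the problem down to the two polynomials $F_6$ and $F_{18}$, and then treat those uniformly in $s$: $F_6$ via the explicit factorization already recorded in the proof of Lemma \ref{lem:Fib 2valuation}, and $F_{18}$ via the triplication identity $F_{3n}=F_n\bigl(F_{n+1}^2+F_{n+1}F_{n-1}+F_{n-1}^2+F_n^2\bigr)$ (a one-line consequence of the addition formula together with (\ref{Fib:eq variant1}) and (\ref{Fib:eq variant2})) plus a parity count based on the period-$3$ behaviour mod $2$. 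Both are complete proofs; yours avoids the $(-1)^j$ identity and the $F_m(i)/i$ bookkeeping and is arguably more transparent, while the paper's method is the template it reuses for the later, harder congruence classes (e.g.\ Proposition \ref{prop:m=4+12q,q=1}), so it scales more directly. Your concluding dynamical step (the image lands in the invariant set $2\Z_2$, so $1+2\Z_2$ meets neither $\mathcal{P}$ nor $\mathcal{M}$ and hence lies in $\mathcal{B}$) is sound, and in fact spelled out more carefully than in the paper.
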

\begin{proof}
Let $k\in\Z_2$. We show that $F_m(1+2k)\equiv 8~(\mathrm{mod}~2^4)$. Using the fact that $(1+2k)^4\equiv 1~(\mathrm{mod}~2^4)$, for every integer $j\geq 0$, the following holds
\begin{equation}\label{m=6+12q intro equ}
(1+2k)^{1+2j}\equiv (1+4k+2k^2)-(2k+2k^2)(-1)^j~(\mathrm{mod}~2^4).
\end{equation}
Using expression (\ref{m=6+12q intro equ}) and the equality $\frac{F_m(i)}{i}=\sum_{j=0}^{2+6q}\tbinom{3+6q+j}{1+2j}(-1)^{j}$, we obtain
\begin{align*}
F_m(1+2k)&=\sum_{j=0}^{2+6q}\tbinom{3+6q+j}{1+2j}(1+2k)^{1+2j}\\
&\equiv \sum_{j=0}^{2+6q}\tbinom{3+6q+j}{1+2j}\big((1+4k+2k^2)-(2k+2k^2)(-1)^j\big)\\
&\equiv (1+4k+2k^2)F_m(1)-(2k+2k^2)\frac{F_m(i)}{i}~(\mathrm{mod}~2^4).
\end{align*}

We know that the sequence $\{F_m(1)~(\mathrm{mod}~2^4)\}_m$ is periodic of period 24 by Proposition \ref{prop:Fib periodic}. Since $m\equiv 6$ or 18~(mod 24), $F_{6}(1)\equiv 8~\mathrm{and}~ F_{18}(1)\equiv 8~(\mathrm{mod}~2^4)$ imply $F_m(1)\equiv 8~(\mathrm{mod}~2^4)$. Write $F_m(1)=8+16h$ for some $h\in\Z_2$.

We know that the sequence $\{\frac{F_m(i)}{i}\}_m$ is periodic of period 12 by Lemma \ref{lem:Fib input i}. Since $m\equiv 6$~(mod 12), $\frac{F_6(i)}{i}=0$ implies $\frac{F_m(i)}{i}=0$.

Therefore, we obtain
\begin{align*}
F_m(1+2k)&\equiv (1+4k+2k^2)(8+16h)-(2k+2k^2)\cdot 0\\
&\equiv 8~(\mathrm{mod}~2^4).
\end{align*}
Since $F_m(8)\equiv \tbinom{3+6q}{1}8\equiv 8~(\mathrm{mod}~2^4)$, we conclude that $1+2\Z_2$ is included in the attracting basin of $F_m$.
\end{proof}

Now, we consider elements in the set $2\Z_2$, which is the complement of $1+2\Z_2$ in $\Z_2$. For this purpose, we divide nonnegative integers $q$ into cases $q=4d,\ q=1+4d,\ q=2+4d$ and $q=3+4d$ for nonnegative integers $d$.

\subsection{Case: $m=6+12q$ with $q=4d$}

We consider the case $m=6+12q$ with $q=4d$ for nonnegative integers $d$. The Fibonacci polynomial becomes
\[ F_m(x)=\sum_{j=0}^{2+24d}\tbinom{3+24d+j}{1+2j}x^{1+2j}. \]

\begin{proposition}\label{prop:m=6+12q,q=0}
Let $m=6+12q$ with $q=4d$ for some nonnegative integer $d$. The Fibonacci polynomial $F_m(x)$ has cycles $\{(1+4k)2^n,(3+4k)2^n\}$ of length 2 which strongly grow at level $n+3$ for $k=0,1$ and $n\geq 1$.
\end{proposition}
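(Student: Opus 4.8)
The plan is to verify the three conditions of Definition \ref{def:movement} for strong growth: that $\{(1+4k)2^n,(3+4k)2^n\}$ is a genuine $2$-cycle at level $n+3$, that $a_{n+3}\equiv 1\pmod 4$, and that $b_{n+3}\equiv 1\pmod 2$. Everything reduces to understanding $F_m$ on inputs of the shape $(\text{odd})\cdot 2^n$ modulo $2^{n+4}$, working directly from the explicit sum $F_m(x)=\sum_{j=0}^{2+24d}\tbinom{3+24d+j}{1+2j}x^{1+2j}$. Writing $c=3+24d$ for the leading ($j=0$) coefficient, the two facts that drive the whole argument are that $c$ is odd with $c\equiv 3\pmod 8$, and that $\nu_2(c^2-1)=\nu_2\big((2+24d)(4+24d)\big)=3$.

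The key technical step, which I would carry out first, is to show that for $x=(\text{odd})\cdot 2^n$ with $n\ge 1$ every summand with $j\ge 1$ satisfies $\nu_2\big(\tbinom{3+24d+j}{1+2j}x^{1+2j}\big)\ge n+4$, so that $F_m(x)\equiv cx\pmod{2^{n+4}}$. Since $x^{1+2j}$ contributes valuation exactly $(1+2j)n$, this is automatic when $n\ge 2$ (as $(1+2j)n\ge 3n\ge n+4$) and when $n=1,\ j\ge 2$ (as $1+2j\ge 5$); the only boundary case is $n=1,\ j=1$, where one checks $\nu_2\tbinom{4+24d}{3}=2$ directly, giving valuation $2+3=5=n+4$ on the nose. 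This bookkeeping is the main obstacle, but it is mild here precisely because $3+24d$ is odd: unlike the harder cases there is no distinguished high power of $2$ sitting in the binomial numerator to track, and the threshold is attained by the single term $j=1,\ n=1$.

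With $F_m(x)\equiv cx\pmod{2^{n+4}}$ in hand, the remaining steps are routine. Reducing modulo $2^{n+3}$ and using $c\equiv 3\pmod 8$ gives $F_m\big((1+4k)2^n\big)\equiv (3+4k)2^n$ and $F_m\big((3+4k)2^n\big)\equiv(1+4k)2^n\pmod{2^{n+3}}$, since $3(1+4k)\equiv 3+4k$ and $3(3+4k)\equiv 1+4k\pmod 8$; as $c^2\equiv 1\pmod 8$ the two points are swapped and distinct, so $\{(1+4k)2^n,(3+4k)2^n\}$ is a $2$-cycle at level $n+3$. For $a_{n+3}$, differentiating the sum shows $F_m'(x)\equiv c\equiv 3\pmod 4$ at each point (the terms $j\ge 1$ carry $x^{2j}$, of valuation $\ge 2$), whence $a_{n+3}=F_m'\big((1+4k)2^n\big)\cdot F_m'\big((3+4k)2^n\big)\equiv 3\cdot 3\equiv 1\pmod 4$ by (\ref{a_l}).

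Finally, for $b_{n+3}$ I would apply the modulo-$2^{n+4}$ computation twice: for any $x=(\text{odd})\cdot 2^n$ one gets $F_m^2(x)\equiv c^2x\pmod{2^{n+4}}$ (the intermediate value $cx$ is again odd times $2^n$, so the same valuation bound applies), hence $F_m^2(x)-x\equiv(c^2-1)x\pmod{2^{n+4}}$. Because $\nu_2(c^2-1)=3$, this forces $\nu_2\big(F_m^2(x)-x\big)=n+3$ exactly, so by (\ref{b_l}) we have $b_{n+3}\equiv 1\pmod 2$ at \emph{both} points of the cycle uniformly; no appeal to the oddness of $F_m^2$ is needed, since the two points are not negatives of one another here. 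The three conditions of Definition \ref{def:movement} then give strong growth at level $n+3$, completing the proof. This mirrors the structure of Proposition \ref{prop:m=2+12q,q=2}, the essential difference being that the leading coefficient $3+24d\equiv 3\pmod 4$ produces $2$-cycles rather than fixed points.
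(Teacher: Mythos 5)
Your proposal is correct and follows essentially the same route as the paper: expand the explicit sum $F_m(x)=\sum_j\binom{3+24d+j}{1+2j}x^{1+2j}$, bound the $2$-adic valuation of each summand with $j\ge 1$ (checking the boundary case $n=1$, $j=1$ via $\nu_2\binom{4+24d}{3}=2$), and read off the cycle, $a_{n+3}$ and $b_{n+3}$ from the $j=0$ coefficient $c=3+24d$. Your only deviation is a mild streamlining of the $b_{n+3}$ step --- using the uniform congruence $F_m(x)\equiv cx\pmod{2^{n+4}}$ on $(\mathrm{odd})\cdot 2^n$ to get $F_m^2(x)-x\equiv(c^2-1)x$ with $\nu_2(c^2-1)=3$, instead of re-expanding the sum at the image point as the paper does --- which is a correct and slightly cleaner organization of the same computation.
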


\begin{proof}
We compute the following:
\begin{align*}
F_m\big((1+4k)2^n\big)=&\tbinom{3+24d}{1}(1+4k)2^n+\tbinom{4+24d}{3}(1+4k)^3 2^{3n}\\
&\quad+\tbinom{5+24d}{5} (1+4k)^5 2^{5n}+\dots\\
\equiv&\tbinom{3+24d}{1}(1+4k) 2^n+\tbinom{4+24d}{3}(1+4k)^3 2^{3n}\\
\equiv&(3+4k) 2^n\ (\mathrm{mod}~2^{n+3})~\mathrm{and}
\end{align*}
\begin{align*}
F_m\big((3+4k)2^n\big)=&\tbinom{3+24d}{1}(3+4k) 2^n+\tbinom{4+24d}{3}(3+4k)^3 2^{3n}\\
&\quad+\tbinom{5+24d}{5}(3+4k)^5 2^{5n}+\dots\\
\equiv&\tbinom{3+24d}{1}(3+4k) 2^n+\tbinom{4+24d}{3} (3+4k)^3 2^{3n}\\
\equiv&(1+4k) 2^n~(\mathrm{mod}~2^{n+3}).
\end{align*}
Therefore, $\{(1+4k)2^n,(3+4k)2^n\}$ is a cycle of length 2 at level $n+3$.

Now we compute the quantity $a_{n+3}$ for the above cycles, as defined in (\ref{a_l}).
\begin{align*}
a_{n+3}\big((1+4k) 2^n\big)&=F_m'\big((1+4k) 2^n\big)\cdot F_m'\big((3+4k) 2^n\big)\equiv \tbinom{3+24d}{1}\tbinom{3+24d}{1}\\
&\equiv 1~(\mathrm{mod}~4).
\end{align*}

Now we compute the quantity $b_{n+3}$ for the above cycles, as defined in (\ref{b_l}).
\begin{align*} b_{n+3}\big((1+4k)2^n\big)=\frac{F_m^2\big((1+4k)2^n\big)-(1+4k)2^n}{2^{n+3}}.
\end{align*}
We show that $\nu_2\Big(F_m^2\big((1+4k)2^n\big)-(1+4k)2^n\Big)=n+3$.
\begin{align*}
F_m\big((1+4k)2^n\big)=\sum_{j=0}^{2+24d}\tbinom{3+24d+j}{1+2j}\big((1+4k)2^n\big)^{1+2j}.
\end{align*}
For $j=1$, the summand has the valuation
\begin{align*}
\nu_2\Big(\tbinom{4+24d}{3}&\big((1+4k)2^n\big)^3\Big)\\
&=\nu_2\big(2^{3n+2}(1+6d)(1+8d)(1+12d)(1+4k)^3\big)\\
&\geq n+4.
\end{align*}
For $j\geq 2$, the summand has the valuation
\begin{align*}
\nu_2\Big(\tbinom{3+24d+j}{1+2j}\big((1+4k)2^n\big)^{1+2j}\Big)
\geq (1+2j)n \geq n+4.
\end{align*}
So, we obtain
\begin{align*}
F_m\big((1+4k)2^n\big)&\equiv \tbinom{3+24d}{1}(1+4k)2^n\\
&\equiv (3+8d)(1+4k)2^n~(\mathrm{mod}~2^{n+4}).
\end{align*}
Now we compute the following.
\begin{align*}
F_m^2&\big((1+4k)2^n\big)-(1+4k)2^n\\
&\equiv F_m\big((3+8d)(1+4k)2^n\big)-(1+4k)2^n\\
&\equiv \sum_{j=0}^{2+24d}\tbinom{3+24d+j}{1+2j}\big((3+8d)(1+4k)2^n\big)^{1+2j}-(1+4k)2^n~(\mathrm{mod}~2^{n+4}).
\end{align*}
For $j=0$, with the term $(1+4k)2^n$, the summand has the valuation
\begin{align*}
\nu_2\big(\tbinom{3+24d}{1}&(3+8d)(1+4k)2^n-(1+4k)2^n\big)\\
&=\nu_2\big(2^{n+3}(1+12d+24d^2)(1+4k)\big)\\
&=n+3.
\end{align*}
For $j=1$, the summand has the valuation
\begin{align*}
\nu_2\Big(\tbinom{4+24d}{3}&\big((3+8d)(1+4k)2^n\big)^{3}\Big)\\
&=\nu_2\big(2^{3n+2}(1+6d)(1+8d)(1+12d)(3+8d)^3 (1+4k)^3 \big)\\
&\geq n+4.
\end{align*}
For $j\geq 2$, the summand has the valuation
\begin{align*}
\nu_2\Big(\tbinom{3+24d+j}{1+2j}\big((3+8d)(1+4k)2^n\big)^{1+2j}\Big)
\geq (1+2j)n \geq n+4.
\end{align*}
Combining these, we obtain
\[ \nu_2\Big(F_m^2\big((1+4k)2^n\big)-(1+4k)2^n\Big)=n+3.\]
Hence, $b_{n+3}\big((1+4k)2^n\big)\equiv 1~(\mathrm{mod}~2)$. Therefore, the cycle $\{(1+4k)2^n,(3+4k)2^n\}$ strongly grows at level $n+3$, which completes the proof.
\end{proof}

\begin{theorem}
The minimal decomposition of $\Z_2$ for $F_m(x)$ with $m=6+12q$ and $q=4d$ for nonnegative integers $d$ is
\[ \Z_2=\{0\}\bigsqcup\Bigl(\bigcup_{n\geq 1}\bigcup_{k=0}^1 M_{n,k}\Bigr)\bigsqcup(1+2\Z_2), \]
where 
\[ M_{n,k}=\big((1+4k)2^n+2^{n+3}\Z_2\big)\cup\big((3+4k)2^n+2^{n+3}\Z_2\big). \]
Here, $\{0\}$ is the set of a fixed point and $M_{n,k}$'s are the minimal components. The clopen set $1+2\Z_2$ is the attracting basin of $\{0\}$ and $M_{n,k}$'s.
\end{theorem}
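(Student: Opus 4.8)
The plan is to assemble the decomposition from the two propositions already proved in this subsection, so that the only real work is checking that the listed pieces genuinely partition $\Z_2$. First I would record that $\{0\}$ is a fixed point: since $m$ is even, $F_m$ has zero constant term, whence $F_m(0)=0$ and $\{0\}$ is a periodic orbit of length $1$.

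Next I would upgrade the cycles of Proposition \ref{prop:m=6+12q,q=0} to minimal components. For each $n\ge 1$ and $k\in\{0,1\}$ that proposition produces a strongly growing $2$-cycle $\sigma_{n,k}=\{(1+4k)2^n,(3+4k)2^n\}$ at level $n+3$. Since $n+3\ge 4\ge 2$, part 1 of Proposition \ref{prop:grow tails} shows that $F_m$ restricted to the invariant clopen set $\mathbb{X}_{\sigma_{n,k}}=M_{n,k}$ is minimal; thus each $M_{n,k}$ is a minimal component, and being an infinite minimal subsystem it contains no periodic point.

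The central step is the bookkeeping that $\{0\}$, the sets $M_{n,k}$, and $1+2\Z_2$ are pairwise disjoint and exhaust $\Z_2$. I would argue by $2$-adic valuation: the elements $x$ with $\nu_2(x)=n$ are exactly $2^n(1+2\Z_2)$, and decomposing $1+2\Z_2$ into its four residue classes $1,3,5,7$ modulo $8$ gives
\[
\{x:\nu_2(x)=n\}=\bigsqcup_{j\in\{1,3,5,7\}}\big(j\,2^n+2^{n+3}\Z_2\big)=M_{n,0}\sqcup M_{n,1},
\]
because $\{1+4k,3+4k:k=0,1\}=\{1,3,5,7\}$. Taking the disjoint union over $n\ge 1$ yields $\bigsqcup_{n\ge1}\bigsqcup_{k=0}^{1}M_{n,k}=2\Z_2\setminus\{0\}$, which is disjoint from $\{0\}$ and from $1+2\Z_2$; this is precisely the claimed partition.

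Finally I would place $1+2\Z_2$ in the attracting part. By Proposition \ref{prop:m=6+12q odd number}, $F_m(1+2\Z_2)\subseteq 2^3+2^4\Z_2=\{x:\nu_2(x)=3\}=M_{3,0}\cup M_{3,1}$, so one iteration carries every odd number into a minimal component $M_{3,0}$ or $M_{3,1}$, where it must stay by invariance; hence each point of $1+2\Z_2$ lies in the attracting basin of a minimal subsystem. Since the minimal components carry no periodic points and $1+2\Z_2$ maps into $\mathcal{M}$, the fixed point $0$ is the only periodic point, so $\mathcal{P}=\{0\}$, and we obtain the decomposition of Theorem \ref{thm-decomposition} in the stated form. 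The only delicate point is this partition bookkeeping — matching the four radius-$2^{n+3}$ balls at each level $n$ to the four odd residues modulo $8$ and checking that the image of the odd numbers lands precisely in the level-$3$ components; everything else is a direct appeal to the two preceding propositions.
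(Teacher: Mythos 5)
Your proposal is correct and follows essentially the same route as the paper, which states this theorem as a direct consequence of Proposition \ref{prop:m=6+12q odd number} (odd inputs land in $2^3+2^4\Z_2$) and Proposition \ref{prop:m=6+12q,q=0} (the strongly growing $2$-cycles), upgraded to minimal components via part 1 of Proposition \ref{prop:grow tails}. The only difference is that you spell out the valuation/residue bookkeeping showing $\bigsqcup_{n\ge1}\bigsqcup_{k=0}^{1}M_{n,k}=2\Z_2\setminus\{0\}$ and that $2^3+2^4\Z_2=M_{3,0}\cup M_{3,1}$, which the paper leaves implicit.
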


\subsection{Case: $m=6+12q$ with $q=1+2d$}

We consider the case $m=6+12q$ with $q=1+2d$ for nonnegative integers $d$.

\begin{proposition}\label{prop:m=6+12q,q=1}
Let $m=6+12q$ with $q=1+2d$ for some nonnegative integer $d$. Let $t=t(q)$, as defined in (\ref{special value t(q)}). The Fibonacci polynomial $F_m(x)$ has cycles $\{(1+2k)2^n\}$ of length 1 which strongly grow at level $n+t+2$ for $k=0,1,\dots,2^{t+1}-1$ and $n\geq 1$.
\end{proposition}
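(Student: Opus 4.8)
The plan is to follow the template of Propositions \ref{prop:m=6+12q,q=0} and \ref{prop:m=2+12q,q=1}: verify directly that each singleton $\{(1+2k)2^n\}$ is a $1$-cycle at level $n+t+2$, then compute the quantities $a_{n+t+2}$ and $b_{n+t+2}$ from (\ref{a_l}) and (\ref{b_l}) and read off strong growth via Definition \ref{def:movement}. The whole argument is controlled by a single arithmetic fact about the leading coefficient $\tbinom{3+6q}{1}=3+6q$ of $F_m$. Writing $q=c_0+c_1\cdot 2+\cdots$ with $c_0=1$ (as $q$ is odd) and $t=t(q)$, Lemma \ref{the relation between q and t} gives $c_t=c_{t+1}=0$ when $t$ is odd and $c_t=c_{t+1}=1$ when $t$ is even; summing the resulting geometric series yields $q=\frac{2\cdot 2^t-1}{3}+2^{t+2}r$ or $q=\frac{10\cdot 2^t-1}{3}+2^{t+2}r$, and in either case $3+6q=1+2^{t+2}w$ with $w$ odd. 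Equivalently $\nu_2(2+6q)=t+2$ while $3+6q$ is odd; this is the only place the value of $t$ enters.

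First I would show $F_m\big((1+2k)2^n\big)\equiv(1+2k)2^n\ (\mathrm{mod}\ 2^{n+t+2})$. Expanding $F_m\big((1+2k)2^n\big)=\sum_{j}\tbinom{3+6q+j}{1+2j}\big((1+2k)2^n\big)^{1+2j}$, the $j=0$ summand equals $(3+6q)(1+2k)2^n\equiv(1+2k)2^n$ modulo $2^{n+t+2}$ since $3+6q\equiv 1\ (\mathrm{mod}\ 2^{t+2})$, so it suffices to bound the tail. For $j\ge 1$ the numerator of $\tbinom{3+6q+j}{1+2j}$ is a product of $1+2j$ consecutive integers centred at the odd number $3+6q$; it contains the factor $2+6q$ of valuation exactly $t+2$ together with at least $j-1$ further even factors, while $\nu_2\big((1+2j)!\big)\le 2j$ by the Legendre formula quoted in the excerpt. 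This gives $\nu_2\ge (t+2)+(j-1)-2j+(1+2j)n = t+1+n+j(2n-1)\ge n+t+2$ for $n\ge1$, establishing the $1$-cycle.

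For $a_{n+t+2}\big((1+2k)2^n\big)=F_m'\big((1+2k)2^n\big)$ I would differentiate term by term: the $j=0$ term contributes $3+6q\equiv1\ (\mathrm{mod}\ 4)$ and every $j\ge1$ term carries a factor $2^{2jn}$ with $2jn\ge2$, so $a_{n+t+2}\equiv1\ (\mathrm{mod}\ 4)$. For $b_{n+t+2}$ I must sharpen the tail estimate to show $\nu_2\big(F_m((1+2k)2^n)-(1+2k)2^n\big)$ is \emph{exactly} $n+t+2$. The $j=0$ contribution is $(2+6q)(1+2k)2^n$, of valuation precisely $n+t+2$, so it remains to check that every other summand strictly exceeds this; for $j\ge2$ the bound $t+1+n+j(2n-1)\ge n+t+3$ already does it, whence $b_{n+t+2}$ is odd and the cycle strongly grows.

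The main obstacle is the borderline case $j=1$, $n=1$, where the uniform bound $t+1+n+j(2n-1)$ only returns $t+3=n+t+2$ and therefore fails to separate the $j=1$ term from the $j=0$ term in the $b$-computation. I would dispose of it by an exact valuation of the single summand $\tbinom{4+6q}{3}\big((1+2k)2^n\big)^3$: since $\nu_2(4+6q)=1$, $\nu_2(3+6q)=0$, $\nu_2(2+6q)=t+2$ and $\nu_2(6)=1$, one gets $\nu_2\tbinom{4+6q}{3}=t+2$, so the $j=1$ valuation is $(t+2)+3n\ge n+t+4>n+t+2$ for all $n\ge1$. With this exceptional term pinned down, the valuation is genuinely attained at $j=0$ and the argument concludes as in Proposition \ref{prop:m=6+12q,q=0}; the only conceptual work beyond the routine expansions is the bookkeeping that ties the valuation $t+2$ of $2+6q$ to the combinatorial quantity $t=t(q)$.
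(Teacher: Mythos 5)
Your proposal is correct and follows essentially the same route as the paper: isolate the $j=0$ term $(3+6q)(1+2k)2^n$, bound the tail via the factor $2+6q$ of valuation $t+2$ in the numerator of $\tbinom{3+6q+j}{1+2j}$ together with $\nu_2\big((1+2j)!\big)\le 2j$, and then verify $a\equiv 1\ (\mathrm{mod}\ 4)$ and $\nu_2\big(F_m((1+2k)2^n)-(1+2k)2^n\big)=n+t+2$ exactly. The only differences are cosmetic: you merge the paper's two parity cases of $t$ into the single identity $3+6q=1+2^{t+2}w$ with $w$ odd, and you resolve the borderline $j=1$, $n=1$ summand by the same exact valuation $\nu_2\tbinom{4+6q}{3}=t+2$ that the paper computes.
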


\begin{proof}
First, assume that $t$ is odd. By Lemma \ref{the relation between q and t}, $c_t=c_{t+1}=0$. Then,
\begin{align*}
q=&1+2^2+2^4+\dots+2^{t-1}+0\cdot 2^t+0\cdot 2^{t+1}+2^{t+2}r\\
=&\frac{2\cdot 2^t-1}{3}+2^{t+2}r
\end{align*}
for some $r\in\Z$. The Fibonacci polynomial becomes \\ $F_m(x)=\sum_{j=0}^{4\cdot 2^t+24\cdot 2^t r}\tbinom{4\cdot 2^t+24\cdot 2^t r+1+j}{1+2j}x^{1+2j}$. Consider the form
\[ F_m\big((1+2k)2^n\big)=\sum_{j=0}^{4\cdot 2^t+24\cdot 2^t r}\tbinom{4\cdot 2^t+24\cdot 2^t r+1+j}{1+2j}\big((1+2k)2^n\big)^{1+2j}. \]
For $j\geq 1$,
\begin{multline*}
\tbinom{4\cdot 2^t+24\cdot 2^t r+1+j}{1+2j}\big((1+2k)2^n\big)^{1+2j}\\
=\frac{\big(2^{t+2}(1+6r)+1+j\big)\dots\big(2^{t+2}(1+6r)+1-j\big)}{(1+2j)!}\big((1+2k)2^n\big)^{1+2j}.
\end{multline*}
In the above expression, the numerator has the factor $2^{t+2}(1+6r)$ and the number of even factors in the numerator, excluding the term $2^{t+2}(1+6r)$, is $j-1$. It is known that for $x=\sum_{i=0}^{s}x_i 2^i$ with $x_i=0$ or 1, $\nu_2(x!)=x-\sum_{i=0}^{s}x_i$. Hence, $\nu_2\big((1+2j)!\big)\leq 2j$. Therefore,
\begin{align}
\nu_2\Big(\tbinom{4\cdot 2^t+24\cdot 2^t r+1+j}{1+2j}&\big((1+2k)2^n\big)^{1+2j}\Big)\label{m=6+12q,q=1+2d coefficient}\\
&\geq(t+2)+(j-1)+(1+2j)n-2j\notag\\
&=1+t+n+j(2n-1)\notag\\
&\geq n+t+2\notag.
\end{align}
So,
\begin{align*}
F_m\big((1+2k)2^n\big)\equiv&\tbinom{4\cdot 2^t+24\cdot 2^t r+1}{1}(1+2k)2^n\\
\equiv&(1+2k)2^n~(\mathrm{mod}~2^{n+t+2}).
\end{align*}
Therefore, $\{(1+2k)2^n\}$ is a cycle of length 1 at level $n+t+2$.

Now we compute the quantity $a_{n+t+2}$ for the above cycles, as defined in (\ref{a_l}).
\[ a_{n+t+2}\big((1+2k)2^n\big)=F_m'\big((1+2k)2^n\big)\equiv \tbinom{4\cdot 2^t+24\cdot 2^t r+1}{1}\equiv 1~(\mathrm{mod}~4). \]

Now we compute the quantity $b_{n+t+2}$ for the above cycles, as defined in (\ref{b_l}).
\begin{align*} b_{n+t+2}\big((1+2k)2^n\big)=\frac{F_m\big((1+2k)2^n\big)-(1+2k)2^n}{2^{n+t+2}}.
\end{align*}
We show that $\nu_2\Big(F_m\big((1+2k)2^n\big)-(1+2k)2^n\Big)=n+t+2$.
\begin{multline*}
F_m\big((1+2k)2^n\big)-(1+2k)2^n\\
=\sum_{j=0}^{4\cdot 2^t+24\cdot 2^t r}\tbinom{4\cdot 2^t+24\cdot 2^t r+1+j}{1+2j}\big((1+2k)2^n\big)^{1+2j}-(1+2k)2^n.
\end{multline*}
For $j=0$, with the term $(1+2k)2^n$, the summand has the valuation
\begin{align*}
\nu_2\big(\tbinom{4\cdot 2^t+24\cdot 2^t r+1}{1}&(1+2k)2^n-(1+2k)2^n\big)\\
&=\nu_2\big(2^{n+t+2}(1+6r)(1+2k)\big)\\
&=n+t+2.
\end{align*}
For $j=1$, the summand has the valuation
\begin{align*}
\nu_2\Big(\tbinom{4\cdot 2^t+24\cdot 2^t r+2}{3}&\big((1+2k)2^n\big)^3\Big)\\
&=\nu_2\Big(\frac{1}{6}(2^{t+2}+6\cdot 2^{t+2}r+2)(2^{t+2}+6\cdot 2^{t+2}r+1)\\
&\qquad\cdot(2^{t+2}+6\cdot 2^{t+2}r)(1+2k)^3 2^{3n}\Big)\\
&\geq n+t+3.
\end{align*}
For $j\geq2$, by expression (\ref{m=6+12q,q=1+2d coefficient}),
\begin{align*}
\nu_2\Big(\tbinom{4\cdot 2^t+24\cdot 2^t r+1+j}{1+2j}\big((1+2k)2^n\big)^{1+2j}\Big)&\geq 1+t+n+j(2n-1)\\
&\geq n+t+3.
\end{align*}
Combining these, we obtain
\[ \nu_2\Big(F_m\big((1+2k)2^n\big)-(1+2k)2^n\Big)=n+t+2. \]
Hence, $b_{n+t+2}\big((1+2k)2^n\big)\equiv 1~(\mathrm{mod}~2)$.

Second, assume that $t$ is even. By Lemma \ref{the relation between q and t}, $c_t=c_{t+1}=1$. Then,
\begin{align*}
q=&1+2^2+2^4+\dots+2^t+2^{t+1}+2^{t+2} r\\
=&\frac{10\cdot 2^t-1}{3}+2^{t+2} r
\end{align*}
for some $r\in\Z$. The Fibonacci polynomial becomes\\
$F_m(x)=\sum_{j=0}^{20\cdot 2^t+24\cdot 2^t r}\tbinom{20\cdot 2^t+24\cdot 2^t r+1+j}{1+2j}x^{1+2j}$. Consider the form
\[ F_m\big((1+2k)2^n\big)=\sum_{j=0}^{20\cdot 2^t+24\cdot 2^t r}\tbinom{20\cdot 2^t+24\cdot 2^t r+1+j}{1+2j}\big((1+2k)2^n\big)^{1+2j}. \]
For $j\geq1$,
\begin{multline*}
\tbinom{20\cdot 2^t+24\cdot 2^t r+1+j}{1+2j}\big((1+2k)2^n\big)^{1+2j}\\
=\frac{\big(2^{t+2}(5+6r)+1+j\big)\dots\big(2^{t+2}(5+6r)+1-j\big)}{(1+2j)!}\big((1+2k)2^n\big)^{1+2j}
\end{multline*}
In the above expression, the numerator has the factor $2^{t+2}(5+6r)$ and the number of even factors in the numerator, excluding the term $2^{t+2}(5+6r)$, is $j-1$. It is known that for $x=\sum_{i=0}^{s}x_i 2^i$ with $x_i=0$ or 1, $\nu_2(x!)=x-\sum_{i=0}^{s}x_i$. Hence, $\nu_2\big((1+2j)!\big)\leq 2j$. Therefore,
\begin{align}
\nu_2\Big(\tbinom{20\cdot 2^t+24\cdot 2^t r+1+j}{1+2j}&\big((1+2k)2^n\big)^{1+2j}\Big)\label{m=6+12q,q=1+2d coefficient_2}\\
&\geq(t+2)+(j-1)+(1+2j)n-2j\notag\\
&=1+t+n+j(2n-1)\notag\\
&\geq n+t+2.\notag
\end{align}
So,
\begin{align*}
F_m\big((1+2k)2^n\big)\equiv&\tbinom{20\cdot 2^t+24R\cdot 2^t+1}{1}(1+2k)2^n\\
\equiv&(1+2k)2^n~(\mathrm{mod}~2^{n+t+2}).
\end{align*}
Therefore, $\{(1+2k)2^n\}$ is a cycle of length 1 at level $n+t+2$.

Now we compute the quantity $a_{n+t+2}$ for the above cycles, as defined in (\ref{a_l}).
\[ a_{n+t+2}\big((1+2k)2^n\big)=F_m'\big((1+2k)2^n\big)\equiv \tbinom{20\cdot 2^t+24\cdot 2^t r+1}{1}\equiv 1~(\mathrm{mod}~4). \]

Now we compute the quantity $b_{n+t+2}$ for the above cycles, as defined in (\ref{b_l}).
\begin{align*}
b_{n+t+2}\big((1+2k)2^n\big)=\frac{F_m\big((1+2k) 2^n\big)-(1+2k) 2^n}{2^{n+t+2}}.
\end{align*}
We show that $\nu_2\Big(F_m\big((1+2k) 2^n\big)-(1+2k) 2^n\Big)=n+t+2$.
\begin{multline*}
F_m\big((1+2k)2^n\big)-(1+2k)2^n\\
=\sum_{j=0}^{20\cdot 2^t+24\cdot 2^t r}\tbinom{20\cdot 2^t+24\cdot 2^t r+1+j}{1+2j}\big((1+2k)2^n\big)^{1+2j}-(1+2k)2^n.
\end{multline*}
For $j=0$, with the term $(1+2k)2^n$, the summand has the valuation
\begin{align*}
\nu_2\big(\tbinom{20\cdot 2^t+24\cdot 2^t r+1}{1}&(1+2k)2^n-(1+2k)2^n\big)\\
&=\nu_2\big(2^{n+t+2}(5+6r)(1+2k)\big)\\
&=n+t+2.
\end{align*}
For $j=1$, the summand has the valuation
\begin{align*}
\nu_2\Big(\tbinom{20\cdot 2^t+24\cdot 2^t r+2}{3}&\big((1+2k)2^n\big)^3\Big)\\
&=\nu_2\Big(\frac{1}{6}(5\cdot 2^{t+2}+6\cdot 2^{t+2}r+2)(5\cdot 2^{t+2}+6\cdot 2^{t+2}r+1)\\
&\qquad\cdot(5\cdot 2^{t+2}+6\cdot 2^{t+2}r)(1+2k)^3 2^{3n}\Big)\\
&\geq n+t+3.
\end{align*}
For $j\geq2$, by expression (\ref{m=6+12q,q=1+2d coefficient_2}),
\begin{align*}
\nu_2\Big(\tbinom{20\cdot 2^t+24\cdot 2^t r+1+j}{1+2j}\big((1+2k)2^n\big)^{1+2j}\Big)&\geq 1+t+n+j(2n-1)\\
&\geq n+t+3.
\end{align*}
Combining these, we obtain
\[ \nu_2\Big(F_m\big((1+2k)2^n\big)-(1+2k)2^n\Big)=n+t+2. \]
Hence, $b_{n+t+2}\big((1+2k)2^n\big)\equiv 1~(\mathrm{mod}~2)$. Therefore, the cycle $\{(1+2k)2^n\}$ strongly grows at level $n+t+2$, which completes the proof.
\end{proof}

By Proposition \ref{prop:m=6+12q odd number} and \ref{prop:m=6+12q,q=1}, we conclude that the following is true.

\begin{theorem}\label{thm:m=6+12q,q=1}
The minimal decomposition of $\Z_2$ for $F_m(x)$ with $m=6+12q$ and $q=1+2d$ for nonnegative integers $d$ is
\[ \Z_2=\{0\}\bigsqcup\Bigl(\bigcup_{n\geq 1}\bigcup_{k=0}^{2^{t+1}-1} M_{n,k}\Bigr)\bigsqcup(1+2\Z_2), \]
where 
\[ M_{n,k}=(1+2k) 2^n+2^{n+t+2}\Z_2. \]
 Here, $\{0\}$ is the set of a fixed point and $M_{n,k}$'s are the minimal components. The clopen set $1+2\Z_2$ is the attracting basin of $\{0\}$ and $M_{n,k}$'s.
\end{theorem}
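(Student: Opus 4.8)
The plan is to assemble Propositions \ref{prop:m=6+12q odd number} and \ref{prop:m=6+12q,q=1} into the global decomposition guaranteed by Theorem \ref{thm-decomposition}. Since the two propositions already supply all the dynamical analysis, the work here is organizational: I must verify that the three listed sets are pairwise disjoint and exhaust $\Z_2$, and that each carries its asserted role as periodic point, union of minimal components, or attracting basin. I would set $\mathcal{P}=\{0\}$, $\mathcal{M}=\bigsqcup_{n,k}M_{n,k}$, and $\mathcal{B}=1+2\Z_2$ and match these against the trichotomy of Theorem \ref{thm-decomposition}.

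First I would dispose of the structural observations. Because $m$ is even, $F_m$ is an odd polynomial (only odd-degree monomials), so $F_m(0)=0$ and $\{0\}$ is a fixed point, while $F_m(2\Z_2)\subseteq 2\Z_2$ since every monomial sends an even argument to an even value; thus $2\Z_2$ is invariant. For the minimal part, Proposition \ref{prop:m=6+12q,q=1} gives that each length-one cycle $\{(1+2k)2^n\}$ strongly grows at level $n+t+2\ge 3\ge 2$, so by property 1 of Proposition \ref{prop:grow tails} the restriction of $F_m$ to the clopen invariant ball $\mathbb{X}_\sigma=(1+2k)2^n+2^{n+t+2}\Z_2=M_{n,k}$ is minimal; hence each $M_{n,k}$ is a minimal component and contains no periodic point.

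The combinatorial heart is the tiling claim. I would write $2\Z_2\setminus\{0\}=\bigsqcup_{n\ge 1}(2^n+2^{n+1}\Z_2)$, the partition of the nonzero even $2$-adic integers by $2$-adic valuation, and then note that for fixed $n$ the balls $M_{n,k}=(1+2k)2^n+2^{n+t+2}\Z_2$ with $k=0,\dots,2^{t+1}-1$ are precisely the $2^{t+1}$ residue classes modulo $2^{n+t+2}$ sitting inside the annulus $2^n+2^{n+1}\Z_2$, because $\{1+2k:0\le k\le 2^{t+1}-1\}$ runs over all odd residues modulo $2^{t+2}$. This yields $\bigsqcup_k M_{n,k}=2^n+2^{n+1}\Z_2$ for each $n$, hence $\{0\}\sqcup\bigsqcup_{n,k}M_{n,k}=2\Z_2$, and adjoining $1+2\Z_2$ recovers all of $\Z_2$ as a disjoint union. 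In particular $0$ is the only periodic point, since the minimal components cover the rest of $2\Z_2$ and carry no periodic points.

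Finally I would identify $1+2\Z_2$ as the basin. Proposition \ref{prop:m=6+12q odd number} gives $F_m(1+2\Z_2)\subseteq 2^3+2^4\Z_2$, and since $2^3+2^4\Z_2=8(1+2\Z_2)=\bigsqcup_k M_{3,k}$ is exactly the tiling at level $n=3$, every orbit starting in $1+2\Z_2$ enters a minimal component after one iteration and remains there; moreover $1+2\Z_2$ maps into the invariant set $2\Z_2$ while being disjoint from it, so it contains no periodic point and meets no minimal component. Together with the exhaustion this forces $\mathcal{B}=1+2\Z_2$. The main obstacle is not any estimate—the propositions provide those—but the bookkeeping of the tiling: confirming that the $2^{t+1}$ cycles at each level leave no gap in the annulus and that the image of the odd part lands squarely on the level-$3$ components, so that no point is misclassified between $\mathcal{M}$ and $\mathcal{B}$.
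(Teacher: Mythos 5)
Your proposal is correct and follows exactly the route the paper intends: the paper offers no written proof beyond citing Propositions \ref{prop:m=6+12q odd number} and \ref{prop:m=6+12q,q=1}, and your argument supplies precisely the missing bookkeeping (minimality of each ball via property 1 of Proposition \ref{prop:grow tails}, the valuation tiling $\bigsqcup_k M_{n,k}=2^n+2^{n+1}\Z_2$, and the identification of $1+2\Z_2$ with the basin via $F_m(1+2\Z_2)\subseteq 2^3+2^4\Z_2=\bigsqcup_k M_{3,k}$). No gaps.
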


\subsection{Case: $m=6+12q$ with $q=2+4d$}

We consider the case $m=6+12q$ with $q=2+4d$ for nonnegative integers $d$.

\begin{proposition}\label{prop:m=6+12q,q=2}
Let $m=6+12q$ with $q=2+4d$ for some nonnegative integer $d$. Let $t=t(q)$, as defined in (\ref{special value t(q)}). The Fibonacci polynomial $F_m(x)$ has cycles $\{(1+4k)2^n,(2^{t+2}-1-4k)2^n\}$ of length 2 which strongly grow at level $n+t+3$ for $k=0,1,\dots,2^{t+1}-1$ and $n\geq 1$.
\end{proposition}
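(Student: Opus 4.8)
The plan is to follow closely the template of Proposition \ref{prop:m=2+12q,q=1}, since the claimed cycles have exactly the shape $\{(1+4k)2^n,(2^{t+2}-1-4k)2^n\}$ and the same level $n+t+3$; the only structural change is that here $m=6+12q$, so the coefficient of $x^{1+2j}$ in $F_m$ is $\binom{3+6q+j}{1+2j}$, and $q=2+4d$ is even, so the two parity subcases of $t=t(q)$ are governed by part 2 of Lemma \ref{the relation between q and t}. First I would split into the cases $t$ odd (so $c_t=c_{t+1}=1$) and $t$ even (so $c_t=c_{t+1}=0$), and in each case write $q$ explicitly from its binary digits; summing the resulting geometric series gives $q=\frac{5\cdot2^{t+1}-2}{3}+2^{t+2}r$ when $t$ is odd and $q=\frac{2^{t+1}-2}{3}+2^{t+2}r$ when $t$ is even. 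The upshot of either computation is the single fact I really need: the linear coefficient satisfies $3+6q=2^{t+2}w-1$ with $w$ odd, equivalently $4+6q=2^{t+2}w$ so that $\nu_2(4+6q)=t+2$, while $\nu_2(2+6q)=1$ and $3+6q$ is odd.

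With this in hand I would run the valuation estimate on the summands of $F_m\big((1+2k)2^n\big)$. For $j\ge 2$ the numerator of $\binom{3+6q+j}{1+2j}$ is a product of $1+2j$ consecutive integers containing the factor $4+6q=2^{t+2}w$ together with $j-1$ further even factors, and using $\nu_2\big((1+2j)!\big)\le 2j$ one obtains the bound $\nu_2\ge 1+t+n+j(2n-1)$, which is $\ge n+t+3$ for $n\ge1$. The term $j=1$ must be handled separately and more sharply: since $\binom{4+6q}{3}=\frac{(4+6q)(3+6q)(2+6q)}{6}$ has valuation exactly $t+2$, its contribution has valuation $t+2+3n\ge n+t+3$. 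Hence only the $j=0$ term survives modulo $2^{n+t+3}$, and $(2^{t+2}w-1)(1+2k)2^n\equiv(2^{t+2}-1-2k)2^n$ shows that $F_m$ sends $(1+2k)2^n$ to $(2^{t+2}-1-2k)2^n$ and back, so $\{(1+4k)2^n,(2^{t+2}-1-4k)2^n\}$ is a $2$-cycle at level $n+t+3$, after the same reindexing into the two subfamilies used in Proposition \ref{prop:m=2+12q,q=1}.

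The quantity $a_{n+t+3}$ is immediate: since the points of the cycle have $2$-adic valuation $\ge 1$, each derivative $F_m'\big((1+2k)2^n\big)$ reduces modulo $4$ to the odd linear coefficient $3+6q\equiv3\pmod4$, so the product over the $2$-cycle is $\equiv(3+6q)^2\equiv1\pmod4$. For $b_{n+t+3}$ I would show $\nu_2\big(F_m^2((1+2k)2^n)-(1+2k)2^n\big)=n+t+3$ exactly, which forces working one level finer: first compute $F_m\big((1+2k)2^n\big)$ modulo $2^{n+t+4}$ by retaining the $j=0,1,2$ summands, substitute this value back into $F_m$, and track which terms have valuation exactly $n+t+3$ and which are strictly larger. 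The oddness of $F_m^2$ then yields $b_{n+t+3}$ for the partner elements $(2^{t+3}-1-2k)2^n\equiv-(1+2k)2^n$ for free, and Definition \ref{def:movement} gives strong growth from $a_{n+t+3}\equiv1\ (\mathrm{mod}\ 4)$ and $b_{n+t+3}\equiv1\ (\mathrm{mod}\ 2)$.

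The main obstacle is the $b_{n+t+3}$ step: unlike the computation of $a$, it is not enough to know $F_m\big((1+2k)2^n\big)$ modulo $2^{n+t+3}$, because the leading term cancels against $(1+2k)2^n$ after two applications of $F_m$; I must carry one extra $2$-adic digit through the composition $F_m\circ F_m$ and verify that the surviving $j=0$ contribution has valuation precisely $n+t+3$ rather than higher, which is exactly where the identities $\nu_2(2+6q)=1$ and $\nu_2(4+6q)=t+2$ are used again. This is routine but delicate bookkeeping, entirely parallel to the corresponding passage in Proposition \ref{prop:m=2+12q,q=1}.
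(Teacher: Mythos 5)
Your proposal is correct and follows exactly the route the paper intends: the paper omits this proof, stating it is similar to the second statement of Proposition \ref{prop:m=2+12q,q=1}, and your adaptation correctly isolates the one fact that transfers, namely $3+6q=2^{t+2}w-1$ with $w$ odd (so $\nu_2(4+6q)=t+2$, $\nu_2(2+6q)=1$), from which the valuation bounds on the summands, the computation $a_{n+t+3}\equiv(3+6q)^2\equiv 1\pmod 4$, and the exact valuation $\nu_2\big((2+6q)(4+6q)(1+2k)2^n\big)=n+t+3$ for $b_{n+t+3}$ all go through as in that proposition.
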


The proof of Proposition \ref{prop:m=6+12q,q=2} is similar to that of the second statement of Proposition \ref{prop:m=2+12q,q=1}, so we omit it.

By Proposition \ref{prop:m=6+12q odd number} and \ref{prop:m=6+12q,q=2}, we conclude that the following is true.

\begin{theorem}\label{thm:m=6+12q,q=2}
The minimal decomposition of $\Z_2$ for $F_m(x)$ with $m=6+12q$ and $q=2+4d$ for nonnegative integers $d$ is
\[ \Z_2=\{0\}\bigsqcup\Big(\bigcup_{n\geq 1}\bigcup_{k=0}^{2^{t+1}-1} M_{n,k}\Big)\bigsqcup(1+2\Z_2), \]
where 
\[ M_{n,k}=\{(1+4k)2^n+2^{n+t+3}\Z_2\}\cup\{(2^{t+2}-1-4k)2^n+2^{n+t+3}\Z_2\}. \]
 Here, $\{0\}$ is the set of a fixed point and $M_{n,k}$'s are the minimal components. The clopen set $1+2\Z_2$ is the attracting basin of $\{0\}$ and $M_{n,k}$'s.
\end{theorem}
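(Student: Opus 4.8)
The plan is to assemble the stated decomposition from the two facts already in hand: Proposition~\ref{prop:m=6+12q odd number}, which controls the odd part $1+2\Z_2$, and Proposition~\ref{prop:m=6+12q,q=2}, which exhibits the strongly growing $2$-cycles living in $2\Z_2$. Concretely I must show that the three pieces $\{0\}$, $\bigcup_{n,k}M_{n,k}$ and $1+2\Z_2$ are pairwise disjoint, that they exhaust $\Z_2$, and that they play the roles of fixed point, minimal components, and attracting basin. Disjointness of $1+2\Z_2$ from the other two pieces is immediate since $\{0\}$ and every $M_{n,k}$ sit inside $2\Z_2$.

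First I would settle the fixed point and the odd numbers. As $m=6+12q$ is even, $F_m$ has no constant term, so $F_m(0)=0$ and $\{0\}$ is a fixed point. By Proposition~\ref{prop:m=6+12q odd number} we have $F_m(1+2\Z_2)\subseteq 2^3+2^4\Z_2$, a set on which every element has $2$-adic valuation exactly $3$. In particular no odd number is periodic, and once the covering claim below is established, $2^3+2^4\Z_2$ meets only the level-$3$ components $\bigcup_k M_{3,k}$. Hence after a single iteration every point of $1+2\Z_2$ enters a minimal component and is thereafter trapped there (each $M_{n,k}$ being clopen and invariant), so its $\omega$-limit set lies in a minimal subsystem. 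This places $1+2\Z_2$ entirely inside the attracting basin $\mathcal B$.

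The combinatorial core, which I expect to be the main obstacle, is the covering claim: \emph{the sets $M_{n,k}$ with $n\geq 1$ and $0\le k\le 2^{t+1}-1$ are pairwise disjoint and their union is exactly $2\Z_2\setminus\{0\}$.} Every nonzero even $x$ is uniquely $2^n u$ with $n=\nu_2(x)\geq 1$ and $u$ odd, and the ball $(1+4k)2^n+2^{n+t+3}\Z_2$ is precisely the set of such $x$ with $\nu_2(x)=n$ and $u\equiv 1+4k \pmod{2^{t+3}}$, the companion ball fixing $u\equiv 2^{t+2}-1-4k \pmod{2^{t+3}}$. Thus balls of different $n$ are separated by valuation, and for fixed $n$ I must verify that as $k$ runs over $0,\dots,2^{t+1}-1$ the residues $1+4k$ exhaust the $2^{t+1}$ classes $\equiv 1\pmod 4$, while the residues $2^{t+2}-1-4k$ exhaust the $2^{t+1}$ classes $\equiv 3\pmod 4$, together accounting for all $2^{t+2}$ odd residues modulo $2^{t+3}$. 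This reduces to checking that each arithmetic progression of common difference $4$ wraps exactly once around $\Z/2^{t+3}\Z$, which yields both disjointness and exhaustion simultaneously.

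Finally I would feed in the dynamical input. By Proposition~\ref{prop:m=6+12q,q=2} each pair $\{(1+4k)2^n,(2^{t+2}-1-4k)2^n\}$ is a $2$-cycle that strongly grows at level $n+t+3$, and since $n\geq 1$ forces $n+t+3\geq 2$, part~$1$ of Proposition~\ref{prop:grow tails} shows that $F_m$ restricted to the invariant clopen set $\mathbb{X}_\sigma=M_{n,k}$ is minimal; hence each $M_{n,k}$ is a minimal component. Combining the three steps yields the disjoint decomposition
\[
\Z_2=\{0\}\bigsqcup\Bigl(\bigcup_{n\geq 1}\bigcup_{k=0}^{2^{t+1}-1}M_{n,k}\Bigr)\bigsqcup(1+2\Z_2),
\]
with $\{0\}$ the fixed point, the $M_{n,k}$ the minimal components, and $1+2\Z_2$ the attracting basin of $\{0\}$ and the $M_{n,k}$, as claimed.
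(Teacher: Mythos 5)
Your proposal is correct and follows the same route as the paper: the paper's proof of this theorem is simply the citation of Proposition~\ref{prop:m=6+12q odd number} (odd points fall into $2^3+2^4\Z_2$, hence into the basin) and Proposition~\ref{prop:m=6+12q,q=2} (the strongly growing $2$-cycles), combined with Proposition~\ref{prop:grow tails}. You merely make explicit the routine covering/disjointness check that the residues $1+4k$ and $2^{t+2}-1-4k$ exhaust the odd classes modulo $2^{t+3}$, which the paper leaves implicit.
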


\section{Minimal decompositions for $F_m(x)$ with $m\equiv 8$ (mod 12)}

We consider the case $m=8+12q$ with nonnegative integers $q$. For the formula (\ref{eq:Fibonacci polynomial_even&odd}), we substitute $m=8+12q$, then we obtain the Fibonacci polynomial
\[ F_m(x)=\sum_{j=0}^{3+6q}\tbinom{4+6q+j}{1+2j}x^{1+2j}. \]

\begin{proposition}\label{prop:m=8+12q even number}
Let $m=8+12q$ for some nonnegative integer $q$. The Fibonacci polynomial $F_m(x)$ has a fixed point 0 in the clopen set $2\Z_2$ with $2\Z_2$ lying its attracting basin.
\end{proposition}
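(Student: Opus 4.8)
The plan is to mirror exactly the argument used for the companion statement Proposition~\ref{m=4+12q even numbers}, since the conclusion here is verbatim the same: produce $0$ as a growing-tails fixed point at level $1$ and then invoke property~5 of Proposition~\ref{prop:grow tails}. Concretely, I would first observe that $\{0\}$ is a length-$1$ cycle (a fixed point) at every level, then compute the multiplier $a_1(0)=F_m'(0)$ modulo $2$ to land in the \emph{grows tails} case of Definition~\ref{def:movement}, and finally read off the attracting-basin conclusion from Proposition~\ref{prop:grow tails}. No lifting analysis across higher levels is needed, because the growing-tails alternative already delivers the fixed point together with the clopen ball lying in its attracting basin.

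The two computations to carry out are both immediate from the explicit even-$m$ form $F_m(x)=\sum_{j=0}^{3+6q}\tbinom{4+6q+j}{1+2j}x^{1+2j}$ displayed just before the statement. First, every exponent $1+2j$ is positive, so the polynomial has no constant term and $F_m(0)=0$; thus $\{0\}$ is a fixed point. Second, the unique linear term is the $j=0$ summand $\tbinom{4+6q}{1}x=(4+6q)x$, whence
\[
a_1(0)=F_m'(0)=4+6q=2(2+3q)\equiv 0~(\mathrm{mod}~2).
\]
Therefore $\{0\}$ grows tails at level $1$ by Definition~\ref{def:movement}, and property~5 of Proposition~\ref{prop:grow tails} guarantees a $1$-periodic orbit in the invariant clopen set $\mathbb{X}_{\{0\}}=0+2\Z_2=2\Z_2$ whose attracting basin contains $2\Z_2$. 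Since $0$ is itself that fixed point, the asserted decomposition of $2\Z_2$ follows.

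I do not anticipate a genuine obstacle: the only nontrivial input is the parity of the coefficient of $x$ in $F_m$, which is forced by $m\equiv 8\pmod{12}$ (equivalently $\lfloor m/2\rfloor=4+6q$ being even). The one point worth flagging is purely scoping: this proposition addresses only the even part $2\Z_2$, and the complementary odd ball $1+2\Z_2$ is deliberately left to the subsequent subsections; so I would not attempt to describe any minimal components inside $1+2\Z_2$ here. Accordingly, as in Proposition~\ref{m=4+12q even numbers}, the proof can be stated in just a few lines once the two evaluations $F_m(0)=0$ and $F_m'(0)\equiv 0\pmod 2$ are recorded.
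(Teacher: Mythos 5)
Your proposal is correct and is essentially identical to the paper's argument: the paper omits the proof, referring to Proposition~\ref{m=4+12q even numbers}, whose proof is exactly your three steps ($F_m(0)=0$, $a_1(0)=F_m'(0)\equiv 0\pmod 2$ so $\{0\}$ grows tails at level 1, then property~5 of Proposition~\ref{prop:grow tails}). Your explicit computation $F_m'(0)=\tbinom{4+6q}{1}=4+6q\equiv 0\pmod 2$ correctly supplies the one detail the paper leaves implicit.
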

We omit the proof because it is similar to that of Proposition \ref{m=4+12q even numbers}, so we omit it.

Now, we consider elements in the set $1+2\Z_2$, which is the complement of $2\Z_2$ in $\Z_2$. For this purpose, we divide nonnegative integers $q$ into the cases that $q$ is $2d,\ 3+4d,\ 1+8d, 5+16d, 13+32d,\ 29+64d $ and $61+64d$ for nonnegative integers $d$.

\subsection{Case: $m=8+12q$ with $q=2d$}

We consider the case $m=8+12q$ with $q=2d$ for nonnegative integers $d$. The Fibonacci polynomial becomes
\[ F_m(x)=\sum_{j=0}^{3+12d}\tbinom{4+12d+j}{1+2j}x^{1+2j}. \]

\begin{proposition}\label{prop:m=8+12q,q=0}
Let $m=8+12q$ with $q=2d$ for some nonnegative integer $d$. The Fibonacci polynomial $F_m(x)$ has cycles $\{3+6k,7+10k+12k^2\}$ of length 2 which strongly grow at level 4 where $k=0,1,2,3$.
\end{proposition}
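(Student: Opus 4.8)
The plan is to follow the three-step template established in Proposition~\ref{prop:m=4+12q,q=1}: first verify that each pair is a $2$-cycle at level $4$, then compute the multiplier $a_4$ and the quantity $b_4$, and finally invoke Definition~\ref{def:movement}. For Step~1, since $m=8+12q=8+24d\equiv 8\pmod{24}$ and the sequence $\{F_m(s)\bmod 2^4\}_m$ is periodic of period $24$ for every odd number $s$ by Proposition~\ref{prop:Fib periodic}, it suffices to evaluate $F_8$ on the odd integers $3+6k$ and $7+10k+12k^2$ modulo $2^4$. A direct computation should give $F_8(3+6k)\equiv 7+10k+12k^2$ and $F_8(7+10k+12k^2)\equiv 3+6k\pmod{2^4}$ for $k=0,1,2,3$, so that $F_m$ agrees with these values and each pair is a $2$-cycle at level $4$. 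I would note in passing that the eight numbers $3+6k$, $7+10k+12k^2$ with $k=0,1,2,3$ exhaust all odd residues modulo $16$, which is what makes the later decomposition complete.

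For Step~2, the multiplier $a_4=F_m'(3+6k)\cdot F_m'(7+10k+12k^2)$ is handled by part~3 of Lemma~\ref{lem:Fib a_n}: the sequence $\{F_m'(s_1)F_m'(s_2)\bmod 4\}_m$ has period $6$, and $m\equiv 2\pmod 6$, so the product reduces to its value at $m=2$; since $F_2(x)=x$ has $F_2'\equiv 1$, we obtain $a_4\equiv 1\pmod 4$. Step~3, the computation of $b_4$, is the main obstacle. Writing $F_m(3+6k)=(7+10k+12k^2)+2^4A$ and $F_m(7+10k+12k^2)=(3+6k)+2^4B$ with $A,B\in\Z$, I must show $\nu_2\big(F_m^2(3+6k)-(3+6k)\big)=4$, equivalently $F_m^2(3+6k)\equiv(3+6k)+2^4\pmod{2^5}$, which forces $b_4\equiv 1\pmod 2$.

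Mirroring equation~(\ref{b_l with m=4+12q and q=1+2d}), the route is to derive a binomial identity that expresses $(7+10k+12k^2)^{1+2j}$ in terms of $(3+6k)^{1+2j}$ together with the quantities $1$, $(1+2j)$ and $(1+2j)(2j)$ modulo $2^5$, valid for all $j\ge 0$ and checked on one period of each relevant sequence in $j$; the key simplifying fact is that the offset $7+10k+12k^2-(3+6k)=4(1+k+3k^2)$ has $2$-valuation exactly $2$, and odd squares are $\equiv 1\pmod 8$. Summing this identity against $\tbinom{4+12d+j}{1+2j}$ then expresses $F_m(7+10k+12k^2)$ through the auxiliary quantities $F_m(3+6k)$, $F_m'(1)$ and $F_m''(1)$ (and possibly $F_m(1)$ or $F_m(i)/i$), whose residues are pinned down by their periodicities via Proposition~\ref{prop:Fib periodic} and Lemmas~\ref{lem:Fib a_n}, \ref{lem:Fib second derivative}, \ref{lem:Fib input i} evaluated at $m\equiv 8$ modulo the appropriate period. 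This determines the parity of $A+B$; substituting the expansion of $(7+10k+12k^2+2^4A)^{1+2j}$ modulo $2^5$ into $F_m^2(3+6k)=F_m(7+10k+12k^2+2^4A)$ should then collapse to $(3+6k)+2^4(A+B)\equiv(3+6k)+2^4\pmod{2^5}$, giving $b_4\equiv 1\pmod 2$.

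Combining the two computations, $a_4\equiv 1\pmod 4$ and $b_4\equiv 1\pmod 2$, so each cycle strongly grows at level $4$ by Definition~\ref{def:movement}. The delicate point I anticipate is making the mixed linear/quadratic-in-$k$ second coordinate cooperate in the binomial identity of Step~3 and confirming the parity $A+B\equiv 1$. A convenient shortcut is that the four cycles pair up under $x\mapsto -x$ (namely $k=0\leftrightarrow k=1$ and $k=2\leftrightarrow k=3$ modulo $16$); since $F_m^2$ is an odd function one has $b_4(-x)=-b_4(x)\equiv b_4(x)\pmod 2$ and, using that $F_m'$ is even, $a_4(-x)=a_4(x)$, so it suffices to carry out Step~3 for two representative values of $k$.
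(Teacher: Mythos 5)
Your proposal is correct and follows essentially the same route as the paper: the paper's own proof of this proposition is omitted, stating only that one splits the cycles into the two constant-offset families $\{1+8k,5+8k\}$ and $\{3+8k,7+8k\}$ ($k=0,1$) and then argues exactly as in Proposition \ref{prop:m=4+12q,q=1}, which is precisely the three-step template you reproduce. The only cosmetic difference is that you keep the quadratic-in-$k$ parametrization (noting the offset $4(1+k+3k^2)$ has exact $2$-valuation $2$) and exploit the $x\mapsto -x$ pairing of the cycles rather than re-indexing them into constant-offset pairs; your key claims ($a_4\equiv 1\pmod 4$ via Lemma \ref{lem:Fib a_n} and $b_4\equiv 1\pmod 2$ via the binomial-identity computation) check out.
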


\begin{proof}
For the proof, we divide the cycles $\{3+6k,7+10k+12k^2\}$ into two types of cycles $\{1+8k,5+8k\}$ and $\{3+8k,7+8k\}$ where $k=0,1$.

We omit the further proof because it is similar to that of Proposition \ref{prop:m=4+12q,q=1}.
\end{proof}

By Proposition \ref{prop:m=8+12q even number} and \ref{prop:m=8+12q,q=0}, we conclude that the following is true.
\begin{theorem}
The minimal decomposition of $\Z_2$ for $F_m(x)$ with $m=8+12q$ and $q=2d$ for nonnegative integers $d$ is
\[ \Z_2=\{0\}\bigsqcup\big(\bigcup_{k=0}^3 M_{k}\big)\bigsqcup(2\Z_2-\{0\}), \]
where 
\[ M_{k}=\big(3(1+2k)+2^4\Z_2\big)\cup(7+10k+4k^2+8k^3+2^4\Z_2). \]
 Here, $\{0\}$ is the set of a fixed point and $M_{k}$'s are the minimal components. The set $2\Z_2-\{0\}$ is the attracting basin of the fixed point $0$.
\end{theorem}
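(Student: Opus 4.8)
The plan is to imitate the three-step template of Proposition~\ref{prop:m=4+12q,q=1}, after first cutting the work in half by a symmetry observation. Reindexing, the four claimed cycles become the two families $\{1+8k,5+8k\}$ and $\{3+8k,7+8k\}$ with $k=0,1$, as the statement indicates. A direct check modulo $2^4$ shows that the second family consists precisely of the negatives of the first: $-\{1,5\}\equiv\{11,15\}$ and $-\{9,13\}\equiv\{3,7\}\pmod{2^4}$. Since $m$ is even, $F_m$ is an odd polynomial, so $F_m^2$ is odd and $F_m'$ is even; consequently, for any $2$-cycle $\{a,b\}$ at level $4$ one has $a_4(-a)\equiv a_4(a)\pmod 4$ and $b_4(-a)\equiv -b_4(a)\equiv b_4(a)\pmod 2$. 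Hence strong growth at level $4$ transfers from a cycle to its negative, and it suffices to treat the first family $\{1+8k,5+8k\}$.

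First I would verify that each $\{1+8k,5+8k\}$ is a $2$-cycle at level $4$. Since $m=8+24d\equiv 8\pmod{24}$ and, by Proposition~\ref{prop:Fib periodic}, the sequence $\{F_m(s)\bmod 2^4\}_m$ has period $24$ for odd $s$, it is enough to compute with $F_8(x)=4x+10x^3+6x^5+x^7$, which gives $F_8(1+8k)\equiv 5+8k$ and $F_8(5+8k)\equiv 1+8k\pmod{2^4}$. For the multiplier $a_4$ I would use Lemma~\ref{lem:Fib a_n}(3): the product $F_m'(s_1)\,F_m'(s_2)\bmod 4$ has period $6$, so reducing $m$ to its small representative and evaluating yields $a_4(1+8k)=F_m'(1+8k)\,F_m'(5+8k)\equiv 1\pmod 4$.

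The main obstacle is the computation of $b_4$, i.e. showing $\nu_2\big(F_m^2(1+8k)-(1+8k)\big)=4$. Writing $F_m(1+8k)=5+8k+2^4A$ and $F_m(5+8k)=1+8k+2^4B$, the crux is to establish a congruence expressing $(5+8k)^{1+2j}$ in terms of $(1+8k)^{1+2j}$ plus correction terms that are fixed $\pmod{2^5}$ linear combinations of $1$, $(1+2j)$, $(1+2j)(2j)$ and $(-1)^j$ — the analogue of~(\ref{b_l with m=4+12q and q=1+2d}). Because each of these building blocks is periodic in $j$ modulo $2^5$, such a congruence can be verified by checking finitely many values of $j$. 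Summing against the binomial coefficients and recognizing the resulting sums as $F_m(1)$, $F_m'(1)$, $F_m''(1)$ and $F_m(i)/i$ — each pinned down modulo a small power of $2$ via Proposition~\ref{prop:Fib periodic} and Lemmas~\ref{lem:Fib a_n}, \ref{lem:Fib second derivative} and~\ref{lem:Fib input i} — would give $A+B\equiv 1\pmod 2$. Finally, expanding $F_m^2(1+8k)=F_m(5+8k+2^4A)\equiv F_m(5+8k)+2^4A\,F_m(1)\equiv (1+8k)+2^4(A+B)\equiv (1+8k)+2^4\pmod{2^5}$ yields $b_4(1+8k)\equiv 1\pmod 2$, so the first family strongly grows at level $4$; the second family then follows from the symmetry of the first paragraph. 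The delicate points will be fixing the exact coefficients in the key congruence and correctly determining the residues of $F_m(1),F_m'(1),F_m''(1),F_m(i)/i$, exactly as in Proposition~\ref{prop:m=4+12q,q=1}, which is why the author defers to that argument.
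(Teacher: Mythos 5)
Your treatment of the odd residues is essentially the paper's own: the paper likewise splits the four cycles into the two families $\{1+8k,5+8k\}$ and $\{3+8k,7+8k\}$ with $k=0,1$ and then defers to the template of Proposition \ref{prop:m=4+12q,q=1}, i.e.\ exactly the three-step computation (cycle at level $4$, then $a_4\equiv 1\pmod 4$, then $b_4\equiv 1\pmod 2$) you outline. Your symmetry observation --- $F_m$ is odd for even $m$, hence $F_m'$ is even and $F_m^2$ is odd, so $a_4(-a)=a_4(a)$ and $b_4(-a)=-b_4(a)\equiv b_4(a)\pmod 2$, and the second family is the negative of the first modulo $2^4$ --- is correct and is the same device the paper uses inside Propositions \ref{prop:m=2+12q,q=1} and \ref{prop:m=10+12q,q=3}; it is a legitimate halving of the work. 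The concrete checks you cite ($F_8(x)=4x+10x^3+6x^5+x^7$, period $24$ for $\{F_m(s)\bmod 2^4\}_m$, period $6$ for $\{F_m'(s_1)F_m'(s_2)\bmod 4\}_m$) are all right, and although the $b_4$ step is only sketched, it follows a template that demonstrably closes in the worked case, which is no less than the paper itself provides.

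The one genuine omission is the even half of $\Z_2$. The theorem asserts that $0$ is a fixed point and that $2\Z_2-\{0\}$ is its attracting basin, and your proposal never considers even inputs. In the paper this is supplied by Proposition \ref{prop:m=8+12q even number}: since $F_m(0)=0$ and $a_1(0)=F_m'(0)=\tbinom{4+6q}{1}\equiv 0\pmod 2$, the cycle $\{0\}$ grows tails at level $1$, and property 5 of Proposition \ref{prop:grow tails} places all of $2\Z_2$ in the attracting basin of the fixed point $0$. This is short, but without it --- and without the remark that your four $2$-cycles exhaust the eight odd residues modulo $2^4$, so that the $M_k$ cover $1+2\Z_2$ --- the claimed decomposition of all of $\Z_2$ is not established.
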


\subsection{Case: $m=8+12q$ with $q=3+4d$}

We consider the case $m=8+12q$ with $q=3+4d$ for nonnegative integers $d$. The Fibonacci polynomial becomes
\[ F_m(x)=\sum_{j=0}^{21+24d}\tbinom{22+24d+j}{1+2j}x^{1+2j}. \]

\begin{proposition}\label{prop:m=8+12q,q=3}
Let $m=8+12q$ with $q=3+4d$ for some nonnegative integer $d$. The Fibonacci polynomial $F_m(x)$ has cycles $\{1+2k,29+22k+20k^2+24k^3\}$ of length 2 which strongly grow at level 5 where $k=0,\dots,7$.
\end{proposition}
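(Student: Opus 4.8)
The plan is to follow the three-step recipe of Proposition \ref{prop:m=4+12q,q=1} verbatim: show that $\{1+2k,\,29+22k+20k^2+24k^3\}$ is a $2$-cycle at level $5$, compute $a_5$ from (\ref{a_l}), and compute $b_5$ from (\ref{b_l}); strong growth then follows from Definition \ref{def:movement}. Here $q=3+4d$ gives $m=44+48d$, so $m\equiv 44~(\mathrm{mod}~48)$, $m\equiv 2~(\mathrm{mod}~6)$ and $m\equiv 8~(\mathrm{mod}~12)$, residues I will use repeatedly to collapse each periodic quantity to a small representative.

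For the cycle structure, since $\{F_m(s)~(\mathrm{mod}~2^5)\}_m$ is periodic of period $3\cdot 2^4=48$ for every odd $s$ by Proposition \ref{prop:Fib periodic}, we have $F_m(s)\equiv F_{44}(s)~(\mathrm{mod}~2^5)$, and a direct computation at $m=44$ checks $F_{44}(1+2k)\equiv 29+22k+20k^2+24k^3$ and $F_{44}(29+22k+20k^2+24k^3)\equiv 1+2k~(\mathrm{mod}~2^5)$ for $k=0,\dots,7$. For $a_5$, the product $F_m'(1+2k)\cdot F_m'(29+22k+20k^2+24k^3)$ modulo $4$ is periodic of period $6$ by Lemma \ref{lem:Fib a_n}; as $m\equiv 2~(\mathrm{mod}~6)$ I reduce to $m=2$, where $F_2'(x)=1$, so that $a_5\equiv 1~(\mathrm{mod}~4)$.

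The crux is $b_5\equiv 1~(\mathrm{mod}~2)$, which requires $F_m^2(1+2k)$ modulo $2^6$. As in Propositions \ref{prop:m=4+12q,q=6}, \ref{prop:m=4+12q,q=10} and \ref{prop:m=8+12q,q=0}, I would first split the eight cycles into sub-families indexed by a new parameter on which the second coordinate becomes an affine function, so that the mixed term $24k^3$ disappears from the relevant expansion. On each such family I then establish, exactly in the spirit of (\ref{b_l with m=4+12q and q=1+2d}) and (\ref{b_l with m=4+12q and q=2+64d}), an identity
\begin{align*}
(29+22k+20k^2+24k^3)^{1+2j}\equiv{}& (1+2k)^{1+2j}+c_0+c_1(1+2j)+c_2(1+2j)(2j)\\
&+c_3(1+2j)(2j)(2j-1)+c_4(-1)^j~(\mathrm{mod}~2^6),
\end{align*}
whose correction constants $c_0,\dots,c_4$ are verified by checking finitely many $j$ (each $j$-block being periodic modulo $2^6$). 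Summing against the binomial coefficients of $F_m$ turns $F_m(29+22k+20k^2+24k^3)$ into a combination of $F_m(1)$, $F_m'(1)$, $F_m''(1)$, $F_m'''(1)$ and $\frac{F_m(i)}{i}$, each of which I pin down from its own periodicity (Proposition \ref{prop:Fib periodic} and Lemmas \ref{lem:Fib a_n}, \ref{lem:Fib second derivative}, \ref{lem:Fib third derivative}, \ref{lem:Fib input i}) evaluated at the representative dictated by $m\equiv 44~(\mathrm{mod}~48)$ and $m\equiv 8~(\mathrm{mod}~12)$; in particular $\frac{F_m(i)}{i}=-1$.

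Writing $F_m(1+2k)=29+22k+20k^2+24k^3+2^5A$ and $F_m(29+22k+20k^2+24k^3)=1+2k+2^5B$, the computation should give $A+B\equiv 1~(\mathrm{mod}~2)$, so that $F_m^2(1+2k)\equiv 1+2k+2^5~(\mathrm{mod}~2^6)$ and hence $b_5\equiv 1~(\mathrm{mod}~2)$; Definition \ref{def:movement} then yields strong growth of all eight cycles at level $5$. The main obstacle is precisely the bookkeeping in this last step: because the second coordinate is a genuine cubic in $k$, I must choose the splitting so that the correction terms become $k$-independent constants, and I must confirm that the falling-factorial blocks up to third order (equivalently, the derivatives up to $F_m'''$) suffice to capture everything modulo $2^6$; verifying that $A+B$ is odd uniformly across the sub-families is where the real work lies.
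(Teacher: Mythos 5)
Your proposal matches the paper's treatment: the paper's entire proof consists of splitting the eight cycles into the two affine families $\{1+4k,29-4k\}$ and $\{3+4k,31-4k\}$, $k=0,1,2,3$, and then invoking the argument of Proposition \ref{prop:m=4+12q,q=1}, which is exactly the recipe you describe (cycle structure from $m\equiv 44 \pmod{48}$, $a_5\equiv 1 \pmod 4$ from the period-$6$ product, and $b_5\equiv 1\pmod 2$ via an identity expressing the second coordinate's powers through $F_m(1)$, its derivatives, and $F_m(i)/i=-1$). Your plan is correct and essentially identical in approach; the only detail left implicit is the explicit form of the two sub-families, which you correctly characterize as the ones making the second coordinate affine.
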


\begin{proof}
For the proof, we divide the cycles $\{1+2k,29+22k+20k^2+24k^3\}$ into two types of cycles $\{1+4k,29-4k\}$ and $\{3+4k,31-4k\}$ where $k=0,1,2,3$.

We omit the further proof because it is similar to that of Proposition \ref{prop:m=4+12q,q=1}.
\end{proof}

By Proposition \ref{prop:m=8+12q even number} and \ref{prop:m=8+12q,q=3}, we conclude that the following is true.
\begin{theorem}
The minimal decomposition of $\Z_2$ for $F_m(x)$ with $m=8+12q$ and $q=3+4d$ for nonnegative integers $d$ is
\[ \Z_2=\{0\}\bigsqcup\big(\bigcup_{k=0}^7 M_{k}\big)\bigsqcup(2\Z_2-\{0\}), \]
where 
\[ M_{k}=(1+2k+2^5\Z_2)\cup(29+22k+20k^2+24k^3+2^5\Z_2). \]
Here, $\{0\}$ is the set of a fixed point and $M_{k}$'s are the minimal components. The set $2\Z_2-\{0\}$ is the attracting basin of the fixed point $0$.
\end{theorem}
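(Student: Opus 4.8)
The plan is to follow the template of Proposition~\ref{prop:m=4+12q,q=1} almost verbatim, only one level higher, and to exploit a negation symmetry that halves the work. Writing $m=8+12q$ with $q=3+4d$ gives $m=44+48d$, so $m\equiv 44\pmod{48}$, $m\equiv 20\pmod{24}$, $m\equiv 8\pmod{12}$ and $m\equiv 2\pmod 6$; these residues are exactly what the periodicity statements of Proposition~\ref{prop:Fib periodic} and Lemmas~\ref{lem:Fib a_n}, \ref{lem:Fib second derivative}, \ref{lem:Fib third derivative}, \ref{lem:Fib input i} require. I would first record that the two subfamilies are related by negation modulo $2^5$: since $m$ is even, $F_m$ is an odd function, and $\{3+4k,31-4k\}\equiv\{-(1+4k),-(29-4k)\}\pmod{2^5}$. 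Hence it suffices to treat the family $\{1+4k,29-4k\}$ for $k=0,1,2,3$; the family $\{3+4k,31-4k\}$ then consists of negatives of a $2$-cycle, so it is again a $2$-cycle, with the same $a_5$ (because $F_m'$ is even) and with $b_5$ negated, and since $-1\equiv 1\pmod 2$ the strong-growth conclusion is preserved.

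For the cycle condition and the multiplier $a_5$ the computations are routine. Using the period-$48$ periodicity of $\{F_m(s)\pmod{2^5}\}_m$ for odd $s$, I would check $F_{44}(1+4k)\equiv 29-4k$ and $F_{44}(29-4k)\equiv 1+4k\pmod{2^5}$, so that $\{1+4k,29-4k\}$ is a $2$-cycle at level $5$. For $a_5$, the product $\{F_m'(s_1)F_m'(s_2)\pmod 4\}_m$ has period $6$ by Lemma~\ref{lem:Fib a_n}, and reducing to the representative $m\equiv 2\pmod 6$ (where $F_2'\equiv 1$) yields $a_5\equiv 1\pmod 4$, matching Definition~\ref{def:movement}.

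The substantive step is the computation of $b_5$, where I must show $\nu_2\bigl(F_m^2(1+4k)-(1+4k)\bigr)=5$. As in the model proof I would write $F_m(1+4k)=29-4k+2^5A$ and $F_m(29-4k)=1+4k+2^5B$ and establish a $j$-uniform congruence of the shape
\[
(29-4k)^{1+2j}\equiv (1+4k)^{1+2j}+c_0+c_1(1+2j)+c_2(1+2j)(2j)+c_3(1+2j)(2j)(2j-1)+c_4(-1)^j \pmod{2^6}
\]
with explicit $k$-dependent constants $c_0,\dots,c_4$. Summing against $\tbinom{4+6q+j}{1+2j}$ and invoking the moment identities $F_m(1)=\sum\tbinom{4+6q+j}{1+2j}$, $F_m'(1)=\sum\tbinom{4+6q+j}{1+2j}(1+2j)$, $F_m''(1)=\sum\tbinom{4+6q+j}{1+2j}(1+2j)(2j)$, $F_m'''(1)=\sum\tbinom{4+6q+j}{1+2j}(1+2j)(2j)(2j-1)$ and $\frac{F_m(i)}{i}=\sum\tbinom{4+6q+j}{1+2j}(-1)^j$ converts this into a closed expression for $F_m(29-4k)$ in terms of $F_m(1+4k)$ and these five moments. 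Substituting their periodic residues, read off at the representatives $m\equiv 44\pmod{48}$ for $F_m(1)$ and $F_m'(1)$, $m\equiv 20\pmod{24}$ for $F_m''(1)$, $m\equiv 8\pmod{12}$ for $F_m'''(1)$, together with $\frac{F_m(i)}{i}=-1$, gives $A+B\equiv 1\pmod 2$. A final binomial step, using $(29-4k+2^5A)^{1+2j}\equiv (29-4k)^{1+2j}+2^5A\pmod{2^6}$ and the oddness of $F_m(1)=F_{44}$, then yields $F_m^2(1+4k)\equiv 1+4k+2^5(A+B)\equiv 1+4k+2^5\pmod{2^6}$, whence $b_5\equiv 1\pmod 2$.

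The main obstacle is precisely the uniform congruence displayed above: pinning down the constants $c_0,\dots,c_4$ and certifying the identity for every $j\ge 0$. Exactly as with expression~(\ref{b_l with m=4+12q and q=1+2d}), I would do this by verifying that each of the sequences $\{(1+4k)^{1+2j}\}_j$, $\{(1+2j)\}_j$, $\{(1+2j)(2j)\}_j$, $\{(1+2j)(2j)(2j-1)\}_j$ and $\{(-1)^j\}_j$ is periodic modulo $2^6$, and then checking the identity over one full set of periods. The bookkeeping is heavier than in Proposition~\ref{prop:m=4+12q,q=1} because reducing modulo $2^6$ can force the first- and third-derivative moments to appear in addition to $F_m(1)$, $F_m''(1)$ and $\frac{F_m(i)}{i}$; but no new idea is involved, which is exactly why deferring to the cited proof is legitimate.
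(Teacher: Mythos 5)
Your proposal is correct and takes essentially the same route as the paper: the theorem there is derived from Proposition~\ref{prop:m=8+12q even number} together with Proposition~\ref{prop:m=8+12q,q=3}, whose (omitted) proof is exactly the split into the families $\{1+4k,29-4k\}$ and $\{3+4k,31-4k\}$ run through the template of Proposition~\ref{prop:m=4+12q,q=1}, i.e.\ the same cycle/$a_5$/$b_5$ computation via the periodicity lemmas and the moment identities you describe. Your negation-symmetry shortcut for the second family is a legitimate refinement that the paper itself uses elsewhere (e.g.\ in Propositions~\ref{prop:m=4+12q, q=2+64d sg at lev10} and~\ref{prop:m=10+12q,q=3}); just note that the claim that $2\Z_2-\{0\}$ is the attracting basin of $0$ still rests on Proposition~\ref{prop:m=8+12q even number}, which your write-up does not address.
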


\subsection{Case: $m=8+12q$ with $q=1+8d$}

We consider the case $m=8+12q$ with $q=1+8d$ for nonnegative integers $d$. The Fibonacci polynomial becomes
\[ F_m(x)=\sum_{j=0}^{9+48d}\tbinom{10+48d+j}{1+2j}x^{1+2j}. \]

\begin{proposition}\label{prop:m=8+12q,q=1}
Let $m=8+12q$ with $q=1+8d$ for some nonnegative integer $d$. The Fibonacci polynomial $F_m(x)$ has cycles $\{(-1)^k+(-1)^{k+1}\cdot 4k,\big(32+(-1)^k\cdot 13\big)+\big(32+(-1)^k\cdot 20\big)k+\big(32+(-1)^{k+1}\cdot 16\big)k^2\}$ of length 2 which strongly grow at level 6 where $k=0,\dots,15$.
\end{proposition}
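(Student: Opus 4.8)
The plan is to follow the template established in the proof of Proposition \ref{prop:m=4+12q,q=1}. Writing $m=8+12q=20+96d$, I would first separate the sixteen cycles according to the parity of $k$: when $k$ is even the pair is $\{1-4k,\,45+52k+16k^2\}$ and when $k$ is odd it is $\{4k-1,\,19+12k+48k^2\}$. This splits the single family indexed by $k=0,\dots,15$ into two uniform families of representatives, each reindexed so that the parameter runs over $0,\dots,7$, exactly as in the proofs of Propositions \ref{prop:m=8+12q,q=0} and \ref{prop:m=8+12q,q=3}. For each representative pair $\{A,B\}$ the goal is then to verify three things: that $\{A,B\}$ is a $2$-cycle at level $6$, that $a_6\equiv1\pmod4$, and that $b_6\equiv1\pmod2$, after which Definition \ref{def:movement} yields strong growth.

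To see that each pair is a $2$-cycle at level $6$, I would use that $m\equiv20\pmod{96}$ together with part 2 of Proposition \ref{prop:Fib periodic}, which states that the sequence $\{F_m(s)\bmod 2^6\}_m$ is periodic of period $3\cdot2^5=96$ for any odd $s$. Hence $F_m\equiv F_{20}\pmod{2^6}$ on odd inputs, and the cycle conditions $F_m(A)\equiv B$ and $F_m(B)\equiv A\pmod{2^6}$ reduce to a finite check on $F_{20}$ for the relevant residues of $A$ and $B$ modulo $2^6$. For the multiplier $a_6$ defined in (\ref{a_l}), I would invoke part 3 of Lemma \ref{lem:Fib a_n}: the sequence $\{F_m'(s_1)F_m'(s_2)\bmod4\}_m$ is periodic of period $6$. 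Since $m\equiv2\pmod6$, the product equals its value at $m=2$, where $F_2(x)=x$ gives $F_2'\equiv1$, so $a_6\equiv1\pmod4$ for every cycle.

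The main obstacle is the computation of $b_6$, which from (\ref{b_l}) requires $F_m^2(A)-A$ modulo $2^7$. As in Proposition \ref{prop:m=4+12q,q=1}, the strategy is to establish a polynomial congruence expressing $B^{1+2j}$ in terms of $A^{1+2j}$ plus correction terms that are linear combinations of $1$, $(1+2j)$, $(1+2j)(2j)$, $(1+2j)(2j)(2j-1)$ and $(-1)^j$ with explicit powers of $2$ as coefficients, valid modulo $2^7$; the periods of these auxiliary sequences in $j$ are powers of $2$, so the identity can be confirmed by checking finitely many values of $j$. Summing against the binomial coefficients of $F_m$ then rewrites $F_m(B)$ as a combination of $F_m(A)$, $F_m(1)$, $F_m'(1)$, $F_m''(1)$, $F_m'''(1)$ and $F_m(i)/i$. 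Pinning down each of these uses the periodicities of Proposition \ref{prop:Fib periodic}, Lemma \ref{lem:Fib a_n}, Lemma \ref{lem:Fib second derivative} and Lemma \ref{lem:Fib input i}; writing $F_m(A)=B+2^6A'$ and $F_m(B)=A+2^6B'$, the resulting combination should force $A'+B'\equiv1\pmod2$. Substituting back to compute $F_m^2(A)=F_m(B+2^6A')$ and applying the same expansion once more should yield $F_m^2(A)\equiv A+2^6\pmod{2^7}$, whence $b_6\equiv1\pmod2$.

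The genuinely delicate points are determining the exact correction coefficients in the $B^{1+2j}$ expansion at this level and verifying that all the invoked periodicities hold modulo the required power of $2$ uniformly in $d$, so that the single closed formula in the statement is valid for both parities of $d$ without the case split seen in Proposition \ref{prop:m=4+12q, q=2+64d sg at lev10}. I expect the bookkeeping of the powers of $2$ in the correction terms to be the principal source of difficulty, since an error there would change the parity of $b_6$ and so the classification; everything else is a routine finite verification reducible to the fixed index $m=20$.
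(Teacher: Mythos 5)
Your proposal matches the paper's: the paper likewise splits the sixteen cycles into the two subfamilies $\{1+8k,45+24k\}$ and $\{3+8k,15+24k\}$ ($k=0,\dots,7$), which coincide with your parity split, and then verifies strong growth at level $6$ exactly by the template of Proposition \ref{prop:m=4+12q,q=1} (periodicity of $\{F_m(s)\bmod 2^6\}_m$ with period $96$ reducing to $F_{20}$, part 3 of Lemma \ref{lem:Fib a_n} for $a_6$, and the $B^{1+2j}$-expansion summed against $F_m(1)$, $F_m'(1)$, $F_m''(1)$, $F_m(i)/i$ for $b_6$). Your outline is correct and is essentially the paper's own (largely omitted) argument.
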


\begin{proof}
For the proof, we divide the cycles $\{(-1)^k+(-1)^{k+1}\cdot 4k,\big(32+(-1)^k\cdot 13\big)+\big(32+(-1)^k\cdot 20\big)k+\big(32+(-1)^{k+1}\cdot 16\big)k^2\}$ into two types of cycles $\{1+8k,45+24k\}$ and $\{3+8k,15+24k\}$ where $k=0,\dots,7$.

We omit the further proof because it is similar to that of Proposition \ref{prop:m=4+12q,q=1}.
\end{proof}

By Proposition \ref{prop:m=8+12q even number} and \ref{prop:m=8+12q,q=1}, we conclude that the following is true.
\begin{theorem}
The minimal decomposition of $\Z_2$ for $F_m(x)$ with $m=8+12q$ and $q=1+8d$ for nonnegative integers $d$ is
\[ \Z_2=\{0\}\bigsqcup\big(\bigcup_{k=0}^{15} M_{k}\big)\bigsqcup(2\Z_2-\{0\}), \]
where 
\begin{multline*}
	M_{k}=\big((-1)^k+(-1)^{k+1}\cdot 4k+2^6\Z_2\big) \\ \cup\Big(\big(32+(-1)^k\cdot 13\big)+\big(32+(-1)^k\cdot 20\big)k+\big(32+(-1)^{k+1}\cdot 16\big)k^2+2^6\Z_2\Big).
\end{multline*}
Here, $\{0\}$ is the set of a fixed point and $M_{k}$'s are the minimal components. The set $2\Z_2-\{0\}$ is the attracting basin of the fixed point $0$.
\end{theorem}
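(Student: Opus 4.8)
The plan is to follow the template established in the proof of Proposition \ref{prop:m=4+12q,q=1}, treating the two subfamilies $\{1+8k,45+24k\}$ and $\{3+8k,15+24k\}$ with $k=0,\dots,7$ separately but by identical arguments. Writing $m=8+12q=20+96d$, I would first verify that each pair is a $2$-cycle at level $6$. By Proposition \ref{prop:Fib periodic}, for any odd number $s$ the sequence $\{F_m(s)~(\mathrm{mod}~2^6)\}_m$ is periodic of period $3\cdot 2^5=96$, and since $m\equiv 20~(\mathrm{mod}~96)$ one may replace $m$ by the representative $20$. A direct computation then gives $F_{20}(1+8k)\equiv 45+24k$ and $F_{20}(45+24k)\equiv 1+8k~(\mathrm{mod}~2^6)$, and similarly for the second family, so that each pair is a cycle of length $2$ at level $6$.

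Next I would compute the multiplier $a_6$ as in (\ref{a_l}). By property 3 of Lemma \ref{lem:Fib a_n} the product $F_m'(s_1)\cdot F_m'(s_2)~(\mathrm{mod}~4)$ is periodic of period $6$; since $m\equiv 2~(\mathrm{mod}~6)$ it suffices to evaluate at a single representative, yielding $a_6(1+8k)=F_m'(1+8k)\cdot F_m'(45+24k)\equiv 1~(\mathrm{mod}~4)$, and likewise for $\{3+8k,15+24k\}$. Thus the multiplier condition of Definition \ref{def:movement} is met, and everything reduces to the computation of $b_6$, which is the substantial step and must be carried out modulo $2^7$.

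Writing $F_m(1+8k)=45+24k+2^6A$ and $F_m(45+24k)=1+8k+2^6B$, the goal is to show $A+B\equiv 1~(\mathrm{mod}~2)$, for then, exactly as in the template, $(45+24k+2^6A)^{1+2j}\equiv(45+24k)^{1+2j}+2^6A~(\mathrm{mod}~2^7)$ gives $F_m^2(1+8k)\equiv 1+8k+2^6(A+B)\equiv 1+8k+2^6~(\mathrm{mod}~2^7)$ (using that $F_m(1)$ is odd since $3\nmid m$), whence $b_6\equiv 1~(\mathrm{mod}~2)$. Because the offset $(45+24k)-(1+8k)=4(11+4k)$ has $2$-adic valuation $2$, the binomial expansion of $(45+24k)^{1+2j}$ about $(1+8k)$ now contributes nonnegligible terms up to cubic order modulo $2^7$ (the $\delta^3$ term has valuation $6<7$). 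I would therefore establish an expansion identity of the shape
\begin{align*}
(45+24k)^{1+2j}&\equiv (1+8k)^{1+2j}+c_0(k)+c_1(k)(1+2j)+c_2(k)(1+2j)(2j)\\
&\quad +c_3(k)(1+2j)(2j)(2j-1)+c_4(k)(-1)^j~(\mathrm{mod}~2^7),
\end{align*}
for explicit constants $c_i(k)$, verified (as with (\ref{b_l with m=4+12q and q=2+64d})) by checking the periodicity in $j$ of each constituent sequence modulo $2^7$ and computing finitely many initial values. Summing against the binomial coefficients expresses $F_m(45+24k)$ as a combination of $F_m(1)$, $F_m'(1)$, $F_m''(1)$, $F_m'''(1)$ and $\frac{F_m(i)}{i}$.

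Finally I would evaluate each of these symmetric sums at its own period. The crucial uniformity is that the coefficients $c_i(k)$ carry increasing $2$-adic valuations (roughly $2,2,4,6$), so that one needs $F_m(1)$ and $F_m'(1)$ only modulo $2^6$ and $2^5$, $F_m''(1)$ modulo $2^3$, and $F_m'''(1)$ modulo $2$; by Proposition \ref{prop:Fib periodic} and Lemmas \ref{lem:Fib a_n}, \ref{lem:Fib second derivative}, \ref{lem:Fib third derivative}, \ref{lem:Fib input i} these residues have periods all dividing $96$, so the residue $m\equiv 20~(\mathrm{mod}~96)$ fixes their values \emph{uniformly in} $d$, with no splitting on the parity of $d$ as was needed at level $10$. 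Substituting these values yields $A+B\equiv 1~(\mathrm{mod}~2)$, and the identical argument for $\{3+8k,15+24k\}$ completes the proof; the conclusion then follows from Definition \ref{def:movement}. The main obstacle is precisely this $b_6$ computation: isolating the correct set of derivative sums and tracking the cubic cross-terms of the expansion modulo $2^7$ without error, the bookkeeping being considerably heavier than in the valuation-$1$ offset of the level-$4$ template.
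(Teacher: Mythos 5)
Your proposal follows exactly the route the paper intends: it splits the sixteen cycles into the two subfamilies $\{1+8k,45+24k\}$ and $\{3+8k,15+24k\}$ ($k=0,\dots,7$) precisely as in Proposition \ref{prop:m=8+12q,q=1}, and then runs the template of Proposition \ref{prop:m=4+12q,q=1} — periodicity of $F_m(s)$ mod $2^6$ to reduce to $m=20$, Lemma \ref{lem:Fib a_n} for $a_6\equiv 1\pmod 4$, and the expansion identity in $j$ summed against $F_m(1)$, $F_m'(1)$, $F_m''(1)$, $F_m'''(1)$, $\frac{F_m(i)}{i}$ to get $A+B\equiv 1\pmod 2$ and hence $b_6\equiv 1$. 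This is the same approach as the paper (which omits these details as "similar"), and your accounting of the cubic terms forced by the valuation-$2$ offset and of the required moduli for the derivative sums is correct.
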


\subsection{Case: $m=8+12q$ with $q=5+16d$}

We consider the case $m=8+12q$ with $q=5+16d$ for nonnegative integers $d$. The Fibonacci polynomial becomes
\[ F_m(x)=\sum_{j=0}^{33+96d}\tbinom{34+96d+j}{1+2j}x^{1+2j}. \]

\begin{proposition}\label{prop:m=8+12q,q=5}
Let $m=8+12q$ with $q=5+16d$ for some nonnegative integer $d$. The Fibonacci polynomial $F_m(x)$ has cycles $\{(-1)^k+(-1)^{k+1}\cdot 4k,\big(64+(-1)^k\cdot 13\big)+\big(32+(-1)^k\cdot 20\big)k+\big(32+(-1)^{k+1}\cdot 16\big)k^2\}$ of length 2 which strongly grow at level 7 where $k=0,\dots,31$.
\end{proposition}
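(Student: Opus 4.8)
The plan is to follow the template of Proposition~\ref{prop:m=4+12q,q=1}: reduce the stated quadratic-in-$k$ cycle to affine families on which the criteria of Definition~\ref{def:movement} and Proposition~\ref{prop:grow tails} can be verified level by level. Substituting $m=8+12q=68+192d$ into the even-case summation formula gives
\[
F_m(x)=\sum_{j=0}^{33+96d}\tbinom{34+96d+j}{1+2j}x^{1+2j}.
\]
The first step is to split the $32$ cycles according to the parity of $k$. Writing $k=2j$ and $k=2j+1$ and using the identity $64j^2\equiv 64j\pmod{2^7}$ to absorb the quadratic term, the family collapses modulo $2^7$ to the two affine families
\[
\{1+8k,\,77+88k\}\qquad\text{and}\qquad\{3+8k,\,111+24k\},\qquad k=0,1,\dots,15,
\]
which pair the residues $1\leftrightarrow 5$ and $3\leftrightarrow 7$ modulo $8$ and between them exhaust the odd residues modulo $2^7$. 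It therefore suffices to prove that each affine family forms a $2$-cycle that strongly grows at level $7$.

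For the cycle property I would invoke Proposition~\ref{prop:Fib periodic}: the sequence $\{F_m(s)\pmod{2^7}\}_m$ is periodic of period $3\cdot 2^6=192$, and since $m\equiv 68\pmod{192}$ for every $d$, it suffices to check $F_{68}(1+8k)\equiv 77+88k$ and $F_{68}(77+88k)\equiv 1+8k$, together with the analogous pair for the second family, all modulo $2^7$. The multiplier $a_7$ is then controlled by Lemma~\ref{lem:Fib a_n}: the product $\{F_m'(s_1)F_m'(s_2)\pmod 4\}_m$ has period $6$, so one reduces to a fixed representative and reads off $a_7\equiv 1\pmod 4$, putting every cycle into the strong regime.

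The substance is the computation of $b_7$, that is, showing $\nu_2\big(F_m^2(p)-p\big)=7$ for each base point $p$. Here I would reproduce the expansion device of Proposition~\ref{prop:m=4+12q,q=1}: expand $(\text{second coordinate})^{1+2j}$ about $(\text{first coordinate})^{1+2j}$ and collect the correction into the basis sequences $1$, $(1+2j)$, $(1+2j)(2j)$, $(1+2j)(2j)(2j-1)$ and $(-1)^j$, working modulo $2^8$, one level beyond the target. Each such identity is verified for all $j$ by checking it on a single period in $j$ and invoking the periodicity of the basis sequences. Summing against $\tbinom{34+96d+j}{1+2j}$ then rewrites the defect $F_m^2(p)-p$ as a $\Z_2$-linear combination of the master quantities $F_m(1)$, $F_m'(1)$, $F_m''(1)$, $F_m'''(1)$ and $F_m(i)/i$, whose residues are supplied by Proposition~\ref{prop:Fib periodic} and Lemmas~\ref{lem:Fib a_n}, \ref{lem:Fib second derivative}, \ref{lem:Fib third derivative} and \ref{lem:Fib input i} after reducing $m$ modulo the relevant periods (a reduction that may distinguish the parity of $d$, though the conclusion is uniform). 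Tracking valuations then isolates a single term of valuation $7$, giving $b_7\equiv 1\pmod 2$.

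The main obstacle is exactly this last step. One must carry the expansion to precision $2^8$ and choose the basis terms so that all contributions of valuation $\ge 8$ cancel while the valuation-$7$ term survives; the sign factor $(-1)^j$ together with the quadratic $k$-dependence of the original cycle makes the bookkeeping noticeably more delicate than at levels $4$ through $6$. Because $F_m$ is an odd polynomial, $F_m^2$ is odd, and I would use this to obtain the negated representatives of each residue class essentially for free, halving the explicit casework. Once the expansion is fixed, the remaining work is the routine (if lengthy) valuation count, carried out exactly as in Proposition~\ref{prop:m=4+12q,q=1}.
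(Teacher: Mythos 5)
Your proposal matches the paper's proof: the paper likewise splits the thirty-two quadratic-in-$k$ cycles into exactly the two affine families $\{1+8k,77+88k\}$ and $\{3+8k,111+24k\}$ for $k=0,\dots,15$ and then declares the rest ``similar to Proposition \ref{prop:m=4+12q,q=1}'', which is precisely the template (periodicity of $F_m \bmod 2^7$ for the cycle property, Lemma \ref{lem:Fib a_n} for $a_7$, and the expansion against $F_m(1)$, $F_m'(1)$, $F_m''(1)$, $F_m'''(1)$, $F_m(i)/i$ for $b_7$) that you lay out. Your reduction of the quadratic term via $64k^2\equiv 64k\pmod{2^7}$ and the parity-of-$k$ reparametrization are correct, so the proposal is sound and takes essentially the same route.
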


\begin{proof}
For the proof, we divide the cycles $\{(-1)^k+(-1)^{k+1}\cdot 4k,\big(64+(-1)^k\cdot 13\big)+\big(32+(-1)^k\cdot 20\big)k+\big(32+(-1)^{k+1}\cdot 16\big)k^2\}$ into two types of cycles $\{1+8k,77+88k\}$ and $\{3+8k,111+24k\}$ where $k=0,\dots,15$.

We omit the further proof because it is similar to that of Proposition \ref{prop:m=4+12q,q=1}.
\end{proof}

By Proposition \ref{prop:m=8+12q even number} and \ref{prop:m=8+12q,q=5}, we conclude that the following is true.
\begin{theorem}
The minimal decomposition of $\Z_2$ for $F_m(x)$ with $m=8+12q$ and $q=5+16d$ for nonnegative integers $d$ is
\[ \Z_2=\{0\}\bigsqcup\big(\bigcup_{k=0}^{31} M_{k}\big)\bigsqcup(2\Z_2-\{0\}), \]
where 
\begin{multline*}
M_{k}=\big((-1)^k+(-1)^{k+1}\cdot 4k+2^7\Z_2\big) \\ \cup  \Big(\big(64+(-1)^k\cdot 13\big) 
    +\big(32+(-1)^k\cdot 20\big)k+\big(32+(-1)^{k+1}\cdot 16\big)k^2+2^7\Z_2\Big).	
\end{multline*} 
Here, $\{0\}$ is the set of a fixed point and $M_{k}$'s are the minimal components. The set $2\Z_2-\{0\}$ is the attracting basin of the fixed point $0$.
\end{theorem}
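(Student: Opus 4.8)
The plan is to assemble the three pieces of the decomposition in Theorem \ref{thm-decomposition} from the two propositions already established for this residue class. Proposition \ref{prop:m=8+12q even number} supplies the periodic part together with (the bulk of) the attracting basin: it gives the fixed point $0$ and the fact that all of $2\Z_2$ lies in its attracting basin, so that $\{0\}$ accounts for $\mathcal{P}$ and $2\Z_2-\{0\}$ sits inside $\mathcal{B}$. Proposition \ref{prop:m=8+12q,q=5} supplies the minimal part: for each $k=0,\dots,31$ it produces a cycle of length $2$ that strongly grows at level $7$. By property $1$ of Proposition \ref{prop:grow tails}, a cycle strongly growing at a level $\ell\ge 2$ makes $F_m$ minimal on its invariant clopen set $\mathbb{X}_\sigma$; applied with $\ell=7$, each of these $32$ cycles yields a minimal subsystem, and the corresponding invariant clopen set is precisely $M_k$, a union of two balls of radius $2^{-7}$.

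It then remains to check that the listed pieces are pairwise disjoint and exhaust $\Z_2$. Since $\{0\}$ and $2\Z_2-\{0\}$ together fill $2\Z_2$, the only thing left is to verify that the $M_k$'s partition the odd part $1+2\Z_2$. For this I would use the reformulation recorded in the proof of Proposition \ref{prop:m=8+12q,q=5}, which rewrites the $32$ cycles as the two families $\{1+8k,\,77+88k\}$ and $\{3+8k,\,111+24k\}$ with $k=0,\dots,15$. Reducing modulo $2^7=128$, the $64$ representatives occurring in these families realize each of the $64$ odd residues modulo $128$ exactly once: in the first family $1+8k$ ranges over the residues $\equiv 1\pmod 8$ while $77+88k$ ranges over those $\equiv 5\pmod 8$ (because $88k$ runs through all multiples of $8$ modulo $128$), and in the second family $3+8k$ and $111+24k$ cover the residues $\equiv 3$ and $\equiv 7\pmod 8$ respectively. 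Hence the $64$ balls tile $1+2\Z_2$, none of them meets $2\Z_2$, and the union over $k$ of the $M_k$'s is exactly $1+2\Z_2$.

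The main obstacle is really only this bookkeeping: confirming that the $64$ balls are distinct and jointly cover every odd residue class modulo $2^7$, which reduces to the elementary fact that $88k$ and $24k$ each run through all multiples of $8$ modulo $128$ as $k$ ranges over $0,\dots,15$. Once the covering is confirmed, the three sets $\{0\}$, $\bigcup_k M_k$ and $2\Z_2-\{0\}$ give a disjoint partition of $\Z_2$ consistent with the dynamics, so by Theorem \ref{thm-decomposition} this is the minimal decomposition, with $\{0\}$ the unique periodic point, the $M_k$'s the minimal components, and $2\Z_2-\{0\}$ the attracting basin of $0$.
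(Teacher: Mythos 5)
Your proposal is correct and follows essentially the same route as the paper, which derives this theorem directly from Proposition \ref{prop:m=8+12q even number} (the fixed point $0$ with $2\Z_2$ in its basin) and Proposition \ref{prop:m=8+12q,q=5} (the $32$ strongly growing $2$-cycles at level $7$), combined with property 1 of Proposition \ref{prop:grow tails}. The only addition is your explicit verification that the $64$ balls tile the odd residues modulo $2^7$ via the families $\{1+8k,77+88k\}$ and $\{3+8k,111+24k\}$, a bookkeeping step the paper leaves implicit; your arithmetic there is sound.
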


\subsection{Case: $m=8+12q$ with $q=13+32d$}

We consider the case $m=8+12q$ with $q=13+32d$ for nonnegative integers $d$. The Fibonacci polynomial becomes
\[ F_m(x)=\sum_{j=0}^{81+192d}\tbinom{82+192d+j}{1+2j}x^{1+2j}. \]

\begin{proposition}\label{prop:m=8+12q,q=13}
Let $m=8+12q$ with $q=13+32d$ for some nonnegative integer $d$. The Fibonacci polynomial $F_m(x)$ has cycles $\{(-1)^k+(-1)^{k+1}\cdot 4k,\big(128+(-1)^{k+1}\cdot 115\big)+\big(128+(-1)^{k+1}\cdot 12\big)k+\big(128+(-1)^k\cdot 16\big)k^2+\big(128+(-1)^{k+1}\cdot 64\big)k^3\}$ of length 2 which strongly grow at level 8 where $k=0,\dots,63$.
\end{proposition}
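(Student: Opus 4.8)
The plan is to follow verbatim the template established in Proposition~\ref{prop:m=4+12q,q=1}, adapted to the level~$8$ setting. Since $m=8+12q=164+384d$, we have $m\equiv 164\pmod{384}$, and $384=3\cdot 2^7$ is exactly the period of $\{F_m(s)\pmod{2^8}\}_m$ guaranteed by Proposition~\ref{prop:Fib periodic}. This lets me replace $m$ by the single representative $164$ throughout the level-$8$ verifications, reducing every modular claim about $F_m$ to a finite computation with $F_{164}$.

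First I would reparametrize. Splitting the index $k=0,\dots,63$ according to its parity collapses the $(-1)^k$-laden formula into two polynomial families that are linear, respectively cubic, in the new index: up to a sign reindexing one obtains a cycle of the clean form $\{1+8k',\dots\}$ for even $k$ and one of the form $\{3+8k',\dots\}$ for odd $k$, each with $k'=0,\dots,31$. Working with these two simpler families is what makes the binomial expansions below tractable, and it mirrors the ``two types of cycles'' reductions in Propositions~\ref{prop:m=8+12q,q=1} and~\ref{prop:m=8+12q,q=5}.

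Second, for each family I would show $\{g,F_m(g)\}$ is a $2$-cycle at level $8$ by evaluating $F_m$ at the two cycle points modulo $2^8$; by the periodicity above this equals $F_{164}$ evaluated there, a direct check. Then I compute the multiplier $a_8=F_m'(g)\,F_m'\!\big(F_m(g)\big)\pmod 4$ from~(\ref{a_l}): by property~3 of Lemma~\ref{lem:Fib a_n} the product of two such derivatives is periodic in $m$ of period $6$, so $a_8$ equals the corresponding product for $m=164$, which is $\equiv 1\pmod 4$. This secures the ``strong'' half of Definition~\ref{def:movement}.

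The crux, and the main obstacle, is the computation of $b_8\pmod 2$ from~(\ref{b_l}). Writing $F_m(\mathrm{first})=\mathrm{second}+2^8A$ and $F_m(\mathrm{second})=\mathrm{first}+2^8B$, I would establish a single ``Taylor-type'' congruence expressing $(\mathrm{second})^{1+2j}$ in terms of $(\mathrm{first})^{1+2j}$ modulo $2^9$ as a fixed linear combination of $1$, $(1+2j)$, $(1+2j)(2j)$, $(1+2j)(2j)(2j-1)$ and $(-1)^j$, verified for all $j$ by checking one full period of each of these periodic sequences; because the second cycle point is \emph{cubic} in $k'$, the expansion must run out to third order, which is precisely why $F_m'''$ enters here and not in the lower-level cases. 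Summing this identity against the binomial coefficients of~(\ref{eq:Fibonacci polynomial_even&odd}) turns the sum into a linear combination of $F_m(1)$, $F_m'(1)$, $F_m''(1)$, $F_m'''(1)$ and $\tfrac{F_m(i)}{i}$, each evaluated modulo the appropriate power of $2$ via Proposition~\ref{prop:Fib periodic} and Lemmas~\ref{lem:Fib a_n}, \ref{lem:Fib second derivative}, \ref{lem:Fib third derivative} and \ref{lem:Fib input i}. The payoff is $A+B\equiv 1\pmod 2$, whence $F_m^2(\mathrm{first})\equiv \mathrm{first}+2^8\pmod{2^9}$ and $b_8\equiv 1\pmod 2$. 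The delicate point is that these level-$9$ reductions have period $768=3\cdot 2^8$, so $m\pmod{768}$ distinguishes the parity of $d$; I would need to confirm that the finitely many residues feeding the combination are insensitive to this parity, or else run the two parities in parallel, exactly as in the auxiliary-identity check.
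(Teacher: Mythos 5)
Your proposal follows essentially the same route as the paper: the paper's entire proof here is to split the $64$ cycles into four families whose second coordinates are affine in the index ($\{1+16k,13+48k\}$, $\{9+16k,101+48k\}$, $\{3+16k,175+176k\}$, $\{11+16k,199+176k\}$ with $k=0,\dots,15$) and then invoke the template of Proposition~\ref{prop:m=4+12q,q=1}, which is exactly the machinery you describe (periodicity reductions for the cycle identification and for $a_8$, and the Taylor-type congruence feeding $F_m(1)$, $F_m'(1)$, $F_m''(1)$, $F_m'''(1)$ and $F_m(i)/i$ into the computation of $b_8$). The only real difference is organizational: you use two families of $32$ cycles whose second coordinates come out quadratic in the new index modulo $2^8$ (not ``linear, respectively cubic'' as you state --- the $k'^3$ term dies mod $2^8$), whereas the paper's finer four-way split keeps them linear; both reduce to the same finite verification, and your closing caveat about the period-$768$ data and the parity of $d$ is well placed.
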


\begin{proof}
For the proof, we divide the cycles $\{(-1)^k+(-1)^{k+1}\cdot 4k,\big(128+(-1)^{k+1}\cdot 115\big)+\big(128+(-1)^{k+1}\cdot 12\big)k+\big(128+(-1)^k\cdot 16\big)k^2+\big(128+(-1)^{k+1}\cdot 64\big)k^3\}$ into four types of cycles $\{1+16k,13+48k\}$, $\{9+16k,101+48k\}$, $\{3+16k,175+176k\}$ and $\{11+16k,199+176k\}$ where $k=0,\dots, 15$.

We omit the further proof because it is similar to that of Proposition \ref{prop:m=4+12q,q=1}.
\end{proof}

By Proposition \ref{prop:m=8+12q even number} and \ref{prop:m=8+12q,q=13}, we conclude that the following is true.
\begin{theorem}
The minimal decomposition of $\Z_2$ for $F_m(x)$ with $m=8+12q$ and $q=13+32d$ for nonnegative integers $d$ is
\[ \Z_2=\{0\}\bigsqcup\big(\bigcup_{k=0}^{63} M_{k}\big)\bigsqcup(2\Z_2-\{0\}), \]
where 
\begin{multline*}
M_{k}=\big((-1)^k+(-1)^{k+1}\cdot 4k+2^8\Z_2\big)\cup\Big(\big(128+(-1)^{k+1}\cdot 115\big) \\ 
+\big(128+(-1)^{k+1}\cdot 12\big)k+\big(128+(-1)^k\cdot 16\big)k^2+\big(128+(-1)^{k+1}\cdot 64\big)k^3+2^8\Z_2\Big).	
\end{multline*} 
Here, $\{0\}$ is the set of a fixed point and $M_{k}$'s are the minimal components. The set $2\Z_2-\{0\}$ is the attracting basin of the fixed point $0$.
\end{theorem}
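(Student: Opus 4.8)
The plan is to assemble the decomposition from the two preceding propositions and then verify that the listed minimal components exhaust the odd part of $\Z_2$. First I would split $\Z_2 = 2\Z_2 \bigsqcup (1+2\Z_2)$. On the even half, Proposition \ref{prop:m=8+12q even number} immediately yields that $0$ is a fixed point and that $2\Z_2 - \{0\}$ lies in its attracting basin; this accounts for the summands $\{0\}$ and $2\Z_2 - \{0\}$ of the claimed decomposition, and one also records that $F_m$ maps $1+2\Z_2$ into itself so the two halves do not interact.

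For the odd half $1+2\Z_2$, I would invoke Proposition \ref{prop:m=8+12q,q=13}: for each $k = 0, \ldots, 63$ the pair forming $M_k$ is a $2$-cycle at level $8$ that strongly grows. By property 1 of Proposition \ref{prop:grow tails}, each such cycle, being strongly growing at level $8 \geq 2$, makes $F_m$ minimal on the invariant clopen set $M_k = \mathbb{X}_{\sigma_k}$; thus each $M_k$ is a genuine minimal component consisting of two balls of radius $2^{-8}$. One checks in passing that both coordinates of every cycle are odd, so $M_k \subseteq 1+2\Z_2$.

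The substantive step is to show $\bigsqcup_{k=0}^{63} M_k = 1+2\Z_2$, i.e. that the $64 \times 2 = 128$ balls are pairwise disjoint and cover every odd residue modulo $2^8$. Here I would use the four-type subdivision already introduced in the proof of Proposition \ref{prop:m=8+12q,q=13}, namely $\{1+16k, 13+48k\}$, $\{9+16k, 101+48k\}$, $\{3+16k, 175+176k\}$, $\{11+16k, 199+176k\}$ with $k = 0, \ldots, 15$, each contributing $32$ balls. The first coordinates of the four types run through the residue classes $1, 9, 3, 11 \pmod{16}$, while the second coordinates, since $48 = 16\cdot 3$ and $176 = 16\cdot 11$, run through the classes $13, 5, 15, 7 \pmod{16}$. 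Because $3$ and $11$ are units modulo $16$, multiplication by each permutes $\Z/16\Z$, so within every class the $16$ values of $k$ produce $16$ distinct residues modulo $2^8$. The eight classes $1,3,5,7,9,11,13,15 \pmod{16}$ so obtained are exactly the odd residue classes, whence the $128$ balls occupy all $128$ odd residues modulo $2^8$; a counting argument then forces both disjointness and exhaustion.

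Combining the two halves gives the asserted decomposition, with $2\Z_2 - \{0\}$ the attracting basin of $0$ and the $M_k$ the minimal components. I expect the main obstacle to be the coverage bookkeeping in the odd half: one must confirm that the four families of first coordinates and the four families of second coordinates together sweep out all eight odd residue classes modulo $16$ with no overlap, which reduces to the elementary fact that multiplication by $3$ and by $11$ permutes $\Z/16\Z$. Once this residue count is verified, the remainder of the argument is purely formal.
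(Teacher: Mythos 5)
Your proposal is correct and follows essentially the same route as the paper, which simply deduces the theorem from Proposition \ref{prop:m=8+12q even number} (the fixed point $0$ with attracting basin $2\Z_2-\{0\}$) and Proposition \ref{prop:m=8+12q,q=13} (the $64$ strongly growing $2$-cycles at level $8$) together with property 1 of Proposition \ref{prop:grow tails}. The only difference is that you make explicit the covering/disjointness check for the $128$ odd residues modulo $2^8$ via the four-type subdivision and the fact that $3$ and $11$ are units modulo $16$, a verification the paper leaves implicit; your bookkeeping there is accurate.
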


\subsection{Case: $m=8+12q$ with $q=29+64d$}

We consider the case $m=8+12q$ with $q=29+64d$ for nonnegative integers $d$. The Fibonacci polynomial becomes
\[ F_m(x)=\sum_{j=0}^{177+384d}\tbinom{178+384d+j}{1+2j}x^{1+2j}. \]

\begin{proposition}\label{prop:m=8+12q,q=29}
Let $m=8+12q$ with $q=29+64d$ for some nonnegative integer $d$. The Fibonacci polynomial $F_m(x)$ has cycles $\{(-1)^k+(-1)^{k+1}\cdot 4k,\big(256+(-1)^k\cdot 141\big)+\big(256+(-1)^{k+1}\cdot 140\big)k+\big(256+(-1)^(k+1)\cdot 112\big)k^2+\big(256+(-1)^k\cdot 64\big)k^3\}$ of length 2 which strongly grow at level 9 where $k=0,\dots,127$.
\end{proposition}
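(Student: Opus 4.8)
Since $m=8+12q$, Proposition~\ref{prop:m=8+12q even number} already shows that $0$ is a fixed point with $2\Z_2$ lying in its attracting basin, so the task is to analyse the odd inputs, i.e. the clopen set $1+2\Z_2$. The plan is to follow the template of Proposition~\ref{prop:m=4+12q,q=1}, exactly as was done for the neighbouring case Proposition~\ref{prop:m=8+12q,q=13}. First I would replace the single family whose second coordinate is cubic in $k$ by four families of $2$-cycles that are \emph{affine} in a reduced parameter, obtained by sorting $k$ according to its residue modulo $16$; a finite computation then checks that these four affine families reassemble into the stated cubic expression and that the sign pattern $(-1)^k$ merely records whether the base point lies in $1+8\Z_2$ or in $-1+8\Z_2$. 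For each affine family $\{x,y\}$ I would verify the cycle relations $F_m(x)\equiv y$ and $F_m(y)\equiv x \pmod{2^9}$ by invoking Proposition~\ref{prop:Fib periodic}: the sequence $\{F_m(s)\pmod{2^9}\}_m$ is periodic of period $3\cdot 2^8=768$, and since $m=356+768d\equiv 356\pmod{768}$ uniformly in $d$, it suffices to evaluate at the single representative $m=356$.

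Next I would compute the two movement quantities of Definition~\ref{def:movement}. For $a_9$, defined in~(\ref{a_l}), I would use Lemma~\ref{lem:Fib a_n}(3): the product $F_m'(x)F_m'(y)\pmod 4$ is periodic of period $6$, so reducing $m$ and evaluating at the representative gives $a_9\equiv 1\pmod 4$. The substantive step is $b_9$, defined in~(\ref{b_l}), where $\nu_2\bigl(F_m^2(x)-x\bigr)=9$ must be shown exactly. As in Proposition~\ref{prop:m=4+12q,q=1}, I would establish an expansion identity rewriting each odd power $y^{1+2j}$ of the second coordinate in terms of $x^{1+2j}$ plus correction terms drawn from the list $1$, $(1+2j)$, $(1+2j)(2j)$, $(1+2j)(2j)(2j-1)$ and $(-1)^j$, each carrying an explicit constant coefficient; because the second coordinate is cubic in $k$ and the level is $9$, the third-order term $(1+2j)(2j)(2j-1)$ is genuinely needed, exactly as in Proposition~\ref{prop:m=4+12q, q=2+64d sg at lev10}. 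Summing this identity against the binomial coefficients of $F_m$ converts $F_m(y)$ into a combination of $F_m(1)$, $F_m'(1)$, $F_m''(1)$, $F_m'''(1)$ and $\tfrac{F_m(i)}{i}$, each of which is evaluated to the required $2$-adic precision by Proposition~\ref{prop:Fib periodic} and Lemmas~\ref{lem:Fib a_n}, \ref{lem:Fib second derivative}, \ref{lem:Fib third derivative} and~\ref{lem:Fib input i}; the periods involved all divide $768$, so this reduction is again uniform in $d$. This yields $F_m^2(x)\equiv x+2^9\pmod{2^{10}}$, hence $b_9\equiv 1\pmod 2$, and the companion cycles carrying the factor $-1$ follow at once from the oddness of $F_m^2$, since $b_9(-x)=-b_9(x)\equiv 1\pmod 2$.

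The main obstacle is the verification of the expansion identity together with the precise evaluation of the auxiliary constants. Establishing the identity requires checking it on one full period of each component sequence in $j$ and then propagating five correction terms through the binomial sum while keeping everything correct modulo $2^{10}$; the cubic dependence on $k$ makes the coefficients bulkier than in the quadratic cases and forces the third-derivative contribution, so the arithmetic is the real labour. The remaining risk is purely bookkeeping: confirming that the four affine families genuinely recombine into the single cubic formula of the statement and that the strong-growth conclusion holds simultaneously for all of them. Since none of these steps departs in structure from Proposition~\ref{prop:m=4+12q,q=1}, the argument can reasonably be abbreviated by reference to it.
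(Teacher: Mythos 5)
Your proposal takes essentially the same route as the paper: the paper's entire proof consists of splitting the cubic family into affine subfamilies --- it uses eight of them, $\{1+32k,397+96k\}$, $\{9+32k,229+96k\}$, $\{17+32k,189+96k\}$, $\{25+32k,277+96k\}$, $\{3+32k,47+352k\}$, $\{11+32k,71+352k\}$, $\{19+32k,479+352k\}$, $\{27+32k,247+352k\}$ with $k=0,\dots,15$, so at level $9$ the step is $32$ rather than your ``four families sorted modulo $16$'' --- and then deferring to the template of Proposition~\ref{prop:m=4+12q,q=1}. The expansion identity, the evaluation of $F_m(1)$, $F_m'(1)$, $F_m''(1)$, $F_m'''(1)$, $\tfrac{F_m(i)}{i}$ via the periodicity lemmas, and the odd-function argument for the companion cycles that you describe are exactly what that template carries out, so apart from the minor miscount of subfamilies your plan matches the paper's argument.
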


\begin{proof}
For the proof, we divide the cycles $\{(-1)^k+(-1)^{k+1}\cdot 4k,\big(256+(-1)^k\cdot 141\big)+\big(256+(-1)^{k+1}\cdot 140\big)k+\big(256+(-1)^(k+1)\cdot 112\big)k^2+\big(256+(-1)^k\cdot 64\big)k^3\}$ into eight types of cycles $\{1+32k,397+96k\}$, $\{9+32k,229+96k\}$, $\{17+32k,189+96k\}$, $\{25+32k,277+96k\}$, $\{3+32k,47+352k\}$, $\{11+32k,71+352k\}$, $\{19+32k,479+352k\}$ and $\{27+32k,247+352k\}$ where $k=0,\dots,15$.

We omit the further proof because it is similar to that of Proposition \ref{prop:m=4+12q,q=1}.
\end{proof}

By Proposition \ref{prop:m=8+12q even number} and \ref{prop:m=8+12q,q=29}, we conclude that the following is true.
\begin{theorem}
The minimal decomposition of $\Z_2$ for $F_m(x)$ with $m=8+12q$ and $q=29+64d$ for nonnegative integers $d$ is
\[ \Z_2=\{0\}\bigsqcup\big(\bigcup_{k=0}^{127} M_{k}\big)\bigsqcup(2\Z_2-\{0\}), \]
where 
\begin{multline*}
	M_{k}=\big((-1)^k+(-1)^{k+1}\cdot 4k+2^9\Z_2\big) 
	      \cup\Big(\big(256+(-1)^k\cdot 141\big) \\
	      +\big(256+(-1)^{k+1}\cdot 140\big)k  
	      +\big(256+(-1)^{k+1}\cdot 112\big)k^2+\big(256+(-1)^k\cdot 64\big)k^3+2^9\Z_2\Big).
\end{multline*}
Here, $\{0\}$ is the set of a fixed point and $M_{k}$'s are the minimal components. The set $2\Z_2-\{0\}$ is the attracting basin of the fixed point $0$.
\end{theorem}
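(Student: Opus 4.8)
The plan is to assemble the decomposition from three ingredients: the fixed point $0$ together with its attracting basin inside $2\Z_2$, the minimal components inside $1+2\Z_2$ furnished by the strongly growing $2$-cycles of Proposition \ref{prop:m=8+12q,q=29}, and a counting argument showing that these minimal components exhaust $1+2\Z_2$. First I would record the invariance of the two halves of $\Z_2$: since $m$ is even, every monomial of $F_m$ is an odd power of $x$, so $F_m$ carries $2\Z_2$ into itself, and because $m\equiv 2\pmod 3$ we have $F_m(s)\equiv 1\pmod 2$ for odd $s$ by Proposition \ref{prop:Fib periodic}, so $1+2\Z_2$ is invariant as well. By Proposition \ref{prop:m=8+12q even number}, $0$ is a fixed point with $2\Z_2$ lying in its attracting basin, which supplies the pieces $\{0\}$ and $2\Z_2-\{0\}$.

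Next I would invoke Proposition \ref{prop:m=8+12q,q=29}: for each $k=0,\dots,127$ the pair $\{a_k,b_k\}$, with $a_k$ the linear and $b_k$ the cubic expression of the statement, is a $2$-cycle that strongly grows at level $9$. Since $9\geq 2$, property 1 of Proposition \ref{prop:grow tails} shows that $F_m$ restricted to the clopen set $M_k=(a_k+2^9\Z_2)\cup(b_k+2^9\Z_2)$ is minimal; being strongly growing, each $M_k$ is a genuine minimal component requiring no further lifting.

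The crux is to verify that $\{M_k\}_{k=0}^{127}$ is a disjoint partition of $1+2\Z_2$. This reduces to a finite residue count modulo $2^9=512$: there are $128$ cycles, each a union of two balls of radius $2^{-9}$, hence $256$ balls, while $1+2\Z_2$ contains exactly $256$ odd residues modulo $512$. I would organize the check using the eight sub-families $\{1+32k,397+96k\},\dots,\{27+32k,247+352k\}$ ($k=0,\dots,15$) into which the proof of Proposition \ref{prop:m=8+12q,q=29} already splits the cycles. The eight ``first'' residues lie in the classes $\{1,3,9,11,17,19,25,27\}\pmod{32}$, while the eight ``second'' residues reduce to the complementary classes $\{5,7,13,15,21,23,29,31\}\pmod{32}$, so together they meet all $16$ odd classes modulo $32$; within each class the shift $+32k$ (respectively $+96k$ or $+352k$), since $\gcd(32,512)=\gcd(96,512)=\gcd(352,512)=32$ and $k$ runs over $0,\dots,15$, sweeps out all $16$ lifts modulo $512$. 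Hence the $256$ balls realize each odd residue modulo $512$ exactly once, which is the claimed partition.

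Combining the three ingredients gives the result: $\{0\}$ is the only periodic point outside the minimal components, the $M_k$ tile $1+2\Z_2$ as minimal components, and $2\Z_2-\{0\}$ is attracted to $0$, matching the three-part form of Theorem \ref{thm-decomposition}. The main obstacle is the partition verification of the previous paragraph --- not its conceptual difficulty but its bookkeeping: one must confirm that the first and second coordinates of the eight families occupy complementary residue classes modulo $32$ and that the progressions with common differences $32$, $96$, and $352$ each cover their class completely modulo $512$, so that no residue is missed and none is double-counted.
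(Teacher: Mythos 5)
Your proof is correct and follows the paper's route: the paper derives this theorem directly from Proposition \ref{prop:m=8+12q even number} (the fixed point $0$ with $2\Z_2$ in its basin) together with Proposition \ref{prop:m=8+12q,q=29} (the $128$ strongly growing $2$-cycles at level $9$), exactly the two ingredients you combine. Your explicit check that the $256$ balls tile the odd residues modulo $2^9$ --- via the eight sub-families, the complementary residue classes modulo $32$, and the progressions with differences $32$, $96$, $352$ (each a unit multiple of $32$ modulo $512$) --- is a detail the paper leaves implicit, and your computations are right.
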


\subsection{Case: $m=8+12q$ with $q=61+64d$}

We consider the case $m=8+12q$ with $q=61+64d$ for nonnegative integers $d$. The Fibonacci polynomial becomes
\[ F_m(x)=\sum_{j=0}^{369+384d}\tbinom{370+384d+j}{1+2j}x^{1+2j}. \]

\begin{proposition}\label{prop:m=8+12q, q=61+64d sg at lev10}
Let $m=8+12q$ with $q=61+64d$ for some nonnegative integer $d$.
\begin{enumerate}
\item When $d$ is even, the Fibonacci polynomial $F_m(x)$ has strongly growing cycles $\{5+16k,873+944k+256k^2\}$ and $\{-5-16k,151+80k+768k^2\}$ and strongly splitting cycles $\{1+16k,141+560k+256k^2\}$ and $\{-1-16k,883+464k+768k^2\}$ with $k=0,1,\dots,2^6-1$ at level 10.

\item When $d$ is odd, the Fibonacci polynomial $F_m(x)$ has strongly growing cycles $\{1+16k,653+560k+256k^2\}$ and $\{-1-16k,371+464k+768k^2\}$ and strongly splitting cycles $\{5+16k,361+944k+256k^2\}$ and $\{-5-16k,663+80k+768k^2\}$ with $k=0,1,\dots,2^6-1$ at level 10.
\end{enumerate}
\end{proposition}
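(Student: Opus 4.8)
The plan is to mirror, essentially verbatim, the argument used for Proposition~\ref{prop:m=4+12q, q=2+64d sg at lev10}, since the present case has the same architecture: the governing period is again $3\cdot 2^9=1536$, and only the base residue of $m$ changes. Writing $m=8+12q=740+768d$, the sequence $\{F_m(s)\ (\mathrm{mod}\ 2^{10})\}_m$ is periodic of period $1536$ by Proposition~\ref{prop:Fib periodic}, so for $d$ even we reduce to the base residue $m\equiv 740$ and for $d$ odd to $m\equiv 1508\ (\mathrm{mod}\ 1536)$; this dichotomy is precisely the source of the two cases in the statement. First I would verify, by a direct computation at $m=740$ (resp.\ $m=1508$), that each of the four listed pairs is a $2$-cycle at level $10$, i.e.\ that $F_m$ carries the first listed element to the second and back modulo $2^{10}$, and then invoke the periodicity to pass from the base residue to every admissible $m=740+768d$.

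Next I would compute the multiplier $a_{10}$ for each cycle. Since $a_{10}(x)=F_m'(x)\,F_m'\big(F_m(x)\big)$ and the sequence $\{F_m'(s_1)\,F_m'(s_2)\ (\mathrm{mod}\ 4)\}_m$ is periodic of period $6$ by Lemma~\ref{lem:Fib a_n}, reducing to the base residue gives $a_{10}\equiv 1\ (\mathrm{mod}\ 4)$ for all four cycles. By Definition~\ref{def:movement} each is therefore either strongly growing or strongly splitting, and the distinction is decided entirely by the parity of $b_{10}$.

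The crux is the computation of $b_{10}$. Following the template, for a pair with first element $g$ I would establish an expansion $\big(F_m(g)\big)^{1+2j}\equiv g^{1+2j}+c_0+c_1(1+2j)+c_2(1+2j)(2j)+c_3(1+2j)(2j)(2j-1)+c_4(-1)^j\ (\mathrm{mod}\ 2^{11})$, valid for all $j\ge 0$ after checking one full period of each constituent sequence, exactly as in~(\ref{b_l with m=4+12q and q=2+64d}). Summing this against $\tbinom{370+384d+j}{1+2j}$ converts the correction terms into $F_m(1)$, $F_m'(1)$, $F_m''(1)$, $F_m'''(1)$ and $F_m(i)/i$, which I would evaluate modulo the relevant powers of $2$ using Proposition~\ref{prop:Fib periodic} and Lemmas~\ref{lem:Fib a_n}, \ref{lem:Fib second derivative}, \ref{lem:Fib third derivative} and \ref{lem:Fib input i}. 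This yields $F_m^2(g)\equiv g+2^{10}\ (\mathrm{mod}\ 2^{11})$, hence $b_{10}\equiv 1\ (\mathrm{mod}\ 2)$, for the pairs asserted to grow, and $F_m^2(g)\equiv g\ (\mathrm{mod}\ 2^{11})$, hence $b_{10}\equiv 0$, for the pairs asserted to split. The only genuine difference from Proposition~\ref{prop:m=4+12q, q=2+64d sg at lev10} is that the changed base residue of $m$ flips which of the residue classes $\pm1,\pm5\ (\mathrm{mod}\ 16)$ grows and which splits, so the growing and splitting roles are interchanged, as recorded in the statement. For the pairs with negative first element I would not recompute: because $m$ is even, $F_m$ is an odd function, whence $F_m^2$ is odd and $b_{10}(-x)=-b_{10}(x)$, so each $-(\cdot)$ cycle inherits the same parity of $b_{10}$ as its positive counterpart.

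I expect the main obstacle to be the verification of the expansion formula together with the valuation bookkeeping modulo $2^{11}$: one must (i) pin down the correct constants $c_0,\dots,c_4$ for each of the four pairs in each parity of $d$, (ii) confirm the periods of the auxiliary sequences $\{g^{1+2j}\}_j$, $\{(1+2j)\}_j$, $\{(1+2j)(2j)\}_j$ and so on, so that checking finitely many $j$ suffices, and (iii) track that no cross term reaches the modulus. These steps are mechanical but lengthy; since the identical mechanism already appears in full detail in the proof of Proposition~\ref{prop:m=4+12q, q=2+64d sg at lev10}, it is legitimate to carry out the $d$ even case and then remark that the $d$ odd case is entirely analogous, with base residue $1508$ in place of $740$.
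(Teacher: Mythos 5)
Your proposal matches the paper exactly: the authors omit this proof with the remark that it is ``similar to that of Proposition~\ref{prop:m=4+12q, q=2+64d sg at lev10}'', and your plan is precisely to transplant that argument with base residue $m\equiv 740$ (resp.\ $1508$) modulo $1536$, the same $a_{10}$ and $b_{10}$ computations, and the same odd-function trick for the negated cycles. Your observation that the shifted residue interchanges the growing and splitting classes $\pm1,\pm5\pmod{16}$ is consistent with the stated proposition, so the approach is sound and essentially identical to the paper's.
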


The proof of Proposition \ref{prop:m=8+12q, q=61+64d sg at lev10} is similar to that of Proposition \ref{prop:m=4+12q, q=2+64d sg at lev10}, so we omit it.

\begin{proposition}\label{prop:m=8+12(61+64d)}
Let $m=8+12q$ with $q=61+64d$ for some nonnegative integer $d$. Let $l\geq 4$ and $g_l$ be a positive integer. If $\{g_l,F_m(g_l)\}$ forms a cycle of length 2 which strongly grows at level $l+6$, then $\{g_l+2^l k, F_m(g_l+2^l k)\}$ and $\{-(g_l+2^l k), -F_m(g_l+2^l k)\}$ form cycles of length 2 which strongly grow at level $l+6$ for $k=0,1,\dots,2^6-1$.
\end{proposition}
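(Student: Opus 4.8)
The plan is to follow the strategy of Proposition~\ref{prop:m=4+12(2+64d)} almost verbatim, since the present statement is its exact analogue for $m=8+12q$ with $q=61+64d$; only the numerical residues feeding the computation change. Two preparatory facts are needed, namely the analogues of Lemmas~\ref{lem:m=4+12q, q=2+64d, g_l} and~\ref{lem:Fib a_n (mod 2^7) for m=28+768d}. First I would classify the $2$-cycles modulo $8$: writing $m\equiv 8~(\mathrm{mod}~12)$ and using the period-$12$ sequence $\{F_m(s)~(\mathrm{mod}~8)\}_m$ from Proposition~\ref{prop:Fib periodic}, a reduction to $F_{8}(\cdot)$ shows the $2$-cycles at level $\geq 3$ reduce to $\{1,5\}$ or $\{3,7\}~(\mathrm{mod}~8)$ (note this differs from the $\{1,3\},\{5,7\}$ pattern of the $m=4$ case). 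Second, reducing $m\equiv 164~(\mathrm{mod}~192)$ and invoking the period-$192$ assertions of Lemma~\ref{lem:Fib a_n}, I would verify
\[
F_m'(1+8a)\cdot F_m'\big(F_m(1+8a)\big)\equiv 1~(\mathrm{mod}~2^7)
\]
for $a\in\Z_2$, which supplies $a_{l+6}\equiv 1~(\mathrm{mod}~4)$ along each translated orbit.

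With these in hand the case split is $g_l\equiv 1~(\mathrm{mod}~8)$ or $g_l\equiv 3~(\mathrm{mod}~8)$. I would expand $F_m(g_l+2^l k)=\sum_j\binom{\cdots}{1+2j}(g_l+2^l k)^{1+2j}$ to second order in $2^l k$, using the identity $(1+2j)j\equiv 1+j-(-1)^j~(\mathrm{mod}~8)$ of~(\ref{exp (1+2j)j (mod 8)}) to collect the $k^2$ contributions, and then substitute the residues of $F_m(1)$ and $F_m'(1)$ (from the period-$24$ and period-$48$ sequences, now with $m\equiv 20$ in both moduli) together with $\tfrac{F_m(i)}{i}$. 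The one structural change from Proposition~\ref{prop:m=4+12(2+64d)} is that $m\equiv 8~(\mathrm{mod}~12)$ forces $\tfrac{F_m(i)}{i}=-1$ rather than $+1$ by Lemma~\ref{lem:Fib input i}. This yields $F_m(g_l+2^l k)\equiv F_m(g_l)+F_m'(g_l)2^l k+C\cdot 2^{2l}k^2~(\mathrm{mod}~2^{l+7})$ for an explicit constant $C$.

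I would then compose once more, writing $F_m^2(g_l+2^l k)=F_m\big(F_m(g_l+2^l k)\big)$, expanding again to second order and using $F_m^2(g_l)=g_l+2^{l+6}+2^{l+7}c$ (the strong growth of the base cycle) and the identity above. After collecting terms the first-order part is $2^l k$ and the surviving constant is $2^{l+6}$, giving $F_m^2(g_l+2^l k)\equiv (g_l+2^l k)+2^{l+6}~(\mathrm{mod}~2^{l+7})$, whence $b_{l+6}(g_l+2^l k)\equiv 1~(\mathrm{mod}~2)$ and the translated cycle strongly grows. The cycles $\{-(g_l+2^l k),-F_m(g_l+2^l k)\}$ are handled as before: since $m$ is even, $F_m$ is an odd polynomial, so $F_m^2$ is odd and $b_{l+6}\big(-(g_l+2^l k)\big)=-b_{l+6}(g_l+2^l k)\equiv 1~(\mathrm{mod}~2)$.

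The main obstacle is the bookkeeping of the quadratic term: the composition produces several $k^2$ contributions of nominal order $2^{2l}$, and one must check they cancel up to order $2^{2l+4}$, which vanishes modulo $2^{l+7}$ for all $l\geq 4$. It is precisely this cancellation — depending on the recomputed residues of $F_m(1),F_m'(1),\tfrac{F_m(i)}{i}$ and on $F_m(g_l),F_m'(g_l)~(\mathrm{mod}~8)$ — that forces all $2^6$ translates and their negatives to strongly grow simultaneously, and it is the one place where the sign change $\tfrac{F_m(i)}{i}=-1$ must be tracked carefully to confirm the outcome is the same as in Proposition~\ref{prop:m=4+12(2+64d)}.
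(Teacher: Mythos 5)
Your proposal is correct and follows exactly the route the paper intends: the paper omits this proof with the remark that it is ``similar to that of Proposition~\ref{prop:m=4+12(2+64d)}'', and your adaptation supplies precisely the right adjustments --- the level-$3$ cycle classes become $\{1,5\}$ and $\{3,7\}$ modulo $8$ (since $F_m(s)\equiv 5s~(\mathrm{mod}~8)$ for $m\equiv 8~(\mathrm{mod}~12)$), the residues $m\equiv 164~(\mathrm{mod}~192)$ and $m\equiv 20~(\mathrm{mod}~24)$ and $(\mathrm{mod}~48)$ replace $28$, the analogue of Lemma~\ref{lem:Fib a_n (mod 2^7) for m=28+768d} holds, and $\tfrac{F_m(i)}{i}=-1$ by Lemma~\ref{lem:Fib input i}. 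The only quibble is that the quadratic terms cancel to order $2^{2l+3}$ rather than $2^{2l+4}$, which is exactly why the hypothesis $l\geq 4$ (not $l\geq 3$) is needed; this does not affect the argument.
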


The proof of Proposition \ref{prop:m=8+12(61+64d)} is similar to that of Proposition \ref{prop:m=4+12(2+64d)}, so we omit it.
In order to explain all strongly growing cycles in the case $m=8+12q$ with $q=61+64d$, we need to construct a sequence.

\begin{definition}\label{def:m=8=12(61+64d)g_l}
	Let $m=8+12q$ with $q=61+64d$ for some nonnegative integer $d$. We define a sequence $\{g_l\}_{l\ge 10}$ recurrently. Let $g_{10}=5$ and $g_{10}'=5+2^3$ if $d$ is even, or $g_{10}=1$ and $g_{10}'=1+2^3$ if $d$ is odd. For $l>10$, if $\{g_{l-1}', F_m(g_{l-1}')\}$ is a cycle of length 2 and strongly grows at level $l$, then set $g_l=g_{l-1}'$ and $g_l'=g_{l-1}'+2^{l-7}$, or otherwise set $g_l=g_{l-1}'+2^{l-7}$ and $g_l'=g_{l-1}'$.
\end{definition}

A computation, for example a Mathematica experiment, shows that $\{g_l, F_m(g_l)\}$ is a cycle of length 2 and strongly grows at level $l$ for many $l$'s. But the numbers in the sequence $\{g_l\}$ appear randomly, so the proof of strong growth cannot be done, so we leave as a conjecture.

\begin{conjecture}\label{conj:m=8+12(61+64d)}
	Let $m=8+12q$ with $q=61+64d$ for some nonnegative integer $d$. For each $g_l$ for $l\ge 10$ defined in Definition \ref{def:m=8=12(61+64d)g_l}, the Fibonacci polynomial $F_m(x)$ has a cycle $\{g_l, F_m(g_l)\}$ of length 2 which strongly grows at level $l$.
\end{conjecture}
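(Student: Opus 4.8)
The plan is to attempt the statement by induction on $l \ge 10$, following the chain of splittings that starts from Proposition \ref{prop:m=8+12q, q=61+64d sg at lev10}. The base case $l=10$ is immediate: that proposition produces, for each parity of $d$, a strongly growing cycle whose first element is exactly the starting value $g_{10}$ of Definition \ref{def:m=8=12(61+64d)g_l}, together with a strongly splitting cycle on the neighbouring ball that carries the companion $g_{10}'$. For the inductive step the governing tool is Proposition \ref{prop:grow tails}: a strongly splitting $2$-cycle at level $l$ (where $a_l \equiv 1 \pmod 4$ and $b_l \equiv 0 \pmod 2$) refines to two cycles at the next level, and the fate of each refinement---strong growth versus continued strong splitting---is decided by the next $2$-adic digit of its $b$-value. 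The recursion in Definition \ref{def:m=8=12(61+64d)g_l} is built so that $g_l$ names the refinement that strongly grows at level $l$ and $g_l'$ names the one on which the splitting chain continues; thus the genuine content to be proved is the \emph{dichotomy} that, of the two candidates $g_{l-1}'$ and $g_{l-1}'+2^{l-7}$, exactly one strongly grows while the other strongly splits.

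To establish this dichotomy at a given level I would expand $F_m^2$ about the splitting element. Writing $x$ for the relevant cycle element and $h \in \{0, 2^{l-7}\}$ for the refinement offset, one Taylor-expands $F_m^2(x+h)-(x+h)$ one $2$-adic digit beyond the splitting precision; since $a_l = F_m'(x)\,F_m'(F_m(x)) \equiv 1 \pmod 4$ the linear term is controlled, and the quadratic and cubic corrections are expressed through $F_m'$, $F_m''$ and $F_m'''$ evaluated at odd arguments. Their residues modulo the needed power of $2$, and their periodicity in $m$, are exactly what Lemmas \ref{lem:Fib a_n}, \ref{lem:Fib second derivative} and \ref{lem:Fib third derivative} provide, so each $b$-value can in principle be reduced to a finite computation at the representative residue of $m$. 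Comparing the two offsets $h=0$ and $h=2^{l-7}$ should then show that precisely one yields $b \equiv 1 \pmod 2$ (strong growth) and the other $b \equiv 0 \pmod 2$ (strong splitting), which both makes the recursion well defined and identifies $g_{l+1}$. Once a single strongly growing seed $g_l$ is secured at level $l$, Proposition \ref{prop:m=8+12(61+64d)} immediately propagates it to the whole family of shifts $g_l + 2^{l-6}k$ and their negatives, yielding all the minimal components.

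The decisive obstacle---and the reason this is stated only as a conjecture---is that the digit of $b$ that decides growth versus splitting depends on $g_{l-1}' \bmod 2^{N}$ for an $N$ that grows with $l$, while the sequence $\{g_l'\}$ admits no closed form: each step of Definition \ref{def:m=8=12(61+64d)g_l} appends an essentially unpredictable bit, so $g_l'$ behaves like a pseudorandom $2$-adic integer rather than a low-degree polynomial in $k$ and $d$. In every case already settled---for instance Propositions \ref{prop:m=8+12q,q=0} through \ref{prop:m=8+12q,q=29}---the cycle elements were explicit polynomials, so the Taylor-and-periodicity computation could be carried out uniformly in a single stroke; here $\nu_2\big(b_l(x)\big)$ is genuinely non-constant across the ball and varies with $l$ in a way no uniform expression captures, so the induction cannot be closed by one calculation. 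A real proof would therefore need either an auxiliary invariant that predicts the decisive digit without explicit knowledge of $g_l'$, or a global/structural argument about $F_m$ on $\Z_2$ that sidesteps the digit-by-digit analysis; producing such an invariant is where I expect the true difficulty to lie.
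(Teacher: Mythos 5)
You should first note that the paper does not prove this statement at all: it is deliberately left as Conjecture~\ref{conj:m=8+12(61+64d)}, supported only by numerical experiment, and the authors state explicitly that ``the numbers in the sequence $\{g_l\}$ appear randomly, so the proof of strong growth cannot be done.'' Your proposal is therefore not in competition with a proof in the paper, and, to your credit, it does not pretend to be one: beyond the base case $l=10$ (which is indeed supplied by Proposition~\ref{prop:m=8+12q, q=61+64d sg at lev10}) you only sketch what an inductive step would have to accomplish and then correctly identify the obstruction --- the decisive digit of $b_l$ depends on $g_{l-1}'$ modulo a power of $2$ that grows with $l$, and $g_{l-1}'$ has no closed form --- which is precisely the reason the authors give for stopping at a conjecture. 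So your diagnosis matches the paper's own assessment, and your observation that a genuine proof would require an auxiliary invariant predicting the decisive digit is a fair statement of where the difficulty lies.

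One caveat on the mechanics of your sketch: the dichotomy you describe (``of the two candidates $g_{l-1}'$ and $g_{l-1}'+2^{l-7}$, exactly one strongly grows while the other strongly splits'') is not what Proposition~\ref{prop:grow tails} gives for strongly splitting cycles; item 2 of that proposition concerns the case where $\nu_2(b_l(x))$ is constant on the whole ball, and the one-grows/one-splits alternative appears there only for \emph{weakly} splitting cycles. Definition~\ref{def:m=8=12(61+64d)g_l} does not assert such a dichotomy either --- it merely selects $g_l$ to be whichever candidate strongly grows, if one does --- so the dichotomy is itself an unproven part of what the conjecture implicitly assumes (namely, that the recursion is well defined and never encounters a level at which neither candidate strongly grows). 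Your proposal would need to establish that as well, and it faces the same pseudorandomness obstruction. In short: no gap relative to the paper, because the paper proves nothing here; but also no progress on the conjecture beyond what the paper already records.
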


If we assume that the conjecture is true, then by Proposition \ref{prop:m=8+12(61+64d)}, the minimal decomposition of $F_m(x)$ in this case is the following.

\begin{theorem}\label{thm:m=8+12q,q=61+64d}
Suppose that Conjecture \ref{conj} holds. Then the minimal decomposition of $\Z_2$ for $F_m(x)$ with $m=8+12q$ and $q=61+64d$ for nonnegative integers $d$ is
\[ \Z_2=\{0\}
   \bigsqcup\Bigl(\bigcup_{l\geq 10}\bigcup_{k=0}^{2^6-1}
   (M_{l,k,1}\cup M_{l,k,-1})\Bigr)
   \bigsqcup(2\Z_2-\{0\}), \]
where 
\begin{align*}
	& M_{l,k,1}=(g_l+2^{l-6} k +2^{l} \Z_2) \cup (F_m(g_l+2^{l-6} k) +2^{l} \Z_2) \ \text{and} \\
	& M_{l,k,-1}=(-(g_l+2^{l-6} k) +2^{l}\Z_2) \cup (-F_m(g_l+2^{l-6} k )+2^{l}\Z_2).
\end{align*} 
 Here, $\{0\}$ is the set of a fixed point and $M_{l,k,1}$'s and $M_{l,k,-1}$'s are the minimal components. The set $2\Z_2-\{0\}$ is the attracting basin of the fixed point $0$.
\end{theorem}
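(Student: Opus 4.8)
The plan is to split $\Z_2=2\Z_2\sqcup(1+2\Z_2)$ and treat the two halves independently, mirroring the companion argument behind Theorem \ref{thm:m=4+12q,q=2+64d}. For the even half there is nothing new to do: Proposition \ref{prop:m=8+12q even number} already provides that $0$ is a fixed point of $F_m$ and that $2\Z_2$ lies in its attracting basin, so $\{0\}$ is the only periodic point contributed here and $2\Z_2-\{0\}$ is absorbed into the basin part $\mathcal{B}$. I would also note that since $m\equiv 8\pmod{12}$ is even, $F_m$ consists only of odd-degree terms with $F_m(1)$ odd, so $F_m(1+2\Z_2)\subseteq 1+2\Z_2$; this forward-invariance of the odd half guarantees that no odd point can be attracted to $0$, which is what lets the two halves be analyzed separately.

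The substance is the odd half. Assuming Conjecture \ref{conj:m=8+12(61+64d)}, for every $l\geq 10$ the pair $\{g_l,F_m(g_l)\}$ is a $2$-cycle that strongly grows at level $l$. I would then invoke Proposition \ref{prop:m=8+12(61+64d)} with its internal index taken to be $l-6$, so that the hypothesis ``strongly grows at level $(l-6)+6=l$'' is exactly supplied by $g_l$; the conclusion then yields the $2^6$ translates $\{g_l+2^{l-6}k,\,F_m(g_l+2^{l-6}k)\}$ together with their negatives, all strongly growing at level $l$, for $k=0,1,\dots,2^6-1$. Applying part (1) of Proposition \ref{prop:grow tails} to each of these cycles, whose level $l\geq 10\geq 2$, shows that $F_m$ restricted to the associated invariant clopen set $\mathbb{X}_\sigma$ is minimal; these clopen sets are precisely the $M_{l,k,1}$ and $M_{l,k,-1}$ of the statement, so each is a minimal component.

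It remains to verify that $\{0\}$, the $M_{l,k,\pm 1}$, and $2\Z_2-\{0\}$ are pairwise disjoint and cover $\Z_2$. Disjointness is read off from the recursion of Definition \ref{def:m=8=12(61+64d)g_l}: at each level the growing branch $g_l$ (with its $2^6$ translates and its negation) is peeled off as a minimal component, while the splitting branch $g_l'$ is carried to the next level and further refined, so that distinct triples $(l,k,\pm)$ index disjoint balls, none of which meet $2\Z_2$. For exhaustion I would combine two observations. First, because every cycle in $1+2\Z_2$ eventually strongly grows (there are no growing-tails cycles on the odd half), each odd point lies in a strongly-growing clopen ball, hence in some $M_{l,k,\pm 1}$ rather than in a proper attracting basin. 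Second, a measure check confirms completeness: each $M_{l,k,\pm 1}$ is a union of two balls of radius $2^{-l}$, so the $2^7$ components at level $l$ have total Haar measure $2^{8-l}$, and $\sum_{l\geq 10}2^{8-l}=2^{-1}$, which is exactly the measure of $1+2\Z_2$. Thus the open union $\bigcup_{l,k,\pm}M_{l,k,\pm 1}\subseteq 1+2\Z_2$ has full measure, its closed complement in $1+2\Z_2$ has measure zero, and by the first observation that complement contains no points and is therefore empty. Assembling the three pieces gives the asserted decomposition.

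The hard part will be the exhaustion step. Certifying that the recursively and rather erratically defined growing branches $g_l$, together with their $2^6$ translates and negatives, tile $1+2\Z_2$ with no leftover is exactly where the irregularity of the $g_l$ enters, and it is the same irregularity that prevents one from \emph{proving} (rather than conjecturing) the strong growth at each level. The measure identity $\sum_{l\geq 10}2^{8-l}=1/2$ is the clean quantitative input that rules out a positive-measure gap, but upgrading ``closed, measure-zero complement'' to ``empty complement'' relies on the bookkeeping of Definition \ref{def:m=8=12(61+64d)g_l}, namely that the splitting branch $g_l'$ eventually captures every remaining odd residue class; this bookkeeping is precisely the content that is available only under Conjecture \ref{conj:m=8+12(61+64d)}.
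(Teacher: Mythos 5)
Your handling of $2\Z_2$ via Proposition \ref{prop:m=8+12q even number}, and of the minimal components via Conjecture \ref{conj:m=8+12(61+64d)}, Proposition \ref{prop:m=8+12(61+64d)} (with the index shift $l\mapsto l-6$) and Proposition \ref{prop:grow tails}(1), is the route the paper intends, and the identity $\sum_{l\ge 10}2^{8-l}=2^{-1}$ is computed correctly. The genuine gap is the exhaustion step, and it cannot be repaired as written: the union of the $M_{l,k,\pm1}$ is \emph{not} all of $1+2\Z_2$. Each $M_{l,k,\pm1}$ is open (a union of two balls of measure $2^{-l}$), so if the family covered the compact set $1+2\Z_2$ then some finite subfamily would; but any finite subfamily has total Haar measure strictly less than $\sum_{l\ge10}2^{8-l}=1/2=\mu(1+2\Z_2)$. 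Hence the complement is a nonempty closed null set, and your upgrade from ``closed of measure zero'' to ``empty'' is false, not merely unproved. The supporting claim that ``every cycle in $1+2\Z_2$ eventually strongly grows'' does not follow from the absence of growing-tails cycles: a cycle with $a_l\equiv1\pmod 4$ can strongly split at \emph{every} level, and that is exactly what happens along the splitting branch $g_l'$ of Definition \ref{def:m=8=12(61+64d)g_l}.

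To see what is left over, let $S_L$ denote the part of $1+2\Z_2$ not covered by the components of levels $\le L$. Each $S_L$ is clopen, has measure at least $1/2-\sum_{l=10}^{L}2^{8-l}=2^{7-L}>0$, and $S_{L+1}\subseteq S_L$, so $\bigcap_L S_L\neq\emptyset$ by compactness. Any $x$ in this intersection lies for every $L$ in a $2$-cycle that strongly splits at level $L$, which forces $\nu_2\big(F_m^2(x)-x\big)\ge L+1$; letting $L\to\infty$ gives $F_m^2(x)=x$ (for instance $x^*=\lim_l g_l'$ is such a point). So the missing set is a nonempty finite collection of odd points of period dividing $2$, which would have to be adjoined to the periodic part $\mathcal{P}$ alongside $\{0\}$ for the displayed partition to hold. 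This is as much a defect of the theorem's formulation (which the paper asserts with no written proof beyond citing Proposition \ref{prop:m=8+12(61+64d)}) as of your argument, but since your proof purports to establish the stated identity verbatim, the exhaustion step is where it breaks.
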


\section{Minimal decompositions for $F_m(x)$ with $m\equiv 10$ (mod 12)}

We consider the case $m=10+12q$ with nonnegative integers $q$. For the formula (\ref{eq:Fibonacci polynomial_even&odd}), we substitute $m=8+12q$, then we obtain the Fibonacci polynomial
\[ F_m(x)=\sum_{j=0}^{4+6q}\tbinom{5+6q+j}{1+2j}x^{1+2j}. \]
We divide nonnegative integers $q$ into the cases $q=2d,\ q=1+4d$ and $q=3+4d$ for nonnegative integers $d$.

\subsection{Case: $m=10+12q$ with $q=2d$}

We consider the case $m=10+12q$ with $q=2d$ for nonnegative integers $d$. The Fibonacci polynomial becomes
\[ F_m(x)=\sum_{j=0}^{16+24d}\tbinom{17+24d+j}{1+2j}x^{1+2j}. \]

\begin{proposition}\label{prop:m=10+12q,q=2}
Let $m=10+12q$ with $q=2d$ for some nonnegative integer $d$. Let $t=t(q)$, as defined in (\ref{special value t(q)}). The Fibonacci polynomial $F_m(x)$ has two types of cycles:
\begin{enumerate}
\item the cycles $\{1+4k,7-4k\}$ of length 2 strongly grow at level 4 where $k=0,1,2,3$, and
\item the cycles $\{(1+2k)2^n\}$ of length 1 strongly grow at level $n+t+2$ with $k=0,1,\dots,2^{t+1}-1$ and $n\geq1$.
\end{enumerate}
\end{proposition}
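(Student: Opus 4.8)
The plan is to split the proof into the two stated families of cycles, which are handled by completely different mechanisms: the length-$2$ cycles of (1) live among the odd residues (so they are analysed at the fixed level $4$ via periodicity), while the length-$1$ cycles of (2) are the even inputs $(1+2k)2^n$ (so they are analysed through the $2$-adic valuation of the summands of $F_m$). Together with the fixed point $0$, which is immediate since $F_m(0)=0$ (every monomial of $F_m$ has positive odd degree), these exhaust $\Z_2$. Part (1) will run parallel to Proposition \ref{prop:m=4+12q,q=1}, and part (2) is structurally the same argument as Proposition \ref{prop:m=6+12q,q=1}, now with $q$ even in place of odd.

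For part (1), I would first confirm that $\{1+4k,7-4k\}$ is a $2$-cycle at level $4$ by reducing $m$ modulo the relevant period and computing $F_m(1+4k)\equiv 7-4k$ and $F_m(7-4k)\equiv 1+4k \pmod{2^4}$, using the period-$24$ behaviour of $\{F_m(s)\pmod{2^4}\}$ from Proposition \ref{prop:Fib periodic}. The multiplier is handled by Lemma \ref{lem:Fib a_n}: the sequence $\{F_m'(1+4k)\,F_m'(7-4k)\pmod 4\}$ has period $6$, giving $a_4\equiv 1\pmod 4$. The substantive step is $b_4$. Following the device of expression (\ref{b_l with m=4+12q and q=1+2d}), I would establish an identity $(7-4k)^{1+2j}\equiv (1+4k)^{1+2j}+(\text{corrections in } 1,\,(1+2j),\,(1+2j)(2j),\,(-1)^j)\pmod{2^5}$, proving it by observing that each constituent sequence in $j$ is periodic modulo $2^5$ and checking one full period directly. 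Summing against $\tbinom{5+6q+j}{1+2j}$ then rewrites $F_m(7-4k)$ as a combination of $F_m(1+4k)$, $F_m(1)$, $F_m'(1)$, $F_m''(1)$ and $F_m(i)/i$; evaluating the latter via the periodicities in Proposition \ref{prop:Fib periodic}, Lemma \ref{lem:Fib a_n}, Lemma \ref{lem:Fib second derivative} and Lemma \ref{lem:Fib input i} shows that the offset constants $A,B$ (defined by $F_m(1+4k)=7-4k+2^4A$ and $F_m(7-4k)=1+4k+2^4B$) satisfy $A+B\equiv 1\pmod 2$. Hence $F_m^2(1+4k)\equiv 1+4k+2^4\pmod{2^5}$, so $b_4\equiv 1\pmod 2$, and strong growth at level $4$ follows from Definition \ref{def:movement} and part 1 of Proposition \ref{prop:grow tails}.

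For part (2), I would mimic Proposition \ref{prop:m=6+12q,q=1}. Split on the parity of $t=t(q)$ using part 2 of Lemma \ref{the relation between q and t} (here $q$ is even, so $t$ is odd iff $c_t=c_{t+1}=1$), write $q$ in closed form in each case, and substitute into $F_m(x)=\sum_j\tbinom{5+6q+j}{1+2j}x^{1+2j}$. The key observation is $\nu_2(4+6q)=t+2$, i.e.\ the leading factor $2^{t+2}$ sits in $(5+6q)-1$; consequently, for each $j\ge 1$ the numerator of $\tbinom{5+6q+j}{1+2j}\big((1+2k)2^n\big)^{1+2j}$ carries this factor $2^{t+2}$ together with $j-1$ further even factors, and since $\nu_2\big((1+2j)!\big)\le 2j$ one gets valuation $\ge (t+2)+(j-1)+(1+2j)n-2j=1+t+n+j(2n-1)\ge n+t+2$. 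Therefore $F_m\big((1+2k)2^n\big)\equiv (1+2k)2^n\pmod{2^{n+t+2}}$, a length-$1$ cycle at level $n+t+2$. Then $a_{n+t+2}=F_m'\big((1+2k)2^n\big)\equiv\tbinom{5+6q}{1}\equiv 1\pmod 4$, and for $b_{n+t+2}$ the $j=0$ term $\tbinom{5+6q}{1}(1+2k)2^n-(1+2k)2^n$ has valuation exactly $n+t+2$, the $j=1$ term needs the explicit factorization of $\tbinom{6+6q}{3}$ (yielding valuation $3n+t+2\ge n+t+3$), and the general bound handles $j\ge 2$; hence $b_{n+t+2}\equiv 1\pmod 2$ and strong growth again follows from Proposition \ref{prop:grow tails}.

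The main obstacle is the $b_4$ computation of part (1): one must guess the correct correction-term identity for $(7-4k)^{1+2j}$ modulo $2^5$, verify it over a full period of the constituent sequences, and then carefully track the parities of $A$ and $B$ through the moment sums. This is exactly where the earlier even-residue propositions concentrate their effort. Part (2), by contrast, is routine valuation bookkeeping once the binary closed form of $q$ is fixed; the only genuine point requiring care is confirming $\nu_2(4+6q)=t+2$ for even $q$ via Lemma \ref{the relation between q and t}, in direct analogy with the factor $2^{t+2}$ appearing in $2+6q$ for odd $q$ in Proposition \ref{prop:m=6+12q,q=1}.
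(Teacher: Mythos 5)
Your proposal is correct and follows essentially the same route as the paper, which in fact omits this proof entirely and simply states that parts (1) and (2) are proved as in Propositions \ref{prop:m=4+12q,q=1} and \ref{prop:m=6+12q,q=1} respectively --- exactly the two templates you identify, including the correction-term identity for $b_4$ in part (1) and the key valuation $\nu_2(4+6q)=t+2$ in part (2).
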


We omit the proof because proofs of the first and second statement are similar to those of Proposition \ref{prop:m=4+12q,q=1} and  \ref{prop:m=6+12q,q=1}, respectively.

By Proposition \ref{prop:m=10+12q,q=2}, we conclude that the following is true.
\begin{theorem}\label{thm:m=10+12q,q=2}
The minimal decomposition of $\Z_2$ for $F_m(x)$ with $m=10+12q$ and $q=2d$ for nonnegative integers $d$ is
\[ \Z_2=\{0\}\bigsqcup\Bigl(\big(\bigcup_{k=0}^3 A_k\big)\cup\big(\bigcup_{n\geq 1}\bigcup_{k=0}^{2^{t+1}-1} M_{n,k}\big)\Bigr), \]
where 
\begin{align*}
	& A_k=(1+4k+2^4\Z_2)\cup(7-4k+2^4\Z_2) \ \text{and} \\
	& M_{n,k}=(1+2k)2^n+2^{n+t+2}\Z_2.
\end{align*} 
Here, $\{0\}$ is the set of a fixed point and $A_k$'s and $M_{n,k}$'s are the minimal components.
\end{theorem}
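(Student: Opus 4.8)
The plan is to derive the theorem from Proposition~\ref{prop:m=10+12q,q=2} together with the general machinery of Proposition~\ref{prop:grow tails} and the decomposition Theorem~\ref{thm-decomposition}. First I would observe that $F_m(0)=0$, so $\{0\}$ is a fixed point of $F_m$ and belongs to the periodic part $\mathcal{P}$; this is immediate since $m$ is even, so $F_m(x)$ consists solely of odd-degree terms and has zero constant term.

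Next I would upgrade the cycles produced by Proposition~\ref{prop:m=10+12q,q=2} to minimal components. By part~1 of Proposition~\ref{prop:grow tails}, any cycle $\sigma$ that strongly grows at a level $l\geq 2$ makes $F_m$ minimal on the invariant clopen set $\mathbb{X}_\sigma$. For the length-$2$ cycle $\{1+4k,7-4k\}$ at level $4$ one has $\mathbb{X}_\sigma=(1+4k+2^4\Z_2)\cup(7-4k+2^4\Z_2)=A_k$, and for the length-$1$ cycle $\{(1+2k)2^n\}$ at level $n+t+2$ one has $\mathbb{X}_\sigma=(1+2k)2^n+2^{n+t+2}\Z_2=M_{n,k}$. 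Hence each $A_k$ and each $M_{n,k}$ is a minimal component lying in $\mathcal{M}$.

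The substantive (though elementary) step is a covering-and-disjointness computation showing that $\{0\}$, the sets $A_k$ for $0\leq k\leq 3$, and the sets $M_{n,k}$ for $n\geq 1$ and $0\leq k\leq 2^{t+1}-1$ partition $\Z_2$. For the $A_k$ I would check that as $k$ runs over $0,1,2,3$ the centers $1+4k$ and $7-4k$ realize all eight odd residues modulo $16$, so that $\bigcup_{k} A_k=1+2\Z_2$. For the $M_{n,k}$ I would write $M_{n,k}=2^n\big((1+2k)+2^{t+2}\Z_2\big)$ and note that, for fixed $n$, as $k$ runs over $0,\dots,2^{t+1}-1$ the residues $1+2k$ exhaust all $2^{t+1}$ odd classes modulo $2^{t+2}$; thus $\bigcup_{k} M_{n,k}$ is precisely the set of elements of $2$-adic valuation exactly $n$, and $\bigcup_{n\geq 1}\bigcup_{k} M_{n,k}=2\Z_2\setminus\{0\}$. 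Together with $\{0\}$ this gives the disjoint exhaustion $\Z_2=\{0\}\sqcup(2\Z_2\setminus\{0\})\sqcup(1+2\Z_2)$.

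Finally I would assemble the decomposition. Every point of $\Z_2\setminus\{0\}$ lies in one of these infinite minimal subsystems and is therefore not periodic, so $\mathcal{P}=\{0\}$; the minimal part $\mathcal{M}$ is the disjoint union of the $A_k$ and the $M_{n,k}$; and the attracting part $\mathcal{B}$ is empty. This reproduces the asserted formula. The only real care is needed in the residue bookkeeping of the third step, in particular in tracking how the number $2^{t+1}$ of minimal components at each level $n$ depends on $t=t(q)$; the remaining assertions follow formally from the quoted results.
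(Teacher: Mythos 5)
Your proposal is correct and follows essentially the same route as the paper, which deduces the theorem directly from Proposition~\ref{prop:m=10+12q,q=2} via part~1 of Proposition~\ref{prop:grow tails}; you have merely made explicit the residue bookkeeping (the eight odd classes mod $2^4$ covered by the $A_k$ and the valuation-$n$ shells covered by the $M_{n,k}$) that the paper leaves implicit. No gaps.
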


\subsection{Case: $m=10+12q$ with $q=1+4d$}

We consider the case $m=10+12q$ with $q=1+4d$ for nonnegative integers $d$. The Fibonacci polynomial becomes
\[ F_m(x)=\sum_{j=0}^{10+24d}\tbinom{11+24d+j}{1+2j}x^{1+2j}. \]

\begin{proposition}\label{prop:m=10+12q,q=1}
Let $m=10+12q$ with $q=1+4d$ for some nonnegative integer $d$.
The Fibonacci polynomial $F_m(x)$ has two types of cycles:
\begin{enumerate}
\item the cycles $\{1+4k,15-4k\}$ of length 2 strongly grow at level 5 where $k=0,1,\dots,7$, and

\item the cycles $\{(1+4k)2^n,(3+4k)2^n\}$ of length 2 strongly grow at level $n+3$ for $k=0,1$ and $n\geq1$.
\end{enumerate}
\end{proposition}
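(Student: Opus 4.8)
The plan is to treat the two stated families of cycles separately, in each case reducing to the templates already proved in Propositions \ref{prop:m=4+12q,q=1} and \ref{prop:m=6+12q,q=0}. Throughout I would record that $m=10+12q=22+48d$, so that $m\equiv 22\pmod{48}$, $m\equiv 10\pmod{12}$, $m\equiv 4\pmod 6$ and $m\equiv 1\pmod 3$; these residues are exactly what the periodicity statements of Proposition \ref{prop:Fib periodic} and Lemma \ref{lem:Fib a_n} require as input.

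For the odd cycles $\{1+4k,15-4k\}$ of statement (1), I would first confirm they are $2$-cycles at level $5$: since $\{F_m(s)\pmod{2^5}\}_m$ has period $3\cdot 2^4=48$ by Proposition \ref{prop:Fib periodic}, it suffices to check $F_{22}(1+4k)\equiv 15-4k$ and $F_{22}(15-4k)\equiv 1+4k\pmod{2^5}$ directly. Then $a_5=F_m'(1+4k)\,F_m'(15-4k)\equiv 1\pmod 4$ follows from the period-$6$ behavior of $\{F_m'(s_1)F_m'(s_2)\pmod 4\}_m$ (Lemma \ref{lem:Fib a_n}) evaluated at $m\equiv 4\pmod 6$. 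The crux is $b_5\equiv 1\pmod 2$. Here I would exploit the exact relation $15-4k=16-(1+4k)$: by the binomial theorem, since $16^2=2^8\equiv 0\pmod{2^6}$ and $(1+4k)^{2j}\equiv 1\pmod 4$, one gets \emph{directly}
\[ (15-4k)^{1+2j}\equiv -(1+4k)^{1+2j}+16(1+2j)\pmod{2^6}, \]
which is cleaner than the empirically verified congruence of Proposition \ref{prop:m=4+12q,q=1} precisely because of this relation. Summing against the coefficients of $F_m$ gives $F_m(15-4k)\equiv -F_m(1+4k)+16F_m'(1)\pmod{2^6}$. Writing $F_m(1+4k)=15-4k+2^5A$ and $F_m(15-4k)=1+4k+2^5B$, this forces $2(A+B)\equiv F_m'(1)-1\pmod 4$; since $m\equiv 10\pmod{12}$ gives $F_m'(1)\equiv 3\pmod 4$ (the explicit residue list in Lemma \ref{lem:Fib a_n}), I obtain $A+B\equiv 1\pmod 2$. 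Finally, expanding $F_m^2(1+4k)=F_m(15-4k+2^5A)\equiv F_m(15-4k)+2^5A\,F_m(1)\pmod{2^6}$ and using that $F_m(1)$ is odd ($m\equiv 1\pmod 3$) yields $F_m^2(1+4k)\equiv 1+4k+2^5\pmod{2^6}$, hence $\nu_2\big(F_m^2(1+4k)-(1+4k)\big)=5$ and $b_5\equiv 1\pmod 2$. Strong growth at level $5$ then follows from Definition \ref{def:movement}, and oddness of $F_m^2$ permutes the eight cycles, so it suffices to carry out the computation on representatives.

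For the even cycles $\{(1+4k)2^n,(3+4k)2^n\}$ of statement (2), the argument is the valuation analysis of Proposition \ref{prop:m=6+12q,q=0} almost verbatim. I would expand $F_m\big((1+4k)2^n\big)=\sum_j\tbinom{11+24d+j}{1+2j}\big((1+4k)2^n\big)^{1+2j}$, observe that the leading coefficient $\tbinom{11+24d}{1}=11+24d\equiv 3\pmod 8$ so the $j=0$ term sends $(1+4k)2^n$ to $(3+4k)2^n\pmod{2^{n+3}}$, and verify that each $j\geq 1$ summand has $2$-adic valuation $\geq n+3$ (for $j=1$ one uses $\nu_2\tbinom{12+24d}{3}=2$, giving valuation $3n+2\geq n+3$ when $n\geq 1$). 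This gives the $2$-cycle at level $n+3$; then $a_{n+3}\equiv\tbinom{11+24d}{1}^2\equiv 1\pmod 4$, and for $b_{n+3}$ the dominant contribution to $F_m^2\big((1+4k)2^n\big)-(1+4k)2^n$ is $\big[\tbinom{11+24d}{1}^2-1\big](1+4k)2^n$, whose valuation is exactly $n+3$ because $(11+24d)^2-1=(10+24d)(12+24d)$ has $\nu_2=1+2=3$.

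I expect the main obstacle to be the bookkeeping in the $b_5$ step of statement (1): one must track two uncontrolled $2$-adic integers $A,B$ and see that they collapse to the single parity condition $A+B\equiv 1$, which in turn hinges on pinning down $F_m'(1)\pmod 4$ and $F_m(1)\pmod 2$ with the correct periodicity lemma applied to the correct residue of $m$. The clean identity $15-4k=16-(1+4k)$ removes the genuinely hard part (the correction congruence), so everything else is a routine reduction to the already-proved periodicity results.
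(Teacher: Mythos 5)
Your proposal is correct, and it fills in exactly the argument the paper leaves implicit (the paper omits this proof, citing Propositions \ref{prop:m=4+12q,q=1} and \ref{prop:m=6+12q,q=0} as templates). The one point where you genuinely deviate from the cited template is the correction congruence in part (1): instead of Proposition \ref{prop:m=4+12q,q=1}'s device of verifying a congruence empirically over one common period of several auxiliary sequences (which drags in $F_m(1)$, $F_m''(1)$ and $F_m(i)/i$), you exploit $15-4k=16-(1+4k)$ and the binomial theorem to get $(15-4k)^{1+2j}\equiv-(1+4k)^{1+2j}+16(1+2j)\pmod{2^6}$ directly, so that only $F_m'(1)\pmod 4$ is needed. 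This is cleaner, and it is in fact the same trick the paper itself uses in the closest analogue, Proposition \ref{prop:m=10+12q,q=3}(1), where $(1+2j)\equiv(-1)^j\pmod 4$ converts the correction term into $2^{u+3}F_m(i)/i$; your $16F_m'(1)$ and their $16F_m(i)/i$ agree here since $F_m'(1)\equiv 3\equiv -1\equiv F_m(i)/i\pmod 4$ for $m\equiv 10\pmod{12}$. All the residue inputs you invoke ($F_m'(1)\equiv 3\pmod 4$, $F_m(1)$ odd, $a$-values $\equiv 1\pmod 4$ via the period-$6$ product sequence at $m\equiv 4\pmod 6$) check out. One small imprecision in part (2): to conclude $\nu_2\big(F_m^2((1+4k)2^n)-(1+4k)2^n\big)=n+3$ you must know the $j\geq 1$ summands are negligible modulo $2^{n+4}$, not merely $2^{n+3}$ as you state; this does hold ($3n+2\geq n+4$ and $(1+2j)n\geq n+4$ for $j\geq 2$, $n\geq 1$), and is exactly the extra digit of precision Proposition \ref{prop:m=6+12q,q=0} carries, so it is a wording issue rather than a gap.
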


We omit the proof because proofs of the first and second statement are similar to those of Proposition \ref{prop:m=4+12q,q=1} and  \ref{prop:m=6+12q,q=0}, respectively.

By Proposition \ref{prop:m=10+12q,q=1}, we conclude that the following is true.
\begin{theorem}
The minimal decomposition of $\Z_2$ for $F_m(x)$ with $m=10+12q$ and $q=1+4d$ for nonnegative integers $d$ is
\[ \Z_2=\{0\}\bigsqcup\Bigl(\big(\bigcup_{k=0}^7 A_k\big)\cup\big(\bigcup_{n\geq 1}\bigcup_{k=0}^1 M_{n,k}\big)\Bigr), \]
where 
\begin{align*}
	& A_k=(1+4k+2^5\Z_2)\cup(15-4k+2^5\Z_2) \ \text{and} \\
	& M_{n,k}=\big((1+4k)2^n+2^{n+3}\Z_2\big)\cup\big((3+4k)2^n+2^{n+3}\Z_2\big).
\end{align*} 
Here, $\{0\}$ is the set of a fixed point and $A_k$'s and $M_{n,k}$'s are the minimal components.
\end{theorem}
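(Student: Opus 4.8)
The plan is to derive the decomposition directly from Proposition \ref{prop:m=10+12q,q=1}: property 1 of Proposition \ref{prop:grow tails} turns each strongly growing cycle into a minimal component, and a residue-counting argument then shows that these components, together with the fixed point $0$, tile all of $\Z_2$, so that the attracting-basin part of the Fan--Liao decomposition is empty.

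First I would record the fixed point. Since $m=10+12q$ is even, $F_m(x)$ has only odd-degree monomials, so $F_m(0)=0$, while the linear coefficient gives $F_m'(0)=\tbinom{5+6q}{1}=5+6q$, which is odd. Hence $0$ is a neutral fixed point whose multiplier is a $2$-adic unit; it belongs to the periodic part $\mathcal{P}$ and, the multiplier being a unit, it does not grow tails and generates no attracting basin. This is the structural feature that distinguishes the present case from those (such as $m\equiv0,4,8\pmod{12}$) in which $0$ attracts the even numbers. Next I would apply property 1 of Proposition \ref{prop:grow tails} to each cycle from Proposition \ref{prop:m=10+12q,q=1}: the length-$2$ cycle $\{1+4k,15-4k\}$ strongly grows at level $5$, so $F_m$ is minimal on $\mathbb{X}_\sigma=A_k$ and each $A_k$ is a minimal component; likewise $\{(1+4k)2^n,(3+4k)2^n\}$ strongly grows at level $n+3$, making each $M_{n,k}$ a minimal component.

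The heart of the argument is the covering count. For the odd part, $\{1+4k \bmod 2^5:k=0,\dots,7\}$ runs over the eight residues $\equiv1\pmod4$ and $\{15-4k\bmod2^5:k=0,\dots,7\}$ over the eight residues $\equiv3\pmod4$, so the $A_k$ are pairwise disjoint and $\bigcup_{k=0}^7 A_k=1+2\Z_2$. For the nonzero even part, fixing $n\geq1$ the residues $\{1,3,5,7\}\cdot 2^n$ modulo $2^{n+3}$ are exactly those carried by $M_{n,0}$ and $M_{n,1}$, and these are precisely the points of valuation $n$; thus $\bigcup_{k=0}^1 M_{n,k}=\{x:\nu_2(x)=n\}$, and the union over $n\geq1$ is $2\Z_2-\{0\}$. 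I would double-check each tiling by a Haar-measure count. Assembling the three disjoint pieces gives $\Z_2=\{0\}\sqcup(1+2\Z_2)\sqcup(2\Z_2-\{0\})$, the asserted decomposition. The only delicate point is this final bookkeeping—verifying that the minimal components exhaust $\Z_2\setminus\{0\}$ without overlap and that the neutral fixed point contributes nothing to the basin part $\mathcal{B}$—since all the dynamical content is already supplied by Proposition \ref{prop:m=10+12q,q=1}.
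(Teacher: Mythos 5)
Your proposal is correct and follows essentially the same route as the paper, which deduces this theorem directly from Proposition \ref{prop:m=10+12q,q=1} (the paper's proof is a one-line citation, leaving implicit the residue-counting that you spell out: the $A_k$ tile $1+2\Z_2$, the $M_{n,k}$ for fixed $n$ tile $\{x:\nu_2(x)=n\}$, and $0$ is a fixed point with odd multiplier $F_m'(0)=5+6q$, so the basin part is empty). Your explicit verification of the tiling and of the role of the neutral fixed point is a faithful filling-in of the details the authors omit.
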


\subsection{Case: $m=10+12q$ with $q=3+4d$}

We consider the case $m=10+12q$ with $q=3+4d$ for nonnegative integers $d$. Let $u=u_0(q)$, as defined in (\ref{special value u_0(q)}). Then,
\begin{align*}
q=&1+2+2^2+\dots+2^{u-1}+0\cdot 2^u+2^{u+1} r\\
=&2^u-1+2^{u+1} r
\end{align*}
for some $r\in\Z$. Then, $m=12\cdot 2^{u}-2+24\cdot 2^{u} r$ and the Fibonacci polynomial becomes
\[ F_m(x)=\sum_{j=0}^{6\cdot 2^{u}+12\cdot 2^{u} r-2}\tbinom{6\cdot 2^{u}+12\cdot 2^{u} r-1+j}{1+2j}x^{1+2j}. \]

\begin{proposition}\label{prop:m=10+12q,q=3}
Let $m=10+12q$ with $q=3+4d$ for some nonnegative integer $d$. Let $u=u_0(q)$, as defined in (\ref{special value u_0(q)}). The Fibonacci polynomial $F_m(x)$ has two types of cycles:
\begin{enumerate}
\item the cycles $\{1+4k,2^{u+3}-1-4k\}$ of length 2 strongly grow at level u+4 where $k=0,1,\dots,2^{u+2}-1$, and

\item the cycles $\{(1+4k)2^n,(2^{u+1}-1-4k)2^n\}$ of length 2 strongly grow at level $n+u+2$ with $k=0,\dots,2^u-1$ and $n\geq1$.
\end{enumerate}
\end{proposition}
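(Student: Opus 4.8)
The plan is to treat the two statements separately, in each case running the standard three-step verification: first exhibit the displayed pair as a $2$-cycle at the stated level by evaluating $F_m$ on the relevant residues through the periodicity results, then read off $a_l \pmod 4$ from the periodicity of the derivatives, and finally pin down $b_l \pmod 2$ by computing $F_m^2$ one $2$-adic level higher, concluding strong growth via Definition~\ref{def:movement}. The unifying device is that $F_m$ is an odd function (it has only odd-degree terms by \eqref{eq:Fibonacci polynomial_even&odd}) and that each nominal image is a reflection of its preimage shifted by a single power of $2$: in statement~1, $2^{u+3}-1-4k=-(1+4k)+2^{u+3}$, and in statement~2, $(2^{u+1}-1-4k)2^n=-(1+4k)2^n+2^{n+u+1}$. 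This is what makes the $b$-computations collapse.

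For statement~1 I write $q=2^u-1+2^{u+1}r$, so $m=12\cdot 2^u-2+24\cdot 2^u r\equiv 3\cdot 2^{u+2}-2 \pmod{3\cdot 2^{u+3}}$, and since $\{F_m(s)\bmod 2^{u+4}\}_m$ has period $3\cdot 2^{u+3}$ by Proposition~\ref{prop:Fib periodic}, I may replace $m$ by $3\cdot 2^{u+2}-2$. Feeding $F_{3\cdot 2^{u+2}}(s)\equiv 0$ and $F_{3\cdot 2^{u+2}+1}(s)\equiv 1+2^{u+3}\pmod{2^{u+4}}$ (Proposition~\ref{prop:Fib periodic}(1) with $l=u+2$) backwards through \eqref{eq:Fibonacci polynomial_recurrence relation} twice yields the uniform formula $F_m(s)\equiv -s(1+2^{u+3})\pmod{2^{u+4}}$ for all odd $s$; substituting $s=1+4k$ and $s=2^{u+3}-1-4k$ produces exactly the claimed $2$-cycle at level $u+4$ (and these residues differ, so it is genuine). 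Since $m\equiv 4\pmod 6$, Lemma~\ref{lem:Fib a_n}(3) reduces $a_{u+4}$ to $F_4'(1+4k)\,F_4'(2^{u+3}-1-4k)\equiv 1\pmod 4$, because $F_4'(s)=2+3s^2\equiv1\pmod4$ on odd $s$. For $b_{u+4}$ I expand $(2^{u+3}-1-4k)^{1+2j}\equiv-(1+4k)^{1+2j}+(1+2j)2^{u+3}\pmod{2^{u+5}}$ (the terms of order $\ge 2$ in $2^{u+3}$ vanish), sum against the binomial coefficients to get $F_m(2^{u+3}-1-4k)\equiv -F_m(1+4k)+2^{u+3}F_m'(1)\pmod{2^{u+5}}$, and compose: writing $F_m(1+4k)=(2^{u+3}-1-4k)+2^{u+4}A$, the unknown $A$ cancels and one is left with $F_m^2(1+4k)\equiv (1+4k)+2^{u+3}(F_m'(1)-1)\pmod{2^{u+5}}$. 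As $m\equiv 10\pmod{12}$, Lemma~\ref{lem:Fib a_n} gives $F_m'(1)\equiv 3\pmod 4$, so $2^{u+3}(F_m'(1)-1)\equiv 2^{u+4}\pmod{2^{u+5}}$ and $b_{u+4}\equiv 1\pmod 2$.

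For statement~2 I use the explicit form $F_m(x)=\sum_j\binom{N-1+j}{1+2j}x^{1+2j}$ with $N=6\cdot 2^u+12\cdot 2^u r$ and $\nu_2(N)=u+1$, evaluated at $G=(1+4k)2^n$. The leading coefficient obeys $N-1\equiv 2^{u+1}-1\pmod{2^{u+2}}$, so the $j=0$ term alone produces $(2^{u+1}-1-4k)2^n$ modulo $2^{n+u+2}$; the estimate $\nu_2(\binom{N-1+j}{1+2j}G^{1+2j})\ge u+n+j(2n-1)$ for $j\ge 2$, together with the exact value $\nu_2(\binom{N}{3})=u+1$ for $j=1$, shows every higher summand lies in $2^{n+u+2}\Z_2$, establishing the $2$-cycle at level $n+u+2$. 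Here $a_{n+u+2}\equiv(N-1)^2\equiv 3^2\equiv1\pmod 4$ is immediate from the leading term of $F_m'$. For $b_{n+u+2}$ I again use $H:=(2^{u+1}-1-4k)2^n=2^{n+u+1}-G$: expanding $F_m(2^{n+u+1}-G)$ and discarding everything that vanishes modulo $2^{n+u+3}$ gives $F_m(H)\equiv -F_m(G)+(N-1)2^{n+u+1}\equiv -F_m(G)+3\cdot 2^{n+u+1}$, and then, writing $F_m(G)=H+2^{n+u+2}A$ and composing once more, the correction $A$ again cancels to leave $F_m^2(G)\equiv G+2^{n+u+2}\pmod{2^{n+u+3}}$, whence $b_{n+u+2}\equiv 1\pmod 2$. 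The reflected representatives are handled by the odd-function identity $b_l(-x)=-b_l(x)$, as in Proposition~\ref{prop:m=2+12q,q=1}.

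I expect the genuine work to be the two $b$-computations, which force every estimate one $2$-adic level past what the cycle and $a$ steps need, all while staying uniform in the free parameter $r$ (equivalently $d$) and in the exponent $u$. The delicate checks are that the order-$\ge 2$ reflection cross terms and the unknown shift $A$ really do drop modulo the next power of $2$, and that the $j=1$ summand in statement~2 carries strictly more than the generic valuation so as not to disturb the exact equality $\nu_2(F_m^2(G)-G)=n+u+2$. Once these vanishings are in hand, the only arithmetic input deciding $b\equiv 1$ is the residue of $F_m'(1)$ modulo $4$ supplied by Lemma~\ref{lem:Fib a_n}, and the remainder is the bookkeeping already rehearsed in Propositions~\ref{prop:m=4+12q,q=1}, \ref{prop:m=6+12q,q=0} and~\ref{prop:m=6+12q,q=1}.
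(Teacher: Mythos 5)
Your argument is correct and reaches the same conclusions by the same basic machinery as the paper (Proposition~\ref{prop:Fib periodic}, Lemma~\ref{lem:Fib a_n}, the bound $\nu_2\big((1+2j)!\big)\le 2j$, and the odd-function symmetry of $F_m$), but it is organized differently in three places, each a genuine streamlining. First, for statement~1 the paper splits the cycles into the sub-families $\{1+2k,2^{u+3}-1-2k\}$ and $\{2^{u+3}+1+2k,2^{u+4}-1-2k\}$ and verifies each separately; your uniform identity $F_m(s)\equiv -s(1+2^{u+3})\pmod{2^{u+4}}$, obtained by running the recurrence backwards from $F_{3\cdot 2^{u+2}}(s)\equiv 0$ and $F_{3\cdot 2^{u+2}+1}(s)\equiv 1+2^{u+3}$, handles all odd residues in one stroke. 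Second, where the paper converts the reflection expansion into the invariant $\frac{F_m(i)}{i}=-1$ via Lemma~\ref{lem:Fib input i}, you keep the factor $1+2j$ and use $F_m'(1)\equiv 3\pmod 4$ from Lemma~\ref{lem:Fib a_n}; these are interchangeable because $1+2j\equiv(-1)^j\pmod 4$, and both correctly yield $b\equiv 1$. Third, in statement~2 the paper establishes $\nu_2\big(F_m^2(G)-G\big)=n+u+2$ by a second full term-by-term valuation pass through the composed sum, while you exploit $H=2^{n+u+1}-G$ together with a first-order Taylor expansion so that the unknown shift $A$ cancels; the required vanishing of the second-order term holds since $2(n+u+1)\ge n+u+3$. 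The only point to make explicit in a write-up is the companion congruence $F_m\big((2^{u+1}-1-4k)2^n\big)\equiv (1+4k)2^n\pmod{2^{n+u+2}}$, which you leave implicit but which follows from the same $j=0$ computation via $(2^{u+1}-1)^2\equiv 1\pmod{2^{u+2}}$.
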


\begin{proof}
1. For the proof, we divide the cycles $\{1+4k,2^{u+3}-1-4k\}$ into two types of cycles $\{1+2k,2^{u+3}-1-2k\}$ and $\{2^{u+3}+1+2k,2^{u+4}-1-2k\}$ where $k=0,1,\dots,2^{u+1}-1$.

By Proposition \ref{prop:Fib periodic} and the recursive relation (\ref{eq:Fibonacci polynomial_recurrence relation}), we compute that $F_{12\cdot 2^{u}-2}(1+2k)\equiv 2^{u+3}-1-2k$, $F_{12\cdot 2^{u}-2}(2^{u+3}-1-2k)\equiv 1+2k$, $F_{12\cdot 2^{u}-2}(2^{u+3}+1+2k)\equiv 2^{u+4}-1-2k$ and $F_{12\cdot 2^{u}-2}(2^{u+4}-1-2k)\equiv 2^{u+3}+1+2k$ (mod $2^{u+4}$). Since the sequence $\{F_m(s)~(\mathrm{mod}~2^{u+4})\}_m$ is periodic of period $24\cdot 2^{u}$ for any odd number $s$ by Proposition \ref{prop:Fib periodic}, for $m=12\cdot 2^{u}-2+24\cdot 2^{u} r$,
\begin{align}
&F_m(1+2k)\equiv F_{12\cdot 2^{u}-2}(1+2k)\equiv 2^{u+3}-1-2k~(\mathrm{mod}~2^{u+4}),\label{thm:m=10+12q,q=3_1}\\
&F_m(2^{u+3}-1-2k)\equiv F_{12\cdot 2^{u}-2}(2^{u+3}-1-2k)\equiv 1+2k~(\mathrm{mod}~2^{u+4}),\label{thm:m=10+12q,q=3_2}\\
&F_m(2^{u+3}+1+2k)\equiv F_{12\cdot 2^{u}-2}(2^{u+3}+1+2k)\equiv 2^{u+4}-1-2k~(\mathrm{mod}~2^{u+4})~\mathrm{and}\notag\\
&F_m(2^{u+4}-1-2k)\equiv F_{12\cdot 2^{u}-2}(2^{u+4}-1-2k)\equiv 2^{u+3}+1+2k~(\mathrm{mod}~2^{u+4}).\notag
\end{align}
Therefore, $\{1+2k,2^{u+3}-1-2k\}$ and $\{2^{u+3}+1+2k,2^{u+4}-1-2k\}$ are cycles of length 2 at level u+4.

Now we compute the quantity $a_{u+4}$ for the above cycles, as defined in (\ref{a_l}). We have that $F_{4}'(1+2k)\cdot F_{4}'(2^{u+3}-1-2k)\equiv 1$ and $F_{4}'(2^{u+3}+1+2k)\cdot F_{4}'(2^{u+4}-1-2k)\equiv 1~(\mathrm{mod}~4)$. Since the sequence $\{F_m'(1+2k)\cdot F_m'(2^{u+3}-1-2k)~(\mathrm{mod}~4)\}_m$ and $\{F_m'(2^{u+3}+1+2k)\cdot F_m'(2^{u+4}-1-2k)~(\mathrm{mod}~4)\}_m$ are periodic of period 6 by Lemma \ref{lem:Fib a_n}, for $m=12\cdot 2^{u}-2+24\cdot 2^{u} r$, we obtain the quantity $a_{u+4}$
\begin{align*}
a_{u+4}(1+2k)&=F_m'(1+2k)\cdot F_m'(2^{u+3}-1-2k)\\
&\equiv F_{4}'(1+2k)\cdot F_{4}'(2^{u+3}-1-2k)\\
&\equiv 1~(\mathrm{mod}~4)~\mathrm{and}
\end{align*}
\begin{align*}
a_{u+4}(2^{u+3}+1+2k)&=F_m'(2^{u+3}+1+2k)\cdot F_m'(2^{u+4}-1-2k)\\
&\equiv F_{4}'(2^{u+3}+1+2k)\cdot F_{4}'(2^{u+4}-1-2k)\\
&\equiv 1~(\mathrm{mod}~4).
\end{align*}

Now we compute the value $b_{u+4}$ for the above cycles, as defined in (\ref{b_l}). From expressions (\ref{thm:m=10+12q,q=3_1}) and (\ref{thm:m=10+12q,q=3_2}), we write
\begin{align}\label{m=10+12q and q=3+4d A,B coeff}
\begin{split}
&F_m(1+2k)=2^{u+3}-1-2k+2^{u+4} A~\mathrm{and}\\
&F_m(2^{u+3}-1-2k)=1+2k+2^{u+4} B
\end{split}
\end{align}
for some $A,B\in\Z$. To obtain the value $b_{u+4}$, the following expression is useful. For integers $j\geq 0$, the following holds
\begin{align}\label{b_l with m=10+12q and q=3+4d}
(2^{u+3}-1-2k)^{1+2j}&\equiv -(1+2k)^{1+2j}+\tbinom{1+2j}{1}(1+2k)^{2j}\cdot 2^{u+3}\\
&\equiv -(1+2k)^{1+2j}+(1+2j)(1+4k+4k^2)^j\cdot 2^{u+3}\notag\\
&\equiv -(1+2k)^{1+2j}+(1+2j) 2^{u+3}\notag\\
&\equiv -(1+2k)^{1+2j}+ 2^{u+3}(-1)^j~(\mathrm{mod}~2^{u+5}).\notag
\end{align}
Using expression (\ref{b_l with m=10+12q and q=3+4d}) and the equality\\
$\frac{F_m(i)}{i}=\sum_{j=0}^{6\cdot 2^{u}+12\cdot 2^{u} r-2}\tbinom{6\cdot 2^{u}+12\cdot 2^{u} r-1+j}{1+2j}(-1)^j$, we obtain
\begin{align*}
F_m(2^{u+3}&-1-2k)\\
&=\sum_{j=0}^{6\cdot 2^{u}+12\cdot 2^{u} r-2}\tbinom{6\cdot 2^{u}+12\cdot 2^{u} r-1+j}{1+2j}(2^{u+3}-1-2k)^{1+2j}\\
&\equiv\sum_{j=0}^{6\cdot 2^{u}+12\cdot 2^{u} r-2}\tbinom{6\cdot 2^{u}+12\cdot 2^{u} r-1+j}{1+2j}\big(-(1+2k)^{1+2j}+2^{u+3} (-1)^j\big)\\
&\equiv -F_m(1+2k)+2^{u+3} \frac{F_m(i)}{i}~(\mathrm{mod}~2^{u+5}).
\end{align*}

We know that the sequence $\{F_m(1)~(\mathrm{mod}~2)\}_m$ is periodic of period 3 by Proposition \ref{prop:Fib periodic}. Since $m\equiv 1$~(mod 3), $F_1(1)\equiv 1~(\mathrm{mod}~2)$ implies $F_m(1)\equiv1~(\mathrm{mod}~2)$. Write $F_m(1)=1+2c$ for some $c\in\Z_2$.

We know that the sequence $\{\frac{F_m(i)}{i}\}_m$ is periodic of period 12 by Lemma \ref{lem:Fib input i}. Since $m\equiv 10$~(mod 12), $\frac{F_{10}(i)}{i}=-1$ implies $\frac{F_m(i)}{i}=1$.

Therefore, we obtain
\begin{align*}
F_m(2^{u+3}-1-2k)&\equiv -(2^{u+3}-1-2k+2^{u+4} A)+2^{u+3}(-1)\\
&\equiv -2^{u+4}+1+2k-2^{u+4}A~(\mathrm{mod}~2^{u+5}).
\end{align*}
From expression (\ref{m=10+12q and q=3+4d A,B coeff}), we have that
\[ 1+2k+2^{u+4} B\equiv -2^{u+4}+1+2k-2^{u+4}A~(\mathrm{mod}~2^{u+5}), \]
so $A+B\equiv 1$ (mod 2).

We compute that
\begin{align*}
(2^{u+3}-1-2k&+2^{u+4}A)^{1+2j}\\
&\equiv (2^{u+3}-1-2k)^{1+2j}+\tbinom{1+2j}{1}(2^{u+3}-1-2k)^{2j}2^{u+4}A\\
&\equiv (2^{u+3}-1-2k)^{1+2j}+2^{u+4}A~(\mathrm{mod}~2^{u+5}).
\end{align*}
Therefore,
\begin{align*}
F_m^2&(1+2k)\\
&=F_m(2^{u+3}-1-2k+2^{u+4}A)\\
&=\sum_{j=0}^{6\cdot 2^{u}+12\cdot 2^{u} r-2}\tbinom{6\cdot 2^{u}+12\cdot 2^{u} r-1+j}{1+2j}(2^{u+3}-1-2k+2^{u+4}A)^{1+2j}\\
&\equiv\sum_{j=0}^{6\cdot 2^{u}+12\cdot 2^{u} r-2}\tbinom{6\cdot 2^{u}+12\cdot 2^{u} r-1+j}{1+2j}\big((2^{u+3}-1-2k)^{1+2j}+2^{u+4}A\big)\\
&\equiv F_m(2^{u+3}-1-2k)+2^{u+4}A\cdot F_m(1)\\
&=1+2k+2^{u+4}B+2^4A(1+2c)\\
&\equiv1+2k+2^{u+4}(A+B)\\
&\equiv1+2k+2^{u+4}~(\mathrm{mod}~2^{u+5}).
\end{align*}
So, $F_m^2(1+2k)=1+2k+2^{u+4}+2^{u+5}Q$ for some $Q\in\Z$. Finally,
\begin{equation*}
b_{u+4}(1+2k)=\frac{F_m^2(1+2k)-(1+2k)}{2^{u+4}}=1+2Q\equiv 1~(\mathrm{mod}~2).
\end{equation*}

Because $F_m^2(x)$ is an odd function and $2^{u+4}-1-2k\equiv -(1+2k)$ (mod $2^{u+4}$),
\begin{align*}
b_{u+4}(2^{u+4}-1-2k)&=b_{u+4}\big(-(1+2k)\big)\\
&=\frac{F_m^2\big(-(1+2k)\big)-\big(-(1+2k)\big)}{2^{u+4}}\\
&=-\frac{F_m^2\big((1+2k)\big)-\big((1+2k)\big)}{2^{u+4}}\\
&=-b_{u+4}\big((1+2k)\big)\\
&\equiv 1~(\mathrm{mod}~2).
\end{align*}
Therefore, the cycles $\{1+2k,2^{u+3}-1-2k\}$ and $\{2^{u+3}+1+2k,2^{u+4}-1-2k\}$ strongly grow at level u+4, which completes the proof.

2. For the proof, we divide the cycles $\{(1+4k)2^n,(2^{u+1}-1-4k)2^n\}$ into two types of cycles $\{(1+2k)2^n,(2^{u+1}-1-2k)2^n\}$ and $\{(2^{u+1}+1+2k)2^n,(2^{u+2}-1-2k)2^n\}$ where $k=0,1,\dots,2^{u-1}-1$.

Consider the form
\[ F_m\big((1+2k)2^n\big)=\sum_{j=0}^{6\cdot 2^{u}+12\cdot 2^{u} r-2}\tbinom{6\cdot 2^{u}+12\cdot 2^{u} r-1+j}{1+2j}\big((1+2k)2^n\big)^{1+2j}. \]
For $j=1$, the summand has the valuation
\begin{align*}
\nu_2\Big(\tbinom{6\cdot 2^{u}+12\cdot 2^{u} r}{3}&\big((1+2k)2^n\big)^{3}\Big)\\
&=\nu_2\big(2^{3n+u+1} (1+2k)^3 (1+2r)\\
&\quad \cdot (-1+3\cdot 2^u+6\cdot 2^{u} r)(-1+6\cdot 2^{u}+12\cdot 2^{u} r)\big)\\
&\geq n+u+2.
\end{align*}
For $j\geq 2$,
\begin{multline*}
\tbinom{6\cdot 2^{u}+12\cdot 2^{u} r-1+j}{1+2j}\big((1+2k)2^n\big)^{1+2j}\\
\frac{(2^{u+1}(3+6r)-1+j)\dots(2^{u+1}(3+6r)-1-j)}{(1+2j)!}\big((1+2k)2^n\big)^{1+2j}.
\end{multline*}
In the above expression, the numerator has the factor $2^{u+1}(3+6r)$ and the number of even factors in the numerator, excluding the term $2^{u+1}(3+6r)$, is $j-1$. It is known that for $x=\sum_{i=0}^{s}x_i 2^i$ with $x_i=0$ or 1, $\nu_2(x!)=x-\sum_{i=0}^{s}x_i$. Hence, $\nu_2\big((1+2j)!\big)\leq 2j$. Therefore,
\begin{align}
\nu_2\Big(\tbinom{6\cdot 2^{u}+12\cdot 2^{u} r-1+j}{1+2j}&\big((1+2k)2^n\big)^{1+2j}\Big)\label{m=10+12q,q=3+4d coefficient}\\
&\geq(u+1)+(j-1)+(1+2j)n-2j\notag\\
&=n+u+j(2n-1)\notag\\
&\geq n+u+2\notag.
\end{align}
So,
\begin{align*}
F_m\big((1+2k)2^n\big)\equiv&\tbinom{6\cdot 2^{u}+12\cdot 2^{u} r-1}{1}(1+2k)2^n\\
\equiv&(2^{u+1}-1-2k)2^n~(\mathrm{mod}~2^{n+u+2}),
\end{align*}
\begin{align*}
F_m\big((2^{u+1}-1-2k)2^n\big)\equiv&\tbinom{6\cdot 2^{u}+12\cdot 2^{u} r-1}{1}(2^{u+1}-1-2k)2^n\\
\equiv&(1+2k)2^n~(\mathrm{mod}~2^{n+u+2}),
\end{align*}
\begin{align*}
F_m\big((2^{u+1}+1+2k)2^n\big)\equiv&\tbinom{6\cdot 2^{u}+12\cdot 2^{u} r-1}{1}(1+2k)2^n\\
\equiv&(2^{u+2}-1-2k)2^n~(\mathrm{mod}~2^{n+u+2})~\mathrm{and}
\end{align*}
\begin{align*}
F_m\big((2^{u+2}-1-2k)2^n\big)\equiv&\tbinom{6\cdot 2^{u}+12\cdot 2^{u} r-1}{1}(2^{u+1}-1-2k)2^n\\
\equiv&(2^{u+1}+1+2k)2^n~(\mathrm{mod}~2^{n+u+2}).
\end{align*}
Therefore, $\{(1+2k)2^n,(2^{u+1}-1-2k)2^n\}$ and $\{(2^{u+1}+1+2k)2^n,(2^{u+2}-1-2k)2^n\}$ are cycles of length 2 at level $n+u+2$.

Now we compute the quantity $a_{n+u+2}$ for the above cycles, as defined in (\ref{a_l}).
\begin{align*}
a_{n+u+2}\big((1+2k)2^n\big)&=F_m'\big((1+2k)2^n\big)\cdot F_m'\big((2^{u+1}-1-2k)2^n\big)\\
&\equiv \tbinom{6\cdot 2^{u}+12\cdot 2^{u} r-1}{1}\cdot\tbinom{6\cdot 2^{u}+12\cdot 2^{u} r-1}{1}\\
&\equiv 1~(\mathrm{mod}~4)~\mathrm{and}
\end{align*}
\begin{align*}
a_{n+u+2}\big((2^{u+1}+1+2k)2^n\big)&=F_m'\big((2^{u+1}+1+2k)2^n\big)\cdot F_m'\big((2^{u+2}-1-2k)2^n\big)\\
&\equiv \tbinom{6\cdot 2^{u}+12\cdot 2^{u} r-1}{1}\cdot\tbinom{6\cdot 2^{u}+12\cdot 2^{u} r-1}{1}\\
&\equiv 1~(\mathrm{mod}~4).
\end{align*}

Now we compute the quantity $b_{n+u+2}$ for the above cycles, as defined in (\ref{b_l}).
\begin{align*} b_{n+u+2}\big((1+2k)2^n\big)=\frac{F_m^2\big((1+2k)2^n\big)-(1+2k)2^n}{2^{n+u+2}}.
\end{align*}
We show that $\nu_2\Big(F_m^2\big((1+2k)2^n\big)-(1+2k)2^n\Big)=n+u+2$.
\[ F_m\big((1+2k)2^n\big)=\sum_{j=0}^{6\cdot 2^{u}+12\cdot 2^{u} r-2}\tbinom{6\cdot 2^{u}+12\cdot 2^{u} r-1+j}{1+2j}\big((1+2k)2^n\big)^{1+2j}. \]
For $j=1$, the summand has the valuation
\begin{align*}
\nu_2\Big(\tbinom{6\cdot 2^{u}+12\cdot 2^{u} r}{3}&\big((1+2k)2^n\big)^{3}\Big)\\
&=\nu_2\big(2^{3n+u+1} (1+2k)^3 (1+2r)\\
&\quad \cdot (-1+3\cdot 2^u+6\cdot 2^{u} r)(-1+6\cdot 2^{u}+12\cdot 2^{u} r)\big)\\
&\geq n+u+3.
\end{align*}
For $j=2$, the summand has the valuation
\begin{align*}
\nu_2&\Big(\tbinom{6\cdot 2^{u}+12\cdot 2^{u} r+1}{5}\big((1+2k)2^n\big)^{5}\Big)\\
&=\nu_2\big(2^{5n+u-1}\cdot\frac{3}{5}(1+2k)^5 (1+2r)(-1+3\cdot 2^u+6\cdot 2^{u} r)\\
&\quad \cdot(-1+2^{u+1}+2^{u+2} r)(-1+6\cdot 2^{u}+12\cdot 2^{u} r)\\
&\quad \cdot(1+6\cdot 2^{u}+12\cdot 2^{u} r)\big)\\
&\geq n+u+3.
\end{align*}
For $j\geq 3$, by expression (\ref{m=10+12q,q=3+4d coefficient}), $\nu_2\Big(\tbinom{6\cdot 2^{u}+12\cdot 2^{u} r-1+j}{1+2j}\big((1+2k)2^n\big)^{1+2j}\Big)
\geq n+u+j(2n-1) \geq n+u+3.$
So, we obtain
\begin{align*}
F_m\big((1+2k)2^n\big)&\equiv \tbinom{6\cdot 2^{u}+12\cdot 2^{u} r-1}{1}(1+2k)2^n\\
&\equiv (1+2k)(-1+6\cdot 2^{u}+12\cdot 2^{u} r)2^n~(\mathrm{mod}~2^{n+u+3}).
\end{align*}
Now we compute the following.
\begin{align*}
F_m^2&\big((1+2k)2^n\big)-(1+2k)2^n\\
&\equiv F_m\big((1+2k)(-1+6\cdot 2^{u}+12\cdot 2^{u} r)2^n\big)-(1+2k)2^n\\
&\equiv \sum_{j=0}^{6\cdot 2^{u}+12\cdot 2^{u} r-2}\tbinom{6\cdot 2^{u}+12\cdot 2^{u} r-1+j}{1+2j}\big((1+2k)(-1+6\cdot 2^{u}+12\cdot 2^{u} r)2^n\big)^{1+2j}\\
&\quad-(1+2k)2^n~(\mathrm{mod}~2^{n+u+3}).
\end{align*}
For $j=0$, with the term $(1+2k)2^n$, the summand has the valuation
\begin{align*}
\nu_2&\big(\tbinom{6\cdot 2^{u}+12\cdot 2^{u} r-1}{1}(1+2k)(-1+6\cdot 2^{u}+12\cdot 2^{u} r)2^n-(1+2k)2^n\big)\\
&=\nu_2\big(2^{n+u+2} 3(1+2k)(1+2r)(-1+3\cdot 2^u+6\cdot 2^{u} r)\big)\\
&=n+u+2.
\end{align*}
For $j=1$, the summand has the valuation
\begin{align*}
\nu_2\Big(&\tbinom{6\cdot 2^{u}+12\cdot 2^{u} r}{3}\big((1+2k)(-1+6\cdot 2^{u}+12\cdot 2^{u} r)2^n\big)^{3}\Big)\\
&=\nu_2\big(2^{3n+u+1}\cdot (1+2k)^3 (1+2r)(-1+3\cdot 2^u+6\cdot 2^{u} r) \\
   & \quad \cdot(-1+6\cdot 2^{u}+12\cdot 2^{u} r)^4\big)\\
&\geq n+u+3.
\end{align*}
For $j=2$, the summand has the valuation
\begin{align*}
\nu_2\Big(&\tbinom{6\cdot 2^{u}+12\cdot 2^{u} r+1}{5}\big((1+2k)(-1+6\cdot 2^{u}+12\cdot 2^{u} r)2^n\big)^{5}\Big)\\
&=\nu_2\big(2^{5n+u-1} \cdot\frac{3}{5}(1+2k)^5 (1+2r)(-1+3\cdot 2^u+6\cdot 2^{u} r)\\
&\quad \cdot(-1+2^{u+1}+2^{u+2} r)(-1+6\cdot 2^{u}+12\cdot 2^{u} r)^6\\
&\quad \cdot(1+6\cdot 2^{u}+12\cdot 2^{u} r)\big)\\
&\geq n+u+3.
\end{align*}
For $j\geq 3$, by expression (\ref{m=10+12q,q=3+4d coefficient}),
\begin{align*}
\nu_2\Big(&\tbinom{6\cdot 2^{u}+12\cdot 2^{u} r-1+j}{1+2j}\big((1+2k)(-1+6\cdot 2^{u}+12\cdot 2^{u} r)2^n\big)^{1+2j}\Big)\\
&\geq n+u+j(2n-1)\\
&\geq n+u+3.
\end{align*}	
Combining these, we obtain
\[ \nu_2\Big(F_m^2\big((1+2k)2^n\big)-(1+2k)2^n\Big)=n+u+2. \]
Hence, $b_{n+u+2}\big((1+2k)2^n\big)\equiv 1~(\mathrm{mod}~2)$.

Because $F_m^2(x)$ is an odd function and $(2^{u+2}-1-2k)2^n\equiv -(1+2k)2^n$ $(\mathrm{mod}~2^{n+u+2})$,
\begin{align*}
b_{n+u+2}\big((2^{u+2}-1-2k)2^n\big)&=b_{n+u+2}\big(-(1+2k)2^n\big)\\
&=\frac{F_m^2\big(-(1+2k)2^n\big)-\big(-(1+2k)2^n\big)}{2^{n+u+2}}\\
&=-\frac{F_m^2\big((1+2k)2^n\big)-\big((1+2k)2^n\big)}{2^{n+u+2}}\\
&=-b_{n+u+2}\big((1+2k)2^n\big)\\
&\equiv 1~(\mathrm{mod}~2).
\end{align*}
Therefore, the cycles $\{(1+2k)2^n,(2^{u+1}-1-2k)2^n\}$ and $\{(2^{u+1}+1+2k)2^n,(2^{u+2}-1-2k)2^n\}$ strongly grow at level $n+u+2$, which completes the proof.
\end{proof}

By Proposition \ref{prop:m=10+12q,q=3}, we conclude that the following is true.
\begin{theorem}\label{thm:m=10+12q,q=3}
The minimal decomposition of $\Z_2$ for $F_m(x)$ with $m=10+12q$ and $q=3+4d$ for nonnegative integers $d$ and $q=1+2+\cdots+2^{u-1}+0\cdot 2^u+2^{u+1} r,r\in\Z$ is
\[ \Z_2=\{0\}\bigsqcup\Bigl(\big(\bigcup_{k=0}^{2^{u+2}-1} A_k\big)\cup\big(\bigcup_{n\geq 1}\bigcup_{k=0}^{2^{u}-1} M_{n,k}\big)\Bigr), \]
where 
\begin{align*}
	& A_k=(1+4k+2^{u+4}\Z_2)\cup(2^{u+3}-1-4k+2^{u+4}\Z_2) \ \text{and} \\
	& M_{n,k}=\big((1+4k)2^n+2^{n+u+2}\Z_2\big)\cup \big((2^{u+1}-1-4k)2^n+2^{n+u+2}\Z_2\big).
\end{align*}
Here, $\{0\}$ is the set of a fixed point and $A_k$'s and $M_{n,k}$'s are the minimal components.
\end{theorem}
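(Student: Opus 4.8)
The plan is to assemble the decomposition directly from the cycle data already established in Proposition \ref{prop:m=10+12q,q=3}, so that the only genuinely new work is verifying that the listed pieces tile $\Z_2$. First I would record that $m$ is even, hence $F_m$ has no constant term and $F_m(0)=0$; thus $\{0\}$ is a fixed point, which will supply the periodic point $\mathcal{P}$ of the decomposition (and it lies in none of the other pieces since $0$ has infinite valuation).

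Next I would invoke minimality. Each cycle underlying an $A_k$ strongly grows at level $u+4\geq 2$ and each cycle underlying an $M_{n,k}$ strongly grows at level $n+u+2\geq 2$, by Proposition \ref{prop:m=10+12q,q=3}. Hence, by part 1 of Proposition \ref{prop:grow tails}, the restriction of $F_m$ to each invariant clopen set $\mathbb{X}_\sigma$ is minimal, and these clopen sets are exactly the sets $A_k$ and $M_{n,k}$ written in the statement. So every $A_k$ and every $M_{n,k}$ is a minimal component.

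The heart of the argument is to show that $\{0\}$, the $A_k$'s, and the $M_{n,k}$'s form a disjoint partition of $\Z_2$; since Theorem \ref{thm-decomposition} splits $\Z_2$ into $\mathcal{P}$, $\mathcal{M}$, $\mathcal{B}$, completeness of the cover will also force $\mathcal{B}=\emptyset$. I would split $\Z_2=(1+2\Z_2)\sqcup 2\Z_2$ and treat the two parts separately. For the odd part, $A_k$ consists of the two residues $1+4k$ and $2^{u+3}-1-4k$ modulo $2^{u+4}$; as $k$ runs over $0,\dots,2^{u+2}-1$ the first family realizes all $2^{u+2}$ residues $\equiv 1\pmod 4$ and the second all $2^{u+2}$ residues $\equiv 3\pmod 4$ modulo $2^{u+4}$, so the $A_k$'s tile $1+2\Z_2$. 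For the even part, fix a valuation $n\geq 1$: a point of exact valuation $n$ is $2^n u'$ with $u'$ odd, and the centers $(1+4k)2^n$ and $(2^{u+1}-1-4k)2^n$ of $M_{n,k}$ (balls of radius $2^{n+u+2}$) again have exact valuation $n$ and realize, through the odd parts $1+4k$ and $2^{u+1}-1-4k$ for $k=0,\dots,2^u-1$, exactly the $2^u$ residues $\equiv 1\pmod 4$ and the $2^u$ residues $\equiv 3\pmod 4$ among the $2^{u+1}$ odd residues modulo $2^{u+2}$. Thus for each $n$ the $M_{n,k}$'s exhaust all points of exact valuation $n$, and letting $n$ range over $n\geq 1$ covers $2\Z_2\setminus\{0\}$, leaving $0$ as the unique remaining point.

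I expect the main obstacle to be nothing deep but rather the careful residue bookkeeping of the last paragraph: one must confirm that the two \emph{types} of centers land in complementary classes modulo $4$, that each family is internally distinct modulo the relevant power of $2$, and that the counts match the number of available residues, so that the balls genuinely tile (not merely cover) the odd and even parts. Here the parametrization $q=2^u-1+2^{u+1}r$ fixing $u=u_0(q)$ guarantees $u\geq 1$, which is what makes $2^{u+1}\equiv 0\pmod 4$ and the congruence-class separation above valid. Once the counting closes, the three families are pairwise disjoint (by parity, by valuation, and by residue), and together with $\{0\}$ they exhaust $\Z_2$; this is precisely the asserted minimal decomposition.
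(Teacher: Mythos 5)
Your proposal is correct and follows the same route as the paper: the paper derives this theorem in one line from Proposition \ref{prop:m=10+12q,q=3} together with part 1 of Proposition \ref{prop:grow tails}, exactly as you do, leaving the residue-counting implicit. Your explicit verification that the $A_k$'s tile $1+2\Z_2$ and that, for each fixed $n\ge 1$, the $M_{n,k}$'s tile the set of points of exact valuation $n$ (using $u=u_0(q)\ge 1$) correctly fills in the bookkeeping the paper omits.
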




Postal address of Jung and Kim: Department of Mathematics, Korea University, Anam-ro, Seongbuk-gu, Seoul, 02841, Republic of Korea

     Email address of Jung: \texttt{myunghyun.jung07@gmail.com}

     Email address of Kim: \texttt{kim.donggyun@gmail.com}

      ORCID iD of Kim: 0000-0002-8072-8071

Postal address of Song: Institute of Mathematical Sciences, Ewha Womans University, Ewhayeodae-gil, Seodaemun-gu, Seoul, 03760, Republic of Korea

     Email address of Song: \texttt{khsong0118@ewha.ac.kr}

\end{document}